\newtheorem{thm}{Theorem}
\newtheorem{lem}[thm]{Lemma}
\newtheorem{prop}[thm]{Proposition}
\newtheorem{defn}[thm]{Definition}
\newtheorem{cor}[thm]{Corollary}
\newtheorem{rem}[thm]{Remark}
\newtheorem{notation}[thm]{Notation}
\newtheorem{exa}[thm]{Example}
\title{$(TE)$-structures  over  the irreducible $2$-dimensional globally nilpotent $F$-manifold germ  }
\date{}
\author{Liana David and Claus Hertling}
\begin{document}

\maketitle

{\bf Abstract:} We find formal and holomorphic normal forms
for a class of meromorphic connections (the so-called $(TE)$-structures) over the 
irreducible $2$-dimensional globally 
nilpotent $F$-manifold 
germ $\mathcal N_{2}$.
We find normal forms for Euler fields on $\mathcal N_{2}$ and we characterize
the  Euler fields on $\mathcal N_{2}$ 
which are  induced by a $(TE)$-structure.\\

{\it 2010 Mathematics Subject Classification:} 34M56, 34M35, 53D45.\

{\it Key words and phrases:} Meromorphic connections, $(TE)$-structures, Malgrange universal deformations, $F$-manifolds,
Euler fields.

\section{Introduction}

An important topic in modern mathematics is the theory of Frobenius manifolds. 
Originally introduced in \cite{dubrovin} by B.  Dubrovin as a geometrization of the Witten-Dijkgraaf-Verlinde-Verlinde
(WDVV)-equations, they received much
interest from the mathematical community owing to their relation with 
various research fields, like 
singularity theory, integrable systems, Gromov-Witten invariants  and the theory of meromorphic connections. A Frobenius manifold
is a complex manifold $M$ together with a commutative, associative, with unit multiplication $\circ$  on the
holomorphic tangent bundle $TM$, a flat holomorphic metric $g$
and a holomorphic vector field $E$ (the Euler field), satisfying certain compatibility conditions. 
In particular, in flat coordinates $(t_{i})$ for the metric the tensor field $c(X, Y, Z) = g(X\circ Y, Z)$
can be written in terms of   the third derivatives of a certain function, the so-called  potential:
$$
c(\partial_{i}\circ \partial_{j}, \partial_{k}) =\partial_{i} \partial_{j}\partial_{k} F.
$$ 
The associativity equations  for the multiplication $\circ$  reduce to the WDVV-equations for $F$. The Euler field
rescales the multiplication and metric by constants and  imposes a  quasi-homogeneity condition on the potential.
In dimension two, the associativiy equations are empty and in dimension three they are related to Painlev\'{e} VI equations
\cite{dubrovin}. An important class of Frobenius manifolds is represented by the orbit spaces of Coxeter groups
\cite{coxeter}.

Later on, C. Hertling and Y. Manin defined the weaker notion of an $F$-manifold \cite{HM},
which does not involve any potential  or metric, but only a multiplication with similar
properties as the multiplication of a Frobenius manifold \cite{HM}.
An $F$-manifold is a complex manifold $M$ together with a commutative, associative,  multiplication $\circ$ on $TM$,
with unit $e$, 
which satisfies a certain integrability condition (see Definition \ref{def-F}).  
An Euler field on $(M, \circ , e)$ is a holomorphic vector field $E\in {\mathcal T}_{M}$ which
satisfies $L_{E}(\circ ) =\circ .$
Any Frobenius manifold without metric is an $F$-manifold with Euler field. 
But  there are $F$-manifolds which cannot be enriched to a Frobenius manifold
(see e.g. \cite{HMT}, where the multiplication of such an $F$-manifold is described in terms
 of  its  spectral cover).  A way to produce a new $F$-manifolds from older ones
is  Dubrovin's duality developed in \cite{dubrovin-ad}, or  its generalizations
developed in  \cite{manin-ad} and   \cite{DS-adv}. 
Endowing  an $F$-manifold with a real structure or Hermitian metric  leads to the notions of harmonic Higgs bundles, 
CV-structures or Hodge structures, which are central objects in $tt^{*}$-geometry  \cite{CV}.  $F$-manifolds endowed with 
purely holomorphic objects  (holomorphic metrics or compatible holomorphic connections) 
lead to notions like  Frobenius manifolds, flat $F$-manifolds, bi-flat $F$-manifolds,  Riemannian $F$-manifolds etc, which
were largely considered in  the literature, see e.g.  \cite{A,  KMS15,LP, manin-ad}  (and Section 3 of  \cite{DH-dubrovin} for a survey).  $F$-manifolds as submanifolds of Frobenius manifolds were considered in \cite{Str}.

$F$-manifolds endowed with various objects as  above  arise naturally in the theory of meromorphic connections.
More precisely, the parameter space of a certain meromorphic connection, a so-called
$(TE)$-structure,
inherits, when the so-called  unfolding condition is satisfied, 
the structure of an $F$-manifold with Euler field.  
If, moreover, the $(TE)$-structure comes with  various  additional flat objects (holomorphic metrics,  Hermitian metrics,
real structures)
its  parameter space becomes an $F$-manifold  with  various additional objects  
mentioned above (see \cite{He03}  for the way $tt^{*}$-geometry arises in this setting or Section 4 of  \cite{DH-dubrovin} for a 
survey on the holomorphic theory).
The fundamental example of this  construction is represented   by Frobenius manifolds
and their  structure connections (see \cite{dubrovin} or \cite{Hbook}).
Conversely, if one wants to enrich an F-manifold with Euler field to a 
Frobenius manifold, the most important
step (of several steps) is the construction of a $(TE)$-structure
(with additional good properties) over the F-manifold
with Euler field. This stepwise construction is discussed
in general in \cite{DH-dubrovin}. 
Examples of 
$(TE)$-structures with unfolding condition  are  universal unfoldings of certain germs of meromorphic connections
 (see e.g. \cite{HM-unfolding}).

In this paper we are only interested in the relation between $F$-manifolds 
(with or without Euler fields)
and $(TE)$-structures
(with no additional metrics or real  structures on either side of the correspondence). A natural question which arises  is to classify (formally and holomorphically) the $(TE)$-structures
which lie over (or  induce) a given germ $((M, 0), \circ , e)$ of  an $F$-manifold. 
As the classification of $F$-manifolds in dimension bigger than two is still unknown, it is natural to address this
question in two dimensions. Recall that the  irreducible germs of   $2$-dimensional 
$F$-manifold are classified:    any such germ is either isomorphic to a generically semisimple
germ  $I_{2}(m)$ (for $m\in \mathbb{Z}_{\geq 3}$)  
or  to  the globally nilpotent germ $\mathcal  N_{2}$
(see Theorem 4.7 of \cite{Hbook}). As germs of manifolds, both $I_{2}(m)$ and $\mathcal N_{2}$
are $(\mathbb{C}^{2}, 0)$ with standard coordinates $(t_{1}, t_{2})$.
The multiplication of $I_{2}(m)$  is given by
$$
\partial_{1}\circ \partial_{1} = \partial_{1},\ \partial_{1}\circ \partial_{2} = \partial_{2},\
\partial_{2}\circ \partial_{2} = t_{2}^{m-2}\partial_{1}
$$
while the multiplication of  $\mathcal N_{2}$ is given by
$$
\partial_{1}\circ \partial_{1} = \partial_{1},\ \partial_{1}\circ \partial_{2} = \partial_{2},\
\partial_{2}\circ \partial_{2} = 0,
$$
where  $\{ \partial_{1},\partial_{2}\}$
are the  vector fields associated to the standard coordinates $( t_{1}, t_{2} ).$ 
Theorem 8.5 of \cite{DH-dubrovin}  answers  the above question
for the generically semisimple 
germ $I_{2}(m).$ It states that any $(TE)$-structure over $I_{2}(m)$  is formally isomorphic to a unique $(TE)$-structure 
which belongs to a short list 
of $(TE)$-structures (called the normal forms)
and that the formal isomorphism  between a $(TE)$-structure and its normal form is holomorphic.
Therefore, the formal and holomorphic classifications for $(TE)$-structures over $I_{2}(m)$  coincide. 
As a consequence, 
any Euler field on $I_{2}(m)$ is induced by a $(TE)$-structure  
(from Theorem 8.5 of \cite{DH-dubrovin} combined with Theorem 4.7  of \cite{Hbook}).

Our aim in this paper is to 
develop similar results for the globally nilpotent germ $\mathcal N_{2}.$ Namely, we 
classify  (formally and holomorphically) the $(TE)$-structures over $\mathcal N_{2}$,  we classify the Euler fields on $\mathcal N_{2}$ and we
characterize the Euler fields on $\mathcal N_{2}$  which are induced by a $(TE)$-structure. 
Like for $I_{2}(m)$,  the classifications 
for $(TE)$-structures  are done by determining   formal and holomorphic normal forms.
The formal normal forms are obtained by computations similar to those from the $I_{2}(m)$ case.
In order to obtain the holomorphic normal forms we prove that the restriction
of  any $(TE)$-structure $\nabla$ over $\mathcal N_{2}$ at the origin $0\in \mathcal N_{2}$  is either regular singular
(in which case $\nabla$ is holomorphically isomorphic to its formal normal forms)  
or is holomorphically isomorphic to a Malgrange universal connection
(in rank two, with pole of Poincar\'{e} rank one, with residue a regular endomorphism with one eigenvalue).  By developing a careful treatment for such Malgrange universal connections
we obtain the complete list of holomorphic normal forms for $(TE)$-structures over $\mathcal N_{2}.$

As opposed to $(TE)$-structures over $I_{2}(m)$, the $(TE)$-structures over $\mathcal N_{2}$ have the
following features: the  (formal or holomorphic) normal form for a given $(TE)$-structure is  not always unique; there are  $(TE)$-structures  over $\mathcal N_{2}$ which are not holomorphically isomorphic to their formal normal form(s);  there are Euler fields on $\mathcal N_{2}$ which are not induced by a $(TE)$-structure.

Formal and holomorphic classifications are  important topics of research in the theory of meromorphic connections. 
The results of this paper add to the existing knowledge in this field,  using 
down-to-earth  arguments rather than  the abstract, more commonly used theory of Stokes structures.  This paper is  a natural continuation  of \cite{DH-AMPA}, where
a formal classification of $(T)$-structures (rather than $(TE)$-structures) over 
$\mathcal N_{2}$  was developed.\

{\bf Structure of the paper.}  
In Section \ref{preliminary} we recall  
basic definitions on  (formal and holomorphic) $(TE)$-structures and $F$-manifolds. 
In Section \ref{formal-i2}  we find the formal normal forms  for $(TE)$-structures over $\mathcal N_{2}$
and in Section \ref{holom-section}  we find the holomorphic normal forms. 
In  Section \ref{application} we find normal forms for Euler fields on $\mathcal N_{2}$ and we characterize 
the Euler fields on $\mathcal N_{2}$ which are  induced by a $(TE)$-structure.

In the appendix we study some classes of differential equations which are useful in our treatment.
To keep our paper self-contained,  we  recall well-known general results on the theory of meromorphic connections, 
which we use  along the paper  (Fuchs criterion,  irreducible bundles,  Birkhoff normal forms and Malgrange universal
connections).

\section{Preliminary material}\label{preliminary}

We preserve the notation used in \cite{DH-AMPA}, which we now recall. 

\begin{notation}{\rm For a complex manifold $M$, we denote by $\mathcal O_{M}$, $\mathcal T_{M}$, $\Omega^{k}_{M}$  the sheaves of holomorphic functions, 
holomorphic vector fields and holomorphic $k$-forms on $M$  respectively. 
For an holomorphic vector bundle $H$, we denote by ${\mathcal O}(H)$ the sheaf of its holomorphic 
sections. We denote by  $\Omega^{1}_{\mathbb{C}\times M}(\mathrm{log} \{ 0\} \times M)$ the sheaf of meromorphic $1$-forms on $\mathbb{C}\times M$, logarithmic along $\{  0\} \times M.$
Locally, in a neighborhood of $(0, p)$, where $p\in M$, any $\omega \in \Omega^{1}_{\mathbb{C}\times M}(\mathrm{log} \{ 0\} \times M)$ is of the form
$$
\omega = \frac{f(z,t)}{z} dz + \sum_{i} f_{i}(z,t) dt_{i}
$$
where $t=(t_{1},\cdots , t_{m})$ is a coordinate system of $M$ around $p$  and  $f$, $f_{i}$ are holomorphic.   
The ring of holomorphic functions defined on a neighbourhood of $0\in \mathbb{C}$ will be denoted  by 
$\mathbb{C}\{ z\}$, the ring of formal power series $\sum_{n\geq 0} a_{n} z^{n}$ will be denoted by
$\mathbb{C}[[z]]$, the subring of formal power series  $\sum_{n\geq 0} a_{n} z^{n}$ with $a_{n} =0$ for any
$n\leq k-1$ will be denoted by $\mathbb{C} [[z]]_{\geq k}$ 
and the vector space of polynomials of degree at most $k$ in the variables $(t_{1},\cdots , t_{m})$ 
will be denoted by $\mathbb{C}[t]_{\leq k}.$  
Finally, we  denote by 
$\mathbb{C}\{ t, z]]$ the ring of 
formal power series $\sum_{n\geq 0} a_{n} z^{n}$ where all 
$a_{n}= a_{n}(t)$ are holomorphic on the {\it same} neighbourhood of $0\in \mathbb{C}$
and by $\mathbb{C}[[z]][t]_{\leq k}$ the vector space
of formal power series  $\sum_{n\geq 0} a_{n} z^{n}$ with $a_{n}$  polynomials of degree at most $k$  in $t$. 
For a  function $f\in\mathbb{C}\{t,z]]$
and matrix $A\in M_{k\times k}(\mathbb{C}\{t,z]])$,
we often write $f=\sum_{n\geq 0}f^{(n)}z^n$ and 
$A=\sum_{n\geq 0}A^{(n)}z^n$ where  $f^{(n)}$
and $A^{(n)}$ are independent on $z$. 
The ring of meromorphic functions defined on a neighborhood of the origin $0\in \mathbb{C}$, with pole at the origin only,
will  be denoted by $\textit{\textbf k}$.}
\end{notation}

\subsection{Basic facts on $(TE)$-structures}\label{t-te}

Let $M$ be a complex manifold.

\begin{defn}\label{t1.1}
i) A  $(T)$-structure over $M$ is a pair $(H\rightarrow  \mathbb{C}\times M,\nabla )$
where $\nabla$ is a map
\begin{equation}\label{1.1}
\nabla:{\mathcal O}(H)\to \frac{1}{z}\mathcal O_{\mathbb{C}\times M}\cdot\Omega^1_M\otimes 
{\mathcal O}(H)
\end{equation}
such that, for any $z\in\mathbb{C}^*$,  the restriction of $\nabla$
to $H\vert_{\{z\}\times M}$ is a flat connection.\

ii)  A $(TE)$-structure over $M$ is a pair $(H\rightarrow \mathbb{C}\times M,\nabla )$
where 
$H\rightarrow \mathbb{C}\times M$ is a holomorphic vector bundle and
$\nabla $ is a flat connection on $H\vert_{\mathbb{C}^{*}\times M}$
with  poles of Poincar\'e rank 1 along $\{0\}\times M$:
\begin{equation}\label{1.2}
\nabla:{\mathcal O}(H)\to \frac{1}{z}\Omega^1_{\mathbb{C}\times M}
(\mathrm{log}(\{0\}\times M)\otimes {\mathcal O}(H).
\end{equation}
\end{defn}

As we only consider  $(T)$ or $(TE)$-structures over germs of $F$-manifolds, 
we assume that  $H= ({\mathcal O}_{(\mathbb{C}^{m}, 0)})^{r}$
 is the trivial rank $r$ vector bundle and $M = (\mathbb{C}^{m},0) $ with coordinates $(t_{1},\cdots , t_{m})$
(in fact, in our computations 
$m = r =2$, but we prefer to present the local formulae below in any rank or dimension).

With respect to the standard basis 
$\underline{s}=(s_1,\cdots ,s_r)$ of $H$,  
\begin{equation}\label{2.8}
\nabla\underline{s} = \underline{s}\cdot \Omega ,\  
\Omega = \sum_{i=1}^{m }  z^{-1} A_{i}(z,t) dt_{i}
+z^{-2}B(z,t)dz, 
\end{equation}
where $A_{i}$, $B$ are holomorphic, 
\begin{equation}\label{2.9}
A_i(z,t)=\sum_{k\geq 0}A_i^{(k)}z^k,\  B(z,t)=\sum_{k\geq 0} B^{(k)}z^k
\end{equation}
and $A_{i}^{(k)}$ and $B^{(k)}$ depend only on $(t_{i}).$ 
The flatness of 
$\nabla$ gives,  for any $i\neq j$,
\begin{align}
\label{2.10}&   z\partial_iA_j-z\partial_jA_i+[A_i,A_j]=0,\\
\label{2.11}&   z\partial_i B-z^2\partial_z A_i + zA_i + [A_i,B] =0.
\end{align}
(When $\nabla$ is a $(T)$-structure, the summand $z^{-2}B(t,z)d z$
in $\Omega$ and relations (\ref{2.11}) are dropped).
Relations  (\ref{2.10}),  (\ref{2.11}) 
split according to the  powers
of $z$  as follows: for any $k\geq 0$, 
\begin{align}
\label{2.12}&  \partial_iA_j^{(k-1)}-\partial_jA_i^{(k-1)}+\sum_{l=0}^k[A_i^{(l)},A_j^{(k-l)}] =0,\\
\label{2.13} &\partial_i B^{(k-1)}-(k-2)A_i^{(k-1)} + \sum_{l=0}^k[A_i^{(l)},B^{(k-l)}]= 0,
\end{align}
where  $A_i^{(-1)}=B^{(-1)}=0$.\

Let $(H, \nabla )$ and $(H, \tilde{\nabla})$
be two $(TE)$-structures over $(\mathbb{C}^{m}, 0)$, with underlying bundle 
$H= ({\mathcal O}_{(\mathbb{C}^{m}, 0)})^{r}$, defined  by matrices $A_{i}$, $B$ and $\tilde{A}_{i}$, $\tilde{B}$ respectively.  
An  isomorphism $T$ 
between $(H, \nabla )$ and $(\tilde{H}, \tilde{\nabla })$ 
which covers $h: (\mathbb{C}^{m}, 0)  \rightarrow (\mathbb{C}^{m}, 0)$,
$h = (h^{1},\cdots , h^{m})$, 
 is given by a matrix
$(T_{ij}) = \sum_{r\geq 0}T^{(k)} z^k
\in M_{r\times r}( \mathcal O _{\Delta \times U})$ 
(where $\Delta\subset \mathbb{C}$ is a small disc around the origin), with 
$T^{(k)}\in M_{r\times r}(\mathcal O_{U})$, $T^{(0)}$ invertible, such that 
 \begin{align}
\label{2.15} &z \partial_{i}\tilde{T} + \sum_{j=1}^{m} (\partial_{i}{h^{j}})( A_{j} \circ h ) \tilde{T}- \tilde{T} 
\tilde{A}_{i}=0,\ \forall i\\
\label{2.16}& z^{2}\partial_{z}\tilde{T}+ (B\circ h) \tilde{T} - \tilde{T}\tilde{B} =0,
\end{align}
where $\tilde{T}:= T \circ h$ (relation (\ref{2.16}) has to be omitted when $\tilde{\nabla}$ and $\nabla$ are $(T)$-structures). Relations
(\ref{2.15}),  (\ref{2.16})  split
according to the powers of $z$ as
\begin{align}
\label{compat-t}&  \partial_i \tilde{T}^{(r-1)}+ \sum_{l=0}^r (\sum_{j=1}^{m} (\partial_{i}{h^{j}}) (A_j^{(l)}\circ h)  \tilde{T}^{(r-l)}
-\tilde{T}^{(r-l)} \tilde{A}_i^{(l)})=0\\
\label{compat-te}&  (r-1)\tilde{T}^{(r-1)}+\sum_{l=0}^r ( (B^{(l)}\circ h )  \tilde{T}^{(r-l)}
-\tilde{T}^{(r-l)}  \tilde{B}^{(l)}) =0,
\end{align}
for any $r\geq 0$, where $\tilde{T}^{(-1)}=0.$ 
When  $h = \mathrm{Id}_{(\mathbb{C}^{m}, 0)}$, the isomorphism $T$ is called a gauge isomorphism.
It satisfies 
\begin{align}
\label{2.17-w}z\partial_{i}T + A_{i}T - T\tilde{A}_{i}=0\\
\label{2.18-w}z^{2} \partial_{z}T + BT - T\tilde{B}=0, 
\end{align}
or 
\begin{align}
\label{2.17} \partial_i T^{(r-1)}+\sum_{l=0}^r (A_i^{(l)}  T^{(r-l)}
-T^{(r-l)} \tilde{A}_i^{(l)})=0,\\
\label{2.18} (r-1){T}^{(r-1)}+\sum_{l=0}^r (B^{(l)}  T^{(r-l)}
-T^{(r-l)}  \tilde{B}^{(l)})=0.
\end{align}
for any $r\geq 0.$  

\begin{rem}{\rm  i)  
A formal $(T)$ or $(TE)$-structure $\nabla$ over  $(\mathbb{C}^{m}, 0)$ is given by a connection form  
 (\ref{2.8}), where $A_{i}$ and $B$
(the latter only when $\nabla$ is a $(TE)$-structure) are matrices  with entries in 
$\mathbb{C} \{ t, z]]$, satisfying relations (\ref{2.10}), (\ref{2.11}) or (\ref{2.12}), (\ref{2.13}) (relations
 (\ref{2.11}), (\ref{2.13}) only when $\nabla$ is a $(TE)$-structure). 

ii) A formal isomorphism between two formal $(T)$ or $(TE)$-structures
$\nabla$ and $\tilde{\nabla}$ 
which covers a biholomorphic map 
$h: (\mathbb{C}^{m}, 0) \rightarrow (\mathbb{C}^{m}, 0)$,  is given by a matrix  $T= (T_{ij})$
with entries $T_{ij}\in \mathbb{C} \{ t, z]]$,  such that  relations  (\ref{2.15}),  (\ref{2.16}) 
or (\ref{compat-t}), (\ref{compat-te}) 
are satisfied with $\tilde{T} = T\circ h$  
(relations (\ref{2.16}),  (\ref{compat-te}) only  when $\nabla$ and $\tilde{\nabla}$ are  $(TE)$-structures). Formal gauge isomorphisms between $(T)$ or $(TE)$-structures 
are formal isomorphisms which cover the identity map. They are given by matrices  $T= (T_{ij})$ with entries in 
$\mathbb{C} \{ t, z]]$ such that relations 
(\ref{2.17-w}), (\ref{2.18-w}) or 
(\ref{2.17}), (\ref{2.18}) are satisfied
(relations (\ref{2.18-w}) and (\ref{2.18}) only for $(TE)$-structures).}
\end{rem}

\subsection{$(TE)$-structures and  $F$-manifolds}\label{s2.2}

\subsubsection{General results}\label{gen-sect}

Let $(H, \nabla )$ be a $(T)$-structure over a complex manifold $M$. 
It induces a Higgs field  $C\in \Omega^{1}(M, \mathrm{End}(K))$
on the restriction $K:=H_{|\{0\}\times M}$, defined by
\begin{equation}
C_X[a]:=[z\nabla_Xa],\ \forall  X\in {\mathcal T}_M,a\in{\mathcal O} (H),
\end{equation}
where $[\ ]$ means the restriction to $\{ 0\} \times M$ and $X\in{\mathcal T}_M$
is lifted canonically 
to ${\mathbb C}\times M$.  If $(H, \nabla )$ is a $(TE)$-structure then there is in addition an endomorphism $\mathcal U\in \mathrm{End}(K) $, 
\begin{equation}\label{mathcal-u}
\mathcal U :=[z\nabla_{z\partial_{z}}]:\mathcal O (K)\rightarrow  \mathcal O (K).
\end{equation}

\begin{defn} (\cite{He03}) 
The $(T)$-structure  (or  $(TE)$-structure)  $(H, \nabla )$  satisfies the 
{\it unfolding condition} if   there is 
an open cover $\mathcal V$ of $M$ and for any $U\in \mathcal V$ 
a section  $\zeta_{U}\in {\mathcal O} (K\vert_{U})$ (called a local primitive section) 
with the property that the map
$TU\ni X\rightarrow C_{X} \zeta_{U} \in K$  is an isomorphism.
\end{defn}

When $(H\rightarrow \mathbb{C}\times M, \nabla )$ satisfies the unfolding condition 
the rank of $H$ coincides with the dimension of $M$.

 \begin{defn}\label{def-F}  (\cite{HM})
A complex manifold $M$ with a (fiber-preserving) commutative, associative  
multiplication $\circ$ on the holomorphic 
tangent bundle $TM$ and unit field $e\in {\mathcal T}_M$ is an
{\it $F$-manifold} if 
\begin{equation}\label{2.19}
L_{X\circ Y}(\circ) =X\circ L_Y(\circ)+Y\circ L_X(\circ),\  \forall X, Y\in {\mathcal T}_{M},
\end{equation}
where $L_{X}$ denotes the Lie derivative in the direction of $X\in {\mathcal T}_{M}.$ 
A vector field $E\in{\mathcal T}_M$ is called an {\it Euler field}
(of weight $1$)  if
\begin{equation}\label{2.20}
L_E(\circ) =\circ.
\end{equation}
\end{defn}

The following theorem  was  proved in 
Theorem 3.3 of \cite{HHP10}.

\begin{thm}\label{t2.4}
A $(T)$-structure  $(H\rightarrow \mathbb{C}\times M, \nabla )$  
with unfolding condition 
induces a multiplication $\circ$ on $TM$ which makes $M$ an $F$-manifold.
A $(TE)$-structure $(H\rightarrow \mathbb{C}\times M, \nabla )$   with unfolding condition induces in addition
a vector field $E$ on $M$, which, together with $\circ$, makes $M$ an $F$-manifold with Euler field.
The multiplication $\circ$, unit field $e$ and Euler field $E$ (the latter, in the case of a $(TE)$-structure), are defined by
\begin{equation}
C_{X\circ Y} = C_{X} C_{Y},\ C_{e} =\mathrm{Id},\ C_{E} = - {\mathcal U} 
\end{equation}
where $C$
and $\mathcal U$ are the Higgs field and endomorphism defined by $\nabla$ as above.
\end{thm}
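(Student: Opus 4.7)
The plan is to construct the multiplication $\circ$ (and, in the $(TE)$-case, the Euler field $E$) by transporting the commuting structure of the Higgs field $C$ and the endomorphism $\mathcal U$ through the isomorphism $TM \to K$ furnished by a primitive section $\zeta$, and then to derive the $F$-manifold axioms from the splitting of the flatness relations (\ref{2.12})--(\ref{2.13}) by powers of $z$.

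First I would observe that the lowest order ($k=0$) part of (\ref{2.12}), namely $[A_i^{(0)},A_j^{(0)}]=0$, says exactly that the Higgs field is commutative: $[C_X,C_Y]=0$ for all $X,Y\in \mathcal T_M$. Working locally on an open set $U$ equipped with a primitive section $\zeta_U$, I would define $X\circ Y$ to be the unique vector field on $U$ characterized by
\begin{equation*}
C_{X\circ Y}\,\zeta_U = C_X C_Y\,\zeta_U,
\end{equation*}
which exists and is unique because $V\mapsto C_V\zeta_U$ is an isomorphism $TU\to K|_U$. Next I would upgrade this to the identity $C_{X\circ Y}=C_XC_Y$ on all of $K|_U$: any section is of the form $C_Z\zeta_U$, and commutativity of the Higgs field gives $C_{X\circ Y}C_Z\zeta_U=C_ZC_{X\circ Y}\zeta_U=C_ZC_XC_Y\zeta_U=C_XC_YC_Z\zeta_U$. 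From this characterization, commutativity and associativity of $\circ$ follow immediately from commutativity and associativity of endomorphism composition together with injectivity of $V\mapsto C_V$. The unit $e$ is defined by $C_e\zeta_U=\zeta_U$, and independence of the choice of $\zeta_U$ (so the local multiplications glue to a global one) is automatic since the condition $C_{X\circ Y}=C_XC_Y$ does not refer to $\zeta_U$.

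The main obstacle is verifying the $F$-manifold integrability condition (\ref{2.19}). For this I would use the next-order ($k=1$) part of (\ref{2.12}),
\begin{equation*}
\partial_i A_j^{(0)}-\partial_j A_i^{(0)}+[A_i^{(0)},A_j^{(1)}]+[A_i^{(1)},A_j^{(0)}]=0,
\end{equation*}
which encodes the compatibility between the Higgs field $C$ (the $z^0$-part) and its first-order deformation $z$-data (the $A_i^{(1)}$). Following the scheme of \cite{HM} and Theorem 3.3 of \cite{HHP10}, I would reinterpret $L_X(\circ)$ through the Higgs field: apply the Lie derivative to the identity $C_{X\circ Y}=C_XC_Y$ acting on $\zeta_U$ (or, equivalently, differentiate the defining equation with respect to $t_i$ and antisymmetrize), and show that the $k=1$ relation above is precisely what is needed to produce the Hertling--Manin identity $L_{X\circ Y}(\circ)=X\circ L_Y(\circ)+Y\circ L_X(\circ)$. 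This is a routine but somewhat involved reindexing once the translation dictionary between $A_i^{(0)},A_i^{(1)}$ and $\circ, L_{\partial_i}(\circ)$ is set up.

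For the $(TE)$-case, I would define $E$ by $C_E=-\mathcal U$, again using the isomorphism $V\mapsto C_V\zeta_U$ (well-definedness of $E$ as a global vector field is straightforward since $C_E$ is intrinsic). To obtain $L_E(\circ)=\circ$ I would use (\ref{2.13}): the $k=0$ equation gives $[A_i^{(0)},B^{(0)}]=0$, i.e. $[\mathcal U,C_{\partial_i}]=0$, and the $k=1$ equation
\begin{equation*}
\partial_i B^{(0)}+A_i^{(0)}+[A_i^{(0)},B^{(1)}]+[A_i^{(1)},B^{(0)}]=0
\end{equation*}
is the analog of the integrability identity above with $B$ playing the role of $E$; translating it through the dictionary $C_E=-\mathcal U$, $C_{\partial_i}\leftrightarrow A_i^{(0)}$ yields exactly the Euler condition (\ref{2.20}). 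The hard part is essentially the same as in Step 2: matching the coordinate form of $L_E(\circ)=\circ$ with the bracket relation above, but, as it does not involve any analytic subtlety once the Higgs-field commutation relations are established, the argument is parallel to the $F$-manifold case and can be cited from \cite{HHP10}.
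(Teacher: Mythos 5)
Your outline is correct and coincides with how this result is actually handled: the paper does not prove Theorem \ref{t2.4} itself but quotes it from Theorem 3.3 of \cite{HHP10}, and your construction (defining $\circ$ and $E$ through the isomorphism $X\mapsto C_X\zeta_U$, extracting commutativity of $C$ and $[\mathcal U,C_X]=0$ from the $z^0$-coefficients of (\ref{2.12})--(\ref{2.13}), and deriving the Hertling--Manin and Euler conditions from the $z^1$-coefficients) is precisely the standard argument underlying that citation, with the one genuinely computational step likewise deferred to \cite{HHP10}.
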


A $(T)$- or $(TE)$-structure as in Theorem \ref{t2.4} is said to lie over the $F$-manifold $(M, \circ , e)$.
$F$-manifold isomorphisms lift naturally to isomorphisms  between the spaces of $(T)$ or $(TE)$-structures lying over the
respective $F$-manifolds. 
In particular,  the  spaces of (formal or holomorphic) $(T)$- or $(TE)$-structures  over isomorphic germs of $F$-manifolds  are isomorphic.

\subsubsection{$(T)$-structures over $\mathcal N_{2}$}\label{definition-n2}

Following \cite{DH-AMPA}, we recall the formal normal forms of 
$(T)$-structures over $\mathcal N_{2}.$ 
They are the starting point in our treatment of $(TE)$-structures over $\mathcal N_{2}.$

\begin{notation}{\rm 
We define matrices  $C_{1}$, $C_{2}$, $D$ and $E$,  by
\begin{equation}\label{6.3}
C_1:= \mathrm{Id }_2,\ 
C_{2}:= \left( \begin{tabular}{cc}
$0$ & $0$\\
$1$ & $0$
\end{tabular}\right) ,\  
D:= \left( \begin{tabular}{cc}
$1$ & $0$\\
$0$ & $-1$
\end{tabular}\right) , \ 
E := \left( \begin{tabular}{cc}
$0$ & $1$\\
$0$ & $0$\end{tabular}\right) . 
\end{equation}
We remark that 
\begin{align}
\label{r1} & (C_2)^{2}=0,\  D^{2}  =C_1,\  E^{2}=0,\\
\label{r2} & C_2  D =C_2= - D  C_2,\  D  E=E =-E  D,\\
\label{r3} &   C_2  E=\frac{1}{2} (C_{1}-D),\  E  C_2 = \frac{1}{2} (C_{1} + D),\\
\label{r4}& [C_{2}, D] = 2 C_{2},\ [C_{2}, E]= - D,\ [D, E]=2E. 
\end{align} }
\end{notation}

\begin{thm}(\cite{DH-AMPA}) \label{iso-formal}   Any  
$(T)$-structure  
over $\mathcal N_{2}$ is formally isomorphic  
to a   $(T)$-structure of the form
\begin{align}
\label{5.19a} &A_{1}= C_{1},\   A_{2}= C_{2} + z E\\
\label{5.19b} &A_{1}= C_{1},\  A_{2}= C_{2} + z t_{2} E\\
\label{5.19} & A_{1} = C_{1},\ A_{2} = C_{2},
\end{align}
or to a holomorphic or formal  $(T)$-structure of the form
\begin{equation}\label{5.20}
A_{1} = C_{1},\ A_{2} = C_{2} + z (t_{2}^{r} +\sum_{k\geq 1}
P_{k}z^{k})E, 
\end{equation}
where $r\in \mathbb{Z}_{\geq  2}$ and $P_{k}\in\mathbb{C}[t_2]_{\leq r-2}$ are polynomials 
of degree at most $r-2$.
\end{thm}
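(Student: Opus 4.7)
The plan is to normalize a given $(T)$-structure in three stages: first kill the $t_1$-dependence beyond order $z^0$, then perform a one-variable gauge reduction in $t_2$, and finally apply a biholomorphism to rescale $t_2$.

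\emph{Stage one (reduction to one variable).} Since the $(T)$-structure satisfies the unfolding condition and lies over $\mathcal N_2$, Theorem~\ref{t2.4} together with a suitable choice of primitive section gives $A_1^{(0)}=C_1$ and $A_2^{(0)}=C_2$. I look for a formal gauge $T\in M_{2\times 2}(\mathbb{C}\{t,z]])$ making $\tilde A_1=C_1$; the equation $z\partial_1 T + A_1T - T\,C_1 = 0$ can be solved order by order in $z$ as a linear ODE in $t_1$, after prescribing the restriction to $\{t_1=0\}$. The flatness relation (\ref{2.10}) for $(i,j)=(1,2)$ then forces $\partial_1\tilde A_2=0$, so $\tilde A_2=\tilde A_2(z,t_2)$ depends only on $t_2$.

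\emph{Stage two (gauge in $t_2$).} The residual gauges preserving $\tilde A_1=C_1$ are exactly those with $\partial_1 T=0$, i.e.\ $T=T(z,t_2)$. Using the commutation relations (\ref{r1})--(\ref{r4}), the adjoint operator $\mathrm{ad}_{C_2}$ on $M_{2\times 2}(\mathbb{C})$ has image $\mathrm{span}(D,C_2)$ and kernel $\mathrm{span}(C_1,C_2)$. Inductively in the $z$-order $k\geq 1$, I choose $T^{(k)}(t_2)$ to kill the $D$- and $C_2$-components of $\tilde A_2^{(k)}$. The $C_1$-component at order $z^1$ is killed by a scalar gauge $T^{(0)}=\alpha(t_2)\mathrm{Id}$ solving $\alpha'/\alpha=-a_1$, and the $C_1$-components at higher orders are absorbed inductively through refined choices of $T^{(k)}$. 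This brings $\tilde A_2$ into the shape $C_2+z\bigl(q(t_2)+\sum_{k\geq 1}p_k(t_2)z^k\bigr)E$.

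\emph{Stage three (biholomorphism and case split).} The only $F$-manifold biholomorphisms of $\mathcal N_2$ fixing the origin and the unit $\partial_1$ are $(t_1,t_2)\mapsto(t_1,b(t_2))$, and I use these to rescale $t_2$ according to the vanishing order $r$ of $q$ at the origin: if $q(0)\neq 0$, I set $q\equiv 1$ and show all $p_k$ can be killed, giving (\ref{5.19a}); a simple zero gives (\ref{5.19b}); $q\equiv 0$ together with all $p_k=0$ gives (\ref{5.19}); and $r\geq 2$ gives (\ref{5.20}) with $q=t_2^r$. The main obstacle is the last case: verifying $P_k\in\mathbb{C}[t_2]_{\leq r-2}$. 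The remaining admissible gauges act on $p_k$ by shifts of the form $s(t_2)\cdot t_2^r$ (coming from commutators with the leading $t_2^rE$ term) together with derivative contributions from $\partial_2 T^{(j)}$; one must verify that these shifts span exactly the space of polynomials of degree $\geq r-1$, leaving only residues in $\mathbb{C}[t_2]_{\leq r-2}$. This demands a careful inductive book-keeping to ensure that normalizations at different orders of $z$ do not interfere and that the residual polynomials have the stated degree bound.
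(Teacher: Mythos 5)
Your proposal is an outline of the natural strategy (normalize $A_1$, kill $t_1$-dependence, gauge in $t_2$ using the structure of $\mathrm{ad}_{C_2}$, then reparametrize $t_2$ and split by the vanishing order $r$ of the leading coefficient), and the elementary observations you make along the way are correct: flatness (\ref{2.10}) does force $\partial_1\tilde A_2=0$ once $\tilde A_1=C_1$, and $\mathrm{ad}_{C_2}$ does have image $\mathrm{span}(C_2,D)$ and kernel $\mathrm{span}(C_1,C_2)$. Note, however, that the paper under review does not prove this statement at all; it is quoted from \cite{DH-AMPA}, where the reduction occupies the bulk of that paper. So the standard you must meet is precisely the part you defer.

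That deferred part is a genuine gap, and it is the entire mathematical content of the theorem. Your Stage two ends with $\tilde A_2=C_2+z\bigl(q(t_2)+\sum_{k\ge1}p_k(t_2)z^k\bigr)E$ and Stage three then asserts, without argument, that (a) for $q(0)\neq0$ and for a simple zero all $p_k$ can be killed, and (b) for $r\ge2$ the residual action reduces each $p_k$ exactly to a polynomial of degree $\le r-2$. Neither claim follows from the image/kernel computation for $\mathrm{ad}_{C_2}$: at order $z^k$ the admissible changes of $\tilde A_2^{(k)}$ mix $[C_2,T^{(k)}]$ with the derivative term $\partial_2T^{(k-1)}$ already fixed at the previous order, the $C_1$-components can only be absorbed through this derivative term, and the reparametrization $t_2\mapsto\lambda(t_2)$ rescales the $C_2$-coefficient by $\dot\lambda$ and changes all the $p_k$ simultaneously, so it must be recombined with further gauges. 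Whether this coupled inductive system can be solved, and exactly which residual polynomials survive, is what determines the normal forms; in particular nothing in your sketch explains why the bound is $r-2$ rather than $r-1$, nor why the $P_k$ cannot be removed altogether when $r\ge2$ (they cannot, and the obstruction is the point of form (\ref{5.20})). You explicitly flag this as "careful inductive book-keeping" still to be done; until it is carried out (as it is, at length, in \cite{DH-AMPA}, via solving the relevant differential equations in $t_2$ order by order in $z$ and controlling degrees), the proposal is a plan rather than a proof.
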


The $(T)$-structures from Theorem \ref{iso-formal} 
are called  formal normal forms.  They are pairwise formally gauge non-isomorphic
(i.e.  there is no formal gauge isomorphism between any two distinct formal normal forms).  
However, there  exist   (distinct) formal normal forms  
which are formally  isomorphic (by a  formal  isomorphism which is not a formal gauge  
isomorphism).  For
a precise statement, see Theorem  21  of   \cite{DH-AMPA}. 
Recall that the  automorphism group  $\mathrm{Aut} (\mathcal N_{2})$
of the $F$-manifold germ $\mathcal N_{2}$ 
is the group of  all biholomorphic maps 
\begin{equation}\label{tilde-f}
(t_{1}, t_{2}) \rightarrow ( t_{1}, \lambda (t_{2})),
\end{equation}
where $\lambda\in \mathbb{C}\{ t_{2}\}$, with $\lambda (0) =0$ and $\dot{\lambda}(0) \neq 0$
(i.e. $\lambda \in \mathrm{Aut} (\mathbb{C}, 0))$).

\section{Formal classification of $(TE)$-structures}\label{formal-i2}

Our aim in this section is to prove the following two theorems, which  classify formally  the
$(TE)$-structures over $\mathcal N_{2}.$

\begin{thm}\label{te}  Any formal $(TE)$-structure  over $\mathcal N_{2}$ is formally isomorphic 
to a   $(TE)$-structure  of the following forms:\ 

i)  for  $c, \alpha , c_{0}\in \mathbb{C}$, 
\begin{align}
\nonumber & A_{1} = C_{1},\ A_{2} = C_{2} + zE,\\
\label{f-1}& B = ( - t_{1} + c +\alpha z) C_{1} + ( -\frac{t_{2}}{2} + c_{0}) C_{2} -\frac{z}{4}D +
z( -\frac{t_{2}}{2} + c_{0})E;
\end{align}
ii)  for $c, \alpha \in \mathbb{C}$ and $r\in\mathbb{Z}_{ \geq 1}$, 
\begin{align}
\nonumber & A_{1} = C_{1},\ A_{2} = C_{2} + z t^{r}_{2}E,\\
\label{f1}& B = ( - t_{1} + c +\alpha z) C_{1} -\frac{t_{2}}{r+2} C_{2} -\frac{z(r+1)}{2(r+2)}D 
-\frac{zt_{2}^{r+1}}{r+2}E,
\end{align}
iii)  a $(TE)$-structure with underlying $(T)$-structure 
$A_{1} = C_{1}$, $A_{2} =C_{2}$ and matrix $B$ of one of the following forms:
\begin{align}
\nonumber B= & ( - t_{1} + c + \alpha z) C_{1} -\frac{z}{2}D;\\
\nonumber B= & ( - t_{1} + c +\alpha z) C_{1} + t_{2}^{2} C_{2}-z( t_{2} +\frac{1}{2} ) D- z^{2} E;\\
\nonumber B= &  ( - t_{1} + c +\alpha z) C_{1} +\lambda t_{2}C_{2} -\frac{z}{2} (\lambda + 1) D,\ \lambda \notin \mathbb{Z}\setminus \{ 0\};\\
\nonumber B  =  & (-t_{1} + c +\alpha z) C_{1} +(\lambda t_{2} + 1)C_{2} -\frac{z}{2} (\lambda +1) D,\ \lambda \notin
\mathbb{Z}\setminus \{ 0\};\\
\nonumber B  =  & (-t_{1} + c +\alpha z) C_{1}+(\lambda t_{2} +1 +\gamma t_{2}^{2} z^{\lambda})C_{2}\\
\nonumber & -\frac{z}{2} (\lambda +1 +2\gamma t_{2} z^{\lambda}) D -\gamma z^{\lambda +2} E,\
\lambda \in \mathbb{Z}_{\geq 1},\\
\nonumber B = &  ( - t_{1} + c +\alpha z) C_{1} + t_{2} (\lambda +  t_{2} z^{\lambda} )C_{2} -\frac{z}{2} 
(\lambda +1 + 2 t_{2} z^{\lambda }) D\\
\nonumber&  -  z^{\lambda +2}E,\ \lambda \in \mathbb{Z}_{\geq 1},\\
\nonumber B=& ( - t_{1} + c +\alpha z) C_{1} + t_{2} \lambda  C_{2} -\frac{z}{2} ( \lambda + 1) D,\ \lambda
\in \mathbb{Z}_{\geq 1};\\
\nonumber B = & (-t_{1} + c+\alpha z) C_{1}  + (\lambda t_{2} + z^{-\lambda }) C_{2} -\frac{ z}{2} (\lambda + 1)
D,\ \lambda \in \mathbb{Z}_{\leq -1}\\
\label{b-2-0} B= & (-t_{1} + c+\alpha z) C_{1} + \lambda t_{2} C_{2} -\frac{z}{2}  (\lambda +1)D,\ 
\lambda \in \mathbb{Z}_{\leq -1},
\end{align}  
where  $c, \alpha ,\gamma \in \mathbb{C}$. 
\end{thm}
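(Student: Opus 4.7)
The plan is to start from Theorem \ref{iso-formal}, which says that the underlying $(T)$-structure of any formal $(TE)$-structure over $\mathcal{N}_2$ is formally $(T)$-isomorphic to one of the four displayed forms (\ref{5.19a})--(\ref{5.20}). Transporting $B$ along such a $(T)$-isomorphism reduces the problem to the following: for each fixed $(A_1, A_2)$ taken from the list of Theorem \ref{iso-formal}, determine all $B$ compatible with flatness modulo the residual group of formal isomorphisms preserving $(A_1, A_2)$, namely gauge transformations $T(t_2, z)$ solving $\partial_1 T = 0$ and $z \partial_2 T + [A_2, T] = 0$, composed with $F$-manifold automorphisms $h(t_1,t_2)=(t_1,\lambda(t_2))$ that happen to stabilize $(A_1, A_2)$.

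First I would solve flatness for $B$. Since $A_1 = C_1 = \mathrm{Id}_2$, equation (\ref{2.11}) with $i=1$ reduces to $\partial_1 B = -C_1$, hence $B = -t_1 C_1 + B_0(t_2, z)$. Substituting into (\ref{2.11}) with $i=2$ produces a first-order ODE in $t_2$ (with $z$ as parameter) for $B_0$. Expanding $B_0 = b_1 C_1 + b_2 C_2 + b_3 D + b_4 E$ in the basis $\{C_1, C_2, D, E\}$ of $M_{2\times 2}(\mathbb{C})$ and using the commutator table (\ref{r4}), this decouples into four scalar ODEs in $t_2$ that can be solved order by order in $z$, yielding an explicit parametrization of the space of admissible $B$.

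Next I would normalize $B$ using residual symmetries. Their action on $B$ is governed by (\ref{2.18-w}), via $z^2 \partial_z T + B T - T \tilde B = 0$. For the normal forms (\ref{5.19a}) and (\ref{5.19b}), the stabilizer is small (essentially scalar rescalings and simple $t_2$-shifts), and a direct order-by-order computation reduces $B$ to the expressions displayed in i) and ii). For (\ref{5.20}) with $r \geq 2$, the key observation is that the higher coefficients $P_k$, which were genuine invariants in the pure $(T)$-classification, can now be killed by a joint formal isomorphism that simultaneously modifies $A_2$ and $B$ (the additional $B$-data provides freedom that was not available in the $(T)$-setting); this yields the $r\geq 2$ instances of normal form ii).

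The hard part will be the case (\ref{5.19}), where $A_2 = C_2$ is nilpotent and the stabilizer is much larger. Here the normal form of $B$ depends on the eigenvalue $\lambda$ attached to the formal residue (captured by the $C_2$-coefficient of $B^{(0)}$, which the flatness relations force to be of the form $\lambda t_2$ plus an additive constant). When $\lambda \notin \mathbb{Z}$, standard non-resonance arguments inductively kill every higher-order $z$-contribution to $B$ and one arrives at exactly two sub-families, distinguished by whether the additive constant can be absorbed. When $\lambda \in \mathbb{Z}_{\geq 1}$ or $\lambda \in \mathbb{Z}_{\leq -1}$, resonances obstruct certain cancellations: no formal gauge can eliminate a distinguished monomial in either the $E$-part (for $\lambda \geq 1$, producing the $z^{\lambda+2}E$ and $\gamma t_2^2 z^\lambda C_2$ families) or the $C_2$-part (for $\lambda \leq -1$, producing the $z^{-\lambda} C_2$ family). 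These obstructions give precisely the remaining normal forms listed in (\ref{b-2-0}); the technical heart of the argument is a careful resonance analysis of the recursions (\ref{2.18}) carried out case by case, which I expect to be the main computational obstacle.
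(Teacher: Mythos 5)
Your overall strategy (reduce to the $(T)$-normal forms of Theorem \ref{iso-formal}, solve the flatness equation (\ref{2.11}) for $B$ in the basis $\{C_1,C_2,D,E\}$, then normalize $B$ by the residual automorphisms of the fixed $(T)$-structure) is exactly the paper's, but two of your steps are wrong as described. First, for the family (\ref{5.20}) with $r\geq 2$ the coefficients $P_k$ are not removed by any extra isomorphism freedom created by the presence of $B$: Proposition \ref{step-1} iv) shows that the flatness equation for the $C_2$-coefficient $b_2$ (equation (\ref{b-2})) has no solution at all unless $P_k=0$ for every $k\geq 1$, so the $(T)$-structures with some $P_k\neq 0$ simply do not underlie any $(TE)$-structure. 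Your proposed mechanism (a joint formal isomorphism modifying $A_2$ and $B$ simultaneously) would in particular have to be a formal isomorphism between distinct $(T)$-normal forms of type (\ref{5.20}), and nothing in your sketch produces one; the correct statement is an obstruction coming from flatness, not a normalization. If you carry out your own first step (solving (\ref{2.11}) order by order) honestly, you will find the empty solution set, but then the sentence about ``killing the $P_k$'' must be discarded.

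Second, and more seriously, in the nilpotent case $A_2=C_2$ flatness only forces $\partial_2^3 b_2=0$, so the $C_2$-coefficient of $B^{(0)}$ is an arbitrary quadratic $a t_2^2+b t_2+c$, not ``$\lambda t_2$ plus an additive constant'' as you assert. The relevant residual symmetries are the isomorphisms covering the M\"obius-type reparametrizations $h(t_1,t_2)=(t_1, k t_2/(e t_2+d))$ (Lemma \ref{auto-last} i)), and under their action (formula (\ref{suitable-choices})) the quadratic term with $a\neq 0$ can be removed only if $(b,c)\neq (0,0)$; the orbit with $b_2^{(0)}=t_2^2$ is genuinely separate and yields the second normal form in iii), $B=(-t_1+c+\alpha z)C_1+t_2^2C_2-z\bigl(t_2+\tfrac12\bigr)D-z^2E$, which your $\lambda$-based resonance scheme omits entirely, so your list would be incomplete. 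Relatedly, your description of the stabilizers as ``small (scalar rescalings and simple shifts)'' undersells what is needed: killing the infinitely many constants $c_k$ in case i) and running the inductive normalization in case iii) requires the full infinite-dimensional gauge stabilizers computed in Lemmas \ref{aut-lema1} and \ref{auto-last} ii), together with the resonance analysis (the solvability dichotomies of Lemma \ref{third-der}) that you correctly anticipate for the integer values of $\lambda$. So the skeleton of your plan is sound and parallel to the paper's, but as written it both misattributes the disappearance of the $P_k$ and misses one of the normal forms in case iii).
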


The $(TE)$-structures from the above theorem are called formal normal forms.
The next theorem studies when two formal normal forms  are formally isomorphic.

\begin{thm}\label{te-class} Any two (distinct) $(TE)$-structures $\nabla$ and $\tilde{\nabla}$ 
in formal normal form  are formally non-isomorphic, 
except when\  

i)  both $\nabla$ and $\tilde{\nabla}$ 
are of the form (\ref{f-1}), with constants $c$, $\alpha$, $c_{0}$, respectively $\tilde{c}$, $\tilde{\alpha}$,
$\tilde{c}_{0}$ and $c_{0}\tilde{c}_{0}\neq 0$. Then they are formally isomorphic if and only if $\tilde{c}= c$, $\tilde{\alpha } = \alpha$ and 
$\tilde{c}_{0} = -  c_{0}$ and they are  formally gauge non-isomorphic;\

ii) both $\nabla$ and $\tilde{\nabla}$ have underlying $(T)$-structure $A_{1}=C_{1}$, $A_{2} = C_{2}$ and their
matrices $B$ and $\tilde{B}$ are of the fourth form   in (\ref{b-2-0}), with constants  $c$, $\alpha$,
$\lambda$, respectively $\tilde{c}$, $\tilde{\alpha}$, $\tilde{\lambda}$ 
and $\lambda\tilde{\lambda}\neq 0.$ Then they are formally isomorphic if and only if
$\tilde{c}=c$, $\tilde{\alpha}=\alpha$
and $\tilde{\lambda} = -\lambda $ and they are formally gauge non-isomorphic.  
\end{thm}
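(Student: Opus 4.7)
The plan is to combine two universal invariants with an explicit pairwise analysis of the isomorphism equations (\ref{2.15})--(\ref{2.16}). Every normal form in Theorem \ref{te} satisfies $\mathrm{tr}(B) = 2(-t_1 + c + \alpha z)$, since $C_2$, $D$, $E$ are traceless, so taking the trace of (\ref{2.16}) gives
\begin{equation*}
\mathrm{tr}(\tilde B) \;=\; \mathrm{tr}(B \circ h) + z^2 \partial_z \log \det \tilde T,
\end{equation*}
and the last term vanishes to order $z^2$. Since every $h \in \mathrm{Aut}(\mathcal N_{2})$ satisfies $h^1 = t_1$, comparing the $z^0$ and $z^1$ coefficients of this identity forces $\tilde c = c$ and $\tilde \alpha = \alpha$ whenever $\nabla$ and $\tilde\nabla$ are formally isomorphic. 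Moreover, any formal isomorphism of $(TE)$-structures is a fortiori a formal isomorphism of the underlying $(T)$-structures, so Theorem~21 of \cite{DH-AMPA} (combined with Theorem~\ref{iso-formal}) reduces the problem to a finite list of candidate pairs indexed by the type of the underlying $(T)$-structure.

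For each such pair I would parametrize explicitly the solutions $(\tilde T, h)$ of (\ref{2.15}) --- these are determined order-by-order in $z$, in the style of \cite{DH-AMPA} --- and then substitute into (\ref{2.16}) to read off which identifications of the remaining parameters ($c_0$ in case (i), $\lambda$ in case (ii)) can occur. In case (i) the involution $h = (t_1, -t_2)$ together with the constant gauge $\tilde T = D$ preserves the underlying $(T)$-structure $A_1 = C_1$, $A_2 = C_2 + zE$ thanks to (\ref{r1})--(\ref{r2}); since $\tilde T$ is constant, $\tilde B = D(B\circ h)D$, and the relations $DC_1D = C_1$, $DC_2D = -C_2$, $DED = -E$ flip the $C_2$- and $E$-coefficients of (\ref{f-1}), producing exactly $\tilde c_0 = -c_0$. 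The analogous identification $\tilde\lambda = -\lambda$ in case (ii) is realized by a subtler non-gauge isomorphism whose leading coefficient $\tilde T^{(0)}$ is of the form $D + q(t_2) C_2$ combined with $h = (t_1,-t_2)$; the higher-order $z$-coefficients of $\tilde T$ are then determined inductively from (\ref{2.16}) so as to shift the $D$-coefficient of $B\circ h$ by the required amount and match the $-\lambda$ normal form.

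Formal gauge non-isomorphy in both exceptional cases is proved by setting $h = \mathrm{Id}$ in (\ref{2.17})--(\ref{2.18}): the $A_2$-preservation equation forces $T^{(0)}$ to commute with $C_2$, hence $T^{(0)} \in \mathbb{C}\{t_2\}C_1 + \mathbb{C}\{t_2\}C_2$; substituting into (\ref{2.18}) with $r=0$ produces an obstruction proportional to $c_0$ (respectively to $\lambda$) which renders $T^{(0)}$ non-invertible under the hypotheses $c_0\tilde c_0 \neq 0$ (respectively $\lambda\tilde\lambda \neq 0$), so no formal gauge isomorphism between the two distinct normal forms exists. The main obstacle will be the case analysis in the second step for the subfamilies of (\ref{b-2-0}) indexed by integer $\lambda$ (the fifth through ninth forms), where the extra $\gamma$-parameter and the $z^{\pm\lambda}$ terms in $B$ create many candidate pairings that must all be ruled out by a delicate order-by-order analysis of (\ref{2.16}), using the resonance index $\lambda$ to separate subfamilies and prevent spurious identifications.
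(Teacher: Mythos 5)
Your overall architecture parallels the paper's: reduce to candidate pairs via the $(T)$-structure classification of \cite{DH-AMPA}, parametrize the automorphisms of the underlying $(T)$-structure, and impose (\ref{2.16}) order by order. Your trace identity is a genuinely nicer way to get the invariance of $c$ and $\alpha$ than the paper's Lemma \ref{X}, and your construction in case (i) (the constant isomorphism $D$ covering $(t_1,-t_2)$) is exactly the paper's Lemma \ref{further-reduction}.

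However, your explicit construction of the exceptional isomorphism in case (ii) is wrong, and the error is visible already at order $z^0$. Identifying the $z^0$-coefficients of (\ref{2.16}) forces the transformation law (\ref{conformal}) for $b_2^{(0)}$: if the isomorphism covers $h(t_1,t_2)=(t_1,\tfrac{kt_2}{et_2+d})$ as in (\ref{H}), then $b_2^{(0)}=\lambda t_2+1$ is sent to $\bigl(\lambda\tfrac{k}{d}+\tfrac{e}{d}+\tfrac{e^2}{kd}\bigr)t_2^2+\bigl(\lambda+\tfrac{2e}{k}\bigr)t_2+\tfrac{d}{k}$ by (\ref{suitable-choices}). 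For an affine covering map ($e=0$), in particular for your $h=(t_1,-t_2)$, the linear coefficient $\lambda$ is unchanged and only the constant term rescales, so one lands on $\lambda t_2-1$, never on the normal form $-\lambda t_2+1$; no choice of higher-order coefficients of $\tilde T$ can repair a mismatch at order $z^0$. Flipping $\lambda\mapsto-\lambda$ while keeping the constant term $1$ requires $e=-\lambda k\neq 0$, i.e.\ the genuinely M\"obius map $t_2\mapsto t_2/(1-\lambda t_2)$ used in the paper's Corollary \ref{first-formal-b2} ii); moreover, by Lemma \ref{auto-last} i) the leading coefficient of such an isomorphism has $(\tau_1^{(0)})_0=\tfrac12(d+k)\neq0$ and $(\tau_3^{(0)})_0=0$, so it is not of your claimed shape $D+q(t_2)C_2$ (which corresponds to $d+k=0$, $e=0$, hence to an affine $h$). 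In addition, the part of the theorem that carries most of the weight --- ruling out all the other a priori possible pairings among the forms in (\ref{b-2-0}) with integer $\lambda$ (the fifth through ninth lines, with the $\gamma$- and $z^{\pm\lambda}$-terms) --- is only announced as an order-by-order plan in your proposal; in the paper this is where the real work lies (Corollaries \ref{initial-cazuri} and \ref{first-formal-b2}, the recursion (\ref{0c1})--(\ref{0e}) of Lemma \ref{lemm1}, Lemma \ref{lem-pre-red} and Lemma \ref{third-der}), so as written the proof is incomplete there as well.
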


In order to prove Theorems \ref{te} and \ref{te-class}, we  begin by  determining  the $(TE)$-structures which extend the $(T)$-structures
from Theorem \ref{iso-formal}.  This is done in Section
\ref{step11} below.

\subsection{$(TE)$-structures  with normal formal  $(T)$-structures}\label{step11}

Let $\nabla$ be a formal $(TE)$-structure, with underlying formal $(T)$-structure $A_{1} = C_{1}$, $A_{2} = C_{2} + z fE$,
where   $f\in \mathbb{C} \{ t_{2}, z]].$  

\begin{lem}\label{simplif-0} The formal $(TE)$-structure $\nabla$ is formally isomorphic to a $(TE)$-structure
with $A_{1}=C_{1}$, $A_{2} = C_{2}+ z f E$ and
matrix   $B$  of the form
\begin{equation}\label{B}
B = ( -t_{1}+ c + \alpha z ) C_{1} + b_{2} C_{2} + zb_{3} D + zb_{4} E,
\end{equation}
where $\alpha ,c\in \mathbb{C}$, $b_{2}, b_{3}, b_{4} \in \mathbb{C} \{ t_{2} ,  z]]$, 
\begin{equation}\label{b3-4}
b_{3} = -\frac{1}{2} ( \partial_{2} b_{2} + 1),\   b_{4}= -\frac{z}{2} \partial^{2}_{2} b_{2} + f b_{2}
\end{equation}
and
\begin{equation}\label{b-2} 
-\frac{z}{2}  \partial_{2}^{3} b_{2} + (\partial_{2}f) b_{2} + 2f \partial_{2} b_{2} - z\partial_{z} f + f=0.
\end{equation}
\end{lem}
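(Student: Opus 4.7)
The plan is to decompose $B$ in the basis $\{C_{1},C_{2},D,E\}$ of $M_{2\times 2}(\mathbb{C})$, apply the flatness equation (\ref{2.11}) to extract all structural constraints on the scalar coefficients, and finally use a gauge transformation commuting with both $A_{1}$ and $A_{2}$ to normalize the $C_{1}$-component. Write
$$B=b_{1}C_{1}+b_{2}C_{2}+b_{3}'D+b_{4}'E,\qquad b_{i},b_{j}'\in\mathbb{C}\{t,z]].$$
Since $A_{1}=C_{1}$ commutes with $B$ and $\partial_{z}A_{1}=0$, equation (\ref{2.11}) for $i=1$ reduces to $z\partial_{1}B+zC_{1}=0$, which forces $b_{1}=-t_{1}+\beta(t_{2},z)$ while making $b_{2},b_{3}',b_{4}'$ independent of $t_{1}$.

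Substituting $A_{2}=C_{2}+zfE$ and the ansatz above into equation (\ref{2.11}) for $i=2$, using the bracket relations (\ref{r4}) to expand $[A_{2},B]$, and splitting the resulting identity along $\{C_{1},C_{2},D,E\}$ produces four conditions. The $C_{1}$-component gives $\partial_{2}\beta=0$, so $\beta=\beta(z)$. The $C_{2}$-component gives $z(\partial_{2}b_{2}+1)+2b_{3}'=0$, whence $b_{3}'=zb_{3}$ with $b_{3}=-\tfrac{1}{2}(\partial_{2}b_{2}+1)$, which is the first relation of (\ref{b3-4}). The $D$-component gives $z\partial_{2}b_{3}'+zfb_{2}-b_{4}'=0$, so $b_{4}'=zb_{4}$ with $b_{4}=fb_{2}-\tfrac{z}{2}\partial_{2}^{2}b_{2}$, which is the second relation of (\ref{b3-4}). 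The $E$-component, after substituting the just-derived expressions for $b_{3}$ and $b_{4}$ and dividing by $z$, simplifies to the differential equation (\ref{b-2}) for $b_{2}$. At this stage flatness alone forces $B$ to have the form (\ref{B}) with all stated constraints, except that $\beta(z)$ is still an arbitrary element of $\mathbb{C}[[z]]$.

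To truncate $\beta(z)$ to $c+\alpha z$, I would apply a formal gauge isomorphism $T=g(z)C_{1}$ with $g(0)=1$. Because $T$ is a scalar multiple of the identity and is independent of $(t_{1},t_{2})$, relations (\ref{2.17-w}) are automatically satisfied with $\tilde{A}_{1}=A_{1}$ and $\tilde{A}_{2}=A_{2}$, so the underlying $(T)$-structure is preserved. Equation (\ref{2.18-w}) simplifies to $\tilde{B}=B+z^{2}(\log g)'(z)\,C_{1}$, and writing $\log g(z)=\sum_{n\geq 1}\gamma_{n}z^{n}$ gives $z^{2}(\log g)'(z)=\sum_{n\geq 1}n\gamma_{n}z^{n+1}$, a formal series whose $z^{0}$ and $z^{1}$ coefficients vanish while all higher coefficients can be prescribed freely. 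A recursive choice of $\gamma_{n}$ therefore absorbs the tail of $\beta(z)$, leaving $\beta(z)=c+\alpha z$ with $c=\beta(0)$ and $\alpha=\beta'(0)$.

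There is no genuine obstacle; the only mildly delicate point is confirming that the $D$- and $E$-coefficients of $B$ are automatically divisible by $z$, which falls out transparently from the $C_{2}$- and $D$-components of the flatness identity. The rest is bookkeeping of the commutators from (\ref{r4}) and a standard power-series manipulation.
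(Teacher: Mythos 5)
Your proposal is correct and follows essentially the same route as the paper: decompose $B$ in the basis $\{C_{1},C_{2},D,E\}$, use (\ref{2.11}) with $i=1$ and $i=2$ to force the form (\ref{B}) together with (\ref{b3-4}), (\ref{b-2}), and then remove the $z^{\geq 2}$ tail of the $C_{1}$-coefficient by a scalar gauge transformation $T=g(z)C_{1}$ (the paper's $T=\exp(-\sum_{k\geq 2}\tfrac{b_1^{(k)}}{k-1}z^{k-1})C_1$ is exactly your recursive choice of $\log g$). The only cosmetic difference is that the paper gets the $z$-divisibility of the $D$- and $E$-coefficients from the $z^{0}$-part $[C_{2},B^{(0)}]=0$ before writing the component equations, while you read it off directly from the $C_{2}$- and $D$-components; both are valid.
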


\begin{proof} Relation (\ref{2.11}) with $i=1$ gives $\partial_{1} B = - C_{1}$. Relation  (\ref{2.11}) with $ i=2$ gives
$[ C_{2}, B^{(0)} ]=0$ i.e. $B^{(0)}$ is a linear combination of $C_{1}$ and $C_{2}.$  
We obtain 
\begin{equation}\label{6.32}
B= (-t_1+ b_1)C_1+b_2C_2+zb_3D+zb_4E,
\end{equation}
where $b_1,b_2,b_3,b_4\in\mathbb{C}\{t_2, z]]$. 
With $B$ given by 
(\ref{6.32})  and $A_{2} = C_{2}+ z f E$, relation  (\ref{2.11})
for $i=2$ is equivalent to  
$ \partial_2b_1=0$, 
(\ref{b3-4}) and (\ref{b-2}).
It remains to show that $b_{1}$ can be chosen of the form $b_{1}(z, {t}_{2}) = c + \alpha z$,  for $c\in \mathbb{C}.$ 
Since $\partial_{2}b_{1} =0$ and  $b_1\in\mathbb{C}\{t_2, z]]$, we can write $b_1=  c + \alpha  z+\sum_{k\geq 2}b_1^{(k)} z^k$,  for  $\alpha , b_{1}^{(k)}\in \mathbb{C}$. Define
\begin{equation}\label{6.37}
T:=\mathrm{exp}(-\sum_{k\geq 2}\frac{b_1^{(k)}}{k-1} z^{k-1}) C_1.
\end{equation}
The isomorphism $T$ maps $\nabla$ to a new $(TE)$-structure which has the same underling $(T)$-structure 
$A_1 = C_{1}$,  $A_2= C_{2}+ z f E$  and
the only change in $B$ is that $b_1$ is replaced by $\tilde{b}_1 (z, {t}_{2})= c+ \alpha  z$.
\end{proof}

To simplify terminology we introduce the next definition.

\begin{defn}\label{assoc-fct} 
A formal $(TE)$-structure $\nabla$ as in Lemma 
\ref{simplif-0} is said to be in pre-normal form, determined by $(f, b_{2}, c, \alpha ).$ 
The functions $(f, b_{2})$ are called associated to $\nabla .$  
\end{defn}

 In order to find all formal $(TE)$-structures which extend the formal 
$(T)$-structures from Theorem \ref{iso-formal}, we need  
to determine their associated functions $b_{2}$, i.e. to solve  equation 
(\ref{b-2})  in the unknown function $b_{2}$, for various classes of  functions $f$, which correspond to the
various classes of formal  $(T)$-structures
from Theorem \ref{iso-formal}.  This is done in the next proposition.

\begin{prop}\label{step-1} i) The first  $(T)$-structure from Theorem \ref{iso-formal} extends to formal $(TE)$-structures, with
matrices $B$  given by
\begin{equation}\label{forma-B}
B= ( -t_{1} + c+ \alpha z) C_{1}+  ( -\frac{t_{2}}{2} + \sum_{k\geq 0} c_{k} z^{k})C_{2} -\frac{z}{4}D 
+ z(-\frac{t_{2}}{2} +\sum_{k\geq 0} c_{k} z^{k}) E,
\end{equation}
where $\alpha , c, c_{k}\in \mathbb{C}.$\

ii) The second $(T)$-structure from Theorem \ref{iso-formal} extends to formal $(TE)$-structures, with matrices $B$  given by
\begin{equation}\label{classif-b} 
B=   ( -t_{1} +c+ \alpha z) C_{1}-\frac{t_{2}}{3}C_{2} -\frac{z}{3} D -\frac{zt_{2}^{2}}{3}E,
\end{equation}
where $c, \alpha \in \mathbb{C}.$\

iii) The third $(T)$-structure from Theorem \ref{iso-formal} 
extends to formal  $(TE)$-structures with
matrices $B$ as in (\ref{B}), functions $b_{3}$ and $b_{4}$ given by (\ref{b3-4}) with $f= 0$ and 
 function  $b_{2} = \sum_{n\geq 0}b_{2}^{(n)} z^{n}$, such that  $b_{2}^{(n)}\in \mathbb{C} \{ t_{2} \}$ satisfies  
 $\partial_{2}^{3}b_{2}^{(n)}=0$, for any $n\geq 0.$\

iv)  The fourth (formal or holomorphic) $(T)$-structure from Theorem \ref{iso-formal} extends to a formal $(TE)$-structure if and only
if $P_{k} =0$, for any $k\geq 1.$ When $P_{k} =0$ for any $k\geq 1$, the extended formal $(TE)$-structures have 
matrices $B$
given by
\begin{equation}\label{classif-b-1} 
 B= (-t_{1} +c+ \alpha z) C_{1} -\frac{t_{2}}{r+2}C_{2} -\frac{z(r+1)}{2(r+2)}D -\frac{ zt_{2}^{r+1}}{r+2} E, 
\end{equation}
where  $\alpha , c \in \mathbb{C}$ (and $r\in \mathbb{Z}_{\geq 2}$).\

\end{prop}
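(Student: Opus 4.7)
The plan is to apply Lemma \ref{simplif-0} to reduce the problem to solving the scalar equation (\ref{b-2}) for $b_2 \in \mathbb{C}\{t_2,z]]$, with $f$ dictated by $A_2$; the remaining entries $b_3, b_4$ of $B$ are then given by (\ref{b3-4}), and the scalar $b_1 = -t_1 + c + \alpha z$ is free. In each of the four cases I would expand $b_2 = \sum_{n \geq 0} b_2^{(n)}(t_2) z^n$ and collect coefficients of $z^k$ in (\ref{b-2}) to obtain a recursion of ODEs in $t_2$ for the $b_2^{(n)}$, to be solved using the requirement that the $b_2^{(n)}$ be holomorphic at $t_2 = 0$.

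For case (i), $f = 1$, equation (\ref{b-2}) becomes $-\tfrac{z}{2}\partial_2^3 b_2 + 2 \partial_2 b_2 + 1 = 0$, with leading term forcing $\partial_2 b_2^{(0)} = -\tfrac{1}{2}$ and higher-order relation $\partial_2 b_2^{(k)} = \tfrac{1}{4} \partial_2^3 b_2^{(k-1)}$; an easy induction shows $\partial_2^3 b_2^{(k)} = 0$ for all $k$, so each $b_2^{(k)}$ is affine in $t_2$ and the constants of integration produce the free parameters $c_k$ in (\ref{forma-B}). For case (ii), $f = t_2$, the leading ODE $b_2^{(0)} + 2 t_2 \partial_2 b_2^{(0)} + t_2 = 0$ admits only the holomorphic solution $b_2^{(0)} = -t_2/3$ (the homogeneous solution $C t_2^{-1/2}$ is excluded), and the order $z^k$ equation $b_2^{(k)} + 2 t_2 \partial_2 b_2^{(k)} = \tfrac{1}{2} \partial_2^3 b_2^{(k-1)}$ forces $b_2^{(k)} = 0$ for $k \geq 1$ by the same exclusion, yielding (\ref{classif-b}). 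Case (iii), $f = 0$, collapses to $\partial_2^3 b_2 = 0$, giving the stated condition directly.

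The delicate case is (iv), with $f = t_2^r + \sum_{k \geq 1} P_k z^k$ and $P_k \in \mathbb{C}[t_2]_{\leq r - 2}$. The order $z^0$ equation $2 t_2^r \partial_2 b_2^{(0)} + r t_2^{r-1} b_2^{(0)} + t_2^r = 0$ has unique holomorphic solution $b_2^{(0)} = -t_2/(r+2)$. Arguing by induction on $k$, I would assume $P_1 = \cdots = P_{k-1} = 0$ and $b_2^{(1)} = \cdots = b_2^{(k-1)} = 0$; the order $z^k$ equation then simplifies to
\[
r t_2^{r-1} b_2^{(k)} + 2 t_2^r \partial_2 b_2^{(k)} = \frac{t_2 \partial_2 P_k + 2 P_k}{r + 2} + (k - 1) P_k.
\]
The decisive observation is a degree count: writing $b_2^{(k)} = \sum_j b_{k,j} t_2^j$ as a power series at $t_2 = 0$, the left-hand side equals $\sum_j (r + 2j) b_{k,j} t_2^{r-1+j}$ and therefore has vanishing Taylor coefficients in all degrees $< r - 1$, whereas the right-hand side is a polynomial in $t_2$ of degree $\leq r - 2$. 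Matching coefficients thus forces both sides to vanish identically. The vanishing of the right-hand side reads $t_2 \partial_2 P_k + (kr + 2k - r) P_k = 0$, and since $j + kr + 2k - r > 0$ for all $j \geq 0$ and $k \geq 1$, every coefficient of $P_k$ must be zero, giving $P_k = 0$; the vanishing left-hand side then forces $b_2^{(k)} = 0$, closing the induction. Reading off $b_3$ and $b_4$ from (\ref{b3-4}) produces (\ref{classif-b-1}). The main obstacle is this degree-count argument in case (iv): it is the only place in the proof where an obstruction to extending a $(T)$-structure to a $(TE)$-structure appears, and it relies essentially on the bound $\deg P_k \leq r - 2$ from Theorem \ref{iso-formal}.
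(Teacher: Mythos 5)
Your proposal is correct and follows essentially the same route as the paper: reduce via the pre-normal form of Lemma \ref{simplif-0} to the scalar equation (\ref{b-2}), expand in powers of $z$, and in case iv) use exactly the paper's valuation/degree split (terms of order $\geq r-1$ versus polynomials of degree $\leq r-2$) together with induction to force $P_{k}=0$ and $b_{2}^{(k)}=0$ for $k\geq 1$. Only a wording point in case i): the recursion $\partial_{2}b_{2}^{(k)}=\tfrac14\partial_{2}^{3}b_{2}^{(k-1)}$ actually forces $b_{2}^{(k)}$ to be \emph{constant} for $k\geq 1$ (not merely affine), which is what your final identification of the free parameters $c_{k}$ in (\ref{forma-B}) already reflects.
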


\begin{proof}  
We only prove claim iv) (which is more involved), since the other claims can be proved similarly.
Let 
\begin{equation}\label{f-t-z}
f( z, t_{2}) = t_{2}^{r} +  \sum_{k\geq 1} P_{k} (t_{2}) z^{k},
\end{equation}
where $P_{k}$ are polynomials of degree at most $r-2$. 
Equation (\ref{b-2}) with $f$ given by (\ref{f-t-z}) becomes 
\begin{align}
\nonumber &-\frac{z}{2}  \partial_{2}^{3} b_{2} + ( r t_{2}^{r-1} + \sum_{k\geq 1} \dot{P}_{k}(t_{2}) z^{k} ) b_{2}\\
\label{b-2-3}  &+ 2( t_{2}^{r} + \sum_{k\geq 1} P_{k}(t_{2}) z^{k}) \partial_{2} b_{2} +  \sum_{k\geq 1}( 1- k) P_{k}(t_{2})  z^{k}+ t_{2}^{r} =0.
\end{align}
We write $b_{2} = \sum_{k\geq 0} b_{2}^{(k)}z^{k}$ with
$b_{2}^{(k)}$ independent on $z$.
Identifying the coefficients of $z^{0}$ in (\ref{b-2-3}) we obtain
$$
r b_{2}^{(0)}  + 2t_{2} \partial_{2} b_{2}^{(0)} + t_{2} =0,
$$
which  implies 
\begin{equation}\label{n1}
b_{2}^{(0) }= -\frac{t_{2}}{r+2} . 
\end{equation}
Identifying the coefficients of $z^{1}$ in (\ref{b-2-3}) and using   (\ref{n1}) we obtain
\begin{equation}\label{ident-degree}
r t_{2}^{r-1} b_{2}^{(1)} + 2 t_{2}^{r} \partial_{2} b_{2}^{(1)} -\frac{1}{r+2} \left( \dot{P}_{1}(t_{2})  t_{2} + 2 P_{1}(t_{2}) \right) =0.
\end{equation}
The first two terms in (\ref{ident-degree})   have degree at least $r-1$ and  the last two terms have degree at most $r-2.$ We obtain  that 
(\ref{ident-degree}) 
is equivalent to
\begin{align*}
& rb_{2}^{(1)} + 2 t_{2} \partial_{2}  b_{2}^{(1)} =0\\
& \dot{P}_{1} (t_{2})t_{2} + 2 P_{1} (t_{2}) =0,
\end{align*}
which imply  $b_{2}^{(1)} =0$ and $P_{1} =0.$  
Identifying the coefficients of $z^{n}$ for $n\geq 2$ in (\ref{b-2-3}) and using 
an induction argument  we obtain that $b_{2}^{(k)} =0$ for any $k\geq 2$ and $P_{k} =0$ for any $k\geq 1.$ 
From (\ref{n1}) and (\ref{b3-4}) we obtain 
$$ 
b_{2} = -\frac{t_{2}}{r+2},\  b_{3} = -\frac{r+1}{2(r+2)},\ b_{4}= -\frac{t_{2}^{r+1}}{r+2},
$$
which implies  claim iv). 
\end{proof}

\subsection{Proof of Theorem \ref{te}}\label{step111}

The existence of a formal
isomorphism between an arbitrary  $(TE)$-structure and one from Theorem \ref{te} will be proved 
by applying to the $(TE)$-structures from Proposition \ref{step-1}   formal automorphisms of their
underlying $(T)$-structures. 
We shall proceed in two steps:  I) we start with
the $(TE)$-structures  
from Proposition \ref{step-1} i) and we obtain  the $(TE)$-structures  
from Theorem \ref{te} i); 
 II) we start with  the $(TE)$-structures from Proposition \ref{step-1} iii) and we obtain
the $(TE)$-structures  
from Theorem \ref{te} iii). 
(The $(TE)$-structures from Proposition \ref{step-1} ii) and iv) are written in 
Theorem \ref{te} ii)  in a unified way).\

\subsubsection{The first step}

The proof of the next lemma is straightforward and will be omitted. 

\begin{lem}\label{aut-lema1}  A  formal automorphism  of the $(T)$-structure
$A= C_{1}$,  $A_{2} =    C_{2} + z E$
is either a formal  gauge automorphism,  given by 
\begin{equation}\label{auto-t}
T = (\sum_{n\geq 0} \tau_{1}^{(n)} z^{n})C_{1} + (\sum_{n\geq 0} \tau_{2}^{(n)} z^{n}) C_{2} + (\sum_{n\geq 1} 
\tau_{2}^{(n-1)}z^{n}) E 
\end{equation}
where $\tau_{1}^{(n)}, \tau_{2}^{(n)} \in \mathbb{C}$  and $\tau_{1}^{(0)}\neq 0$, or  
covers the map $h(t_{1}, t_{2}) = (t_{1}, - t_{2})$ and is given by 
\begin{equation}\label{auto-t-t}
{T} = (\sum_{n\geq 0} \tau_{2}^{(n)} z^{n}) C_{2} + (\sum_{n\geq 0} \tau_{3}^{(n)} z^{n}) D - (\sum_{n\geq 1} 
\tau_{2}^{(n-1)} z^{n}) E ,
\end{equation}
where $\tau_{2}^{(n)}, \tau_{3}^{(n)}\in \mathbb{C}$ and $\tau_{3}^{(0)}\neq 0.$ 
\end{lem}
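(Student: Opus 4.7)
The plan is to use the fact that the $(T)$-structure $A_1 = C_1$, $A_2 = C_2 + zE$ satisfies the unfolding condition (take the primitive section $s_1$) and induces the $F$-manifold $\mathcal{N}_2$ via Theorem \ref{t2.4}. Any automorphism must therefore cover an $F$-manifold automorphism of $\mathcal{N}_2$, and by (\ref{tilde-f}) such a cover has the form $h(t_1, t_2) = (t_1, \lambda(t_2))$ with $\lambda(0) = 0$ and $\dot{\lambda}(0) \neq 0$. Equation (\ref{2.15}) for $i=1$ immediately reduces to $z \partial_1 \tilde{T} = 0$, so $T$ is independent of $t_1$. I then write $T = aC_1 + bC_2 + cD + dE$ with $a, b, c, d \in \mathbb{C}\{t_2, z]]$ and expand (\ref{2.15}) for $i=2$ on the basis $(C_1, C_2, D, E)$ using (\ref{r1})--(\ref{r4}); this yields a system of four scalar equations for $\tilde{a}, \tilde{b}, \tilde{c}, \tilde{d}$ in which $\dot{\lambda}$ appears as a parameter.

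Setting $z = 0$ in those four equations gives $\tilde{d}^{(0)} = 0$ (from two of them), together with $(1-\dot{\lambda})\tilde{a}^{(0)} = 0$ and $(1+\dot{\lambda})\tilde{c}^{(0)} = 0$. Since, after using $d^{(0)}=0$, one has $\det T^{(0)} = (a^{(0)})^2 - (c^{(0)})^2$, and $\mathbb{C}\{t_2\}$ is an integral domain, invertibility of $T^{(0)}$ rules out any $\lambda$ with $\dot{\lambda}\not\equiv\pm 1$: at least one of $\tilde{a}^{(0)}, \tilde{c}^{(0)}$ must be nonzero, forcing $\dot{\lambda} \equiv 1$ or $\dot{\lambda} \equiv -1$. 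Combined with $\lambda(0) = 0$, this pins $h$ down to the identity (gauge case) or to the involution $(t_1, t_2) \mapsto (t_1, -t_2)$.

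In each of the two surviving cases I substitute $\dot{\lambda} = \pm 1$ back into the system. Eliminating $a$ (respectively $c$) and $d$ in terms of $b$ (respectively $\tilde b$) produces, in both cases, the single ODE $z \partial_2^3 b = 4 \partial_2 b$. Expanding $b = \sum_{n\geq 0} b^{(n)}(t_2) z^n$ and comparing coefficients yields $(b^{(0)})' = 0$ and the recursion $(b^{(n)})' = \frac{1}{4}(b^{(n-1)})'''$ for $n\geq 1$, which by induction on $n$ forces every $b^{(n)}$ to be a constant. Feeding this back into the expressions for $a, c, d$ reproduces exactly (\ref{auto-t}) and (\ref{auto-t-t}), with the invertibility of $T^{(0)}$ translating to $\tau_1^{(0)} \neq 0$ and $\tau_3^{(0)} \neq 0$ respectively. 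The converse direction, that these formulas do satisfy (\ref{2.15}), is a routine direct verification.

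The main technical obstacle will be the bookkeeping in the matrix products $(C_2 + zE)\,\tilde{T}$ and $\tilde{T}\,(C_2 + zE)$, where the signs from $DC_2 = -C_2$ and $ED = -E$, together with the asymmetric products $C_2 E = \tfrac{1}{2}(C_1 - D)$ and $EC_2 = \tfrac{1}{2}(C_1 + D)$, must be tracked carefully to get the four scalar equations right. Once these are in hand, the pleasant coincidence that both branches $\dot{\lambda} = \pm 1$ collapse to the same third-order ODE for $b$ makes the remainder of the argument essentially uniform across the two cases.
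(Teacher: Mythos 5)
Your proposal is correct: the reduction of the covering map to $h(t_1,t_2)=(t_1,\lambda(t_2))$, the componentwise expansion of (\ref{2.15}) in the basis $(C_1,C_2,D,E)$ forcing $d^{(0)}=0$ and $\dot\lambda\equiv\pm 1$, and the collapse of both branches to $z\partial_2^3 b=4\partial_2 b$ (whose coefficientwise analysis makes all $b^{(n)}$ constant) all check out and reproduce (\ref{auto-t}) and (\ref{auto-t-t}) exactly. The paper omits this proof as ``straightforward,'' and your argument is precisely the direct computation it has in mind, in the same style as its proof of the analogous Lemma \ref{auto-last}.
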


\begin{lem}\label{stat1} Let $\nabla$ and $\tilde{\nabla}$ be two formal  
$(TE)$-structures as in Proposition \ref{step-1} i),  
with constants
$c, \alpha , c_{k}$ ($k\geq 0$) and, respectively, $\tilde{c}, \tilde{\alpha}, 
\tilde{c}_{k}$ ($k\geq 0).$ 
Then $\nabla$ and $\tilde{\nabla}$ are formally gauge isomorphic 
if and only if 
\begin{equation}\label{low-cond}
c = \tilde{c},\  \alpha = \tilde{\alpha},\ c_{0} = \tilde{c}_{0}.
\end{equation} 
In particular, $\nabla$ is formally gauge isomorphic 
to the  $(TE)$-structure  (\ref{f-1}).
\end{lem}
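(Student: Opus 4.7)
The plan is as follows. Since any formal gauge isomorphism from $\nabla$ to $\tilde\nabla$ is, in particular, a formal gauge automorphism of their common underlying $(T)$-structure $A_1=C_1$, $A_2=C_2+zE$, Lemma \ref{aut-lema1} forces the gauge $T$ to have the form (\ref{auto-t}), namely
$T = \tau_1(z)C_1 + \tau_2(z)C_2 + z\tau_2(z)E$
with $\tau_1(z),\tau_2(z)\in\mathbb{C}[[z]]$, $\tau_1^{(0)}\neq 0$, and with no $t_2$-dependence in the coefficients. It therefore remains only to impose the $B$-compatibility condition (\ref{2.18-w}).

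To do so, I would compute $BT$ and $T\tilde B$ in the basis $\{C_1,C_2,D,E\}$ using the multiplication rules (\ref{r1})--(\ref{r4}). The $-t_1 C_1$ summands cancel (since $C_1$ is central) and the $-\tfrac{z}{4}D$ summands yield identical contributions on both sides, so the $D$-component of (\ref{2.18-w}) is automatic; a direct check also shows that the $E$-component equation, after dividing by $z$, coincides with the $C_2$-component equation. The gauge equation thus reduces to the pair
\begin{align*}
 & z^2\tau_1' + [(c-\tilde c)+(\alpha-\tilde\alpha)z]\tau_1 + z(b_2-\tilde b_2)\tau_2 = 0,\\
 & z^2\tau_2' + [(c-\tilde c)+(\alpha-\tilde\alpha)z]\tau_2 + (b_2-\tilde b_2)\tau_1 + \tfrac{z\tau_2}{2} = 0,
\end{align*}
where $b_2-\tilde b_2 = \sum_{k\geq 0}(c_k-\tilde c_k)z^k$ (the $-t_2/2$ terms cancel, consistently with $T$ being independent of $t_2$).

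For the necessity of (\ref{low-cond}) I would read off low-order coefficients: the $z^0$-coefficient of the first equation gives $(c-\tilde c)\tau_1^{(0)}=0$, hence $c=\tilde c$; then the $z^0$-coefficient of the second yields $c_0=\tilde c_0$; and finally the $z^1$-coefficient of the first yields $\alpha=\tilde\alpha$. Conversely, assuming (\ref{low-cond}) the two equations become a triangular recursion: the coefficient of $z^n$ in the first determines $\tau_1^{(m)}$ for $m\geq 1$ with denominator $m$, while the coefficient of $z^n$ in the second determines $\tau_2^{(m)}$ for $m\geq 0$ with denominator $m+\tfrac12$, each in terms of previously determined $\tau_j^{(l)}$ and of the differences $c_k-\tilde c_k$. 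The main (and only) technical point is that these denominators never vanish; the half-integer shift, produced by the $\tfrac{z\tau_2}{2}$ term, is what rescues solvability at $m=0$. An arbitrary nonzero choice of $\tau_1^{(0)}$ thus propagates to a unique formal gauge isomorphism. Specializing the converse direction to the target parameters $\tilde c=c$, $\tilde\alpha=\alpha$, $\tilde c_0=c_0$ and $\tilde c_k=0$ for all $k\geq 1$ then yields a formal gauge isomorphism from $\nabla$ to (\ref{f-1}), proving the last assertion.
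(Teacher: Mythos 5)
Your proof is correct and follows essentially the same route as the paper: restrict the gauge to the form (\ref{auto-t}) via Lemma \ref{aut-lema1}, impose (\ref{2.18-w}), note that the $D$-component is automatic and the $E$-component reproduces the $C_2$-component (the paper phrases this coefficient-wise as the $E$-part of (\ref{2.18}) at level $r$ equalling the $C_2$-part at level $r-1$), read off $c=\tilde c$, $c_0=\tilde c_0$, $\alpha=\tilde\alpha$ from the low-order terms, and solve the resulting triangular recursion for $\tau_1^{(n)},\tau_2^{(n)}$, where the nonvanishing denominators $n$ and $n+\tfrac12$ are exactly the paper's relations (\ref{specific}). The only difference is presentational (two scalar equations in the series $\tau_1(z),\tau_2(z)$ versus the paper's direct coefficient identities), so nothing further is needed.
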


\begin{proof} Let $T$ be a formal gauge isomorphism 
between $\nabla$ and $\tilde{\nabla }.$ As $\nabla$ and $\tilde{\nabla}$
have the same underlying $(T)$-structure  $A_{1} = C_{1}$, $A_{2} = C_{2} + z E$, 
$T$ is a formal gauge  automorphism of this $(T)$-structure. From Lemma \ref{aut-lema1}, 
$T$ is of the form
(\ref{auto-t}), and must satisfy  relations
(\ref{2.18}), with $B$ and $\tilde{B}$ of the form (\ref{forma-B}),
with constants $c, \alpha , c_{k}$ and $\tilde{c}, \tilde{\alpha}, \tilde{c}_{k}.$

Replacing $T^{(l)}$, $B^{(l)}$ and $\tilde{B}^{(l)}$ in (\ref{2.18})  and identifying  the coefficients
of $\{ C_{1}, C_{2}, D, E\}$  we obtain, from a straightforward computation
which uses relations (\ref{r1})-(\ref{r4}),  
\begin{align}
\nonumber & c =\tilde{c},\ \alpha = \tilde{\alpha},\ c_{0} = \tilde{c}_{0};\\
\nonumber& \frac{\tau_{2}^{(0)}}{2} + \tau_{1}^{(0)} ( c_{1} - \tilde{c}_{1}) =0;\\
\nonumber & (n-1) \tau_{1}^{(n-1)} + \sum_{l=2}^{n} \tau_{2}^{(n-l)} (c_{l-1} -\tilde{c}_{l-1}) =0;\\
\label{specific}& (n-\frac{1}{2}) \tau_{2}^{(n-1)} + 
\sum_{l=1}^{n} \tau_{1}^{(n-l)} (c_{l} - \tilde{c}_{l}) =0,
\end{align}
for any  $n\geq 2.$ 
(In all. relations (\ref{2.18}) the coefficients of $D$ vanish; the  coefficients of $E$ in 
(\ref{2.18}), with $r=0,1$, vanish as well
and the coefficient of $E$ in (\ref{2.18}), with $r\geq 2$, coincides with the coefficient of $C_{2}$  
in (\ref{2.18}), with $r$ replaced by $r-1$. Thus, relations (\ref{2.18}) are equivalent to the vanishing of their coefficients
of $C_{1}$ and $C_{2}$, which leads to relations (\ref{specific})). 
In particular, if $\nabla$ and $\tilde{\nabla}$ are formally gauge  isomorphic, then 
(\ref{low-cond}) is satisfied. Conversely, assume that (\ref{low-cond}) is satisfied.
We aim to construct a formal gauge isomorphism between $\nabla$ and $\tilde{\nabla}$, i.e. to find
$\tau_{1}^{(n)}, \tau_{2}^{(n)}\in \mathbb{C}$, with $\tau_{1}^{(0)}\neq 0$, 
such that relations (\ref{specific}) hold. 
Let $\tau_{1}^{(0)}\in \mathbb{C}^{*}$ be 
arbitrary. The second relation (\ref{specific}) determines $\tau_{2}^{(0)}$ and then, $\tau_{1}^{(1)}$ is determined by the third relation
(\ref{specific}) with $n=2$:
$$
\tau_{1}^{(1)} = \tau_{2}^{(0)} (\tilde{c}_{1} - {c}_{1}). 
$$ 
Knowing $\tau_{1}^{(0)}$ and $\tau_{1}^{(1)}$, the fourth relation (\ref{specific}) with $n=2$ determines $\tau_{2}^{(1)}$:
$$
\tau_{2}^{(1)}  =  \frac{2}{3} \left( \tau_{1}^{(1) }(\tilde{c}_{1} -  {c}_{1}) + \tau_{1}^{(0)} (\tilde{c}_{2} -{c}_{2})\right ). 
$$
Repeating the argument  we obtain inductively $\tau_{1}^{(l)}$ and $\tau_{2}^{(l)}$, for all $l\geq 1.$ 
\end{proof}

\subsubsection{The second step}

We use a similar  argument for the $(TE)$-structures from  Proposition \ref{step-1} iii).  
As before, we begin by finding the automorphisms of their underlying
$(T)$-structure.

\begin{lem}\label{auto-last} 
i) Any  formal automorphism $T$  of the $(T)$-structure  $A_{1} = C_{1}$, $A_{2} = C_{2}$ covers an
automorphism $h\in \mathrm{Aut} (\mathcal N_{2})$ of the form
\begin{equation}\label{H}
h(t_{1}, t_{2}) = (t_{1}, \frac{k t_{2}}{e t_{2} + d}),  
\end{equation}
where $e\in \mathbb{C}$, 
$k, d\in \mathbb{C}^{*}$ and  
\begin{equation}\label{tilde-t-def}
\tilde{T}:= T\circ  h=  \sum_{n\geq 0} \tilde{T}^{(n)} z^{n},\  \tilde{T}^{(n)} = \tau_{1}^{(n)} C_{1} + \tau_{2}^{(n)} C_{2} + \tau_{3}^{(n)} D + \tau_{4}^{(n)} E
\end{equation}
is given by: for any $n\geq 0$, $\tau_{4}^{(n)}\in \mathbb{C}$, with
$\tau_{4}^{(0)}=0$, $\tau_{4}^{(1)}= e$, and  
\begin{align}
\nonumber \tau_{1}^{(n)}& = \frac{ t_{2}(e t_{2} + 
d-k)}{2( et_{2} + d)} \tau_{4}^{(n+1)} + (\tau_{1}^{(n)})_{0}\\
\nonumber\tau_{2}^{(n)} & = -\frac{ t_{2}^{2} k}{et_{2} + d}
\tau_{4}^{(n+2)}+ \frac{t_{2}(e t_{2} + d-k)}{et_{2} + d}
(\tau_{1}^{(n+1)})_{0}\\
\nonumber& 
+ (\tau_{2}^{(n)})_{0}-\frac{t_{2}( et_{2} + d+k)}{e t_{2} + d}(\tau_{3}^{(n+1)})_{0},\\
\label{auto-coeff-0}\tau_{3}^{(n)} & = \frac{ t_{2}(e t_{2} + d+k)}{2(e t_{2} + d)}
\tau_{4}^{(n+1)}  + (\tau_{4}^{(n)})_{0},
\end{align}
where $(\tau_{1}^{(n)})_{0}, 
(\tau_{2}^{(n)})_{0},  (\tau_{3}^{(n)})_{0} \in \mathbb{C}$ and 
$(\tau_{1}^{(0)})_{0} = \frac{1}{2} ( d+k)$, $(\tau_{3}^{(0)})_{0} = \frac{1}{2}(d-k)$.

ii)  Any  formal gauge automorphism of the $(T)$-structure  $A_{1} = C_{1}$, $A_{2} = C_{2}$ is of the form 
$$
T= (\sum_{k\geq 0} \tau_{1}^{(n)}z^{n}) C_{1} + \sum_{n\geq 0} (\tau_{2}^{(n)}z^{n}) C_{2} + 
(\sum_{n\geq 0}\tau_{3}^{(n)}z^{n}) D + (\sum_{n\geq 0}\tau_{4}^{(n)}z^{n}) E
$$
where $\tau_{1}^{(n)}\in \mathbb{C}$ with $\tau_{1}^{(0)}\neq 0$, $\tau_{2}^{(n)}, \tau_{3}^{(n)}, \tau_{4}^{(n)} \in \mathbb{C}\{ t_{2}\}$, $\tau_{2}^{(n)}$ satisfies $\partial_{2}^{3}\tau_{2}^{(n)} =0$
for any $n\geq 0$ and 
\begin{equation}\label{d-e-p}
\tau_{3}^{(n)} = -\frac{1}{2} \partial_{2} \tau_{2}^{(n-1)},\ \tau_{4}^{(n)} =-\frac{1}{2} \partial^{2}_{2} \tau_{2}^{(n-2)},
\end{equation}
with  the convention $\tau_{2}^{(n)}=0$ for $n<0.$ 
\end{lem}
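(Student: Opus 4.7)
The plan is to unpack the isomorphism equations (\ref{compat-t}) for $\tilde A_i=A_i$, specialized to $A_1=C_1=\mathrm{Id}_2$ and $A_2=C_2$, and to extract the constraints on $h$ and on $T$ order by order in $z$. Since $T$ covers an automorphism between $(T)$-structures over the same $F$-manifold $\mathcal N_2$, its base map $h$ must lie in $\mathrm{Aut}(\mathcal N_2)$, so by (\ref{tilde-f}) we have $h(t_1,t_2)=(t_1,\lambda(t_2))$ with $\lambda\in\mathrm{Aut}(\mathbb{C},0)$. The $i=1$ equation collapses to $\partial_1\tilde{T}^{(r-1)}=0$, making $\tilde T$ independent of $t_1$. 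Writing $\tilde T^{(r)}=\tau_1^{(r)}C_1+\tau_2^{(r)}C_2+\tau_3^{(r)}D+\tau_4^{(r)}E$ and using (\ref{r1})--(\ref{r3}) to compute $C_2\tilde T^{(r)}$ and $\tilde T^{(r)}C_2$, the $i=2$ equation reads $\partial_2\tilde T^{(r-1)}+\dot\lambda\,C_2\tilde T^{(r)}-\tilde T^{(r)}C_2=0$, whose projection on $\{C_1,C_2,D,E\}$ produces four scalar ODEs at each order $r$. The $E$-component is $\partial_2\tau_4^{(r-1)}=0$, so every $\tau_4^{(n)}$ is a constant in $\mathbb{C}$; the $C_1$- and $D$-components then integrate to $\tau_1^{(n)}=\tfrac{\tau_4^{(n+1)}}{2}(t_2-\lambda)+(\tau_1^{(n)})_0$ and $\tau_3^{(n)}=\tfrac{\tau_4^{(n+1)}}{2}(t_2+\lambda)+(\tau_3^{(n)})_0$.

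I expect the main obstacle to be deducing the Möbius form of $\lambda$. This comes from the $C_2$-component at $r=0$: the $C_1$- and $D$-equations at $r=0$ force $\tau_4^{(0)}=0$ (since $\dot\lambda\mp 1$ cannot simultaneously vanish), leaving $(\dot\lambda-1)\tau_1^{(0)}+(\dot\lambda+1)\tau_3^{(0)}=0$. Substituting the integrated formulas for $\tau_1^{(0)}$ and $\tau_3^{(0)}$, setting $e:=\tau_4^{(1)}$, $a:=(\tau_1^{(0)})_0$, $b:=(\tau_3^{(0)})_0$, and using the algebraic identity $(\dot\lambda-1)(t_2-\lambda)+(\dot\lambda+1)(\lambda+t_2)=2\tfrac{d}{dt_2}(t_2\lambda)$, one obtains a first-order ODE which integrates (using $\lambda(0)=0$) to $\lambda(t_2)[et_2+(a+b)]=(a-b)t_2$. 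Hence $\lambda(t_2)=\frac{kt_2}{et_2+d}$ with $k:=a-b$ and $d:=a+b$; invertibility of $T^{(0)}$ at the origin yields $\det T^{(0)}(0,0)=a^2-b^2=kd\neq 0$, forcing $k,d\in\mathbb{C}^*$ and $(\tau_1^{(0)})_0=\tfrac12(d+k)$, $(\tau_3^{(0)})_0=\tfrac12(d-k)$.

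With $\lambda$ fixed, the identities $t_2\pm\lambda=t_2(et_2+d\mp k)/(et_2+d)$ recast the formulas for $\tau_1^{(n)}$ and $\tau_3^{(n)}$ in the shape of (\ref{auto-coeff-0}). The $C_2$-component at general $r\geq 0$ integrates once (again via $\dot\lambda\,t_2+\lambda=\tfrac{d}{dt_2}(t_2\lambda)$) to give the stated formula for $\tau_2^{(n)}$, with an additional constant of integration $(\tau_2^{(n)})_0\in\mathbb{C}$; the construction is then consistent at all higher orders by the recursive nature of the system. For part ii), specialize to $h=\mathrm{Id}$, i.e.\ $\lambda(t_2)=t_2$ and $\dot\lambda\equiv 1$: the $C_1$- and $D$-equations reduce to $\partial_2\tau_1^{(r-1)}=0$ and $\partial_2\tau_3^{(r-1)}=\tau_4^{(r)}$, so $\tau_1^{(n)}\in\mathbb{C}$; the $C_2$-equation reduces to $\tau_3^{(r)}=-\tfrac12\partial_2\tau_2^{(r-1)}$, which combined with the $D$-equation gives $\tau_4^{(r)}=-\tfrac12\partial_2^2\tau_2^{(r-2)}$, recovering (\ref{d-e-p}); finally the $E$-equation $\partial_2\tau_4^{(r-1)}=0$ becomes $\partial_2^3\tau_2^{(n)}=0$ for all $n\geq 0$, yielding the stated description of the formal gauge automorphisms.
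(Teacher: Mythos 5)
Your proposal is correct and follows essentially the same route as the paper: decompose $\tilde T^{(n)}$ in the basis $\{C_1,C_2,D,E\}$, reduce (\ref{compat-t}) to the four scalar equations (constancy of $\tau_4^{(n)}$ with $\tau_4^{(0)}=0$, and the integrable equations for $\tau_1^{(n)},\tau_2^{(n)},\tau_3^{(n)}$), use the order-zero $C_2$-relation together with the integrated forms of $\tau_1^{(0)},\tau_3^{(0)}$ to get the ODE forcing the M\"obius form of $\lambda$, and pin down $k,d\in\mathbb{C}^*$ and $(\tau_1^{(0)})_0,(\tau_3^{(0)})_0$ from invertibility of $T^{(0)}$ at the origin, with part ii) obtained by setting $\dot\lambda\equiv 1$. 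The only cosmetic difference is that you integrate the constraint directly via $\frac{d}{dt_2}(t_2\lambda)$ instead of first writing the linear ODE $\dot\lambda=\frac{-e\lambda+k}{et_2+d}$ as the paper does, which is the same computation.
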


\begin{proof} i)  Let  $T$ be a formal automorphism of
the $(T)$-structure $A_{1} = C_{1}$, $A_{2} = C_{2}$ and 
$h(t_{1}, t_{2}) = (t_{1}, \lambda (t_{2}))$
the automorphism  of $\mathcal N_{2}$ covered by $T$.   
Let  $\tau_{1}^{(n)}$, $\tau_{2}^{(n)}$, $\tau_{3}^{(n)}$,  $\tau_{4}^{(n)}$ 
be the functions defined by
(\ref{tilde-t-def}). 
From relation 
(\ref{compat-t}) with $i=1$ and $A_{1} = \tilde{A}_{1} = C_{1}$, we
obtain that they are independent on $t_{1}.$  
Relation (\ref{compat-t}) with $i=2$ is 
\begin{equation}\label{explicit-2.17}
\partial_{2} \tilde{T}^{(n-1)} + \dot{\lambda}C_{2} \tilde{T}^{(n)} -\tilde{T}^{(n)} C_{2}=0,\ n\geq 0.
\end{equation} 

Using relations  (\ref{r1})-(\ref{r4})  we obtain that (\ref{explicit-2.17})  is equivalent to
\begin{equation}\label{dot-lambda}
\tau_{4}^{(0)} =0,\ \dot{\lambda} =\frac{ \tau_{1}^{(0)} - \tau_{3}^{(0)}}{\tau_{1}^{(0)} + \tau_{3}^{(0)}},
\end{equation}  
$\tau_{4}^{(n)}\in \mathbb{C}$ 
(for $n\geq 1$) and, for any $n\geq 0$, 
\begin{align}
\nonumber& \partial_{2} \tau_{1}^{(n)} = (\frac{1-\dot{\lambda}}{2}) \tau_{4}^{(n+1)},\\
\nonumber& \partial_{2} \tau_{2}^{(n)} = (1- \dot{\lambda}) \tau_{1}^{(n+1)} - ( 1+ \dot{\lambda }) \tau_{3}^{(n+1)},\\ 
\label{derivate}& \partial_{2} \tau_{3}^{(n)} =  (\frac{1+ \dot{\lambda}}{2}) \tau_{4}^{(n+1)}.
\end{align}
(Since $\tilde{T}^{(0)}$ is invertible and $\tau_{4}^{(0)} =0$, we obtain that $\tau_{1}^{(0)} - \tau_{3}^{(0)}$, 
$\tau_{1}^{(0)} + \tau_{3}^{(0)}$
are units in $\mathbb{C} \{ t_{2} \}$). 
Using that $\tau_{4}^{(n)}$ are constant, we obtain from the first and third relation (\ref{derivate})  that $\tau_{1}^{(n)}$ and $\tau_{3}^{(n)}$ are given by
\begin{align}
\nonumber \tau_{1}^{(n)} & = \frac{\tau_{4}^{(n+1)}}{2} (t_{2} - \lambda ) + (\tau_{1}^{(n)})_{0}\\
\label{k-d} \tau_{3}^{(n)} &= \frac{\tau_{4}^{(n+1)}}{2} (t_{2} +\lambda ) + (\tau_{3}^{(n)})_{0},
\end{align}
where $(\tau_{1}^{(n)})_{0}, (\tau_{3}^{(n)})_{0}\in \mathbb{C}$.  From (\ref{k-d}) and  the second relation (\ref{derivate}), we obtain 
$$
\tau_{2}^{(n)} = - \tau_{4}^{(n+2)} t_{2} \lambda - ( (\tau_{1}^{(n+1)})_{0} + (\tau_{3}^{(n+1)})_{0}) \lambda + ( (\tau_{1}^{(n+1)})_{0} - (\tau_{3}^{(n+1)})_{0}) t_{2} + (\tau_{2}^{(n)})_{0},
$$
where $(\tau_{2}^{(n)})_{0}\in \mathbb{C}.$ Replacing the expressions of $\tau_{1}^{(0)}$ and $\tau_{3}^{(0)}$ provided by
(\ref{k-d}) in the second relation (\ref{dot-lambda}) we obtain that $\lambda$ satisfies the differential equation
$$
\dot{\lambda} =\frac{ - \tau_{4}^{(1)}\lambda +  k}{ \tau_{4}^{(1)} t_{2} + d}.
$$
Let $e:= \tau_{4}^{(1)}.$  
Solving this differential equation for $\lambda$ we obtain (\ref{H}). Finally, replacing $\lambda$ 
in the above expressions for $\tau_{1}^{(n)}$, $\tau_{2}^{(n)}$ and $\tau_{3}^{(n)}$ we conclude the proof of  claim i).\

Claim ii) follows from relations (\ref{derivate}) with $\dot{\lambda }=1.$ The condition $\partial_{2}^{3}\tau_{2}^{(n)} =0$
follows from $\partial_{2}^{2} \tau_{3}^{(n)} =0$  (from the third relation (\ref{derivate}) and  $\tau_{4}^{(n+1)}\in \mathbb{C}$).  
\end{proof}

\begin{cor}\label{initial-cazuri} i) Consider two  formal $(TE)$-structures $\nabla$ and $\tilde{\nabla}$
as in Proposition \ref{step-1} iii), with associated functions $b_{2}$ and $\tilde{b}_{2}$ respectively. 
Assume that there is a formal isomorphism $T$ between $\nabla$ and $\tilde{\nabla}$, given by Lemma \ref{auto-last} i).
In the notation of that lemma, 
\begin{equation}\label{conformal}
\tilde{b}_{2}^{(0)}(t_{2}) = b_{2}^{(0)}( \frac{k t_{2}}{ et_{2} + d}) \frac{( e t_{2} + d)^{2}}{kd}.
\end{equation}
In particular, $b_{2}^{(0)}$ is a formal gauge invariant of $\nabla$.\

ii) Let $\nabla$ be a formal $(TE)$-structure as in Proposition \ref{step-1} iii).  There is a formal
isomorphism which maps $\nabla$ to another $(TE)$-structure 
as in Proposition \ref{step-1} iii), with  associated function 
$\tilde{b}_{2}$, such that $\tilde{b}_{2}^{(0)}$  is of one of the following forms:
\begin{equation}\label{form-b2-0}
\tilde{b}_{2}^{(0)} =0,\ \tilde{b}_{2}^{(0)} = 1,\  \tilde{b}_{2}^{(0)} = \lambda t_{2} + 1,\ \tilde{ b}_{2}^{(0)} = \beta t_{2},\   
\tilde{b}_{2}^{(0)} = t_{2}^{2},
\end{equation}
where $\lambda \in \mathbb{C}$ and $\beta \in \mathbb{C}^{*}.$ 
\end{cor}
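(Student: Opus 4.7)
The plan is first to derive the transformation formula (\ref{conformal}) by reading off the $z^{0}$-coefficient of the isomorphism equation (\ref{compat-te}) relating $B$ to $\tilde{B}$, and then to use (\ref{conformal}) to classify, by a $\mathbb{P}^{1}$ orbit analysis, the possible shapes of $b_{2}^{(0)}$ modulo formal isomorphisms of the underlying $(T)$-structure $A_{1}=C_{1}$, $A_{2}=C_{2}$.

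For part i), Lemma \ref{simplif-0} applied with $f=0$ forces $B^{(0)} = -t_{1} C_{1} + b_{2}^{(0)}(t_{2})C_{2}$ and similarly for $\tilde{B}^{(0)}$. The relation (\ref{compat-te}) at $r=0$, after the $(-t_{1})$-terms cancel, reduces to
\begin{equation*}
b_{2}^{(0)}(\lambda(t_{2}))\, C_{2}\tilde{T}^{(0)} = \tilde{b}_{2}^{(0)}(t_{2})\, \tilde{T}^{(0)} C_{2},
\end{equation*}
where $\lambda(t_{2}) = \frac{kt_{2}}{et_{2}+d}$. Using $\tau_{4}^{(0)} = 0$ and the multiplication rules (\ref{r1})--(\ref{r4}), a direct computation gives $C_{2}\tilde{T}^{(0)} = (\tau_{1}^{(0)} + \tau_{3}^{(0)})C_{2}$ and $\tilde{T}^{(0)}C_{2} = (\tau_{1}^{(0)} - \tau_{3}^{(0)})C_{2}$. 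The explicit formulas in Lemma \ref{auto-last} i) then evaluate to $\tau_{1}^{(0)} + \tau_{3}^{(0)} = et_{2} + d$ and $\tau_{1}^{(0)} - \tau_{3}^{(0)} = \frac{kd}{et_{2}+d}$, whose ratio is exactly $\frac{(et_{2}+d)^{2}}{kd}$; this proves (\ref{conformal}). Gauge invariance is the case $h = \mathrm{Id}$, which forces $e = 0$, $k = d$, under which the ratio is $1$.

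For part ii), I view (\ref{conformal}) as a two-parameter action (with parameters $(e, k, d)$ modulo simultaneous scaling) on the three-dimensional space of polynomials $P(t_{2}) = b_{2}^{(0)}(t_{2})$ of degree $\leq 2$ (the degree bound coming from $\partial_{2}^{3}b_{2}^{(0)} = 0$ in Proposition \ref{step-1} iii)). This action is the natural one of the M\"{o}bius group fixing $0 \in \mathbb{P}^{1}$ on degree-two divisors, combined with the density factor $\frac{(et_{2}+d)^{2}}{kd}$. Since $\tilde{P}(0) = \frac{d}{k}P(0)$ and, when $P(0) = 0$, $\tilde{P}'(0) = P'(0)$, I split by the configuration of $\mathrm{div}(P)$ relative to $\{0, \infty\}$: (a) $P = 0$ gives $\tilde{b}_{2}^{(0)} = 0$; (b) $P = at_{2}^{2}$ with $a \neq 0$ rescales via $k/d = 1/a$ to $t_{2}^{2}$; (c) $P = \beta t_{2} + at_{2}^{2}$ with $\beta \neq 0$ is brought to $\beta t_{2}$ by the choice $e = -ak/\beta$, with $\beta = P'(0)$ then invariant; (d) $P(0) \neq 0$ with a double root $r \neq 0$ is brought to the constant $1$ via $k = re$ followed by $e = ard$; and (e) $P(0) \neq 0$ with two distinct non-zero roots $r_{1}, r_{2}$ is brought to $\lambda t_{2} + 1$ with $\lambda = a(r_{2} - r_{1}) \in \mathbb{C}^{*}$ by sending $r_{1}$ to $\infty$ (i.e.\ $k = r_{1}e$) and then normalizing $\tilde{P}(0) = 1$. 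Since any formal isomorphism of the underlying $(T)$-structure sends a $(TE)$-structure of the form in Proposition \ref{step-1} iii) to another such structure, this exhausts the list (\ref{form-b2-0}).

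The main obstacle is the clean derivation of (\ref{conformal}), which requires careful bookkeeping of the matrix algebra (\ref{r1})--(\ref{r4}) and the explicit structure of $\tilde{T}^{(0)}$ from Lemma \ref{auto-last} i); once (\ref{conformal}) is in hand, the orbit analysis in part ii) is a routine M\"{o}bius-geometric computation on $\mathbb{P}^{1}$.
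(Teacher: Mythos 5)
Your proposal is correct and follows essentially the same route as the paper: part i) is obtained by identifying the $z^{0}$-coefficient of the isomorphism relation (\ref{2.16}) (equivalently (\ref{compat-te}) at $r=0$) with $\tilde{T}^{(0)}$ from Lemma \ref{auto-last} i), and part ii) is precisely the paper's reduction via the formula (\ref{suitable-choices}), your M\"obius-orbit case analysis just making the paper's ``suitable choices'' of $k,d,e$ explicit. Two cosmetic points: the $C_{1}$-component of the $z^{0}$ relation also carries the constants $c,\tilde{c}$ (invertibility of $T^{(0)}$ forces $c=\tilde{c}$ before the $C_{2}$-component yields (\ref{conformal})), and your cases (d)--(e) should be read projectively (roots on $\mathbb{P}^{1}$), since when the leading coefficient vanishes but $b_{2}^{(0)}(0)\neq 0$ (i.e.\ $b_{2}^{(0)}$ constant or linear) your explicit recipes degenerate, although the reduction to $1$ or $\lambda t_{2}+1$ is then immediate from (\ref{suitable-choices}) with $e=0$.
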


\begin{proof}  i) Relation (\ref{conformal}) follows by identifying  the coefficients of $z^{0}$ in relation 
(\ref{2.16}) with $B$, $\tilde{B}$ as in Proposition \ref{step-1} iii)  and $\tilde{T}$ given in Lemma \ref{auto-last} i).\

ii) Let $b_{2}$ be the associated function of $\nabla .$ Since $\partial_{2}^{3}b_{2}^{(0)} =0$ and $b_{2}^{(0)}$ is independent on $t_{1}$, we can write
$b_{2}^{(0)} = a t_{2}^{2} + b t_{2} + c$ for $a, b, c\in \mathbb{C}.$ 
Let $T$ be any formal automorphism of the $(T)$-structure $A_{1}=C_{1}$, $A_{2}=C_{2}$, as in Lemma 
\ref{auto-last} i),
and $\tilde{\nabla}:= T\cdot \nabla$, with associated function $\tilde{b}_{2}.$    From (\ref{conformal}),
\begin{equation}\label{suitable-choices}
\tilde{b}_{2}^{(0)}(t_{2})  = ( a \frac{k}{d} + b\frac{e}{d}+ c\frac{ e^{2}}{kd}) t_{2}^{2} 
+ (b + 2c \frac{e}{k}) t_{2} + c\frac{d}{k}.
\end{equation} 
Suitable choices of $k, d\in \mathbb{C}^{*}$ and $e\in \mathbb{C}$ in (\ref{suitable-choices}) 
show that $\tilde{b}_{2}^{(0)}$ can be reduced to one of the forms  (\ref{form-b2-0}). Any formal automorphism
of the $(T)$-structure  $A_{1}=C_{1}$, $A_{2}= C_{2}$, as in Lemma 
\ref{auto-last} i), with such constants $k$, $d$ and $e$, maps $\nabla$ to a formal
$(TE)$-structure with the required property.
\end{proof}

\begin{cor}\label{first-formal-b2}
i) Two formal  $(TE)$-structures $\nabla$ and $\tilde{\nabla}$ as in Proposition \ref{step-1} iii), 
with associated functions $b_{2}$ and $\tilde{b}_{2}$ respectively, such that $b_{2}^{(0)}$ and $\tilde{b}_{2}^{(0)}$
are distinct,  of the form  (\ref{form-b2-0}), are formally non-isomorphic, unless 
\begin{equation}\label{distincte-iso}
b_{2}^{(0)} = \lambda t_{2} +1,\ \tilde{b}_{2}^{(0)} = -\lambda t_{2} +1,\ \lambda \in \mathbb{C}^{*}.
\end{equation}
ii) A formal $(TE)$-structure $\nabla$ 
as in Proposition \ref{step-1} iii),  with associated  function $b_{2}$ such that  $b_{2}^{(0)} = \lambda t_{2}+1$, where $\lambda \in \mathbb{C}^{*}$, 
can be mapped 
by a formal   isomorphism to another $(TE)$-structure as in Proposition \ref{step-1} iii), 
with associated function $\tilde{b}_{2}$, 
such that  $\tilde{b}_{2}^{(0)} = - \lambda t_{2}+ 1.$   
\end{cor}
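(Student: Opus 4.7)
The plan is to base everything on the explicit transformation formula (\ref{suitable-choices}) from Corollary \ref{initial-cazuri} ii): writing $b_2^{(0)} = at_2^2 + bt_2 + c$, a formal isomorphism parametrised by $(k,d,e)$ as in Lemma \ref{auto-last} i) sends this to
\[
\tilde{b}_2^{(0)} = \left(a\tfrac{k}{d} + b\tfrac{e}{d} + c\tfrac{e^2}{kd}\right) t_2^2 + \left(b + 2c\tfrac{e}{k}\right) t_2 + c\tfrac{d}{k},
\]
with $k,d\in\mathbb{C}^*$ and $e\in\mathbb{C}$ free. Both parts of the corollary then reduce to finite computations with this formula.

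For part i), I would run through the pairs of distinct normal forms in (\ref{form-b2-0}) and show that the formula cannot interpolate between them except in the claimed case. Two immediate observations eliminate most pairs. First, the constant term $c\,d/k$ vanishes precisely when $c=0$, so the three forms $0$, $\beta t_2$, $t_2^2$ (the $c=0$ group) cannot be related to $1$ or $\lambda t_2+1$ (the $c\neq 0$ group). Second, inside the first group the linear coefficient of $\tilde{b}_2^{(0)}$ is $b$, so $\beta t_2$ maps only to $\beta t_2$ and the remaining normal forms are distinguished by their $t_2^2$-coefficient. Inside the second group, starting from $1$ (so $b=0$), requiring the image to be another normal form forces $e=0$ (to kill the linear and quadratic terms together), giving $\tilde{b}_2^{(0)}=d/k$, hence still the constant $1$.

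The heart of part i) is the remaining case $b_2^{(0)} = \lambda t_2 + 1$. Matching the constant gives $d=k$; vanishing of the induced $t_2^2$-coefficient becomes $e(e+\lambda k)=0$. The root $e=0$ returns $\lambda t_2+1$ unchanged, while $e=-\lambda k$ (available precisely when $\lambda\neq 0$) yields linear coefficient $\lambda + 2e/k = -\lambda$, so $\tilde{b}_2^{(0)} = -\lambda t_2 + 1$. This identifies the unique exceptional pair and closes part i).

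Part ii) is then almost free: I would exhibit the isomorphism by taking $k=d=1$, $e=-\lambda$ in Lemma \ref{auto-last} i). The covering biholomorphism $h(t_1,t_2) = (t_1, t_2/(1-\lambda t_2))$ lies in $\mathrm{Aut}(\mathcal{N}_2)$ since $1-\lambda t_2$ is a unit at the origin, so the associated formal automorphism $T$ of the $(T)$-structure $A_1=C_1$, $A_2=C_2$ exists. Applying $T$ to $\nabla$ produces a formal $(TE)$-structure $\tilde{\nabla}$ over the same underlying $(T)$-structure, hence of the form in Proposition \ref{step-1} iii), and the transformation formula above computes $\tilde{b}_2^{(0)} = -\lambda t_2 + 1$. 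I do not foresee any substantive obstacle; the only mildly delicate point is being thorough enough in the case-by-case inspection of part i) to confirm that every other distinct pair is genuinely dismissed by comparing the three coefficients produced by (\ref{suitable-choices}).
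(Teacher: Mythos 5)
Your proposal is correct and takes essentially the same route as the paper: part i) is exactly the solvability analysis of the system (\ref{system-abc}) coming from (\ref{suitable-choices}) (you simply carry out explicitly the case-by-case check that the paper only asserts), and part ii) exhibits a concrete automorphism from Lemma \ref{auto-last} i). Your choice $k=d=1$, $e=-\lambda$ is the one consistent with (\ref{suitable-choices}) as stated and correctly sends $\lambda t_{2}+1$ to $-\lambda t_{2}+1$; the paper's proof writes $e=\lambda$, which realizes the same exceptional pair in the opposite direction, so the difference is immaterial.
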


\begin{proof} i)  We  consider two $(TE)$-structures 
$\nabla$ and $\tilde{\nabla}$ as in Proposition \ref{step-1} iii), with associated functions $b_{2}$ and $\tilde{b}_{2}$, but such that $b_{2}^{(0)}$
and $\tilde{b}_{2}^{(0)}$ are not necessarily of the form (\ref{form-b2-0}). As 
$\partial_{2}^{3}b_{2}^{(0)}=\partial_{2}^{3}\tilde{b}_{2}^{(0)} =0$,  
\begin{equation}
b_{2}^{(0)} = a t_{2}^{2} + b t_{2} +c,\ \tilde{b}_{2}^{(0)} = \tilde{a}t_{2}^{2} + \tilde{b} t_{2} +\tilde{ c},
\end{equation}
for $a, b, c, \tilde{a}, \tilde{b}, \tilde{c} \in \mathbb{C}$. 
From (\ref{suitable-choices}),  if there is a formal isomorphism between $\nabla$ and
$\tilde{\nabla}$ then the system
\begin{equation}\label{system-abc}
a \frac{k}{d} + b\frac{e}{d}+ c\frac{e^{2}}{kd}=\tilde{a},\  
b + 2c \frac{e}{k}=\tilde{b},\  c\frac{d}{k}= \tilde{c}
\end{equation}
in the unknown constants $k, d\in \mathbb{C}^{*}$ and  $e\in \mathbb{C}$,   has a solution. 
When $b_{2}^{(0)}$ and $\tilde{b}_{2}^{(0)}$ are distinct,  of the form (\ref{form-b2-0}), a solution 
of (\ref{system-abc})  exists  only when
$b_{2}^{(0)}$ and $\tilde{b}_{2}^{(0)}$ are of the form (\ref{distincte-iso}). \

ii) Consider the automorphism  given by  Lemma \ref{auto-last} i), 
with $k = d =1$, 
$e=\lambda$,  $\tau_{4}^{(n)} =0$ for any $n\neq 1$,  
$(\tau_{1}^{(n)})_{0} =(\tau_{3}^{(n)})_{0}=0$ for any $n\geq 1$ and $(\tau_{2}^{(n)})_{0} =0$ for any $n\geq 0.$ 
It maps $\nabla$ to a $(TE)$-structure $\tilde{\nabla}$ with associated function $\tilde{b}_{2}$
and   $\tilde{b}_{2}^{(0)} = - \lambda t_{2} +1$.
\end{proof}

\begin{lem}\label{lemm1} Let $\nabla$ be a formal $(TE)$-structure  as in  Proposition \ref{step-1} iii).  Then 
$\nabla$ is formally isomorphic to  a $(TE)$-structure
with underlying $(T)$-structure $A_{1}=C_{1}$, $A_{2}=C_{2}$ and whose matrix $B$ either belongs to
the first five  lines in (\ref{b-2-0}) or is of one of the forms 
\begin{align}
\nonumber B  =&  ( - t_{1} + c +\alpha z) C_{1} + t_{2} (\lambda +\gamma t_{2} z^{\lambda} )C_{2}
-\frac{z}{2} ( \lambda + 1 +2\gamma t_{2} z^{\lambda})D\\
\nonumber &  -\gamma z^{\lambda +2} E,\ \lambda \in
 \mathbb{Z}_{\geq 1}\\
\label{pre-reduced} B= & ( -t_{1} + c+\alpha z) C_{1} +(\lambda t_{2} +\gamma z^{-\lambda}) C_{2}
-\frac{z}{2}(\lambda +1)D,\ \lambda \in {\mathbb Z}_{\leq -1},
\end{align}
where $c, \gamma \in \mathbb{C}.$
\end{lem}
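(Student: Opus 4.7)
The plan is a two-step normalization of the matrix
$$
B = (-t_1+c+\alpha z)C_1 + b_2 C_2 + zb_3 D + zb_4 E,
$$
where $b_2 = \sum_{n\geq 0} b_2^{(n)} z^n$ with each $b_2^{(n)}\in\mathbb{C}[t_2]_{\leq 2}$, and $b_3,b_4$ are determined by $b_2$ via (\ref{b3-4}) with $f=0$. First I reduce $b_2^{(0)}$ using a general formal isomorphism; then I apply formal gauge automorphisms of the underlying $(T)$-structure to kill as many higher coefficients $b_2^{(n)}$ ($n\geq 1$) as possible. Corollary \ref{initial-cazuri} ii) immediately brings $b_2^{(0)}$ to one of the five shapes in (\ref{form-b2-0}); and Corollary \ref{first-formal-b2} ii) additionally allows the flip $\lambda\mapsto -\lambda$ in the form $\lambda t_2+1$, so when $\lambda\in\mathbb{Z}\setminus\{0\}$ I may assume $\lambda\in\mathbb{Z}_{\geq 1}$.

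For the second step I use Lemma \ref{auto-last} ii): a formal gauge automorphism of $A_1=C_1$, $A_2=C_2$ is a series $T=\sum_{n\geq 0} T^{(n)}z^n$ with $T^{(n)}=\tau_1^{(n)}C_1+\tau_2^{(n)}C_2+\tau_3^{(n)}D+\tau_4^{(n)}E$, parametrised by $\tau_1^{(n)}\in\mathbb{C}$ and $\tau_2^{(n)}\in\mathbb{C}[t_2]_{\leq 2}$, with $\tau_3^{(n)}=-\tfrac{1}{2}\partial_2\tau_2^{(n-1)}$, $\tau_4^{(n)}=-\tfrac{1}{2}\partial_2^2\tau_2^{(n-2)}$. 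Expanding the gauge equation (\ref{2.18}) at order $r\geq 1$ and using (\ref{r1})--(\ref{r4}), the $C_1$-projection fixes $\tilde c=c$, $\tilde\alpha=\alpha$ and consumes the constants $\tau_1^{(n)}$; the $D$- and $E$-projections are automatically consistent with the $C_2$-projection through (\ref{b3-4}); and the $C_2$-projection yields the key recursion
$$
\tau_1^{(0)}\bigl(\tilde b_2^{(r)}-b_2^{(r)}\bigr) = L_r\bigl(\tau_2^{(r-1)}\bigr) + R_r,\quad L_r(\tau):=(\partial_2 b_2^{(0)}+r)\,\tau - b_2^{(0)}\,\partial_2\tau,
$$
where $R_r$ depends only on data fixed at strictly earlier orders. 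The coefficient $\partial_2 b_2^{(0)}+r$ arises as the sum $(r-1)+(\partial_2 b_2^{(0)}+1)$, coming from $(r-1)T^{(r-1)}$ and the commutator $B^{(1)}T^{(r-1)}-T^{(r-1)}\tilde B^{(1)}$, while the term $-b_2^{(0)}\partial_2$ comes from $[B^{(0)},T^{(r)}]$ through $\tau_3^{(r)}$.

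The operator $L_r$ preserves $\mathbb{C}[t_2]_{\leq 2}$, and a direct computation in the basis $\{1,t_2,t_2^2\}$ gives $\det L_r=r(r^2-\mu^2)$, where $\mu:=\partial_2 b_2^{(0)}(0)\in\{0,\lambda,\beta\}$. For $b_2^{(0)}\in\{0,1,t_2^2\}$ and for $b_2^{(0)}=\lambda t_2+1$ or $\beta t_2$ with $\mu\notin\mathbb{Z}\setminus\{0\}$, $L_r$ is invertible for all $r\geq 1$; inductively solving $\tilde b_2^{(r)}=0$ yields the first four lines of (\ref{b-2-0}). In the resonant cases $\mu=\lambda\in\mathbb{Z}_{\geq 1}$ with $b_2^{(0)}=\lambda t_2+1$, or $\mu=\lambda\in\mathbb{Z}\setminus\{0\}$ with $b_2^{(0)}=\lambda t_2$, $L_r$ fails surjectivity at exactly $r=|\mu|$ with one-dimensional cokernel spanned by $t_2^2$ if $\mu>0$ and by $1$ if $\mu<0$. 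This leaves one irremovable parameter $\gamma$ at that order, producing line 5 of (\ref{b-2-0}) and the two forms of (\ref{pre-reduced}); the accompanying $D$- and $E$-coefficients $-\frac{z}{2}(\lambda+1+2\gamma t_2 z^\lambda)D$ and $-\gamma z^{\lambda+2}E$ are then forced by (\ref{b3-4}). The main obstacle is the careful bookkeeping of $R_r$: one must verify that at the resonant order its cokernel-component leaves $\gamma$ as a genuine new free parameter (not fixed by earlier choices), and that at all subsequent orders the recursion can still be continued once the $\gamma$-term has been installed in $b_2^{(|\mu|)}$.
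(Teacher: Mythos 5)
Your proposal is correct and follows essentially the same route as the paper: after reducing $b_{2}^{(0)}$ via Corollaries \ref{initial-cazuri} ii) and \ref{first-formal-b2} ii), you run the same order-by-order gauge recursion from Lemma \ref{auto-last} ii), with the $D$- and $E$-projections redundant and solvability governed by the operator $(\partial_{2}b_{2}^{(0)}+r)\tau-b_{2}^{(0)}\partial_{2}\tau$ on $\mathbb{C}[t_{2}]_{\leq 2}$, whose resonances at $r=|\lambda|$ produce the $\gamma$-terms. Your determinant/cokernel computation $\det L_{r}=r(r^{2}-\mu^{2})$ is just a repackaging of the paper's appendix Lemma \ref{third-der}, so the two arguments coincide in substance.
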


\begin{proof} Let $\nabla$, $\tilde{\nabla}$ be two  formal   $(TE)$-structures
as in  Proposition \ref{step-1} iii),  
with constants $c$, $\alpha$ and associated function $b_{2} = \sum_{n\geq 0} b_{2}^{(n)} z^{n}$,  
respectively constants $\tilde{c}$, $\tilde{\alpha}$ and associated  function
$\tilde{b}_{2}=\sum_{n\geq 0} \tilde{b}_{2}^{ (n)} z^{n}$. Recall that
$b_{2}, \tilde{b}_{2}\in \mathbb{C}\{ t_{2}, z]]$ satisfy 
$\partial_{2}^{3}b_{2}=\partial_{2}^{3}\tilde{b}_{2}=0.$ 
We determine conditions on $b_{2}^{(n)}$ and $\tilde{b}_{2}^{(n)}$ such that
$\nabla$ and $\tilde{\nabla}$ are formally gauge isomorphic. 
This happens  if and only if there is a formal gauge automorphism 
$T$ 
of their underling  $(T)$-structure $A_{1} = C_{1}$, $A_{2} = C_{2}$,
such that
(\ref{2.18}), with matrices $B$ and $\tilde{B}$ of $\nabla$ and $\tilde{\nabla}$, 
is satisfied. The automorphism  $T$ is given by   Lemma \ref{auto-last} ii). 
Relation (\ref{2.18})  for $r=0$ is equivalent to $b_{2}^{(0)} =\tilde{b}_{2}^{(0)}$ 
(which we already know, from 
Corollary \ref{initial-cazuri})  
and
$c =\tilde{c}.$ For $r=1$ it is equivalent to $\alpha = \tilde{\alpha}$ 
(by identifying the coefficients of $C_{1}$) 
together with 
\begin{equation}\label{b2-0}
b_{2}^{(0)} \partial_{2} \tau_{2}^{(0)} - ( \partial_{2} b_{2}^{(0)}  + 1) \tau_{2}^{(0)} + \tau_{1}^{(0)} ( \tilde{b}_{2}^{(1)} - b_{2}^{(1)}) =0
\end{equation} 
(by identifying the coefficients of $C_{2}$). The coefficients of $D$ and $E$ give no relations
in (\ref{2.18}) with $r=1$.

We now consider relation (\ref{2.18}) with $r= n\geq 2.$ 
Identifying the coefficients of 
$C_{1}$ in this relation we obtain  
\begin{align}
\nonumber& (n-1) \tau_{1}^{(n-1)}-\frac{1}{4} \sum_{l=1}^{n} \partial_{2}^{2} \tau_{2}^{(n-l-2)} (b_{2}^{(l)} -\tilde{b}_{2}^{(l)} )\\ 
\nonumber&+\frac{1}{4} \sum_{l=2}^{n-1} \partial_{2} \tau_{2}^{(n-l-1)} \partial_{2} (b_{2}^{(l-1)} -\tilde{b}_{2}^{(l-1)}) \\
\label{0c1} &-\frac{1}{4}\sum_{l=2}^{n-1} \tau_{2}^{(n-l-1)} \partial_{2}^{2} (b_{2}^{(l-1)} -\tilde{b}_{2}^{(l-1)})=0
\end{align}
(with  the convention $\tau_{2}^{(l)}: =0$ for $l\in \mathbb{Z}_{\leq -1}$). 
Identifying the coefficients of $C_{2}$ in the same relation we obtain
\begin{align}
\nonumber& n  \tau_{2}^{(n-1)} - b_{2}^{(0)} \partial_{2} \tau_{2}^{(n-1)}+ \tau_{2}^{(n-1)} \partial_{2} b_{2}^{(0)}
+\sum_{l=1}^{n} \tau_{1}^{(n-l)} (b_{2}^{(l)} -\tilde{b}_{2}^{(l)})\\
\label{0c2}&-\frac{1}{2} \sum_{l=1}^{n} \partial_{2}\tau_{2}^{(n-l-1)} (b_{2}^{(l)} +\tilde{b}_{2}^{(l)}) 
+\frac{1}{2}\sum_{l=1}^{n-1} \tau_{2}^{(n-l-1)} \partial_{2} (b_{2}^{(l)} +\tilde{b}_{2}^{(l)}   ) =0 .
\end{align}
Identifying the coefficients of $D$ in the same relation we obtain:
\begin{align}
\nonumber&(n-1) \partial_{2} \tau_{2}^{(n-2)} -b_{2}^{(0)} \partial_{2}^{2} \tau_{2}^{(n-2)} -\frac{1}{2}
\sum_{l=1}^{n} \partial_{2}^{2} \tau_{2}^{(n-l-2)} (b_{2}^{(l)} +\tilde{b}_{2}^{(l)} ) \\
\nonumber&  +\sum_{l=2}^{n} \tau_{1}^{(n-l)} \partial_{2}(b_{2}^{(l-1)} -\tilde{b}_{2}^{ (l-1)})+
\frac{1}{2} \sum_{l=0}^{n-2} \tau_{2}^{(n-l-2)} \partial_{2}^{2} (b_{2}^{(l)} +\tilde{b}_{2}^{(l)})\\
\label{0d}&=0.
\end{align}
Identifying the coefficients of $E$ in the same relation  we obtain:
\begin{align}
\nonumber & (n-2)\partial_{2}^{2} \tau_{2}^{(n-3)} -\partial_{2} b_{2}^{(0)} \partial_{2}^{2} \tau_{2}^{(n-3)} -\frac{1}{2} \sum_{l=1}^{n-1}
\partial_{2}^{2} \tau_{2}^{(n-l-3)} \partial_{2} (b_{2}^{(l)} +\tilde{b}_{2}^{(l)}) \\
\nonumber&   +\frac{1}{2} \sum_{l=0}^{n-2} \partial_{2}\tau_{2}^{(n-l-3)} \partial_{2}^{2} (b_{2}^{(l)} +\tilde{b}_{2}^{(l)}) 
+\sum_{l=2}^{n} \tau_{1}^{(n-l)} \partial_{2}^{2} (b_{2}^{(l-2)} -\tilde{b}_{2}^{(l-2)}) \\
\label{0e}& =0.
\end{align}
A long but straightforward computation shows that for $n\geq 3$, relation (\ref{0d}) is the derivative of relation
(\ref{0c2}) with $n$ replaced by $n-1.$ Similarly, for $n\geq 4$, relation (\ref{0e}) is the second derivative of
(\ref{0c2}) with $n$ replaced by $n-2.$ Therefore, 
relations (\ref{0c1})-(\ref{0e}) are equivalent to relations (\ref{0c1}), (\ref{0c2}), together with relation
(\ref{0d}) with $n=2$ and relation (\ref{0e}) with $n=2,3$. But relation (\ref{0d}) with $n=2$ is the derivative of
(\ref{b2-0}), relation (\ref{0e}) with  $n=2$ follows from $b_{2}^{(0) }=\tilde{b}_{2}^{(0)}$ and 
relation (\ref{0e}) with $n=3$ is  the second derivative of (\ref{b2-0}).

To summarize: we proved that $\nabla$ and $\tilde{\nabla}$ are formally gauge isomorphic if and only if
$c=\tilde{c}$, $\alpha =\tilde{\alpha}$,  $b_{2}^{(0)} =\tilde{b}_{2}^{(0)}$ and relations (\ref{b2-0}), 
 (\ref{0c1}) and (\ref{0c2}) are
satisfied (the last two for any $n\geq 2$).

Using the above considerations,  we now prove our claim.
Let $\nabla$ be a $(TE)$-structure
as in Proposition \ref{step-1} iii). We aim to construct a $(TE)$-structure $\tilde{\nabla}$ formally isomorphic
to $\nabla$, as required by the lemma. 
From Corollary \ref{initial-cazuri} ii)
we can  assume, without loss of generality,  that $b_{2}^{(0)} =\tilde{b}_{2}^{(0)}$ is of one of  the forms  (\ref{form-b2-0}).
From  Corollary \ref{first-formal-b2} ii)), we
can further assume that the constant $\lambda$
from (\ref{form-b2-0}) belongs to  $(\mathbb{C}\setminus \mathbb{Z})\cup \mathbb{Z}_{\geq 0}$.
Let $\tau_{1}^{(0)}\in \mathbb{C}^{*}$. 
We choose   $\tilde{b}_{2}^{(1)}$ in a suitable way   
such that relation (\ref{b2-0}), considered as an equation in the unknown function $\tau_{2}^{(0)}$, has
a solution with $\partial_{2}^{3} \tau_{2}^{(0)} =0$. More precisely, 
when $b_{2}^{(0)} =0$  we  choose $\tilde{b}_{2}^{(1)} =0$ and $\tau_{2}^{(0)} = - \tau_{1}^{(0)} b_{2}^{(1)}$. 
When $b_{2}^{(0)}\neq 0$ 
we use Lemma \ref{third-der} 
and we  choose  $\tilde{b}_{2}^{(1)}$ 
as follows:
if  $b_{2}^{(0)} = \lambda t_{2}$ (with $\lambda \notin\{ -1, 1\}$)  or $b_{2}^{(0)} = \lambda t_{2} +1$ (with $\lambda \neq 1$) or $b_{2}^{(0)}
= t_{2}^{2}$ we choose $\tilde{b}_{2}^{(1)}= 0$;  if $b_{2}^{(0)} = t_{2}$ or $b_{2}^{(0)} = t_{2} +1$
we choose $\tilde{b}_{2}^{(1)} = (b_{2}^{(1)})_{2}  t_{2}^{2}$ where 
$ (b_{2}^{(1)})_{i} $  denotes the coefficient of $t_{2}^{i}$ in $b_{2}^{(1)}$;
if $b^{(0)}_{2} = - t_{2}$ we choose $\tilde{b}_{2}^{(1)} = (b_{2}^{(1)})_{0}$. 
Suppose now that $\tau_{1}^{(i)}$ and $\tau_{2}^{(i)}\in\mathbb{C}$ (with $i\leq n-1$) 
and $\tilde{b}_{2}^{(i)}$ (with $i\leq n$) are known
(and satisfy $\partial_{2}^{3} \tau_{2}^{(i)} = \partial_{2}^{3} \tilde{b}_{2}^{(i)} =0$).
Relation (\ref{0c1}) with $n$ replaced by $n+1$ determines $\tau_{1}^{(n)}$:
\begin{align}
\nonumber n\tau_{1}^{(n)} &= \frac{1}{4} \sum_{l=1}^{n+1} \partial_{2}^{2} \tau_{2}^{(n-l-1)} (b_{2}^{(l)} -\tilde{b}_{2}^{(l)} ) - \frac{1}{4} \sum_{l=2}^{n} \partial_{2} \tau_{2}^{(n-l)} \partial_{2} (b_{2}^{(l-1)} -\tilde{b}_{2}^{(l-1)}) \\
\label{0c1-prime} &+ \frac{1}{4}\sum_{l=2}^{n} \tau_{2}^{(n-l)} \partial_{2}^{2} (b_{2}^{(l-1)} -\tilde{b}_{2}^{(l-1)}).
\end{align}
We remark that $\tau_{1}^{(n)}\in \mathbb{C}:$  
straightforward computation, which uses $\partial_{2}^{3}b_{2}^{(l)} =\partial_{2}^{3}\tilde{b}_{2}^{(l)}= 
\partial_{2}^{3}\tau_{2}^{(i)}=0$   (for $l\leq n-1$ and $i\leq n-2$) 
shows that the right  hand side of (\ref{0c1-prime}) is constant.
Relation (\ref{0c2}) with $n$ replaced by $ n+1$ is
\begin{align}
\nonumber& (n+1) \tau_{2}^{(n)} +\partial_{2}b_{2}^{(0)} \tau_{2}^{(n)} - b_{2}^{(0)} \partial_{2} \tau_{2}^{(n)} + \tau_{1}^{(0)} (b_{2}^{(n+1) }- \tilde{b}_{2}^{(n+1)})\\
\nonumber& + \sum_{l=1}^{n} \tau_{1}^{(n+1-l)} (b_{2}^{(l)} -\tilde{b}_{2}^{(l)}) -\frac{1}{2}\sum_{l=1}^{n}
\partial_{2} \tau_{2}^{(n-l)}(b_{2}^{(l)} +\tilde{b}_{2}^{(l)}) \\
\label{b2-0-ind}& +\frac{1}{2} \sum_{l=1}^{n} \tau_{2}^{(n-l)} \partial_{2} ( b_{2}^{(l)} +\tilde{b}_{2}^{(l)}) =0.
\end{align}
The second and third lines from the left  hand side of (\ref{b2-0-ind}) are known. 
When $b_{2}^{(0)}\neq 0$ we choose as before (using Lemma \ref{third-der}) 
$\tilde{b}_{2}^{(n+1)}$   such that  
(\ref{b2-0-ind})  has a solution  $\tau_{2}^{(n)}$
and $\partial_{2}^{3} \tilde{b}_{2}^{(n+1)} = \partial_{2}^{3} \tau_{2}^{(n)} =0.$ 
When $b_{2}^{(0)} =0$  we  choose $\tilde{b}_{2}^{(n+1)} =0$
and $\tau_{2}^{(n)}$ to satisfy (\ref{b2-0-ind}) (with $\tilde{b}_{2}^{(n+1)}=0$)
and   $\partial_{2}^{3} \tau_{2}^{(n)} =0.$ 
Using an induction procedure 
we define an automorphism $T$ of the $(T)$-structure $A_{1}=C_{1}$, $A_{2} = C_{2}$, 
such that  the associated function  
$\tilde{b}_{2}$ of  $\tilde{\nabla}:= T\cdot \nabla$ is of one of the following forms:

$\tilde{b}_{2}=0$,  $\tilde{ b}_{2} = t_{2}^{2}$,  $\tilde{b}_{2} = \lambda t_{2}$  ($\lambda \notin \mathbb{Z}\setminus \{ 0\}$),
$\tilde{b}_{2} = \lambda t_{2} +1$ ($\lambda \notin\mathbb{Z}\setminus \{ 0\}$),  $\tilde{b}_{2} = \lambda t_{2} +1 +\gamma t_{2}^{2}z^{\lambda}$
($\lambda \in \mathbb{Z}_{\geq 1},\ \gamma \in \mathbb{C}$), 
$\tilde{b}_{2} = \lambda t_{2} +\gamma t_{2}^{2} z^{\lambda}$  ($\lambda \in \mathbb{Z}_{\geq 1}$, $\gamma \in \mathbb{C}$),
$\tilde{b}_{2} = \lambda t_{2} + \gamma z^{-\lambda}$  ($\lambda \in \mathbb{Z}_{\leq -1}$, $\gamma \in \mathbb{C}$).\

The first five forms above  of $\tilde{b}_{2}$ give the  $(TE)$-structures with $A_{1}=C_{1}$, $A_{2} = C_{2}$ and matrix
$B$ as in  the first 
five  lines from
(\ref{b-2-0}). The last two forms of $\tilde{b}_{2}$ give 
the $(TE)$-structures with $A_{1}=C_{1}$, $A_{2}=C_{2}$ and matrix $B$ of either of the two  forms  
(\ref{pre-reduced}).
\end{proof}

The next lemma concludes the proof of Theorem \ref{te}. It shows that  the $(TE)$-structures from Lemma \ref{lemm1}, 
with matrices $B$ given in (\ref{pre-reduced}), are formally isomorphic to the last four classes of
$(TE)$-structures from Theorem \ref{te} iii).

\begin{lem}\label{lem-pre-red} The $(TE)$-structures with $A_{1}=C_{1}$, $A_{2}=C_{2}$ and matrices $B$ given
by (\ref{pre-reduced}) are formally isomorphic to  the $(TE)$-structures of the same form
(with the same constants $c$, $\alpha$, $\lambda$), but with $\gamma\in \{ 0,1\}.$ 
\end{lem}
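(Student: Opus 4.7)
My plan is to exhibit an explicit formal isomorphism that rescales the parameter $\gamma$ in (\ref{pre-reduced}) to $1$ whenever $\gamma \neq 0$, while leaving $c$, $\alpha$, $\lambda$ fixed. Both $(TE)$-structures in (\ref{pre-reduced}) share the underlying $(T)$-structure $A_{1} = C_{1}$, $A_{2} = C_{2}$, so the carrier of the isomorphism must be a formal automorphism of this $(T)$-structure. Specializing Lemma \ref{auto-last} i) to $e = 0$, $d = 1$, $\tau_{4}^{(n)} = 0$ for all $n \geq 0$, and the remaining free constants equal to zero (except for the forced values $(\tau_{1}^{(0)})_{0} = \tfrac{1+k}{2}$ and $(\tau_{3}^{(0)})_{0} = \tfrac{1-k}{2}$), one obtains the constant automorphism
\begin{equation*}
T = \tfrac{1+k}{2} C_{1} + \tfrac{1-k}{2} D = \mathrm{diag}(1, k), \qquad k \in \mathbb{C}^{*},
\end{equation*}
covering the dilation $h(t_{1}, t_{2}) = (t_{1}, k t_{2})$.

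Since $T$ is independent of $z$ and of $t$, relation (\ref{2.16}) reduces to $\tilde{B} = T^{-1} (B \circ h) T$. Using (\ref{r1})--(\ref{r4}) (or a direct $2 \times 2$ computation) one finds
\begin{equation*}
T^{-1} C_{1} T = C_{1}, \quad T^{-1} D T = D, \quad T^{-1} C_{2} T = k^{-1} C_{2}, \quad T^{-1} E T = k E.
\end{equation*}
For the first form in (\ref{pre-reduced}), the pullback $B \circ h$ replaces $t_{2}$ by $k t_{2}$ throughout, and conjugation by $T$ then multiplies the $C_{2}$ coefficient by $k^{-1}$ and the $E$ coefficient by $k$; term-by-term this produces a $(TE)$-structure of the same first form with $c$, $\alpha$, $\lambda$ unchanged and $\gamma$ replaced by $\tilde{\gamma} = \gamma k$. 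For the second form only the $C_{2}$ coefficient contains $\gamma$, and the analogous calculation yields $\tilde{\gamma} = \gamma / k$.

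Choosing $k = 1/\gamma$ in the first case (resp. $k = \gamma$ in the second) when $\gamma \neq 0$ therefore normalizes $\tilde{\gamma} = 1$; when $\gamma = 0$ there is nothing to prove. There is no serious obstacle here: the whole argument is algebraic once the diagonal constant automorphism $\mathrm{diag}(1, k)$ of the underlying $(T)$-structure is identified, and checking that conjugation by it (combined with the variable rescaling $t_{2} \mapsto k t_{2}$) acts as a single coherent scaling of $\gamma$ is immediate from the action of $T$ on $\{C_{1}, C_{2}, D, E\}$.
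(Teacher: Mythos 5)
Your proof is correct and follows essentially the same route as the paper: the paper's proof also applies the automorphism of Lemma \ref{auto-last} i) with $e=0$ and all other free constants zero, i.e. the constant diagonal isomorphism $\mathrm{diag}(d,k)$ covering the dilation $t_{2}\mapsto \tfrac{k}{d}t_{2}$ (your choice $d=1$ loses nothing), and concludes by choosing the scaling to normalize $\gamma\neq 0$ to $1$. Your explicit observation that the two forms in (\ref{pre-reduced}) rescale $\gamma$ by reciprocal factors is a slightly finer bookkeeping than the paper's uniform statement $\tilde\gamma=\tfrac{k}{d}\gamma$, but this is immaterial to the conclusion.
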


\begin{proof} Let $T^{[1]}$ be a formal automorphism of the $(T)$-structure $A_{1}=C_{1}$, $A_{2}=C_{2}$ given by
Lemma \ref{auto-last} i), with $\tau_{4}^{(n)}= (\tau_{2}^{(n)})_{0}=0$ ($n\geq 0$),  
$(\tau_{1}^{(n)})_{0}= (\tau_{3}^{(n)})_{0}=0$ ($n\geq 1$), $k, d\in \mathbb{C}^{*}$ and $e=0.$
It  maps the  $(TE)$-structures
from Lemma \ref{lemm1}, with matrices  $B$ given by 
(\ref{pre-reduced}),  to  $(TE)$-structures of the same form, with the same $c, \alpha , \lambda \in \mathbb{C}$ 
but with $\gamma$ replaced by $\tilde{\gamma}:= \frac{k}{d}\gamma .$ 
When $\gamma \neq 0$ we can choose $k, d\in \mathbb{C}^{*}$ such that
$\tilde{\gamma }=1.$
\end{proof}

\subsubsection{Proof of Theorem \ref{te-class}}

Consider two distinct $(TE)$-structures $\nabla$ and $\tilde{\nabla}$ from Theorem \ref{te},
and suppose that they are formally isomorphic.  
Then their underlying $(T)$-structures are also formally isomorphic. 
From Theorem 21 ii) of \cite{DH-AMPA}, there are two cases, namely:   
a)  $\nabla$ and $\tilde{\nabla}$ are 
as in Theorem \ref{te} i); b)  $\nabla$ and $\tilde{\nabla}$ 
are as in Theorem \ref{te} iii).  
The next lemma treats the first possibility.

\begin{lem}\label{further-reduction} Consider two distinct  $(TE)$-structures 
as in Theorem \ref{te} i), with constants $c$, $\alpha$, $c_{0}$, respectively $\tilde{c}$, $\tilde{\alpha}$, $\tilde{c}_{0}.$ 
Then $\nabla$ and $\tilde{\nabla}$ are formally isomorphic 
if and only if  $\tilde{c} =c$, $\tilde{\alpha } =\alpha$ and $\tilde{c}_{0} = - c_{0}$.
\end{lem}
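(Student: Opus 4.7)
The strategy is to reduce to an analysis of formal automorphisms of the common underlying $(T)$-structure $A_{1}=C_{1}$, $A_{2}=C_{2}+zE$. Any formal isomorphism between $\nabla$ and $\tilde\nabla$ is such an automorphism, so by Lemma~\ref{aut-lema1} it is either a formal gauge automorphism, or else it covers $h(t_{1},t_{2})=(t_{1},-t_{2})$ and has the shape (\ref{auto-t-t}). The first alternative is incompatible with $\nabla\neq\tilde\nabla$: by Lemma~\ref{stat1} a gauge isomorphism between $(TE)$-structures in the form (\ref{f-1}) would force $c=\tilde c$, $\alpha=\tilde\alpha$, $c_{0}=\tilde c_{0}$, and since the triple $(c,\alpha,c_{0})$ completely determines such a normal form, we would get $\nabla=\tilde\nabla$. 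Hence only the second alternative remains, and the lemma reduces to understanding when an isomorphism covering the involution $t_{2}\mapsto-t_{2}$ exists.

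For the necessity direction I will substitute $\tilde T=\sum_{n\geq 0}\tilde T^{(n)}z^{n}$ of the form (\ref{auto-t-t}) (so $\tilde T^{(0)}=\tau_{2}^{(0)}C_{2}+\tau_{3}^{(0)}D$ with $\tau_{3}^{(0)}\neq 0$) into the $B$-equation (\ref{2.16}), taking $B,\tilde B$ from (\ref{f-1}) and $B\circ h$ the substitution $t_{2}\mapsto -t_{2}$ in $B$. Reading off the $z^{0}$ coefficient and isolating the $D$- and $C_{2}$-components via the multiplication table (\ref{r1})--(\ref{r4}) yields $(c-\tilde c)\tau_{3}^{(0)}=0$ and $(c_{0}+\tilde c_{0})\tau_{3}^{(0)}=0$, and the invertibility condition $\tau_{3}^{(0)}\neq 0$ gives $c=\tilde c$ and $\tilde c_{0}=-c_{0}$. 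The $z^{1}$ coefficient, treated in the same way, then produces $(\alpha-\tilde\alpha)\tau_{3}^{(0)}=0$ and hence $\tilde\alpha=\alpha$.

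For sufficiency, assuming $\tilde c=c$, $\tilde\alpha=\alpha$, $\tilde c_{0}=-c_{0}$, I will propose the explicit candidate $T:=D$ (a constant, of the shape (\ref{auto-t-t}) with $\tau_{3}^{(0)}=1$ and all other coefficients zero) covering $h(t_{1},t_{2})=(t_{1},-t_{2})$. Equation (\ref{2.15}) for $i=1$ is trivial, and for $i=2$ it reduces to $(C_{2}+zE)D+D(C_{2}+zE)=0$, which follows at once from $C_{2}D+DC_{2}=0$ and $ED+DE=0$ in (\ref{r2}). Equation (\ref{2.16}) reduces to the algebraic identity $(B\circ h)D-D\tilde B=0$, whose verification via (\ref{r1})--(\ref{r4}) yields exactly the three conditions $c=\tilde c$, $\alpha=\tilde\alpha$, $c_{0}+\tilde c_{0}=0$. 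The main obstacle is not computational but conceptual: one has to spot at the outset that the only viable cover is the sign flip $t_{2}\mapsto-t_{2}$ and that, on top of it, the constant gauge $T=D$ already suffices; once this is recognised, every verification is immediate from the multiplication table, and no inductive construction along the lines of Lemma~\ref{stat1} is needed.
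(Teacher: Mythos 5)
Your proposal is correct and follows essentially the same route as the paper: rule out gauge isomorphisms via Lemma \ref{stat1}, use Lemma \ref{aut-lema1} to reduce to isomorphisms of the form (\ref{auto-t-t}) covering $(t_{1},t_{2})\mapsto(t_{1},-t_{2})$, extract the necessity of $c=\tilde c$, $\alpha=\tilde\alpha$, $\tilde c_{0}=-c_{0}$ from the $z^{0}$ and $z^{1}$ coefficients of (\ref{2.16}), and exhibit the constant isomorphism $T=D$ for sufficiency. Your coefficient computations check out, so no gap.
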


\begin{proof} Let ${T}$ be a formal automorphism 
the $(T)$-structure $A_{1} = C_{1}$, $A_{2} = C_{2} + zE$. From Lemma \ref{stat1}, $T$ is not a formal gauge 
automorphism. 
From Lemma \ref{aut-lema1}, $T$  covers the map $h(t_{1}, t_{2}) = (t_{1}, - t_{2})$
and $\tilde{T} =  T\circ h = T$ is of the form (\ref{auto-t-t}).
As $\tilde{B}   = \tilde{B} \circ h$, relation (\ref{2.16})  becomes 
\begin{equation}\label{tilde-z-b}
z^{2}  \partial_{z} T + \tilde{B} T - TB=0.
\end{equation}
By identifying the coefficients of $z^{0}$ and $z$  in (\ref{tilde-z-b})  we obtain 
that $\tilde{c} = c$, $\tilde{\alpha} =\alpha$ and $\tilde{c}_{0} = - c_{0}$.
Moreover, if these relations are satisfied then  the  isomorphism $T (z, t_{1}, t_{2}) : = D$  which covers $h$ maps  $\nabla$ to $\tilde{\nabla }.$ 
\end{proof}

It remains to study the second case. We begin with the next simple lemma.

\begin{lem}\label{X} Let $\nabla$ be a $(TE)$-structure as in Theorem \ref{te} iii). Then the constants $c$ and $\alpha$ 
are formal invariants. 
\end{lem}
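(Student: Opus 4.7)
The plan is to reduce the statement to a one-variable problem by restricting everything to $t=0$, where the formal isomorphism becomes a formal gauge transformation between meromorphic connections over $\mathbb{C}[[z]]$, and then to read off $c$ and $\alpha$ from the trace of the restricted matrix $B$.

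Suppose $\nabla$ and $\tilde{\nabla}$ are two formally isomorphic $(TE)$-structures as in Theorem \ref{te} iii), with constants $c,\alpha$ and $\tilde{c},\tilde{\alpha}$ respectively, and let $T$ be a formal isomorphism between them. Since both structures have the same underlying $(T)$-structure $A_1=C_1$, $A_2=C_2$, the map $T$ is a formal automorphism of this $(T)$-structure; by Lemma \ref{auto-last}, it covers an automorphism $h\in \mathrm{Aut}(\mathcal{N}_2)$ fixing the origin. Consequently $\tilde{T}|_{t=0}=(T\circ h)|_{t=0}=T|_{t=0}$, and I denote this restriction by $S\in M_{2\times 2}(\mathbb{C}[[z]])$. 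Because $S^{(0)}=T^{(0)}(0)$ is invertible, $S$ is invertible in $M_{2\times 2}(\mathbb{C}[[z]])$ and $\log\det S$ is a well-defined element of $\mathbb{C}[[z]]$. Restricting the isomorphism relation (\ref{2.16}) to $t=0$ yields
\[
z^2\partial_z S+B(z,0)\,S-S\,\tilde{B}(z,0)=0.
\]

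The next step is to take the trace of this identity. Rewriting it as $S^{-1}B(z,0)\,S=\tilde{B}(z,0)-z^2 S^{-1}\partial_z S$ and using the cyclicity of the trace together with the classical identity $\mathrm{tr}(S^{-1}\partial_z S)=\partial_z\log\det S$, I obtain the scalar relation
\[
\mathrm{tr}(B(z,0))-\mathrm{tr}(\tilde{B}(z,0))=-z^2\partial_z\log\det S\in z^2\mathbb{C}[[z]].
\]

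Finally, I would inspect the left-hand side case by case using Theorem \ref{te} iii). In every normal form, $B$ decomposes as $(-t_1+c+\alpha z)C_1$ plus terms proportional to $C_2$, $D$ and $E$, all of which are traceless. Hence $\mathrm{tr}(B(z,0))=2(c+\alpha z)$ and likewise $\mathrm{tr}(\tilde{B}(z,0))=2(\tilde{c}+\tilde{\alpha}z)$, so the left-hand side equals $2(c-\tilde{c})+2(\alpha-\tilde{\alpha})z$, a polynomial of degree at most one in $z$. Matching against a right-hand side which vanishes at order $z^2$ forces $c=\tilde{c}$ and $\alpha=\tilde{\alpha}$. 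The argument is essentially frictionless; the one conceptual step worth flagging is the observation that the restriction at $t=0$ turns an arbitrary formal isomorphism (possibly covering a nontrivial $h$) into a one-variable gauge equivalence, for which the determinant--trace formula detects exactly the two constants in question.
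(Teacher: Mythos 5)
Your proof is correct, and it takes a genuinely different route from the paper's. The paper argues structurally: it first observes that the automorphisms of Lemma \ref{auto-last} i) with $\tau_{4}^{(n)}=0$ for $n\neq 1$, $(\tau_{1}^{(n)})_{0}=(\tau_{3}^{(n)})_{0}=0$ for $n\geq 1$ and $(\tau_{2}^{(n)})_{0}=0$ leave $c$ and $\alpha$ unchanged and cover all automorphisms of $\mathcal N_{2}$ lifting to the $(T)$-structure $A_{1}=C_{1}$, $A_{2}=C_{2}$, thereby reducing the claim to gauge invariance of $c$ and $\alpha$; the latter is then extracted from the coefficient identifications at orders $z^{0}$ and $z^{1}$ of relation (\ref{2.18}) carried out in the proof of Lemma \ref{lemm1}. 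You instead restrict the isomorphism relation (\ref{2.16}) to the slice $t=0$ (legitimate, since the covered automorphism $h$ fixes the origin, so $(B\circ h)(z,0)=B(z,0)$ and $\tilde T(z,0)$ is an invertible element of $M_{2\times 2}(\mathbb{C}[[z]])$), and then take traces, using cyclicity and Jacobi's formula $\operatorname{tr}(S^{-1}\partial_{z}S)=(\det S)^{-1}\partial_{z}\det S$ to conclude that $\operatorname{tr}B(z,0)-\operatorname{tr}\tilde B(z,0)=2(c-\tilde c)+2(\alpha-\tilde\alpha)z$ lies in $z^{2}\mathbb{C}[[z]]$, forcing $c=\tilde c$ and $\alpha=\tilde\alpha$; here you only use that in every normal form of Theorem \ref{te} iii) the matrix $B-(-t_{1}+c+\alpha z)C_{1}$ is a combination of the traceless matrices $C_{2}$, $D$, $E$. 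Your argument is more self-contained and uniform: it avoids both the reduction via explicit automorphisms and the case-by-case coefficient computations inherited from Lemma \ref{lemm1}, and it in fact shows more generally that the degree-$\leq 1$ part of $\tfrac{1}{2}\operatorname{tr}B(z,0)$ is a formal invariant of any $(TE)$-structure of this shape; what the paper's route buys in exchange is that the same bookkeeping (the relations at orders $z^{0}$, $z^{1}$) is needed anyway for the finer gauge classification, so no extra machinery is introduced there.
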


\begin{proof}  We notice that $c$ and $\alpha$  remain unchanged  under the 
automorphisms 
from Lemma \ref{auto-last} i), for which  $\tau_{4}^{(n)} =0$ for any $n\neq 1$, $(\tau_{1}^{(n)})_{0}= (\tau_{3}^{(n)})_{0} =0$ for any $n\geq 1$,
$(\tau_{2}^{(n)})_{0} =0$ for any $n\geq 0$ 
(and $k, d\in \mathbb{C}^{*}$, $e\in \mathbb{C}$ arbitrary).
These  automorphisms cover all automorphisms of $\mathcal N_{2}$ which
lift to automorphisms of the $(T)$-structure $A_{1} = C_{1}$, $A_{2} = C_{2}$. They can be used
to reduce the statement  we need to prove  to showing that $c$ and $\alpha$ are formal gauge invariant This was shown
in  the proof of Lemma \ref{lemm1}.
\end{proof}

\begin{lem} Consider two distinct $(TE)$-structures  $\nabla$ and $\tilde{\nabla}$ as in 
Theorem \ref{te} iii).
They are formally isomorphic if and only if their matrices $B$ and $\tilde{B}$ 
are of the fourth  form in  (\ref{b-2-0}), with constants $c$, $\alpha$, $\lambda$ and
$\tilde{c}$, $\tilde{\alpha}$, $\tilde{\lambda }$,  such that
$\tilde{c} = c$, $\tilde{\alpha} =\alpha$ and $\tilde{\lambda}  = -\lambda .$
\end{lem}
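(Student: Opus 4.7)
The plan is to reduce the classification to the behavior of the leading term $b_2^{(0)}$ of the associated function. By Lemma \ref{X}, the constants $c$ and $\alpha$ are formal invariants of $(TE)$-structures from Theorem \ref{te} iii), so any formal isomorphism automatically gives $\tilde{c} = c$ and $\tilde{\alpha} = \alpha$. A formal isomorphism $T$ between $\nabla$ and $\tilde{\nabla}$ must cover a biholomorphism $h(t_1, t_2) = (t_1, kt_2/(et_2 + d))$ as described in Lemma \ref{auto-last} i), and Corollary \ref{initial-cazuri} i) then supplies the transformation law (\ref{conformal}) relating $b_2^{(0)}$ and $\tilde{b}_2^{(0)}$.

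Next, I would group the nine entries of (\ref{b-2-0}) by the value of $b_2^{(0)}$: the singleton $\{0\}$ (Form 1), $\{t_2^2\}$ (Form 2), $\{\lambda t_2 : \lambda \in \mathbb{C}^*\}$ (Forms 3, 6, 7, 8, 9), and $\{\lambda t_2 + 1 : \lambda \in \mathbb{C}\}$ (Forms 4, 5). A direct substitution of each representative into (\ref{conformal}), solving for $k, d \in \mathbb{C}^*$ and $e \in \mathbb{C}$ under the constraint that $\tilde{b}_2^{(0)}$ again be in normal form, yields the forced choice $(k, d, e) = (k, k, 0)$ for the first three types (so $\tilde{b}_2^{(0)} = b_2^{(0)}$); for the fourth type, an additional solution $(k, d, e) = (k, k, -\lambda k)$ exists exactly when $\lambda \neq 0$, producing $\tilde{b}_2^{(0)} = -\lambda t_2 + 1$. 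Since both $\lambda$ and $-\lambda$ lie in $\mathbb{C} \setminus (\mathbb{Z}\setminus\{0\})$ whenever one of them does (while $-\lambda$ leaves the range $\mathbb{Z}_{\geq 1}$ of Form 5), this rules out formal isomorphisms between distinct normal forms with differing $b_2^{(0)}$, except when both belong to Form 4 with parameters $\lambda$ and $-\lambda$, $\lambda \neq 0$.

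What remains is to handle distinct normal forms sharing the same $b_2^{(0)}$: Forms 6 and 7 with equal $\lambda \in \mathbb{Z}_{\geq 1}$, Forms 8 and 9 with equal $\lambda \in \mathbb{Z}_{\leq -1}$, and two instances of Form 5 with the same $\lambda$ but distinct $\gamma$. Since (\ref{conformal}) now forces only $e = 0$, the covering map $h$ reduces to a rescaling $t_2 \mapsto (k/d) t_2$; the proof of Lemma \ref{lem-pre-red} shows that such a rescaling multiplies the relevant $z^\lambda$- (or $z^{-\lambda}$-) order obstruction coefficient by a non-zero scalar, and so cannot send $1$ to $0$. After the rescaling is absorbed, the residual freedom in $T$ is purely gauge, and the inductive argument in the proof of Lemma \ref{lemm1} shows that the higher-order normalizations ($\gamma$ and the $z^{-\lambda}$-term) are gauge invariant. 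For the ``if'' direction, Corollary \ref{first-formal-b2} ii) produces the isomorphism in the Form 4 case, and a final application of Lemma \ref{lemm1} confirms that its image reduces to Form 4 with parameters $(c, \alpha, -\lambda)$. The main obstacle I expect is precisely in the same-$b_2^{(0)}$ subcase: one must verify carefully that, after the covering automorphism has been pinned down to a pure rescaling, no remaining gauge freedom can interchange normal forms differing only in the $z^\lambda$- or $z^{-\lambda}$-order obstruction terms.
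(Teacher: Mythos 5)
Your proposal is correct and follows essentially the same route as the paper's proof: invariance of $c,\alpha$ via Lemma \ref{X}, reduction of the possible pairings through the leading-term transformation law (\ref{conformal})/(\ref{system-abc}) as in Corollary \ref{first-formal-b2} i), absorption of the residual rescaling via the automorphism of Lemma \ref{lem-pre-red}, gauge non-isomorphy of the pre-reduced forms (\ref{pre-reduced}) through the inductive computation of Lemma \ref{lemm1} with Lemma \ref{third-der}, and the ``if'' direction via Corollary \ref{first-formal-b2} ii). The verification you flag as the main obstacle is exactly the step the paper itself only sketches, so no genuine gap beyond what the paper leaves to the reader.
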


\begin{proof} Assume that $\nabla$ and $\tilde{\nabla}$ are formally isomorphic.
Then, from Lemma \ref{X}, 
$c=\tilde{c}$ and $\alpha =\tilde{\alpha}$.
Using  Corollary \ref{first-formal-b2} i), 
we deduce (exchanging $\nabla$ with $\tilde{\nabla}$ if necessary) that  one of the following cases holds:  I)
$\nabla$ and $\tilde{\nabla}$  belong to the fourth class in Theorem \ref{te} iii)
and $\tilde{\lambda } = - \lambda$; II)  
$\nabla$ and $\tilde{\nabla}$   belong to the fifth class in Theorem \ref{te} iii) 
and $\tilde{\lambda} = \lambda$, $\tilde{\gamma}\neq\gamma$;   III)  $\nabla$ belongs  to 
the  sixth class and $\tilde{\nabla}$ belongs to the seventh class
in Theorem \ref{te} iii) 
and $\tilde{\lambda} =\lambda$;  IV)  $\nabla$ 
belongs to the eighth class and $\tilde{\nabla}$  to the  nineth class  
in Theorem \ref{te} iii) 
and $\tilde{\lambda} =\lambda$. 
In case  I) $\nabla$ and $\tilde{\nabla}$ are formally isomorphic by  
the isomorphism  $T$ used in  Corollary \ref{first-formal-b2} ii). 
It turns out that in the remaining cases $\nabla$ and $\tilde{\nabla}$ are in fact formally non-isomorphic. 
Let us sketch the argument for  case III). Assume, by contradiction, that $\nabla$ and $\tilde{\nabla}$
are formally isomorphic. 
Then the $B$-matrices of $\nabla$, $\tilde{\nabla}$  are of  the first form   (\ref{pre-reduced}), with  
$c=\tilde{c}$, $\alpha =\tilde{\alpha}$, $\lambda =\tilde{\lambda}$, 
$\gamma =1$ and  $\tilde{\gamma}=0.$ 
Since $\tilde{b}_{2}^{(0)}=b_{2}^{(0)} = \lambda t_{2}$, from  (\ref{system-abc}) with 
$a=\tilde{a} =0$, $b=\tilde{b} =\lambda$ and $c =\tilde{c} =0$ 
we  deduce that $e=0$ and $T$ covers a map of the form $h(t_{1}, t_{2} ) =( t_{1}, \frac{k}{d}t_{2})$,
with $k,d\in \mathbb{C}^{*}.$  
The automorphism   $T^{[1]}$ used in the proof of Lemma \ref{lem-pre-red} maps $\nabla$ to a $(TE)$-structure
$\nabla^{[1]}$ with $A_{1}^{[1]} = C_{1}$, $A^{[1]}_{2} = C_{2}$ and matrix $B^{[1]}$ of the first form (\ref{pre-reduced})  
with $\gamma^{[1]}  = \frac{k}{d}.$ Since $\nabla$ and $\tilde{\nabla}$ are formally isomorphic,  
$\nabla^{[1]}$ and $\tilde{\nabla}$ are formally gauge isomorphic (and are both
of the first form 
(\ref{pre-reduced})).
Going through the computations
of Lemma \ref{lemm1} and using Lemma \ref{third-der}  
we obtain  that $\nabla^{[1]} = \tilde{\nabla}$ which is a contradiction (as $\gamma^{[1]} \neq 0$ while $\tilde{\gamma} =0$).
\end{proof}

\section{Holomorphic classification of $(TE)$-structures}\label{holom-section}

\subsection{Restriction of $(TE)$-structures at the origin; holomorphic classification in the non-elementary case}\label{restriction-section}

By an {\it elementary model} we mean 
a meromorphic connection $\nabla^{0}$ 
on the germ $({\mathcal O}_{(\mathbb{C}, 0)})^{2}$ 
with connection form  
$\Omega^{0}=\frac{1}{z^{2}} B^{0} dz= \frac{1}{z^{2}} \sum_{k\geq 0} B_{0}^{(k)} z^{k}dz$ (where $B_{0}^{(k)}\in M_{2\times 2}(\mathbb{C})$),  such that 
$\nabla^{1}:={\mathcal E}^{\frac{\mathrm{tr}\,  (B_{0}^{(0)})}{2z}}\otimes \nabla^{0}$ is regular singular (we denote by $\mathcal E^{\rho }$ the 
connection in rank one with connection form $d\rho$).  
Obviously, $\nabla^{0}$ is an elementary model if and only if
 ${\mathcal E}^{\frac{\mathrm{tr}\, (B^{0})}{2z}}\otimes \nabla^{0}$ is regular singular.
The property of  a meromorphic connection to be an elementary model is invariant
under holomorphic isomorphisms: 
if $\tilde{\nabla}^{0}$, with connection form 
$\tilde{\Omega}^{0}=\frac{1}{z^{2}} \tilde{B}^{0} dz= 
\frac{1}{z^{2}} \sum_{k\geq 0} \tilde{B}_{0}^{(k)} z^{k}dz$ ($\tilde{B}_{0}^{(k)} \in 
M_{2\times 2}(\mathbb{C})$) is
isomorphic  to $\nabla^{0}$ by means of an
holomorphic isomorphism  $T^{0} = \sum_{k\geq 0} T_{0}^{(k)}z^{k}$, then 
$\mathrm{tr}\,  (\tilde{B}_{0}^{(0)} )= \mathrm{tr}\,  (B_{0}^{(0)})$ and 
$T^{0}$ is an  isomorphism also between $\nabla^{1}$ and $\tilde{\nabla}^{1}$ 
(the latter defined as $\nabla^{1}$ starting with $\tilde{\nabla}^{0}$ instead of
$\nabla^{0}$).

\begin{defn} A $(TE)$-structure $\nabla$ over $\mathcal N_{2}$ is called  elementary
if its restriction  to the slice $(\mathbb{C},0)\times \{ 0\}\subset (\mathbb{C}, 0)\times \mathcal N_{2}$  
is an elementary model. A $(TE)$-structure which is not elementary is called non-elementary.
\end{defn}

Our aim in this section is to prove the next proposition
(for Malgrange universal connections and Birkhoff normal forms, see appendix).

\begin{prop}\label{conceptual} Let $\nabla$ be a $(TE)$-structure 
in pre-normal form, determined by $(f, b_{2}, c, \alpha )$.\

i) Then $\nabla$ is  elementary if and only if $f(0, 0) b_{2}(0, 0) =0.$\

ii)  If $\nabla$ is elementary then 
$\nabla$ is holomorphically isomorphic to its formal normal form(s).\

iii)  If $\nabla$ is non-elementary then
$\nabla$ is holomorphically isomorphic to the Malgrange universal deformation of a
meromorphic connection in Birkhoff normal form,
with residue a regular endomorphism.
\end{prop}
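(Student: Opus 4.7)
I would attack the three parts in sequence, with part (i) providing the key invariant that distinguishes (ii) from (iii).

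\emph{Part (i).} The restriction $\nabla^{0}:=\nabla|_{(\mathbb{C},0)\times\{0\}}$ can be read off directly from the pre-normal form: its connection matrix is $B|_{t=0}$, whose leading term in $z$ is $B_{0}^{(0)}=cC_{1}+b_{2}(0,0)C_{2}$ (the $b_{3}, b_{4}$ contributions carry an extra factor $z$). Thus $\mathrm{tr}(B_{0}^{(0)})=2c$, and tensoring with $\mathcal{E}^{c/z}$ yields $B^{1}=B^{0}-cC_{1}$ with leading term $b_{2}(0,0)C_{2}$: either zero (giving a logarithmic, hence regular singular, pole) or a nonzero nilpotent. In the latter case the Fuchs-type criterion for a rank-two, Poincar\'e-rank-one connection with nonzero nilpotent leading coefficient reduces regular singularity to the vanishing of the $(1,2)$-entry of the subleading coefficient of $B^{1}$; a direct reading identifies that entry with $b_{4}^{(0)}(0)=f(0,0)b_{2}(0,0)$. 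Both cases combine to give the stated criterion.

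\emph{Part (ii).} Assume $f(0,0)b_{2}(0,0)=0$. Theorem~\ref{te} supplies a formal isomorphism $T\in M_{2\times 2}(\mathbb{C}\{t,z]])$ between $\nabla$ and a formal normal form $\widehat{\nabla}$; after composing with the (holomorphic) automorphism of $\mathcal{N}_{2}$ that $T$ may cover, I may assume $T$ is a formal gauge isomorphism. Restricting to $t=0$ and using part (i), both $\nabla^{0}$ and $\widehat{\nabla}^{0}$ are elementary, i.e.\ regular singular after twisting by $\mathcal{E}^{c/z}$; the classical theorem that formal gauge isomorphisms between regular singular connections converge then gives $T|_{t=0}\in M_{2\times 2}(\mathbb{C}\{z\})$. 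I would next use $T|_{t=0}$ as initial data to solve the flat system (\ref{2.17-w})--(\ref{2.18-w}) in a neighbourhood of the origin; the flatness of $\nabla$ and $\widehat{\nabla}$ furnishes the compatibility needed for the holomorphic Cauchy problem (with analytic parameters) to have a solution, and uniqueness of formal solutions forces this holomorphic $T$ to coincide with the formal one.

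\emph{Part (iii).} When $f(0,0)b_{2}(0,0)\neq 0$, the restriction $\nabla^{0}$ has Poincar\'e rank exactly one, nonzero nilpotent leading coefficient, and nonvanishing subleading $(1,2)$-entry, so its formal structure is of ramified Levelt--Turrittin type. I would first invoke the Birkhoff reduction recalled in the appendix to bring $\nabla^{0}$ by a holomorphic gauge to a connection of the form $\frac{1}{z^{2}}(B_{0}+zB_{1})dz$, and then verify by direct computation on the canonical form that the residue $B_{1}$ has a single eigenvalue and is non-diagonalisable, hence a regular endomorphism. Viewing $\nabla$ as an integrable two-parameter deformation of $\nabla^{0}$ parametrised by $\mathcal{N}_{2}$, I would then apply the Malgrange universal deformation theorem (appendix) to obtain a universal such deformation over a base $S$ of dimension equal to that of the first-order deformation space of $\nabla^{0}$, together with a classifying map $\mathcal{N}_{2}\to S$ along which $\nabla$ is the pullback of the universal connection.

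\emph{Main obstacle.} The delicate step is the final identification: showing that this classifying map is an isomorphism of germs rather than merely a morphism. This amounts to verifying that the Kodaira--Spencer map of $\nabla$ over $\mathcal{N}_{2}$ at the origin is bijective, i.e.\ that $\nabla$ is versal. I expect this to follow from the unfolding condition enjoyed by $\nabla$ together with the nondegeneracy $f(0,0)b_{2}(0,0)\neq 0$, but carried out through an $\mathrm{Ext}^{1}$-type computation matched to the pre-normal form data $(f,b_{2},c,\alpha)$; dimension counting alone should force the deformation space of $\nabla^{0}$ to be two-dimensional, confirming the isomorphism.
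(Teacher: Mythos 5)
Your overall route coincides with the paper's: part (i) by restricting to the slice and applying the Fuchs criterion (your ``$(1,2)$-entry of the subleading coefficient'' is exactly the quantity $\gamma(0)\eta(0)=f(0,0)b_2(0,0)$ that the paper isolates via the cyclic vector $v_1$ in Lemma \ref{lem-elem}), part (ii) by restricting the formal isomorphism, and part (iii) via Birkhoff normal form plus Malgrange's universal deformation. However, two steps as you present them do not go through.

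In part (ii), the passage from holomorphy of $T|_{t=0}$ to holomorphy of $T$ is not a ``holomorphic Cauchy problem with analytic parameters''. The propagation equations (\ref{2.17-w}) read $\partial_i T=\tfrac1z\,(T\tilde A_i-A_iT)$, which are singular at $z=0$, and coefficientwise (\ref{2.17}) the derivative $\partial_2T^{(r-1)}$ involves the \emph{higher} coefficient $T^{(r)}$ through $A_2^{(0)}T^{(r)}-T^{(r)}\tilde A_2^{(0)}$, so the system is not triangular in powers of $z$; for fixed $z\neq0$ the solution with initial value $T(z,0)$ exists but may a priori grow like $e^{C/|z|}$ as $z\to0$, and ``uniqueness of formal solutions'' does not identify it with the formal series unless one already knows it is holomorphic (or asymptotic) at $z=0$. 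This convergence statement is precisely the nontrivial ingredient the paper imports as Theorem 5.6 of \cite{DH-dubrovin}, after the slice argument that you and Corollary \ref{cor-class-elem} share; as written, your sketch does not prove it.

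In part (iii) you attach the required regularity to the wrong matrix. In the Birkhoff form $\frac1{z^2}(B_0+zB_1)dz$, what Malgrange's construction (and the proposition) needs is regularity of the pole coefficient $B_0$; this is automatic, since holomorphic gauges conjugate the $z^{-2}$-coefficient, which is $cC_1+b_2(0,0)C_2$ with $b_2(0,0)\neq0$, hence a single $2\times2$ Jordan block. Your claim that the residue $B_1$ ``has a single eigenvalue and is non-diagonalisable'' is false in general: in the paper's non-elementary forms $B_\infty$ has eigenvalues $\alpha\pm\tfrac12\sqrt{\tfrac14+4c_0B^{\infty}_{21}}$, generically distinct (cf. Lemma \ref{char-non-elem}, where only $c_0B^{\infty}_{12}\neq0$ is imposed). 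Moreover, ``invoke the Birkhoff reduction recalled in the appendix'' is conditional on irreducibility (Lemma \ref{lem-ired}); the paper checks this by the explicit valuation argument of Lemma \ref{BNF}, and indeed not every such connection admits a Birkhoff form (see the remark at the end of Section \ref{application}). Your alternative via the ramified Levelt--Turrittin type can be made to work (when $f(0,0)b_2(0,0)\neq0$ the determinant of the trace-free part has odd valuation, forcing slope $\tfrac12$, and a ramified rank-two germ has no rank-one meromorphic subconnection), but both the ramification and the implication must be proved, not asserted. Finally, the ``main obstacle'' you leave open is resolved in the paper without any $\mathrm{Ext}^1$ computation: regularity of $B_0$ yields Malgrange's universal deformation over a two-dimensional base, and the unfolding condition of the $(TE)$-structure makes the classifying map an isomorphism of germs, as recalled in the appendix.
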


We divide the proof of the above proposition into several steps. 
In the setting of Proposition \ref{conceptual},  
let   $\nabla^{\mathrm{restr}}$ be the restriction of $\nabla$  to the slice 
$\Delta\times \{ 0\}$ 
(where $\Delta$ is  a small disc around the origin in $\mathbb{C}$) 
and  $\eta := b_{2}$, $\lambda := \partial_{2}b_{2}$,  $\beta := \partial^{2}_{2}b_{2}$
and $\gamma : = f$, 
all restricted to this slice. 
They are functions on $z$ only and are holomorphic.  
From (\ref{B}), (\ref{b3-4}), the  connection form of $\nabla^{\mathrm{restr}}$  is
\begin{equation}\label{conn-restr}
\Omega^{\mathrm{restr}} = \frac{1}{z^{2}} \left( (c  + z\alpha )C_{1} + \eta  C_{2} -\frac{z(\lambda + 1)}{2} D + z ( -\frac{z\beta }{2} + \gamma\eta  ) E \right) dz.
\end{equation}

\begin{lem}\label{lem-elem}The connection $\nabla^{\mathrm{restr}}$ is  
an elementary model if and only if $\eta (0)\gamma (0) =0$.
\end{lem}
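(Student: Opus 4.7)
The plan is to translate elementarity into regular singularity of the twisted connection $\nabla^{1} := \mathcal{E}^{c/z}\otimes \nabla^{\mathrm{restr}}$, and then analyse the matrix of $\nabla^{1}$ case by case. Writing out \eqref{conn-restr} in the basis $(C_{1}, C_{2}, D, E)$ gives $B_{0}^{(0)} = cI_{2} + \eta(0)C_{2}$, so $\tfrac12\mathrm{tr}(B_{0}^{(0)}) = c$, and $\nabla^{1}$ has connection matrix $\tilde A(z) := B^{0}(z) - cI_{2}$. Hence the lemma reduces to: when is the rank-two Poincar\'e-rank-one connection $\tilde A\,dz/z^{2}$ regular singular?

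For the implication $\eta(0)\gamma(0) = 0 \Rightarrow$ regular singular, I would treat two overlapping cases. If $\eta(0) = 0$, then every entry of $\tilde A$ is visibly divisible by $z$, so $\tilde A/z^{2}\,dz$ already has only a simple pole and is automatically regular singular. If $\gamma(0) = 0$ (with $\eta(0)$ possibly non-zero), I would apply the meromorphic shearing $T := \mathrm{diag}(1, z)$; the formula $\tilde A^{\mathrm{new}} = T\tilde A T^{-1} - z^{2}T'T^{-1}$ produces
\[
\tilde A^{\mathrm{new}} = \begin{pmatrix} \tilde A_{11} & z^{-1}\tilde A_{12} \\ z\tilde A_{21} & \tilde A_{22} - z \end{pmatrix},
\]
and $z^{-1}\tilde A_{12} = -\tfrac{z\beta}{2} + \gamma\eta$, which vanishes at $z = 0$ precisely because $\gamma(0) = 0$; the remaining three entries are visibly $O(z)$. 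Thus $\tilde A^{\mathrm{new}}$ is divisible by $z$, so the gauged connection has a simple pole, and regular singularity transfers back to $\nabla^{1}$ since it is invariant under meromorphic gauges.

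The converse is the hard direction. Assuming $\eta(0)\gamma(0)\neq 0$, the key computation is
\[
\det\tilde A = -z\,\eta^{2}\gamma + z^{2}\bigl(\alpha^{2} - \tfrac{(\lambda+1)^{2}}{4} + \tfrac{\eta\beta}{2}\bigr),\qquad \mathrm{tr}\,\tilde A = 2z\alpha,
\]
so $\det\tilde A$ vanishes to order exactly one at $z=0$. The eigenvalues of $\tilde A$ are then $z\alpha \pm \sqrt{z^{2}\alpha^{2} - \det\tilde A} \sim \pm\sqrt{\eta(0)^{2}\gamma(0)\,z}$, a branch point of Puiseux order $1/2$; formally integrating an eigenvalue against $dz/z^{2}$ yields an essential singularity $\exp\bigl(\mp 2\sqrt{\eta(0)^{2}\gamma(0)/z}\bigr)$ in a formal fundamental solution of $\nabla^{1}$, signalling a positive formal slope equal to $1/2$ and hence ruling out regular singularity. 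The main obstacle will be making this last step fully rigorous in the paper's elementary style: I would appeal to the Newton polygon / Hukuhara--Turrittin material recalled in the appendix, or, alternatively, pass to the ramified coordinate $z = w^{2}$, where the pulled-back system becomes Poincar\'e rank two with leading term no longer nilpotent and the irregular exponential factor can be exhibited explicitly and shown to be uncancellable by any gauge in $\mathrm{GL}_{2}(\mathbb{C}((z)))$.
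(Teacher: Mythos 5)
Your reduction of elementarity to regular singularity of the twisted connection, the logarithmic-pole observation when $\eta(0)=0$, and the shearing gauge $T=\mathrm{diag}(1,z)$ for the case $\gamma(0)=0$ are all correct; the shearing is a legitimate alternative to what the paper does for that direction. The problem is the converse, which you yourself flag: as written it is not a proof, and neither of your two proposed completions works as stated. First, the appendix of this paper contains no Newton polygon or Hukuhara--Turrittin material; the only tool of that kind it records is the Fuchs criterion (Theorem \ref{fuchs}), so the appeal to ``material recalled in the appendix'' is empty. Second, the ramification $z=w^{2}$ alone does not make the leading term non-nilpotent: the pulled-back connection form is $2\tilde A(w^{2})w^{-3}dw$, whose leading coefficient is $2\tilde A(0)=2\eta(0)C_{2}$, still nilpotent. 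You would need a further shearing (say by $\mathrm{diag}(1,w)$) to arrive at a pole of order two with invertible leading coefficient, and even then you would still need a criterion of the type ``pole order at least two with non-nilpotent leading coefficient implies irregular singularity,'' which the paper neither states nor proves. The eigenvalue/slope heuristic is right in spirit, but nothing you invoke turns it into an argument.

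The paper settles both directions at once, for $\eta(0)\neq 0$, with exactly the elementary tool you left unused: since $\eta(0)\neq 0$, the vector $v_{1}$ is a cyclic vector, and writing $\nabla^{\mathrm{restr}}_{\partial_z}$ applied twice to $v_{1}$ in the basis $\{v_{1},\nabla^{\mathrm{restr}}_{\partial_z}v_{1}\}$, the coefficient of $\nabla^{\mathrm{restr}}_{\partial_z}v_{1}$ has valuation $-1$ while the coefficient of $v_{1}$ has valuation $\geq -2$ if and only if $\gamma(0)=0$, the offending term being $\eta^{2}\gamma/z^{3}$ --- the same quantity that drives your determinant computation. Since the Fuchs criterion is an equivalence, this single computation proves both that $\gamma(0)=0$ forces regular singularity and that $\eta(0)\gamma(0)\neq 0$ excludes it, with no ramification or formal decomposition theory. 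I recommend replacing your converse sketch by this cyclic-vector argument, or, if you want to keep your route, supplying the missing shearing after ramification and a proof of the irregularity criterion you need.
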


\begin{proof} 
We need to show that $\nabla^{\mathrm{restr}}$, with $c= \alpha =0$, is regular singular.
Assume from now on that $c=\alpha =0.$
When $\eta (0) =0$,  $\Omega^{\mathrm{restr}}$ has a logarithmic pole and 
the regular singularity  of $\nabla^{\mathrm{restr}}$ is obvious. 
Assume assume that $\eta (0) \neq 0.$   
Let $\{ v_{1}, v_{2} \}$ be the standard basis of $({\mathcal O}_{(\mathbb{C} , 0)})^{2}$, so that   
\begin{align}
\nonumber& \nabla^{\mathrm{restr} }_{\partial z} (v_{1})=( \Omega^{\mathrm{restr}}_{\partial z} )_{11} v_{1} + ( \Omega^{\mathrm{restr}}_{\partial z} )_{21} v_{2} \\
\label{restr-v}&\nabla^{\mathrm{restr} }_{\partial z} (v_{2})=( \Omega^{\mathrm{restr}}_{\partial z} )_{12} v_{1} + ( \Omega^{\mathrm{restr}}_{\partial z} )_{22} v_{2} .
\end{align}
Using the definitions of the matrices $C_{2}$, $D$ and $E$,  
we rewrite (\ref{restr-v}) as
\begin{align}
\nonumber& \nabla^{\mathrm{restr}}_{\partial z}(v_{1}) = -\frac{\lambda + 1}{2z} v_{1} +\frac{\eta }{z^{2}} v_{2}\\
\label{con-ex}& \nabla^{\mathrm{restr}}_{\partial z}(v_{2}) = \left( -\frac{\beta }{2} +\frac{\eta \gamma }{z}\right) v_{1} +\frac{\lambda + 1}{2z} v_{2}.
\end{align}
Since $\eta (0)\neq 0$, $v_{1}$ is a cyclic vector (see appendix).   
Let $\tilde{v}_{2}:=\nabla^{\mathrm{restr}}_{\partial_{z}} (v_{1})$. Then 
\begin{align}
\nonumber\nabla^{\mathrm{restr}}_{\partial z} (\tilde{v}_{2}) &= \left( -\partial_{z} ( \frac{\lambda +1}{2z} ) +\frac{\eta }{z^{2}} (-\frac{\beta}{2} +\frac{\eta \gamma }{z}) 
+\frac{(\lambda +1)}{2\eta } ( \frac{\dot{\eta}}{z} -\frac{2\eta }{z^{2}}) +\frac{ (\lambda + 1)^{2} }{4z^{2}} \right) v_{1}\\
& + \frac{ 1}{\eta} ( \dot{\eta} - \frac{2\eta}{z}) \tilde{v}_{2}.
\end{align}
The valuation of the coefficient of $\tilde{v}_{2}$ 
in $\nabla^{\mathrm{restr}}_{\partial z} (\tilde{v}_{2})$
is equal to $-1$,
while the valuation of the coefficient of $v_{1}$ is greater or equal to $-2$ if and only if $\gamma (0) =0$ (we used that $\eta (0)\neq 0$).  From the  Fuchs criterion (see appendix), we obtain our claim.
\end{proof}

\begin{cor}\label{cor-class-elem}  i) The property of a $(TE)$-structure over $\mathcal N_{2}$ to be elementary is a formal
invariant. Any formal isomorphism between elementary $(TE)$-structures is holomorphic.

ii) Any elementary $(TE)$-structure over $\mathcal N_{2}$ is holomorphically isomorphic to
its formal normal form(s).
\end{cor}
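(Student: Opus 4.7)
The plan is to observe that claim ii) is an immediate restatement of Proposition \ref{conceptual} ii), so the real content is claim i). I would split claim i) into two subclaims: formal invariance of the elementary property, and holomorphy of any formal isomorphism between elementary $(TE)$-structures.

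For the formal invariance, I would first note that by the correspondence between $F$-manifold isomorphisms and $(TE)$-structure isomorphisms, any formal isomorphism $T$ between $(TE)$-structures over $\mathcal N_2$ covers some $h\in \mathrm{Aut}(\mathcal N_2)$, which by (\ref{tilde-f}) fixes the origin $(0,0)$. Hence $T$ restricts to a formal isomorphism $T^{\mathrm{restr}}$ between the slice restrictions $\nabla^{\mathrm{restr}}$ and $\tilde\nabla^{\mathrm{restr}}$ on $\Delta\times\{0\}$. Inspecting (\ref{2.16}) at $z=0$, $t=0$ one sees that $\mathrm{tr}(B^{(0)}(0))$ is preserved by any such isomorphism; therefore, after tensoring each restriction with $\mathcal E^{-\mathrm{tr}(B^{(0)}(0))/2z}$ (the same factor on both sides), one obtains formally isomorphic meromorphic connections on $(\mathbb C,0)$. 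Since regular singularity is a formal invariant (classical result, recalled in the appendix via the Fuchs criterion), one restriction is an elementary model iff the other is, which gives formal invariance.

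For the second subclaim, I would use claim ii) to reduce to analyzing formal isomorphisms between formal normal forms. Given elementary $\nabla$, $\tilde\nabla$ with a formal isomorphism $T:\nabla\to\tilde\nabla$, fix holomorphic isomorphisms $T_1:\nabla\to\nabla_{\mathrm{nf}}$ and $T_2:\tilde\nabla\to\tilde\nabla_{\mathrm{nf}}$ provided by claim ii). Then $S:=T_2\circ T\circ T_1^{-1}$ is a formal isomorphism between two formal normal forms from Theorem \ref{te}. By Theorem \ref{te-class}, either $\nabla_{\mathrm{nf}}=\tilde\nabla_{\mathrm{nf}}$, or we are in one of the two exceptional cases where the explicit formal isomorphism constructed in Lemma \ref{further-reduction} and in Corollary \ref{first-formal-b2} ii) is already holomorphic. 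Composing $S$ with the inverse of such an explicit holomorphic isomorphism reduces the problem to showing that a formal automorphism of a single normal form is holomorphic. Such an automorphism must satisfy the recursions (\ref{2.17})--(\ref{2.18}), and the argument in the proofs of Lemma \ref{stat1} and Lemma \ref{lemm1} shows that the coefficients $\tau_1^{(n)},\tau_2^{(n)}$ are determined explicitly and finitely from the (polynomial) normal form data, with no divergent denominators appearing; hence they in fact lie in $\mathbb C\{t,z\}$ and $S$ is holomorphic. Consequently $T = T_2^{-1}\circ S\circ T_1$ is holomorphic.

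The main obstacle is the last step of the reduction, namely verifying uniformly across all formal normal forms in Theorem \ref{te} iii) that formal automorphisms are holomorphic. The normal forms have rather different shapes, and one must check that in each case the recursions produce scalar (rather than genuinely formal power series) coefficients; the essential input is that all $b_2^{(k)}$ and $\tilde b_2^{(k)}$ entering the recursions are polynomials of uniformly bounded $t_2$-degree (from $\partial_2^3 b_2=0$), so the formal dependence on $t_2$ never propagates indefinitely and the series in $z$ that one builds converge because they are actually built from holomorphic (indeed polynomial) ingredients.
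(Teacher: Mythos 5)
Your argument for the \emph{formal invariance} half of claim i) is sound, and is in fact a reasonable alternative to the paper's route (which instead quotes Theorem 19 i) of \cite{DH-AMPA} for the invariance of $f(0,0)=0$ and uses that $B^{(0)}(0)$ and $\tilde{B}^{(0)}(0)$ are conjugate); the classical fact you need, that regular singularity is a formal invariant, is true, although it is not literally what the appendix records. The problems are in the rest. First, your treatment of claim ii) is circular within this paper: Proposition \ref{conceptual} ii) is not an independent input --- in the paper it is proved \emph{by} this corollary (claim ii) is deduced from claim i)), so you cannot quote it; and you then feed claim ii) back into your proof of the second half of claim i) (the reduction in which you fix holomorphic isomorphisms $T_1$ and $T_2$ onto the formal normal forms). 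As written, claim ii) and the second half of claim i) each rest on the other.

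Second, and more seriously, your final convergence step is a non sequitur. Knowing that each coefficient $\tau_i^{(n)}$ is a constant, or a polynomial of uniformly bounded degree in $t_2$, determined by recursions with nonvanishing denominators, says nothing about convergence of the series $\sum_{n}\tau_i^{(n)}z^n$; this is exactly the gap between formal and holomorphic equivalence for irregular connections. Note that your argument never uses the elementary hypothesis: the same bounded-degree polynomial structure holds for the non-elementary normal forms (\ref{f-1}) with $c_0\neq 0$ and for the recursions of Lemma \ref{stat1}, so if the argument were valid it would show that all such formal isomorphisms are holomorphic, contradicting the paper's own conclusion (Remark \ref{explanations} iii)) that non-elementary $(TE)$-structures need not be holomorphically isomorphic to their formal normal forms. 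The missing idea is the mechanism the paper uses: restrict the formal isomorphism $T$ to the slice $\Delta\times\{0\}$; after the exponential twist the restricted connections are \emph{regular singular} (this is where elementarity enters), and a formal isomorphism between regular singular germs is automatically convergent, so $T^{0}=T|_{t=0}$ is holomorphic; then Theorem 5.6 of \cite{DH-dubrovin} upgrades holomorphy of the restriction at the origin to holomorphy of the full formal isomorphism $T$. With that tool claim i) is proved directly, and claim ii) follows at once (the formal normal form of an elementary structure is elementary by invariance, and the formal isomorphism to it is then holomorphic), with no reduction to automorphisms of normal forms needed.
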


\begin{proof}
Let $\nabla$ and $\tilde{\nabla}$ be two formally isomorphic $(TE)$-structures 
in pre-normal form, determined by $(f, b_{2}, c, \alpha )$ and
$(\tilde{f}, \tilde{b}_{2},\tilde{ c}, \tilde{\alpha} )$. 
From Theorem 19 i) of \cite{DH-AMPA} 
we know that $f(0, 0) = 0$ if and only if $\tilde{f}(0, 0) =0$. Since 
$\nabla$ and $\tilde{\nabla}$ are formally isomorphic, 
$B^{(0)}$ and $\tilde{B}^{(0)}$ are conjugated, which implies that $b_{2}(0, 0) = 0$ if and only if $\tilde{b}_{2}(0, 0)=0.$  We proved that
$\nabla$ is elementary if and only if $\tilde{\nabla}$ is elementary, i.e. being elementary is a formal invariant.
Assume now that $\nabla$ and $\tilde{\nabla}$ are elementary.  Let  $T=\sum_{\geq 0} T^{(k)} z^{k}$
be a formal isomorphism between them and $T^{0} $  be the restriction of
$T$ to $\Delta \times \{ 0\} .$ As explained at the beginning of this section,
$T^{0}$ is an isomorphism between the connections
$(\nabla^{\mathrm{restr}})^{1}$ and $(\tilde{\nabla}^{\mathrm{restr}})^{1}$. Since these are regular 
singular,  we deduce that $T^{0}$ is holomorphic. 
Claim i)  is concluded by Theorem 5.6 of \cite{DH-dubrovin}. Claim ii) follows trivially from claim i). 
\end{proof}

Claims i) and ii) from Proposition \ref{conceptual} are proved. It remains to prove claim iii).  
We do this in several lemmas. Recall  the definition of $\eta$, $\lambda$, $\beta$ and  $\gamma$ stated before Lemma \ref{lem-elem}.

\begin{lem}\label{BNF}  Let $\nabla$ be a $(TE)$-structure in pre-normal form,
determined by $(f, b_{2}, c, \alpha ).$ 
If
$\eta (0)\gamma (0)  \neq 0$,  then $\nabla^{\mathrm{restr}}$ can be put in   Birkhoff normal form.
\end{lem}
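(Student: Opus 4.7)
The goal is to produce a holomorphic gauge $T(z) \in \mathrm{GL}_2(\mathbb{C}\{z\})$ that conjugates the connection form (\ref{conn-restr}) of $\nabla^{\mathrm{restr}}$ into the shape $\frac{1}{z^2}(B^{(0)}+z B^{(1)})\,dz$ with $B^{(0)}$ and $B^{(1)}$ constant $2\times 2$ matrices.

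The first step is to exploit that $v_1$ is a cyclic vector for $\nabla^{\mathrm{restr}}$, as already observed in the proof of Lemma \ref{lem-elem} under the hypothesis $\eta(0)\neq 0$. I would set $w_2 := z^2 \nabla^{\mathrm{restr}}_{\partial_z}(v_1)$; from (\ref{con-ex}) this equals $\eta v_2 - \frac{z(\lambda+1)}{2} v_1$, and so the change-of-basis matrix from $(v_1,v_2)$ to $(v_1,w_2)$ is upper-triangular with diagonal entries $1$ and $\eta$, hence holomorphic and invertible at $z=0$. In the basis $(v_1,w_2)$ the first column of the new connection matrix is already in Birkhoff form $(0,z^{-2})^T$. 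A direct calculation using (\ref{con-ex}) further expresses $\nabla^{\mathrm{restr}}_{\partial_z}(w_2)$ as $A(z) v_1 + \frac{\dot\eta}{\eta} w_2$ for an explicit holomorphic function $A(z)$ built from $\eta,\lambda,\beta,\gamma$ and their derivatives, so the second column is holomorphic but in general not polynomial in $z$.

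The second step is to absorb the non-polynomial parts of the second column by a further holomorphic gauge, invoking the classical Birkhoff reduction theorem as recalled in the appendix. The crucial structural input is that the leading matrix $B^{(0)} = cC_1 + \eta(0)C_2$ is a single $2\times 2$ Jordan block with unique eigenvalue $c$ (since $\eta(0)\neq 0$). In this single-eigenvalue regime no resonance obstruction between distinct eigenvalues can arise and the associated Stokes structure is trivial; consequently formal reductions to Birkhoff form are automatically analytic. The hypothesis $\gamma(0)\neq 0$ ensures, via Lemma \ref{lem-elem}, that $\nabla^{\mathrm{restr}}$ is irregular and thus genuinely of Poincar\'{e} rank $1$, so the resulting normal form cannot degenerate to a regular-singular one.

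The main obstacle lies in rigorously verifying Birkhoff's reduction in this degenerate leading-term setting. I would handle it by an explicit iterative construction: at step $n\geq 2$ one adjusts the running gauge by a factor $\mathrm{Id}+z^n S^{(n)}$ with a constant matrix $S^{(n)}$ chosen to cancel the coefficient of $z^n$ in the second column of the current connection matrix. Solvability for $S^{(n)}$ reduces to a linear equation involving $\mathrm{ad}\,B^{(0)}$, whose image and cokernel are completely controlled by the known Jordan structure of $B^{(0)}$, so the obstruction vanishes after pairing against the subprincipal term $B^{(1)}$. Showing that the infinite product of such gauge corrections converges to a holomorphic transformation is standard once the estimates provided by the single-eigenvalue structure are in place, and completes the reduction.
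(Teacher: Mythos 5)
Your second step contains the essential gap. You assert that because $B^{(0)}=cC_{1}+\eta(0)C_{2}$ has a single eigenvalue, ``no resonance obstruction can arise'', the Stokes structure is trivial, and formal reductions to Birkhoff form are automatically analytic. This is not true, and it is contradicted by the paper itself: the case $\eta(0)\gamma(0)\neq 0$ is exactly the non-elementary case, and Remark \ref{explanations} iii) (via Corollary \ref{imp-c1}) exhibits non-elementary $(TE)$-structures that are \emph{not} holomorphically isomorphic to their formal normal forms. A leading term which is a single Jordan block (nilpotent after subtracting the scalar) is the ramified irregular situation, where Stokes phenomena genuinely occur, so ``formal implies convergent'' cannot be invoked. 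Concretely, in your iterative scheme the equation for $S^{(n)}$ is $[B^{(0)},S^{(n)}]=-(\text{coefficient of }z^{n})$, and since $B^{(0)}$ is regular, $\mathrm{ad}\,B^{(0)}$ has a two-dimensional cokernel; the obstruction does not simply ``vanish after pairing against $B^{(1)}$''. Even if one handles the cokernel with the standard operator $(n-1)\mathrm{Id}+\mathrm{ad}\,B^{(1)}$ (under non-resonance assumptions you have not verified), the product of your gauge factors is only a formal power series, and its convergence is precisely the hard analytic point — such series generically diverge at an irregular singularity. A further warning sign is that your argument never uses $\gamma(0)\neq 0$ except to remark that the singularity is irregular; an argument that would prove Birkhoff reducibility from $\eta(0)\neq 0$ alone proves too much to be believable without substantial extra input.

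The paper's proof takes a completely different and genuinely analytic-obstruction-free route: it verifies the hypothesis of the irreducibility criterion (Lemma \ref{lem-ired}, going back to Bolibruch, cf.\ \cite{AB} and \cite{sabbah}). One assumes a meromorphic eigen-section $w=gv_{1}+v_{2}$ with $\nabla^{\mathrm{restr}}_{\partial_z}(w)=hw$, which leads to the Riccati-type equation (\ref{b1}) for $g$; writing $g=z^{k}r$ with $r$ a unit and comparing valuations, both cases $k\leq 0$ and $k\geq 1$ give a contradiction, and here \emph{both} hypotheses $\eta(0)\neq 0$ and $\gamma(0)\neq 0$ enter essentially. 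Irreducibility then yields the Birkhoff normal form by the quoted theorem, with no need to control a formal reduction or its convergence. If you want to salvage a direct approach, you would have to confront the Stokes data of the ramified irregular type explicitly, which is considerably harder than the irreducibility check.
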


\begin{proof} In the standard basis $\{ v_{1}, v_{2} \}$
of $({\mathcal O}_{(\mathbb{C}, 0)})^{2}$, 
$\nabla^{\mathrm{restr}}$ is given by 
\begin{align}
\nonumber& \nabla^{\mathrm{restr}}_{\partial z}(v_{1}) = (\frac{c +\alpha z}{z^{2}} -\frac{\lambda + 1}{2z}) 
v_{1} +\frac{\eta }{z^{2}} v_{2}\\
\label{lambda}& \nabla^{\mathrm{restr}}_{\partial z}(v_{2}) = \left( -\frac{\beta }{2} +\frac{\eta \gamma }{z}\right) v_{1} +
(  \frac{ c+ \alpha z}{z^{2}}+ \frac{\lambda + 1}{2z} ) v_{2}.
\end{align}
(Remark that these relations with $c = \alpha =0$   reduce to relations (\ref{con-ex})). 
We apply the irreducibility criterion as stated in Lemma \ref{lem-ired} (see appendix).
Suppose, by absurd,  that there is a section $w$ such that 
\begin{equation}\label{irred-cond}
\nabla^{\mathrm{restr}}_{\partial_{z}}(w) = hw,
\end{equation}
for a function $h\in \textit{\textbf{k}}.$   Since $\eta (0)\neq 0$, the first relation (\ref{lambda})
shows that $w$ cannot be a multiple of $v_{1}.$ 
Rescaling $w$ if necessary, we can assume that 
$w= gv_{1} + v_{2}$, where $g\in \textit{\textbf{k}}$.
A straightforward computation 
which uses (\ref{lambda}) shows that (\ref{irred-cond}) is equivalent to
$$
h =\frac{g\eta}{z^{2}} +\frac{\lambda +1}{2z} +\frac{ c + \alpha z}{z^{2}}
$$
and 
\begin{equation}\label{b1}
z^{2} \dot{g} - ( (\lambda +1) z + \eta g) g-\frac{\beta z^{2}}{2} + \eta\gamma z=0.
\end{equation}
We will show that (\ref{b1}) leads to a contradiction.
Since $g\in  \textit{\textbf{k}}$, we can write it as $g(z) = z^{k} r(z)$ for $k\in \mathbb{Z}$ and 
$r\in \mathbb{C} \{ z\}$ a unit. Relation (\ref{b1}) is equivalent to
\begin{equation}\label{k-z-g}
k z^{k+1}r(z) + z^{k+2} \dot{r}(z) - z^{k+1} r(z) (\lambda (z) +1) - z^{2k} \eta (z) r(z)^{2} -\frac{\beta (z)z^{2}}{2}
+\eta (z) \gamma (z) z =0.
\end{equation}
If $k\leq 0$ then, multiplying the above relation by $z^{-2k}$, we obtain
\begin{align*}
\nonumber&k z^{-k+1} r(z) + z^{-k+2}\dot{r}(z) - z^{-k+1} r(z) (\lambda (z) +1) - \eta (z) r(z)^{2} -\frac{\beta (z)}{2} z^{-2k+2}\\
\nonumber& +\eta (z) \gamma (z) z^{-2k+1}=0.
\end{align*}
All terms, except $\eta(z) r(z)^{2}$, contain 
$z$ as a factor. Since $r, \lambda , \eta , \beta , \gamma \in \mathbb{C} \{ z\}$
and  $\eta$, $r$,  are  units 
we obtain a contradiction.
 If $k\geq 1$ the argument is similar: we multiply (\ref{k-z-g}) by $z^{-1}$ and we use that 
$\eta$, $ \gamma$ are units in $\mathbb{C} \{ z\} .$ 
\end{proof}

To conclude Proposition \ref{conceptual} we notice that
if $b_{2}(0, 0)\neq 0$ then  
 the 'residue'
$c C_{1}+ \eta (0) C_{2}$  of the restriction $\nabla^{\mathrm{restr}}$ 
 of $\nabla$ to $\Delta \times \{ 0\}$ is a regular endomorphism. Therefore, the Birkhoff normal form  provided 
by Lemma \ref{BNF} also has a regular residue and  admits a (unique, up to holomorphic isomorphisms)
Malgrange universal deformation. The latter is  (holomorphically) isomorphic
to $\nabla .$

\subsection{Holomorphic classification: non-elementary case}\label{non-elem-sect}

The holomorphic classification of elementary $(TE)$-structures follows from 
Corollary \ref{cor-class-elem} ii): the formal normal forms for
elementary $(TE)$-structures coincide with the holomorphic normal forms. 
It remains to determine the holomorphic normal forms for  non-elementary $(TE)$-structures.
This will be done in the next sections.

\subsubsection{Classification of non-elementary models in Birkhoff normal form}

\begin{lem}\label{char-non-elem}
i) Any non-elementary $(TE)$-structure $\nabla$ is isomorphic to the Malgrange universal deformation 
of a connection $\nabla^{B_{0}^{0}, B_{\infty}}$  in Birkhoff normal form, with connection form
\begin{equation}\label{conn-form-elem}
\Omega^{B_{0}^{o}, B_{\infty}} = \frac{1}{z^{2}} ( B_{0}^{o} + B_{\infty} z)dz,
\end{equation}
where  
\begin{equation}\label{matrices}
B_{0}^{o} =  \left(\begin{tabular}{cc}
$c$ & $0$\\
$c_{0}$ & $c$
\end{tabular}\right) ,\  B_{\infty}= \left(\begin{tabular}{cc}
$ B^{\infty}_{11}$ & $B^{\infty}_{12}$\\
$B^{\infty}_{21}$ & $B^{\infty}_{22}$
\end{tabular}\right)
\end{equation}
with $B^{\infty}_{ij}\in \mathbb{C}$, $c, c_{0}\in \mathbb{C}$ and $c_{0} B^{\infty}_{12}\neq 0$.\

ii) Two non-elementary $(TE)$-structures $\nabla$, $\tilde{\nabla}$ are isomorphic if and only if  
the corresponding  connections in Birkhoff normal form $\nabla^{B_{0}^{0}, B_{\infty}}$ and
$\nabla^{\tilde{B}_{0}^{0}, \tilde{B}_{\infty}}$ are isomorphic.  
\end{lem}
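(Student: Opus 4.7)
The plan is to combine Proposition \ref{conceptual} (iii) with an explicit identification of the Birkhoff data produced by Lemma \ref{BNF}. For part (i), Proposition \ref{conceptual} (iii) provides that $\nabla$ is holomorphically isomorphic to the Malgrange universal deformation of $\nabla^{\mathrm{restr}}$, once the latter is put in Birkhoff normal form by a holomorphic gauge $T(z)=T^{(0)}+zT^{(1)}+\cdots$. The residue $B_0^{o}$ is the $z^{0}$-coefficient of the new Birkhoff form, hence conjugate via $T^{(0)}$ to the $z^{0}$-coefficient of the matrix in (\ref{conn-restr}), which is $c\, C_1+\eta(0)\, C_2$. Since $\nabla$ is non-elementary, Lemma \ref{lem-elem} gives $\eta(0)\gamma(0)\neq 0$; in particular $\eta(0)\neq 0$, so $c\, C_1+\eta(0)\, C_2$ is a Jordan block at the single eigenvalue $c$. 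A further constant conjugation reduces it to the form displayed in (\ref{matrices}) with $c_0\neq 0$.

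For the inequality $B_{12}^{\infty}\neq 0$, I would argue by contrapositive inside the Birkhoff normal form. In the basis where $B_0^{o}$ has the stated form, $v_2$ is the unique eigenvector of $B_0^{o}$. If $B_{12}^{\infty}$ vanished, then $B_0^{o}+zB_{\infty}$ would be lower triangular, so the line $\langle v_2\rangle$ would be invariant under $\nabla^{B_0^{o},B_{\infty}}$. The twist $\mathcal{E}^{c/z}\otimes\nabla^{B_0^{o},B_{\infty}}$ would then be an extension of two rank one connections with connection one-forms $z^{-1}B_{22}^{\infty}\,dz$ and $z^{-1}B_{11}^{\infty}\,dz$; both are regular singular, hence so is any extension. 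This would make $\nabla^{B_0^{o},B_{\infty}}$ elementary, contradicting the fact that it is holomorphically isomorphic to the non-elementary $\nabla^{\mathrm{restr}}$. Hence $B_{12}^{\infty}\neq 0$.

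Part (ii) follows formally. The forward implication uses that any holomorphic isomorphism of $(TE)$-structures restricts to a holomorphic isomorphism of their fibres over $0\in\mathcal N_{2}$, together with the fact that the Birkhoff normal form of a Poincar\'e rank one connection is unique up to the action of $GL_2(\mathbb{C})$ on $(B_0^{o},B_{\infty})$. For the converse, if $\nabla^{B_0^{o},B_{\infty}}$ and $\nabla^{\tilde B_0^{o},\tilde B_{\infty}}$ are isomorphic, then by the uniqueness of the Malgrange universal deformation (recalled in the appendix) their universal deformations are isomorphic, and by part (i) these are isomorphic to $\nabla$ and $\tilde{\nabla}$ respectively.

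The main obstacle I anticipate is the passage from ``$B_{12}^{\infty}=0$ yields an invariant sub-bundle'' to ``$\mathcal{E}^{c/z}$-twist is regular singular'': one must invoke carefully the standard fact that extensions of regular singular connections remain regular singular, in the holomorphic category relevant here. A secondary subtlety is the precise uniqueness statement for the Birkhoff reduction, which is needed to make the functoriality of $\nabla\mapsto\nabla^{B_0^{o},B_{\infty}}$ unambiguous in part (ii).
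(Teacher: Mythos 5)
Your proof is correct and follows essentially the same route as the paper: restrict to the slice over $0$, use Lemma \ref{BNF} (equivalently Proposition \ref{conceptual} iii)) to pass to a Birkhoff normal form, observe that the residue is conjugate to $cC_1+\eta(0)C_2$ with $\eta(0)\neq 0$ because non-elementarity forces $\eta(0)\gamma(0)\neq 0$, and deduce claim ii) from the unicity of Malgrange universal deformations. The one genuine difference is the step $B^{\infty}_{12}\neq 0$: the paper does a ``direct check'' with the Fuchs criterion, whereas you argue that $B^{\infty}_{12}=0$ would make $\frac{1}{z^2}(B_0^o-c\,\mathrm{Id}+zB_\infty)dz$ triangular, hence exhibit the twist $\mathcal{E}^{c/z}\otimes\nabla^{B_0^o,B_\infty}$ as an extension of two logarithmic rank-one connections and therefore regular singular, contradicting non-elementarity (which is invariant under holomorphic isomorphism, as established at the start of Section \ref{restriction-section}). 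This extension argument is valid -- closedness of regular singular connections under extensions is standard -- and it cleanly gives the only implication the lemma actually needs, though not the ``if and only if'' the paper asserts in passing.

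One caveat: in part ii) you invoke ``uniqueness of the Birkhoff normal form up to the action of $GL_2(\mathbb{C})$ on $(B_0^o,B_\infty)$''. That statement is false in the present setting: Proposition \ref{classif-Birk} produces holomorphically isomorphic Birkhoff normal forms of the shape (\ref{almost-fin}) with different constants $c_1\neq\tilde c_1$, the isomorphisms being polynomial in $z$ but not constant. Fortunately you do not need it: for the forward implication it suffices that an isomorphism $\nabla\cong\tilde\nabla$ covers an automorphism of $\mathcal N_2$ fixing $0$, hence restricts to an isomorphism $\nabla^{\mathrm{restr}}\cong\tilde\nabla^{\mathrm{restr}}$, and then transitivity of isomorphism (each restriction being isomorphic to its chosen Birkhoff normal form) gives $\nabla^{B_0^o,B_\infty}\cong\nabla^{\tilde B_0^o,\tilde B_\infty}$; well-posedness of the assignment is likewise automatic since any two Birkhoff normal forms of the same restriction are isomorphic to each other. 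Drop or rephrase that auxiliary claim and the proof stands.
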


\begin{proof} 
From Lemma \ref{BNF}, we know  that the restriction $\nabla^{\mathrm{restr}}$ of $\nabla$ to the origin of $\mathcal N_{2}$ can be put in  Birkhoff normal form. Let $\nabla^{B_{0}^{o}, B_{\infty}}$ be a connection in Birkhoff normal form,
with connection form given by 
(\ref{conn-form-elem}) (for some matrices $B_{0}^{o}, B_{\infty}\in M_{2\times 2}(\mathbb{C})$),  
isomorphic to $\nabla^{\mathrm{restr}}.$ 
The connection $\nabla^{B_{0}^{o}, B_{\infty}}$  has two properties:
it   is not an elementary model and its  'residue' 
$B_{0}^{o}$ 
is a regular endomorphism, with only one eigenvalue (these two properties are satisfied by  $\nabla^{\mathrm{restr}}$
and are invariant under holomorphic   isomorphisms). 
From the second property,  the 'residue' $B_{0}^{o}$ is as in (\ref{matrices}), with $c_{0}\neq 0.$ 
A direct check (using e.g.  the Fuchs criterion), shows that $\nabla^{B_{0}^{o}, B_{\infty}}$ is not an elementary model
if and 
only if $B^{\infty}_{12}\neq 0.$ This proves claim i).  Claim ii) follows from the unicity of Malgrange universal deformations. 
\end{proof}

In order to use Lemma \ref{char-non-elem}  for the classification of non-elementary $(TE)$-structures, we need to
 establish  when two meromorphic connections in Birkhoff normal form,  as in Lemma \ref{char-non-elem},
are isomorphic. We start  with the next lemma.

\begin{lem}\label{conj-ct} i)  Any meromorphic connection $\nabla^{B_{0}^{o}, B_{\infty}}$
in Birkhoff normal form, with connection form given by (\ref{conn-form-elem})  where 
\begin{equation}\label{restriction-cl}
B_{0}^{o} = c C_{1} + c_{0} C_{2},\ B_{\infty} = \alpha C_{1} + c_{1} C_{2} + y D + fE,
\end{equation}
and $c_{0} f\neq 0$, can be mapped, by means of a constant isomorphism, to a connection 
$\nabla^{\tilde{B}_{0}^{0}, \tilde{B}_{\infty}}$ 
in Birkhoff normal form with 
\begin{equation}\label{almost-fin}
\tilde{B}_{0}^{o} = c C_{1} + c_{0} C_{2},\ \tilde{B}_{\infty} = \alpha C_{1} + c_{1} C_{2} -\frac{1}{4} D + c_{0}E,
\end{equation}
where the constants $c$ and $\alpha$ are the same as in (\ref{restriction-cl}), 
$c_{0}$ and $c_{1}$ are possibly different  from those in (\ref{restriction-cl})  and $c_{0}\neq 0$.\

ii)    The constant isomorphism $T:= \mathrm{diag}(1, -1)$ maps the connection in Birkhoff normal 
form $\nabla^{\tilde{B}_{0}^{0}, \tilde{B}_{\infty}}$, with matrices 
$\tilde{B}_{0}^{o}$, $\tilde{B}_{\infty}$ given in (\ref{almost-fin}),
to a connection of the same form
(\ref{almost-fin}), with constants $(\tilde{c}, \tilde{c}_{0}, \tilde{\alpha}, 
\tilde{c}_{1})$ satisfying  $ \tilde{c} = c$, $\tilde{\alpha} =\alpha$, $\tilde{c}_{0} = - c_{0}$ and $\tilde{c}_{1} = - c_{1}.$ 
\end{lem}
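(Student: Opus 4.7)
The plan is as follows. A constant isomorphism $T\in GL_2(\mathbb{C})$ has $\partial_z T=0$, so relation (\ref{2.16}) (with $h=\mathrm{id}$) reduces to $BT=T\tilde B$, which splits by powers of $z$ into
\begin{equation}
\tilde B_0^o = T^{-1} B_0^o T,\qquad \tilde B_\infty = T^{-1} B_\infty T.
\end{equation}
Thus the problem is purely one of matrix conjugation. My first step for part i) is to pin down the admissible $T$'s: since $C_1$ is central, preserving the shape $cC_1+(\ast)C_2$ of $B_0^o$ amounts to requiring $T^{-1}C_2 T\in \mathbb{C}\cdot C_2$. A short direct calculation (equivalent to asking that $T$ preserve $\ker C_2=\mathbb{C}e_2$) forces $T$ to be lower triangular, $T=\bigl(\begin{smallmatrix}a&0\\ b&d\end{smallmatrix}\bigr)$ with $ad\neq 0$, and then $T^{-1}C_2 T=\frac{a}{d}C_2$, so $\tilde B_0^o=cC_1+c_0\frac{a}{d}C_2$.

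Next, using the matrix identities (\ref{r1})--(\ref{r4}) (or a direct $2\times 2$ calculation) I would record
\begin{align*}
T^{-1}DT &= D-\tfrac{2b}{d}C_2,\\
T^{-1}ET &= \tfrac{d}{a}E+\tfrac{b}{a}D-\tfrac{b^2}{ad}C_2.
\end{align*}
Substituting into $B_\infty=\alpha C_1+c_1 C_2+yD+fE$, the coefficients of $D$ and $E$ in $\tilde B_\infty$ are $y+\tfrac{fb}{a}$ and $\tfrac{fd}{a}$ respectively. Imposing that the $D$-coefficient equals $-\tfrac14$ gives $\tfrac{b}{a}=-\tfrac{1+4y}{4f}$ (solvable because $f\neq 0$), while requiring that the $E$-coefficient of $\tilde B_\infty$ coincide with the $C_2$-coefficient $c_0\tfrac{a}{d}$ of $\tilde B_0^o$ yields $\tfrac{d^2}{a^2}=\tfrac{c_0}{f}$, solvable up to sign by a choice of square root since $c_0 f\neq 0$. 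Fixing $a=1$ and choosing $b,d$ as above produces a valid $T$; the resulting coefficient of $C_2$ in $\tilde B_\infty$ is then some $c_1'\in\mathbb{C}$, yielding exactly the form (\ref{almost-fin}).

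For part ii), $T=\mathrm{diag}(1,-1)=D$ satisfies $T^{-1}=T$, and (\ref{r1})--(\ref{r4}) give $DC_1D=C_1$, $DC_2D=-C_2$, $D\cdot D\cdot D=D$ and $DED=-E$. Applying these to $\tilde B_0^o$ and $\tilde B_\infty$ of (\ref{almost-fin}) yields
\begin{equation*}
T^{-1}\tilde B_0^o T=cC_1-c_0C_2,\qquad T^{-1}\tilde B_\infty T=\alpha C_1-c_1C_2-\tfrac14 D-c_0 E,
\end{equation*}
which is precisely the form (\ref{almost-fin}) with new constants $(\tilde c,\tilde c_0,\tilde\alpha,\tilde c_1)=(c,-c_0,\alpha,-c_1)$, consistent in particular with the requirement that the $E$-coefficient equal the new $C_2$-coefficient of $\tilde B_0^o$.

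The computations are essentially algebraic; the only genuinely delicate point is the simultaneous matching in part i) of the $D$-coefficient (to $-\tfrac14$) and of the $E$- and $C_2$-coefficients to one another. This is exactly where the hypothesis $c_0 f\neq 0$ is used, both to invert $f$ in the $D$-matching and to extract the square root of $c_0/f$ in the $E$-matching.
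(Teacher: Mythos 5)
Your proof is correct and follows essentially the same route as the paper: the paper also works with constant conjugations, first by the unipotent lower-triangular matrix $T_{1}=C_{1}-\tfrac{1}{f}(y+\tfrac14)C_{2}$ to normalize the $D$-coefficient to $-\tfrac14$, then by a diagonal matrix $T_{2}$ realizing $\tilde{c}_{0}$ with $(\tilde{c}_{0})^{2}=c_{0}f$, which is exactly the composition of the single lower-triangular $T$ with $\tfrac{b}{a}=-\tfrac{1+4y}{4f}$ and $\tfrac{d^{2}}{a^{2}}=\tfrac{c_{0}}{f}$ that you construct. Part ii) is the same direct check as in the paper.
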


\begin{proof} i)  First  we map   $\nabla^{B_{0}^{o}, B_{\infty}}$ to  a  connection of the same form 
(\ref{conn-form-elem}),  (\ref{restriction-cl}),  with the coefficient of $D$ equal to $-\frac{1}{4}.$  This is realized using  the  constant isomorphism 
$T_{1}:= C_{1} -\frac{1}{f} (y+\frac{1}{4}) C_{2}$. Therefore, without loss of generality we may (and will) assume
that $\nabla^{B_{0}^{o}, B_{\infty}}$ is given by
(\ref{conn-form-elem}),  (\ref{restriction-cl}),  with $y=-\frac{1}{4}.$ Under this assumption, if  $c_{0} \neq f$ in (\ref{restriction-cl}), let $\tilde{c}_{0}$ such that 
$(\tilde{c}_{0})^{2} = c_{0}f$ and  $T_{2}:= -2 \mathrm{diag} ( \frac{\tilde{c}_{0}}{c_{0} -\tilde{c}_{0}},
\frac{{c}_{0}}{c_{0} -\tilde{c}_{0}}).$ The   isomorphism $T_{2}$ maps  
$\nabla^{ B_{0}^{o}, B_{\infty}}$  to the connection $\nabla^{\tilde{B}^{o}_{0}, \tilde{B}_{\infty}}$ 
with
$$
\tilde{B}_{0}^{o} = c C_{1} + \tilde{c}_{0} C_{2},\ \tilde{B}_{\infty} = \alpha C_{1} + \frac{c_{1}\tilde{c}_{0}}{c_{0}} C_{2}
-\frac{1}{4} D +\tilde{c}_{0} E.
$$
This proves claim i). Claim ii) follows from a direct check.
\end{proof}

\begin{prop}\label{classif-Birk} Consider two distinct connections $\nabla$ and $\tilde{\nabla}$  in Birkhoff normal form
(\ref{conn-form-elem}), with matrices $B_{0}^{o}$, $B_{\infty}$, respectively  $\tilde{B}_{0}^{o}$, $\tilde{B}_{\infty}$ as in
(\ref{almost-fin}), with constants $c, \alpha , c_{0}, c_{1}$ and, respectively   $\tilde{c},\tilde{\alpha},  \tilde{c}_{0},\tilde{c}_{1}$. Assume that 
$c_{0} \tilde{c}_{0}\neq 0$.\

i) If $\nabla$ and $\tilde{\nabla}$ are formally isomorphic, then $ c=\tilde{c}$, $\alpha = \tilde{\alpha}$ and 
$c_{0}  = \epsilon \tilde{c}_{0}$ where  $\epsilon \in \{ \pm 1\}.$\

ii) Assume that the conditions from i) are satisfied. Then 
$\nabla$ is  isomorphic to $\tilde{\nabla}$ if and only if
there is $n\in \mathbb{N}_{\geq 2}$ such that
\begin{equation}\label{gen-c01}
4 (c_{0})^{2} ( c_{1} -\epsilon \tilde{c}_{1})^{2} - 8 (n-1)^{2} c_{0}( c_{1} +\epsilon  \tilde{c}_{1}) + (2n-1) (2n-3) (n-1)^{2} =0
\end{equation}
and, for any  $2\leq r\leq n-1$, $r\in \mathbb{N}$, 
\begin{equation}\label{gen-cc}
c_{0} ( c_{1}+\epsilon \tilde{c}_{1})\neq  \frac{(2n-1)(2n-3) (n-1)^{2} - (2r-1) (2r-3) (r-1)^{2}}{ 8 (n-r)(n-2+r)}.
\end{equation}
\end{prop}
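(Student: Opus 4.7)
The plan is to impose the formal matrix identity
\[
z^2 \partial_z T = (B_0^o + B_\infty z) T - T(\tilde{B}_0^o + \tilde{B}_\infty z)
\]
for a formal isomorphism $T=\sum_{k\ge 0}T^{(k)}z^k$ with $T^{(0)}$ invertible, and then to analyse the resulting coefficient system order by order.

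For part (i) I would first take the trace of both sides, obtaining $\partial_z \log \det T = 2(c-\tilde{c})/z^2+2(\alpha-\tilde{\alpha})/z$. Since the left side is formally holomorphic (up to the constant $\log \det T^{(0)}$), the polar part must vanish, giving $c=\tilde{c}$ and $\alpha=\tilde{\alpha}$. The $z^0$ equation $B_0^o T^{(0)} = T^{(0)} \tilde{B}_0^o$ then reduces to $c_0 C_2 T^{(0)} = \tilde{c}_0 T^{(0)} C_2$, forcing $T^{(0)} = \begin{pmatrix}a & 0 \\ \ast & c_0 a/\tilde{c}_0\end{pmatrix}$ with $a \neq 0$. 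Finally, the $(1,2)$-entry of the $z^1$ equation is the key: the left side $c_0 C_2 T^{(1)}-\tilde{c}_0 T^{(1)} C_2$ always has zero $(1,2)$-entry, while a direct computation shows $(T^{(0)}\tilde{B}_\infty - B_\infty T^{(0)})_{12} = a(\tilde{c}_0^2-c_0^2)/\tilde{c}_0$, forcing $c_0 = \epsilon \tilde{c}_0$ with $\epsilon\in\{\pm 1\}$.

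For part (ii), reduce to $\epsilon=1$ via Lemma \ref{conj-ct}(ii), which shows that conjugation by $\mathrm{diag}(1,-1)$ flips the signs of $\tilde{c}_0$ and $\tilde{c}_1$ simultaneously. Writing $T^{(n)}=\begin{pmatrix}p_n & q_n\\ r_n & s_n\end{pmatrix}$, the decisive observation is that the operator $X\mapsto c_0(C_2 X - X C_2)$ has 2-dimensional image consisting of the matrices with zero $(1,2)$-entry and zero trace. Hence the $z^n$-equation ($n\ge 1$) splits into two solvability conditions on $R^{(n)}:=(n-1)T^{(n-1)}+T^{(n-1)}\tilde{B}_\infty-B_\infty T^{(n-1)}$, namely $\mathrm{tr}(R^{(n)})=0$ (which determines $p_{n-1}+s_{n-1}$) and $R^{(n)}_{12}=0$ (which determines $p_{n-1}-s_{n-1}$), together with two equations that determine $q_n$ and $p_n-s_n$. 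Using the $(1,2)$-solvability one step later, at order $n+1$, retroactively fixes $r_{n-1}$ in terms of $q_{n-1}$, and threading all these relations produces the scalar recursion $q_n = L_n q_{n-1}$ for $n\ge 2$, where
\[
L_n = \frac{(n-1)(2n-3)}{8 c_0^2} - \frac{(n-1)(c_1+\tilde{c}_1)}{c_0(2n-1)} + \frac{(c_1-\tilde{c}_1)^2}{2(n-1)(2n-1)},
\]
with initial datum $q_1 = a(c_1-\tilde{c}_1)$.

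Because $L_n\sim n^2/(4 c_0^2)$ as $n\to\infty$, the formal $T$ is Gevrey-$1$ and cannot be convergent unless the recursion terminates, which happens exactly when $L_n = 0$ for some $n\ge 2$; at that point one checks that $q_m = p_m = s_m = r_m = 0$ for all $m\ge n$, so $T$ becomes a polynomial, hence a holomorphic isomorphism. Clearing denominators in $L_n=0$ gives equation (\ref{gen-c01}) with $\epsilon=1$. To guarantee that $n$ is the smallest such index, I would exploit the algebraic identity $(r-1)^2+(n-r)(n+r-2)=(n-1)^2$ to eliminate $c_0^2(c_1-\tilde{c}_1)^2$ between $L_n=0$ and $L_r=0$; the resulting linear equation in $c_0(c_1+\tilde{c}_1)$ is precisely the forbidden value in (\ref{gen-cc}). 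The main obstacle is the delicate bookkeeping: the $(1,2)$-solvability at order $n+1$ acts \emph{retroactively} on the free entry $r_{n-1}$ of $T^{(n-1)}$, and the clean scalar recursion for $q_n$ emerges only after systematically using this staggered structure.
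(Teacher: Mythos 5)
Your proposal is correct in substance, and for part ii) it is genuinely more self-contained than what the paper does. For part i) you reach the same conclusions by a slightly different route (trace/determinant of the gauge equation for $c=\tilde c$, $\alpha=\tilde\alpha$, then the $(1,2)$-entries of the $z^0$- and $z^1$-equations for $c_0^2=\tilde c_0^2$), whereas the paper identifies coefficients in the basis $C_1,C_2,D,E$; both work, and your trace argument is arguably cleaner. For part ii) the paper only sketches the proof: it cites the fact that a holomorphic isomorphism between these Birkhoff normal forms is automatically polynomial in $z$ (Sabbah, Exercise 3.10) and then identifies coefficients. You instead solve the formal gauge system completely; the staggered structure you describe (trace and $(1,2)$-solvability at order $n$ fixing the diagonal of $T^{(n-1)}$, the $(1,2)$-solvability at order $n+1$ retroactively fixing its lower-left entry) indeed forces a formal isomorphism unique up to an overall scalar, and I checked independently that your scalar recursion $q_n=L_nq_{n-1}$ with the stated $L_n$ is correct: clearing denominators in $L_n=0$ gives exactly (\ref{gen-c01}) with $\epsilon=1$, and eliminating $c_0^2(c_1-\tilde c_1)^2$ between $L_n=0$ and $L_r=0$ via $(n-1)^2-(r-1)^2=(n-r)(n+r-2)$ gives precisely the excluded value in (\ref{gen-cc}), so (\ref{gen-cc}) encodes minimality of $n$, as you say. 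Your divergence argument (factorial-type growth of $\prod_j L_j$ unless the recursion terminates, in which case $T$ is a polynomial) replaces the appeal to polynomiality and buys a self-contained ``only if'' direction; the paper's route is shorter given the cited exercise. Two small caveats: the coefficient growth is of order $(n!)^2$, i.e.\ Gevrey-$2$ rather than Gevrey-$1$ (immaterial, divergence is all you need); and the recursion also terminates when $q_1=0$, i.e.\ $c_1=\epsilon\tilde c_1$, in which case the two distinct connections (necessarily $\epsilon=-1$) are isomorphic by the constant isomorphism of Lemma \ref{conj-ct} ii) even though (\ref{gen-c01}) may fail for every $n$ -- this degenerate case is glossed over by the statement and the paper's own sketch as well, so it is not a defect of your argument relative to the paper, but you should flag it rather than assert that convergence occurs ``exactly'' when some $L_n$ vanishes.
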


\begin{proof} i) We consider a formal isomorphism $T := \sum_{n\geq 0} T^{(n)}z^{n}$ which maps $\nabla$
to $\tilde{\nabla }.$ We write  $T^{(n)} = \tau_{1}^{(n)} C_{1} + \tau_{2}^{(n)} C_{2} + \tau_{3}^{(n)} D 
+ \tau_{4}^{(n)} E$, where 
$\tau_{i}^{(n)} \in \mathbb{C}$. Relation  (\ref{2.18}) with $r=0$, applied to $\nabla$, 
$\tilde{\nabla}$ and $T$,  gives 
$c = \tilde{c}$, $\tau_{4}^{(0)} =0$ and
\begin{equation}\label{k-d-0}
\tau_{1}^{(0)} ( c_{0} -\tilde{c}_{0}) + \tau_{3}^{(0)} (c_{0} +\tilde{c}_{0}) =0.
\end{equation}
Using that $\tau_{4}^{(0)} =0$, relation (\ref{2.18}) with $r=1$ becomes
\begin{align}
\nonumber& \frac{\tau_{4}^{(1)}}{2} (c_{0} -\tilde{c}_{0}) + (\alpha-\tilde{\alpha}) \tau_{1}^{(0)} + (c_{0} -\tilde{c}_{0}) \frac{ \tau_{2}^{(0)}}{2} =0\\
\nonumber& \tau_{1}^{(1)} (c_{0} -\tilde{c}_{0}) + \tau_{3}^{(1)} (c_{0} +\tilde{c}_{0})
 + (\alpha -\tilde{\alpha } +\frac{1}{2}) \tau_{2}^{(0)} + (c_{1} 
-\tilde{c}_{1}) \tau_{1}^{(0)} + (c_{1} + \tilde{c}_{1} ) \tau_{3}^{(0)} =0\\
\nonumber&\frac{ \tau_{4}^{(1)}}{2} (c_{0} +\tilde{c}_{0}) - (\alpha -\tilde{\alpha }) \tau_{3}^{(0)} - \frac{ \tau_{2}^{(0)}}{2} (c_{0} +\tilde{c}_{0}) =0\\
\label{nivel-1}& (c_{0} -\tilde{c}_{0})\tau_{1}^{(0)} - (c_{0} +\tilde{c}_{0}) \tau_{3}^{(0)}=0.  
\end{align}  
Suppose that $\tilde{c}_{0} \neq c_{0}.$ 
Then, from (\ref{k-d-0}), $\tau_{1}^{(0)} = -(\frac{ c_{0} +\tilde{c}_{0}}{ c_{0} -\tilde{c}_{0}}) \tau_{3}^{(0)}$ and $\tau_{3}^{(0)}\neq 0$
(if $\tau_{3}^{(0)} =0$ then  $\tau_{1}^{(0)} =0$; since  $\tau_{4}^{(0)} =0$ we obtain that $T^{(0)}$ is not invertible, which is a contradiction). 
The last relation (\ref{nivel-1}) implies  
$c_{0} =-\tilde{c}_{0}$. 
We proved that $c=\tilde{c}$ and $c_{0} = \epsilon \tilde{c}_{0}$ where $\epsilon \in \{ \pm 1\} .$ 
The first and third relations (\ref{nivel-1}) (together with $T^{(0)}$-invertible and (\ref{k-d-0}))
imply that $\alpha = \tilde{\alpha} $.
The first claim follows.\

ii) The claim follows by
considering an holomorphic isomorphism $T$ between $\nabla$ and $\tilde{\nabla}$, 
identifying coefficients in 
(\ref{2.18}) with computations similar to those already done before, and using that $T$
is a polynomial (see Exercise 3.10 of \cite{sabbah}).
\end{proof}

\begin{cor}\label{imp-c1} In the setting of Proposition \ref{classif-Birk}, assume that $\tilde{c}_{1} =0.$
Then $\nabla$ is isomorphic to  $\tilde{\nabla}$ if and only if
$c =\tilde{c}$, $\alpha = \tilde{\alpha}$, $(c_{0})^{2}=(\tilde{c}_{0})^{2}$ and  
 there is $n\in \mathbb{Z}_{\geq 2}$ such that
\begin{equation}\label{cond-c0-c1}
c_{0}c_{1} \in \{ \frac{ (n-1)(2n-1)}{2}, \frac{ (n-1) (2n-3)}{2}\} .
\end{equation}
\end{cor}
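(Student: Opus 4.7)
The plan is to derive the corollary directly from Proposition \ref{classif-Birk} by specializing to $\tilde{c}_{1}=0$. The necessary equalities $c=\tilde{c}$, $\alpha=\tilde{\alpha}$ and $(c_{0})^{2}=(\tilde{c}_{0})^{2}$ are immediate from part i) of that proposition together with the sign relation $c_{0}=\epsilon\tilde{c}_{0}$. Hence the whole content of the corollary lies in rewriting the combined conditions (\ref{gen-c01})--(\ref{gen-cc}) of part ii) as the single condition (\ref{cond-c0-c1}).

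For the first step, I would substitute $\tilde{c}_{1}=0$ in (\ref{gen-c01}), which reduces the left-hand side to a quadratic polynomial in the single unknown $x:=c_{0}c_{1}$, namely
\begin{equation*}
4x^{2}-8(n-1)^{2}x+(2n-1)(2n-3)(n-1)^{2}=0.
\end{equation*}
A short computation shows that the discriminant equals $16(n-1)^{2}\bigl[4(n-1)^{2}-(2n-1)(2n-3)\bigr]=16(n-1)^{2}$, because $4(n-1)^{2}-(2n-1)(2n-3)=1$. Extracting the two roots gives $x=\frac{(n-1)(2n-1)}{2}$ or $x=\frac{(n-1)(2n-3)}{2}$, which are exactly the two values in (\ref{cond-c0-c1}).

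The one subtle point is the interplay with the non-degeneracy condition (\ref{gen-cc}). To handle it, I would use the identity $(n-r)(n-2+r)=(n-1)^{2}-(r-1)^{2}$ and the fact that, once (\ref{gen-c01}) holds, $(2n-1)(2n-3)(n-1)^{2}=8(n-1)^{2}x-4x^{2}$. Substituting these into (\ref{gen-cc}) and simplifying, the forbidden equality for a given $r$ becomes $4x^{2}-8(r-1)^{2}x+(2r-1)(2r-3)(r-1)^{2}=0$, i.e.\ the condition that the pair $(r,x)$ solves the same quadratic used for $n$. In other words, (\ref{gen-cc}) fails for some $r<n$ precisely when $c_{0}c_{1}\in\{\frac{(r-1)(2r-1)}{2},\frac{(r-1)(2r-3)}{2}\}$.

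To close the argument, if $c_{0}c_{1}$ satisfies (\ref{cond-c0-c1}) for some $n\geq 2$, I would choose $n$ minimal with this property; by the observation above, (\ref{gen-cc}) is then automatically satisfied for all $2\leq r\leq n-1$, so Proposition \ref{classif-Birk} ii) yields the desired isomorphism. Conversely, any isomorphism produces some $n$ satisfying (\ref{gen-c01}), which by our root computation forces $c_{0}c_{1}$ into (\ref{cond-c0-c1}). I do not foresee a serious obstacle: the proof reduces to the elementary identification of the quadratic's roots and the bookkeeping needed to match (\ref{gen-cc}) with the minimality of $n$.
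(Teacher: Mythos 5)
Your proposal is correct and follows essentially the same route as the paper: with $\tilde{c}_{1}=0$ equation (\ref{gen-c01}) becomes a quadratic in $c_{0}c_{1}$ whose roots are exactly the two values in (\ref{cond-c0-c1}), and the non-degeneracy conditions (\ref{gen-cc}) are then automatic. The paper states this last implication without argument, and your reduction of a failure of (\ref{gen-cc}) at $r$ to the same quadratic with $n$ replaced by $r$, combined with the minimal choice of $n$, is a clean way of supplying that detail.
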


\begin{proof} When $\tilde{c}_{1} =0$, relation (\ref{gen-c01})  is an equation in $c_{0}c_{1}$,
with solutions given by the right hand side of (\ref{cond-c0-c1}).  If  $\tilde{c}_{1} =0$ then 
(\ref{cond-c0-c1}) implies  (\ref{gen-cc}). 
\end{proof}

\begin{rem}\label{explanations}{\rm i) 
Lemmas  \ref{char-non-elem}
and  \ref{conj-ct},  combined with
Proposition \ref{classif-Birk},  provide a criterion to decide when two 
non-elementary $(TE)$-structures are isomorphic, using their restriction at the origin of $\mathcal N_{2}$\

ii)  The only non-elementary  formal normal forms  from Theorem \ref{te}
are  those 
from Theorem \ref{te} i) with 
$c_{0}\neq 0$.
They represent the formal normal forms of non-elementary $(TE)$-structures. 
Their restriction at the origin  are in Birkhoff normal form, with 
\begin{equation}\label{initial}
B_{0}^{o}=c C_{1}+ c_{0} C_{2},\  B_{\infty}=\alpha C_{1} -\frac{1}{4} D +  c_{0}E, 
\end{equation}
(where $c_{0} \neq 0$).\

iii) There are non-elementary $(TE)$-structures which are not (holomorphically) isomorphic to their
formal normal form(s): from  Corollary \ref{imp-c1},   they coincide (up to isomorphism) with the Malgrange universal deformations
of  connections  $\nabla^{B_{0}^{o}, B_{\infty}}$ in Birkhoff normal form,
with matrices $B_{0}^{o}$, $B_{\infty}$ as in  (\ref{almost-fin}), such that $c_{0}c_{1}$  does not satisfy
(\ref{cond-c0-c1}).}
\end{rem}

In order to obtain  a list of holomorphic normal forms for  non-elementary
$(TE)$-structures 
we will  express the Malgrange universal  deformations of the meromorphic connections 
$\nabla^{B_{0}^{o}, B_{\infty}}$ 
in Birkhoff normal form (\ref{conn-form-elem}), with matrices $B_{0}^{o}$, $B_{\infty}$ given by
(\ref{almost-fin}), with $c_{0}B^{\infty}_{12}\neq 0$,   in local  coordinates $(t_{1}, t_{2})$ of $\mathcal N_{2}.$ 
This will be done in the next sections.

\subsubsection{Non-elementary $(TE)$-structures and Malgrange universal connections}\label{coord-sect}

Let $B_{0}^{o}$, $B_{\infty}\in M_{2\times 2} (\mathbb{C})$ be two  matrices, where 
$B_{0}^{o}$ is  regular, with  one Jordan block, i.e. 
$B_{0}^{o} = cC_{1} + c_{0} C_{2}$,
and $c_{0}\neq 0$.
We are interested in the case $B^{\infty}_{12}\neq 0$ but for the moment we don't make this assumption.
We denote by 
$\nabla^{\mathrm{univ}}= \nabla^{\mathrm{univ}, B^{o}_{0}, B_{\infty}}$ the Malgrange universal deformation 
of  the meromorphic connection $\nabla^{B_{0}^{o}, B_{\infty}}$ 
with connection form 
$$
\Omega^{B_{0}^{o}, B_{\infty}} = \frac{1}{z^{2}} ( B_{0}^{o} + B_{\infty} z) dz
$$
We consider $(M^{\mathrm{univ}},\circ_{\mathrm{univ}}, e_{\mathrm{univ}}, E_{\mathrm{univ}})$ the 
parameter space of $\nabla^{\mathrm{univ}}$. The germs $((M^{\mathrm{univ}},0), \circ_{\mathrm{univ}}, e_{\mathrm{univ}})$ and $\mathcal N_{2}$ are isomorphic.

For any $\Gamma \in M_{2\times 2} (\mathbb{C})$, we identify 
$T_{\Gamma} M_{2\times 2} (\mathbb{C})$ with $M_{2\times2}(\mathbb{C})$ in the natural way.  Therefore, vector fields
on $M_{2\times 2} (\mathbb{C})$ or on the submanifold $M^{\mathrm{univ}}$ will be viewed as $M_{2\times 2}(\mathbb{C})$-valued functions (defined on $M_{2\times 2}(\mathbb{C})$ or $M^{\mathrm{univ}}$ respectively).

Let  $X_{0}$, $X_{1}$ be  vector fields on $M_{2\times 2}(\mathbb{C})$ 
defined by 
$(X_{0})_{\Gamma} = C_{1}$ and $(X_{1})_{\Gamma} = B_{0}^{o} - \Gamma + [ B_{\infty}, \Gamma ]$,
for any $\Gamma \in M_{2\times 2}(\mathbb{C})$. In the standard coordinates  $(\Gamma_{ij})$ of  $M_{2\times 2} (\mathbb{C})$
(where $\Gamma_{ij}: M_{2\times 2}(\mathbb{C}) \rightarrow \mathbb{C}$ is
the function  which assigns to $\Gamma \in M_{2\times 2}(\mathbb{C})$ its $(i,j)$-entry), 
\begin{equation}\label{x01-coord}
X_{0} = \sum_{i,j=1}^{2} \delta_{ij} \frac{\partial}{\partial \Gamma_{ij}},\ 
X_{1}  = \sum_{i,j=1}^{2} ( (B_{0}^{o})_{ij} - \Gamma_{ij} + (B_{\infty})_{ik} \Gamma_{kj} - \Gamma_{ik} (B_{\infty})_{kj})
\frac{\partial}{\partial \Gamma_{ij}}.
\end{equation}
(To simplify notation, we omitted the summation sign over $k\in \{ 1,2\}$). 
Let
\begin{equation}\label{def-tk}
\tilde{k}: M_{2\times 2} (\mathbb{C}) \rightarrow \mathbb{C},\ 
\tilde{k}(\Gamma ):= -\frac{1}{2} \mathrm{trace} (X_{1})_{\Gamma}= \frac{1}{2} \sum_{i=1}^{2} \Gamma_{ii} - c.
\end{equation}

Viewing a vector field $X$ on $M_{2\times 2} (\mathbb{C})$   as  an $M_{2\times 2}(\mathbb{C})$-valued function,
we  can consider its derivative   along any  other vector field $Y$ on $M_{2\times2}(\mathbb{C})$.
The result is a function $Y(X) : M_{2\times 2}(\mathbb{C}) \rightarrow M_{2\times 2}(\mathbb{C})$,
whose $(i,j)$-entry is the function $Y(X_{ij}).$  Various such  derivatives are computed in the next lemma
(below $C_{1}$ denotes the constant function on $M_{2\times 2} (\mathbb{C})$ equal to $C_{1}$).

\begin{lem}\label{comput1} The following relations hold:
\begin{align}
\nonumber& X_{0} ( X_{1})= - C_{1},\ X_{0}(\tilde{k})=1,\ X_{0} ( \tilde{k}X_{0} + X_{1}) =0; \\
\label{compute-deriv} & X_{1} (X_{1}) =  - X_{1} + [ B_{\infty}, X_{1}],\ X_{1} (\tilde{k}) =-\tilde{k}.
\end{align}
\end{lem}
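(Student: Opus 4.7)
The plan is to treat Lemma \ref{comput1} as a direct calculation based on the explicit formulas (\ref{x01-coord}) and (\ref{def-tk}), exploiting two observations: that $X_0 = C_1$ is a constant (i.e.\ $\Gamma$-independent) $M_{2\times 2}(\mathbb{C})$-valued function on $M_{2\times 2}(\mathbb{C})$, and that both $(X_1)_{\Gamma}$ and $\tilde{k}(\Gamma)$ depend on $\Gamma$ in a very simple affine way. No genuine obstacle is expected; the work is purely organizational.

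For the first three identities (those in which $X_0$ does the differentiation), I would observe that if $F:M_{2\times 2}(\mathbb{C})\to M_{2\times 2}(\mathbb{C})$ (or $\to\mathbb{C}$) is smooth, then by definition $X_0(F)(\Gamma)=\frac{d}{dt}\big|_{t=0}F(\Gamma+tC_1)$. Applied to $F(\Gamma)=B_0^o-\Gamma+[B_\infty,\Gamma]$, this gives $X_0(X_1)=-C_1+[B_\infty,C_1]=-C_1$; applied to $\tilde{k}(\Gamma)=\tfrac12(\Gamma_{11}+\Gamma_{22})-c$, it gives $X_0(\tilde{k})=\tfrac12\operatorname{tr}(C_1)=1$. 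The third identity is then the Leibniz rule combined with the vanishing of $X_0(X_0)$ (since $X_0$ is constant):
\[
X_0(\tilde{k}X_0+X_1)=X_0(\tilde{k})\,X_0+\tilde{k}\,X_0(X_0)+X_0(X_1)=1\cdot C_1+0+(-C_1)=0.
\]

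For the two identities involving $X_1$ as the differentiating field, I would use the same directional-derivative interpretation: for any vector field $Y$ on $M_{2\times 2}(\mathbb{C})$,
\[
Y(X_1)_\Gamma=\frac{d}{dt}\bigg|_{t=0}\!\bigl(B_0^o-(\Gamma+tY_\Gamma)+[B_\infty,\Gamma+tY_\Gamma]\bigr)=-Y_\Gamma+[B_\infty,Y_\Gamma].
\]
Taking $Y=X_1$ yields $X_1(X_1)=-X_1+[B_\infty,X_1]$, which is the fourth identity. For the fifth, since $\tilde{k}(\Gamma)=\tfrac12\operatorname{tr}(\Gamma)-c$, one has $Y(\tilde{k})_\Gamma=\tfrac12\operatorname{tr}(Y_\Gamma)$; applying this with $Y=X_1$ and using that $\operatorname{tr}([B_\infty,\Gamma])=0$ together with $\operatorname{tr}(B_0^o)=2c$ (because $B_0^o=cC_1+c_0C_2$ and $C_2$ is trace-free) gives
\[
X_1(\tilde{k})_\Gamma=\tfrac12\bigl(\operatorname{tr}(B_0^o)-\operatorname{tr}(\Gamma)\bigr)=c-\tfrac12\operatorname{tr}(\Gamma)=-\tilde{k}(\Gamma).
\]
This completes all five identities; the computation presents no real obstacle beyond keeping careful track of signs and of the fact that $\operatorname{tr}$ annihilates commutators.
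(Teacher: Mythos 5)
Your proof is correct and follows essentially the same route as the paper: a direct computation from the explicit formulas for $X_0$, $X_1$ and $\tilde{k}$, merely phrased via directional derivatives of the affine map $\Gamma\mapsto B_0^o-\Gamma+[B_\infty,\Gamma]$ instead of the coordinate-by-coordinate calculation in $\Gamma_{ij}$ used in the paper. You in fact write out the second line of (\ref{compute-deriv}) (using $\operatorname{tr}(B_0^o)=2c$ and that the trace kills commutators), which the paper only asserts ``can be proved similarly.''
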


\begin{proof}
As $X_{0}(\Gamma_{ij}) = \delta_{ij}$ for  any $i, j\in \{ 1, 2\}$ we obtain   
\begin{align*}
& X_{0} ( [ B_{\infty}, \Gamma ]_{ik}) = \sum_{j=1}^{2} X_{0}( (B_{\infty})_{ij} \Gamma_{jk} - \Gamma_{ij} (B_{\infty})_{jk} )\\
 & = \sum_{j=1}^{2} ( (B_{\infty})_{ij}\delta_{jk} - (B_{\infty})_{jk} \delta_{ij} ) = (B_{\infty})_{ik} - (B_{\infty})_{ik} =0
\end{align*}
and 
$$
X_{0}( (B_{0}^{o})_{ik} - \Gamma_{ik} + [ B_{\infty}, \Gamma]_{ik}) = -\delta_{ik}.
$$
We proved  that $X_{0} (X_{1}) = - C_{1}$.  Since $X_{0}(\Gamma_{ii})=1$ we obtain 
$X_{0} (\tilde{k}) = 1.$ Obviously, $X_{0} (X_{0})=0$ 
(since $(X_{0})_{ij} = \delta_{ij}$ are constants) 
and so
$$
X_{0} ( \tilde{k} X_{0} + X_{1}) = X_{0} (\tilde{k}) X_{0} + X_{0} (X_{1}) = C_{1} - C_{1} =0.
$$ 
The first line of (\ref{compute-deriv}) follows. The second line can be proved similarly. 
\end{proof}

Using the expression (\ref{x01-coord})
of $X_{0}$ and $X_{1}$ we compute
the Lie derivative of $X_{1}$ in the direction of $X_{0}$:  $L_{X_{0}} X_{1}  = - X_{0}$.
Using  $X_{0}(\tilde{k})=1$ we obtain
that the vector fields  $X_{0}$ and $\tilde{k} X_{0} + X_{1}$ commute. 
Their restriction to $M^{\mathrm{univ}}$  
are the fundamental vector fields of a coordinate system $(t_{1}, t_{2})$ on $M^{\mathrm{univ}}$,
which we choose to be centred at the origin of $M^{\mathrm{univ}}$.
As $X_{0} (\tilde{k}) =1$ and  $(\tilde{k}X_{0} + X_{1})(\tilde{k})=0$ 
(from Lemma \ref{comput1})
and $\tilde{k}(0) = -c$ (from the definition  (\ref{def-tk}) of $\tilde{k}$)
we obtain that $\tilde{k}(t_{1}, t_{2} ) =t_{1}- c$.

\begin{rem}\label{properties-mat}{\rm 
For any $\Gamma \in M^{\mathrm{univ}}$, the matrix $(\tilde{k} X_{0} + X_{1})(\Gamma )$ has the following properties:
it is trace-free (from the definition of $\tilde{k}$); it is regular (since $(X_{1})(\Gamma )$ is regular, being regular
at $\Gamma =0$); it has only one Jordan block (since $B_{0}^{o}$ 
has this property; see appendix). 
We obtain that $(\tilde{k} X_{0} + X_{1})(\Gamma )$ is conjugated to a matrix with all entries zero except
the $(2,1)$-entry which is non-zero. 
In particular, $(\tilde{k} X_{0} + X_{1})(\Gamma )^{2} =0$. 
Also,
$(\tilde{k} X_{0} + X_{1})_{21} = (X_{1})_{21}$ at $\Gamma =0$ is equal to $c_{0}\neq 0$. The function
$y:= (X_{1})_{21}: M^{\mathrm{univ}}\rightarrow \mathbb{C}$ is non-vanishing in a neighborhood of the origin in $M^{\mathrm{univ}}$ 
and the function $\tilde{k}X_{0} + X_{1}: M^{\mathrm{univ}}\rightarrow M_{2\times 2}(\mathbb{C})$ 
can be written as 
\begin{equation}\label{ex-matr}
\tilde{k} X_{0} + X_{1} = y \left(\begin{tabular}{cc}
$x$ & $- x^{2}$\\
$1$ &  $-x$
\end{tabular}\right),
\end{equation}
where 
\begin{equation}\label{simpl-not}
x= \frac{1}{(X_{1})_{21}} (\tilde{k} + (X_{1})_{11}) = \frac{1}{2 (X_{1})_{21}}( (X_{1})_{11} - (X_{1})_{22})  :M^{\mathrm{univ}} \rightarrow \mathbb{C} .
\end{equation}}
\end{rem}

\begin{prop}\label{expr-mal-coord}
In the coordinate system $(t_{1}, t_{2})$,  
the  Malgrange universal connection
$\nabla^{\mathrm{univ}}$ is given by
the matrices $A_{1}$, $A_{2}$ and 
$B = B^{(0)} + B^{(1)} z$, where 
 \begin{equation}\label{te-2}
A_{1} = C_{1},\ A_{2} = y \left(\begin{tabular}{cc}
$x$ & $- x^{2}$\\
$1$ &  $-x$
\end{tabular}\right) 
\end{equation}
and  
\begin{equation}\label{te-1}
B^{(0)} =   \left(\begin{tabular}{cc}
$xy - t_{1}+ c$ & $- x^{2}y$\\
$y$ &  $-xy - t_{1}+ c$
\end{tabular}\right) , \ B^{(1)} = B_{\infty}.
\end{equation}
\end{prop}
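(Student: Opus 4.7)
The plan is to deduce the coordinate expressions by invoking the universal form of the Malgrange connection recalled in the appendix and then translating it through the change of frame $(\Gamma_{ij}) \rightsquigarrow (t_1, t_2)$ induced by the commuting vector fields $X_0$ and $\tilde{k} X_0 + X_1$. The key input is that, in the standard trivialization of $\nabla^{\mathrm{univ}}$ over $M^{\mathrm{univ}} \subset M_{2\times 2}(\mathbb{C})$, the $z^{-1}$-coefficient in the direction of any vector field $Y$ (viewed as an $M_{2\times 2}(\mathbb{C})$-valued function on $M^{\mathrm{univ}}$) is just the matrix $Y(\Gamma)$ itself, while the $z$-direction has matrix $X_1(\Gamma) + B_\infty z = B_0^o - \Gamma + [B_\infty,\Gamma] + B_\infty z$.

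Granted this, the computation of $A_1$ and $A_2$ is immediate. Since $\partial_{t_1} = X_0$ is the constant function $C_1$, one reads off $A_1 = C_1$. Since $\partial_{t_2} = \tilde{k}X_0 + X_1$, the identity \eqref{ex-matr} in Remark \ref{properties-mat} gives
\[
A_2 \;=\; (\tilde{k}X_0 + X_1)(\Gamma) \;=\; y\begin{pmatrix} x & -x^2 \\ 1 & -x \end{pmatrix},
\]
and in particular $A_2$ is independent of $z$.

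For the $z$-component, the universal form directly gives $B^{(1)} = B_\infty$ and $B^{(0)} = X_1(\Gamma)$. To express $B^{(0)}$ in the coordinates $(t_1,t_2)$, I use the identity $X_1 = (\tilde{k}X_0 + X_1) - \tilde{k}X_0 = \partial_{t_2} - (t_1 - c)\partial_{t_1}$ established just before the proposition (the relation $\tilde{k} = t_1 - c$ was derived there from $X_0(\tilde{k})=1$, $(\tilde{k}X_0+X_1)(\tilde{k}) = 0$ and $\tilde{k}(0) = -c$). In matrix form this yields
\[
B^{(0)} \;=\; A_2 - (t_1 - c) C_1 \;=\; \begin{pmatrix} xy - t_1 + c & -x^2 y \\ y & -xy - t_1 + c \end{pmatrix},
\]
which is the matrix claimed.

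The main obstacle is the first input — correctly citing and unpacking the canonical form of the Malgrange universal deformation in the appendix; once that form is in hand, everything else is a direct substitution using \eqref{ex-matr} and the identification $\tilde{k} = t_1 - c$. As a sanity check (and as an alternative route, should one wish to avoid appealing to the universal form), one can verify directly that the matrices $A_1, A_2, B$ defined by \eqref{te-2}–\eqref{te-1} satisfy the flatness relations \eqref{2.10}, \eqref{2.11}, reduce at $(t_1,t_2) = 0$ to the given Birkhoff normal form $\Omega^{B_0^o, B_\infty}$, and define a $(TE)$-structure with unfolding condition; uniqueness of the Malgrange universal deformation then forces it to coincide with $\nabla^{\mathrm{univ}}$.
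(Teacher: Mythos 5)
Your proposal is correct and follows essentially the same route as the paper's proof: read off $A_{1}$, $A_{2}$, $B^{(0)}$, $B^{(1)}$ from the canonical form (\ref{omega-cann}) of the Malgrange universal connection, using $\partial_{t_1}=X_0$, $\partial_{t_2}=\tilde{k}X_0+X_1$, the matrix identity (\ref{ex-matr}), the identification $B^{(0)}=(X_1)(\Gamma)$ from (\ref{B-0}), and $\tilde{k}=t_1-c$. The concluding "sanity check" via flatness and uniqueness of the universal deformation is an extra, not needed for the argument.
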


\begin{proof}
Recall  the definition (\ref{omega-cann}) of the Malgrange universal connection.
The equality $A_{1} = C_{1}$ follows from 
the fact that 
$A_{1} = A_{\frac{\partial}{\partial t_{1}}}$ is the matrix valued function on $M^{\mathrm{univ}}$  identified with
the vector field $\frac{\partial}{\partial t_{1}}= X_{0}$, which is $C_{1}$.
The expression of $A_{2}$ follows similarly, from $A_{2} = C_{\frac{\partial}{\partial t_{2}}}$, together with
$\frac{\partial}{\partial t_{2}} = \tilde{k}X_{0} + X_{1}$ and 
(\ref{ex-matr}). 
The  expression of $B^{(0)}$ is obtained as follows:
from (\ref{B-0}), $B^{(0)}(\Gamma )  = (X_{1}) (\Gamma)$ for any $\Gamma \in M^{\mathrm{univ}}$. Therefore, 
\begin{align*}
\nonumber B^{(0)} &=  (\tilde{k}X_{0}+X_{1})-\tilde{k}X_{0} =  y \left(\begin{tabular}{cc}
$x$ & $- x^{2}$\\
$1$ &  $-x$
\end{tabular}\right) - (t_{1}- c )\mathrm{Id}\\
\nonumber & =  \left(\begin{tabular}{cc}
$xy - t_{1}+ c$ & $- x^{2}y$\\
$y$ &  $-xy - t_{1}+ c$
\end{tabular}\right)
\end{align*}
where we used (\ref{ex-matr}) and $\tilde{k} (t_{1}, t_{2}) = t_{1}- c.$
\end{proof}

To  simplify notation, in the proof of the next lemma  instead of the vector field  $X_{1}$ we simply write $X$.
The $(i, j)$-entry of $X_{1}$ (viewed as a $M_{2\times 2}(\mathbb{C})$-valued function) 
will be denoted by $X_{ij}.$

\begin{lem}\label{comput2} The functions $x$ and $y$ are independent on $t_{1}$ and their derivatives with respect to $t_{2}$ are given by
\begin{align}
\nonumber & \dot{x} = - B^{\infty}_{21} x^{2} + (B^{\infty}_{11} - B^{\infty}_{22}) x + B^{\infty}_{12}\\
\label{dx} & \dot{y} = y( 2 B^{\infty}_{21} x + B^{\infty}_{22} - B^{\infty}_{11} -1),
\end{align}
where $B^{\infty}_{ij}$ denotes the $(i, j)$-entry of the matrix $B_{\infty}$. 
They satisfy the initial conditions $x(0)=0$ and $y(0) =c_{0}$.
\end{lem}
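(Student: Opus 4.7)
The plan is to read off everything from the formula $X_1(\Gamma)=B_0^o-\Gamma+[B_\infty,\Gamma]$, combined with the two identities of Lemma \ref{comput1} (namely $X_0(X_1)=-C_1$ and $X_1(X_1)=-X_1+[B_\infty,X_1]$) and with the coordinate identifications $\partial/\partial t_1=X_0$ and $\partial/\partial t_2=\tilde k X_0+X_1$.

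First I would prove $t_1$-independence. Since $X_0(X_1)=-C_1$, each entry satisfies $X_0(X_{ij})=-\delta_{ij}$; hence $X_0(X_{21})=0$ and $X_0(X_{11}-X_{22})=0$. From the formulas $y=X_{21}$ and $x=(X_{11}-X_{22})/(2X_{21})$ in \eqref{simpl-not}, the quotient rule gives $X_0(y)=0$ and $X_0(x)=0$. In the chosen coordinates this is exactly $\partial x/\partial t_1=\partial y/\partial t_1=0$.

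Next, since $X_0(x)=X_0(y)=0$, the $t_2$-derivatives reduce to $\dot y=X_1(y)$ and $\dot x=X_1(x)$. Using $X_1(X_1)=-X_1+[B_\infty,X_1]$ I would compute
\[
\dot y=X_1(X_{21})=-X_{21}+[B_\infty,X_1]_{21}=-y+B^\infty_{21}(X_{11}-X_{22})+(B^\infty_{22}-B^\infty_{11})X_{21},
\]
and substituting $X_{11}-X_{22}=2xy$ and $X_{21}=y$ gives the second equation of \eqref{dx}. For $\dot x$, the same calculation applied to $X_{11}-X_{22}$ yields
\[
X_1(X_{11}-X_{22})=-(X_{11}-X_{22})+2B^\infty_{12}X_{21}-2B^\infty_{21}X_{12}.
\]
Here I would use Remark \ref{properties-mat} (more precisely \eqref{ex-matr}, after subtracting $\tilde k X_0$) to read off $X_{12}=-x^2 y$. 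Combining with the quotient rule
\[
\dot x=\frac{X_1(X_{11}-X_{22})}{2y}-\frac{x\,\dot y}{y}
\]
and simplifying gives $\dot x=-B^\infty_{21}x^2+(B^\infty_{11}-B^\infty_{22})x+B^\infty_{12}$, as required. I do not expect any real obstacle here; the only delicate point is remembering that the $(1,2)$-entry of $\tilde k X_0+X_1$ equals $X_{12}$ (because $X_0=C_1$ has vanishing off-diagonal entries), so the formula $X_{12}=-x^2y$ from \eqref{ex-matr} is legitimate.

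Finally, the initial values come from evaluating at the origin $\Gamma=0$ of $M^{\mathrm{univ}}$: the definition of $X_1$ gives $X_1(0)=B_0^o=cC_1+c_0C_2$, so $X_{11}(0)=X_{22}(0)=c$ and $X_{21}(0)=c_0$, whence $x(0)=0$ and $y(0)=c_0$.
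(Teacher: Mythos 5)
Your proof is correct and takes essentially the same route as the paper: both rest on the derivative formulas of Lemma \ref{comput1} together with the identification $\partial/\partial t_{2}=\tilde{k}X_{0}+X_{1}$, the only cosmetic difference being that you differentiate $x=(X_{11}-X_{22})/(2X_{21})$ and read $X_{12}=-x^{2}y$ off (\ref{ex-matr}), whereas the paper differentiates $(\tilde{k}+X_{11})/X_{21}$ and invokes $X_{1}(\tilde{k})=-\tilde{k}$ and $\det (X_{1}+\tilde{k}\,\mathrm{Id})=0$, which is the same nilpotency fact in a different guise. Your explicit check of the initial conditions from $X_{1}(0)=B_{0}^{o}$ is also consistent with the paper, where it is left implicit.
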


\begin{proof} From the first line of (\ref{compute-deriv}) and the definition of $x$ and $y$, $X_{0}(x) = X_{0}(y) =0$, i.e. $x$ and $y$ are independent on $t_{1}.$ 
From the second line of (\ref{compute-deriv}), 
\begin{equation}\label{x-11}
X(X_{11}) =- X_{11} + [B_{\infty}, X]_{11}=  - X_{11} + B^{\infty}_{12} X_{21} - B^{\infty}_{21}X_{12}
\end{equation}
and similarly 
\begin{equation}\label{x-21}
X(X_{21}) = - X_{21} + B^{\infty}_{21} (X_{11}- X_{22})  + (B^{\infty}_{22} - B^{\infty}_{11} ) X_{21} .
\end{equation}
Using (\ref{x-11}), (\ref{x-21}),  $X (\tilde{k}) =- \tilde{k}$ and $\mathrm{det}(X +\tilde{k} \mathrm{Id})=0$, we obtain
$$
X \left(\frac{\tilde{k} + X_{11}}{X_{21}}\right) =  B^{\infty}_{12}
+   (B^{\infty}_{11} - B^{\infty}_{22})\left(  \frac{X_{11} - X_{12}}{2 X_{21}} \right)  -  B^{\infty}_{21}
 \left(\frac{X_{11} - X_{22}}{2X_{21}}\right)^{2}, 
$$
which implies  the first relation (\ref{dx})
(we use the definition (\ref{simpl-not}) of $x$ and $\dot{x} = X(x)$, since 
$\frac{\partial}{\partial t_{2}} = \tilde{k}X_{0} + X$ and $X_{0}(x) =0$). 
The second relation (\ref{dx}) can be obtained similarly. 
\end{proof}

\begin{rem}{\rm   When $B^{\infty}_{12}=0$, the system (\ref{dx})  is solved by  $x=0$ and $y(t_{1}, t_{2}) = 
c_{0} e^{ kt_{2}}$ where $c_{0}\in \mathbb{C}$ and $k:=   B^{\infty}_{22} - B^{\infty}_{11} -1.$ 
Assume that $k\neq 0.$  Replacing the expressions of $x$ and $y$ in 
(\ref{te-2}),  (\ref{te-1})  
we obtain that 
$\nabla^{\mathrm{univ}}$ is the pull-back 
by $\mu (t_{1}, t_{2}) = (t_{1}, \frac{c_{0}}{k}(e^{kt_{2}} -1))$
of  the $(TE)$-structure $\tilde{\nabla}$ 
given by 
$$
\tilde{A}_{1} = C_{1},\ \tilde{A}_{2} = C_{2},\ \tilde{B} = (-t_{1}+ c) C_{1} + 
(k t_{2} + c_{0}) C_{2} + z B_{\infty}. 
$$
When $k=0$ the same statement holds with $\mu (t_{1}, t_{2}) = ( t_{1}, c_{0} t_{2}).$
We obtain that $\nabla^{\mathrm{univ}}$ is isomorphic to $\tilde{\nabla}.$
Remark that $\tilde{\nabla}$ is of the third type in Theorem \ref{te}.
} 
\end{rem}

We now turn to the Malgrange universal deformations $\nabla^{\mathrm{univ}}$ we are interested in, 
namely those which are non-elementary. Therefore, we assume that $B_{12}^{\infty}\neq 0.$ 
From Lemma \ref{conj-ct}, we may (and will) assume, without loss of generality, that 
$B^{\infty}_{11} - B^{\infty}_{22} =-\frac{1}{2}$
and $B_{12}^{\infty} = c_{0}$.
We distinguish two subcases, namely 
$B^{\infty}_{21} =0$ 
and  $B^{\infty}_{21} \neq 0.$ In the first subcase, $\nabla^{\mathrm{univ}}$ is isomorphic to a $(TE)$-structure of the 
first type in Theorem \ref{te}:

\begin{cor}\label{cor-2} If $B^{\infty}_{12} = c_{0}$,   $B^{\infty}_{21} =0$
and $B_{11}^{\infty} - B_{22}^{\infty} = -\frac{1}{2}$, 
then
$\nabla^{\mathrm{univ}}$ is isomorphic to the $(TE)$-structure $\tilde{\nabla}$ given by
\begin{align}
 \nonumber & \tilde{A}_{1} = C_{1},\  \tilde{A}_{2} =  C_{2}+ z  E\\
 \label{te-1-b} & \tilde{B} = ( - t_{1} + c +\alpha z ) C_{1} 
+(-\frac{t_{2}}{2} + c_{0}) C_{2}-\frac{z}{4} D + z ( -\frac{t_{2}}{2} + c_{0}) E,
\end{align}
where  $\alpha := \frac{1}{2} ( B_{11}^{\infty} + B_{22}^{\infty})$. 
\end{cor}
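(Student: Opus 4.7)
The plan is to solve the ODE system from Lemma \ref{comput2} in this subcase and then to build an explicit isomorphism from $\nabla^{\mathrm{univ}}$ to $\tilde{\nabla}$ consisting of a change of coordinates on $\mathcal N_{2}$ composed with a gauge transformation.

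Under the hypotheses $B^{\infty}_{12}=c_{0}$, $B^{\infty}_{21}=0$ and $B^{\infty}_{11}-B^{\infty}_{22}=-\tfrac{1}{2}$, the system (\ref{dx}) decouples into the linear ODEs $\dot{x}=-\tfrac12 x+c_{0}$ and $\dot{y}=-\tfrac12 y$, with initial data $x(0)=0$, $y(0)=c_{0}$ from Lemma \ref{comput2}. The explicit solutions $x(t_{2})=2c_{0}(1-e^{-t_{2}/2})$ and $y(t_{2})=c_{0}\,e^{-t_{2}/2}$ satisfy $y=c_{0}-x/2$, the crucial identity that will eventually match the $C_{2}$-coefficient appearing in $\tilde B$. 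Substituting these into (\ref{te-2})--(\ref{te-1}) provides explicit matrices $A_{1},A_{2},B^{(0)},B^{(1)}$ for $\nabla^{\mathrm{univ}}$ in the coordinates $(t_{1},t_{2})$.

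Next I would introduce the biholomorphism $h(t_{1},t_{2})=(t_{1},-2\log(1-t_{2}/(2c_{0})))$ of $(\mathcal N_{2},0)$, chosen so that $x\circ h=t_{2}$ and $y\circ h=c_{0}-t_{2}/2$, together with the $z$-independent gauge $\tilde T:=C_{1}+t_{2}E$ covering $h$. To verify (\ref{2.15}) for $i=2$, the key observation is that $\partial_{2}h^{2}=1/(c_{0}-t_{2}/2)$ exactly cancels the overall $y$-factor in $A_{2}\circ h$, so that conjugation by $C_{1}+t_{2}E$ combined with the correction $z\partial_{2}\tilde T=zE$ produces $C_{2}+zE$; the identities $E^{2}=0$ and $EC_{2}=\tfrac12(C_{1}+D)$ from (\ref{r1})--(\ref{r3}) handle the bookkeeping. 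The case $i=1$ is trivial since $A_{1}=\tilde A_{1}=C_{1}$ and $\tilde T$ is independent of $t_{1}$.

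For relation (\ref{2.16}) I would conjugate $B^{(0)}$ and $B^{(1)}=B_{\infty}$ by $\tilde T$ separately. Writing $B_{\infty}=\alpha C_{1}-\tfrac14 D+c_{0}E$ with $\alpha=\tfrac12(B^{\infty}_{11}+B^{\infty}_{22})$, the $B_{\infty}$-conjugation yields $\alpha C_{1}-\tfrac14 D+(c_{0}-t_{2}/2)E$, and the $B^{(0)}$-conjugation yields $(-t_{1}+c)C_{1}+(c_{0}-t_{2}/2)C_{2}$ once the $t_{1}$-dependent off-diagonal cross-terms cancel. Assembling reproduces $\tilde B$ in (\ref{te-1-b}). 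The main obstacle is computational rather than conceptual: one must verify by hand the cancellation of the $t_{1}$-dependent entries in the $B^{(0)}$-conjugation; however, this is forced by the nilpotent rank-one structure of the matrix in (\ref{te-1}), so it cannot fail.
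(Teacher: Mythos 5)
Your proposal is correct and follows essentially the paper's own route: you solve the system (\ref{dx}) to get the same $x=2c_{0}(1-e^{-t_{2}/2})$, $y=c_{0}e^{-t_{2}/2}$, change the second coordinate so that it becomes $x$ (your $h$ is exactly the inverse of the paper's pull-back map $\mu(t_{1},t_{2})=(t_{1},x(t_{2}))$), and then apply the same gauge $C_{1}+t_{2}E$; composing these into one isomorphism covering $h$ and checking (\ref{2.15})--(\ref{2.16}) directly is only a cosmetic repackaging of the paper's two-step argument. One small remark: the $t_{1}$-dependence of $B^{(0)}$ sits entirely in the scalar term $(-t_{1}+c)C_{1}$, which is conjugation-invariant, so there are no genuine $t_{1}$-dependent cross-terms to cancel -- the nontrivial cancellation is in the nilpotent part $(c_{0}-t_{2}/2)(C_{2}+t_{2}D-t_{2}^{2}E)$, which conjugates to $(c_{0}-t_{2}/2)C_{2}$ as you assert.
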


\begin{proof} The functions $x(t_{1}, t_{2}) = 2 c_{0} (1- e^{-\frac{t_{2}}{2}})$ and
$y(t_{1}, t_{2}) = c_{0} e^{-\frac{t_{2}}{2}}$ solve the system (\ref{dx}). From $y = \dot{x}$ we  
obtain that  $\nabla^{\mathrm{univ}}$
is the pull-back, by the function $\mu (t_{1}, t_{2}) = (t_{1},  x(t_{1}, t_{2}))$,  of the $(TE)$-structure ${\nabla}^{[1]}$ 
with matrices $A^{[1]}_{1}$, $A^{[1]}_{2}$, $B^{[1]}= \sum_{k\geq 0} B^{[1] , (k)} z^{k}$ 
given by 
 \begin{align}
\nonumber & {A}^{[1]}_{1} = C_{1},\  {A}^{[1]}_{2} =  C_{2}+ t_{2}D - t_{2}^{2} E\\
\nonumber & {B}^{[1],(0)} = ( - t_{1} + c) C_{1} +(-\frac{t_{2}}{2} + c_{0}) C_{2} + t_{2} (-\frac{t_{2}}{2} + c_{0}) D + t_{2}^{2} (\frac{t_{2}}{2} - c_{0})E\\
\nonumber&  {B}^{[1], (1)}= \alpha   C_{1} -\frac{1}{4} D + c_{0}E\\
\label{conn-univ-pb} & B^{[1],(k)} =0,\ k\geq 2.
\end{align} 
The  gauge isomorphism $T:= C_{1} +  t_{2}E$ maps ${\nabla}^{[1]}$ to 
$\tilde{\nabla}.$ 
\end{proof}

It remains to consider the case  when both $B^{\infty}_{12}$ and $B^{\infty}_{21}$ are non-zero.

\begin{cor} \label{non-ele-case}
Assume that $B_{12}^{\infty} = c_{0}$, $B_{21}^{\infty} \neq 0$ and $B_{11}^{\infty} - B_{22}^{\infty}
=-\frac{1}{2}.$   Define $a, b\in \mathbb{C}$ by 
 $a+b= - \frac{1}{2B^{\infty}_{21}}$ and $ab= -\frac{c_{0}}{B^{\infty}_{21}}.$

i) When $B_{12}^{\infty} B^{\infty}_{21} \neq -\frac{1}{16}$,
the system (\ref{dx}) is solved by  
\begin{align}
\nonumber& x(t_{1}, t_{2}) =  ab ( 1- e^{(b-a) B^{\infty}_{21} t_{2}})( b - a e^{(b-a)B^{\infty}_{21}t_{2}})^{-1}\\
\label{a-b}& y(t_{1}, t_{2}) =\frac{ c_{0}}{(b-a)^{2}} ( b-{a} e^{(b-a)B^{\infty}_{21}t_{2}})^{2} e^{(B^{\infty}_{21} (a-b)-1)t_{2}}.
\end{align}
ii) When $B_{12}^{\infty} B^{\infty}_{21} =  -\frac{1}{16 }$,  it is solved by 
\begin{equation}\label{a-b-sec}
x(t_{1}, t_{2})  = \frac{ 4 c_{0} t_{2}}{t_{2}+4},\ 
y(t_{1}, t_{2})  = \frac{c_{0}}{16}  e^{-t_{2}} (t_{2} +4)^{2}.
\end{equation}
\end{cor}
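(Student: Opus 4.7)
The plan is to observe that under the stated hypotheses the system (\ref{dx}) decouples into an autonomous Riccati equation for $x$ alone, followed by a linear first-order ODE for $y$ whose coefficient depends on $x$. Substituting $B_{12}^{\infty} = c_0$ and $B_{11}^{\infty} - B_{22}^{\infty} = -\tfrac{1}{2}$ into (\ref{dx}) gives
\begin{equation*}
\dot{x} = -B_{21}^{\infty} x^{2} - \tfrac{1}{2} x + c_{0}, \qquad \dot{y} = y\bigl(2 B_{21}^{\infty} x - \tfrac{1}{2}\bigr).
\end{equation*}
The quadratic $-B_{21}^{\infty} x^{2} - \tfrac{1}{2} x + c_{0}$ factors as $-B_{21}^{\infty}(x-a)(x-b)$ precisely because the defining relations $a+b = -1/(2 B_{21}^{\infty})$ and $ab = -c_{0}/B_{21}^{\infty}$ are the Vieta relations for its roots. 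The discriminant $\tfrac{1}{4} + 4 B_{21}^{\infty} c_{0}$ vanishes exactly when $B_{12}^{\infty} B_{21}^{\infty} = -\tfrac{1}{16}$, which separates cases i) and ii).

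For case i) ($a \neq b$), I would integrate the separable equation $\frac{dx}{(x-a)(x-b)} = -B_{21}^{\infty}\, dt_{2}$ by partial fractions, use $x(0)=0$ to pin down the constant, and solve algebraically for $x$ in terms of $u := e^{(b-a) B_{21}^{\infty} t_{2}}$; this yields $x = ab(1-u)/(b-au)$ directly. Having $x$ explicit, the linear ODE for $y$ can in principle be integrated by a direct quadrature, but it is cleanest to verify the proposed formula by computing $\dot{y}/y$ from
\[
\ln y = \ln\!\tfrac{c_{0}}{(b-a)^{2}} + 2\ln(b - au) + \bigl(B_{21}^{\infty}(a-b) - 1\bigr) t_{2}
\]
and matching the result with $2 B_{21}^{\infty} x - \tfrac{1}{2}$. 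The key algebraic cancellation uses $2 B_{21}^{\infty}(a+b) = -1$, and the initial value $y(0) = c_{0}$ is immediate from the factor $(b-a)^{2}$ appearing both inside and outside.

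Case ii) proceeds analogously but is shorter. When $B_{12}^{\infty} B_{21}^{\infty} = -\tfrac{1}{16}$, the Riccati has a double root $a = b = -1/(4 B_{21}^{\infty})$, and using $B_{21}^{\infty} c_{0} = -\tfrac{1}{16}$ one finds $a = 4 c_{0}$. Integrating $\dot{x} = -B_{21}^{\infty}(x-a)^{2}$ with $x(0)=0$ gives $x = 4 c_{0} t_{2}/(t_{2}+4)$. Substituting into the linear ODE yields $\dot{y}/y = -1 + 2/(t_{2}+4)$, so $y = (c_{0}/16)\, e^{-t_{2}} (t_{2}+4)^{2}$ after matching $y(0) = c_{0}$.

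The main obstacle is the verification in case i) that the proposed formula for $y$ satisfies $\dot{y}/y = 2 B_{21}^{\infty} x - \tfrac{1}{2}$: after substituting the expression for $x$, one must combine the two sides over the common denominator $b - au$ and apply the Vieta relations twice to collapse the resulting polynomial in $u$. This is elementary but somewhat lengthy; the rest of the argument is a straightforward integration of a Riccati-plus-linear system, made transparent by the substitution that absorbs $B_{21}^{\infty}$ and $c_{0}$ into the roots $a$, $b$.
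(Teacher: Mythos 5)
Your proposal is correct and follows essentially the same route as the paper: rewrite the first equation of (\ref{dx}) as the factored Riccati equation $\dot{x}=-B^{\infty}_{21}(x-a)(x-b)$, solve it with $x(0)=0$ (separately for $a\neq b$ and the double-root case, which is exactly $B^{\infty}_{12}B^{\infty}_{21}=-\tfrac{1}{16}$), and then obtain $y$ from the second, linear equation with $y(0)=c_{0}$. The paper merely states these steps more tersely; your added verification of the formula for $y$ is the same computation carried out explicitly.
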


\begin{proof}
We write  equation
(\ref{dx}) as   $\dot{x} = - B^{\infty}_{21} ( x-a) (x-b)$.
Since $x(0) =0$ this determines $x$ as stated in the lemma.  Then $y$ is determined from the second equation (\ref{dx}).
Remark that $B_{12}^{\infty}B^{\infty}_{21}  =  -\frac{1}{16 }$ if and only if $a=b.$  
\end{proof}

To simplify terminology we introduce the following definition.

\begin{defn} A non-elementary Malgrange normal form  of the first (respectively second) type is a Malgrange
universal deformation $\nabla^{\mathrm{univ}}$  as in Proposition \ref{expr-mal-coord},
with functions $x$ and $y$ satisfying  (\ref{dx}),  
$B_{12}^{\infty} = c_{0}\neq 0$, $B_{11}^{\infty} - B_{22}^{\infty} = -\frac{1}{2}$
and  $B_{21}^{\infty} =0$ (respectively, $B_{21}^{\infty}\neq 0$).
\end{defn}

Non-elementary $(TE)$-structures coincide (up to  isomorphisms) to  non-elementary Malgrange normal
forms. From Corollary \ref{cor-2},  those  of the first type are isomorphic  to the $(TE)$-structures 
(\ref{f-1}) from Theorem \ref{te},  with $c_{0}\neq 0.$ In the next section 
we   study  the non-elementary Malgrange normal forms of the second type.

\subsubsection{Non-elementary Malgrange normal forms of second type}

We are looking for (holomorphic)  isomorphisms $T$ which map an arbitrary  
non-elementary Malgrange normal form
of the second type $\nabla := \nabla^{\mathrm{univ}, B_{0}^{0}, B_{\infty}}$ to  a  $(TE)$-structure  which is    'as close as possible' to the formal normal forms from Theorem \ref{te}.  
Let $A_{1}$, $A_{2}$, $B= B^{(0)} + z B^{(1)}$ be the matrices of $\nabla$, described  
in Proposition \ref{expr-mal-coord},  in terms of functions
$x$, $y$ determined in Corollary \ref{non-ele-case}.
Recall that $B_{0}^{o} = c C_{1} + c_{0} C_{2}$ (with $c_{0}\neq 0$), 
$B_{12}^{\infty} = c_{0}$ and $B_{11}^{\infty} - B_{22}^{\infty} = -\frac{1}{2}.$ 
From Remark \ref{properties-mat},  $A_{2} = A_{2}^{(0)}$ is conjugated 
to a matrix of the form $F C_{2}$, for a function $F= F(t_{2})$.
Therefore, there is a gauge  isomorphism $T = T^{(0)}$, which depends only on $t_{2}$,  such that the underling
$(T)$-structure of ${\nabla}^{[1]}:= T\cdot \nabla$ is  
\begin{equation}\label{new-t}
{A}^{[1]}_{1} = C_{1},\    {A}^{[1]}_{2} = F C_{2} + z T^{-1}\partial_{2} T.
\end{equation}
As $T$ is independent on $z$, the matrix $B^{[1]}$ of $\nabla^{[1]}$ is given by   
\begin{equation}\label{tilde-b}
{B}^{[1]} = T^{-1} B^{(0)} T + z T^{-1} B^{( 1)} T.
\end{equation}

\begin{lem}\label{non-compl} The gauge isomorphisms $T$, which depend only on $t_{2}$, 
and map $\nabla$ to a $(TE)$-structure $\nabla^{[1]}:= T\cdot \nabla$, whose underlying $(T)$-structure 
satisfies
\begin{equation}\label{t-t-a}
{A}^{[1]}_{1} = C_{1},\ {A}^{[1]}_{2} = F C_{2} + z G  E
\end{equation}
for functions $F= F(t_{2})$ and $G= G(t_{2})$, are of the form
\begin{equation}\label{expr-for-t}
T= \left(\begin{tabular}{cc}
$ k_{0} k$ & $ \frac{k_{1} x}{k-x}$\\
$k_{0}$ & $ \frac{k_{1}}{k-x}$
\end{tabular}\right) ,\ T= \left(\begin{tabular}{cc}
$ k_{1}$ & $ k_{0} x$\\
$0$ & $ k_{0}$
\end{tabular}\right) 
\end{equation}
where $k_{0}, k_{1}, k\in \mathbb{C}^{*}$. If  $T$ is given by the first formula (\ref{expr-for-t}), 
then
$F= \frac{k_{0}}{k_{1}} (k-x)^{2}y,\ G= \frac{ k_{1} \dot{x}}{k_{0} (k-x)^{2}}$.
If  $T$ is given by the second formula (\ref{expr-for-t}), 
then $F= \frac{k_{1}}{k_{0}}y$, $G= \frac{k_{0}}{k_{1}} \dot{x}$. In both cases, $F(0)\neq 0.$ 
\end{lem}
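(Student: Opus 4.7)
The plan is to expand the gauge transformation equation (\ref{2.17-w}) with $i=2$ in powers of $z$, exploiting the fact that $T$ is independent of $z$, that $A_2$ is a $z$-independent matrix (as given in Proposition \ref{expr-mal-coord}), and that $\tilde A^{[1]}_2 = FC_2 + zGE$ has only $z^0$ and $z^1$ components. Writing $T = \left(\begin{smallmatrix} a & b \\ c & d\end{smallmatrix}\right)$ with entries in $\mathbb{C}\{t_2\}$, the equation $z\partial_2 T + A_2 T - T\tilde A^{[1]}_2 = 0$ splits into two independent matrix identities: the $z^0$-part $A_2 T = T\cdot FC_2$, and the $z^1$-part $\partial_2 T = T\cdot GE$ (the order $z^k$ parts for $k\geq 2$ are automatic).

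Substituting $A_2 = y\left(\begin{smallmatrix} x & -x^2 \\ 1 & -x\end{smallmatrix}\right)$ into the first identity and comparing entries, using that $y(0) = c_0 \neq 0$ so $y$ is invertible near $0$, I would reduce the four scalar equations to the two relations $b = xd$ and $y(a - xc) = Fd$. The invertibility of $T^{(0)}$ at $t_2 = 0$, where $x(0) = 0$, then forces $d(0) \neq 0$ and $a(0) \neq 0$. From the second identity I would read off $\dot a = 0$, $\dot c = 0$, $\dot b = Ga$, $\dot d = Gc$, so $a$ and $c$ are constants.

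The remaining work is to solve the coupled system. Differentiating the algebraic relation $b = xd$ and substituting $\dot b = Ga$, $\dot d = Gc$ gives $G(a - xc) = \dot x\, d$, which determines $G = \dot x d/(a-xc)$, and then $F = y(a-xc)/d$ follows immediately. To find $d$ explicitly I would use $\dot d/d = c\dot x/(a-xc) = -\partial_{t_2}\log(a-xc)$ when $c\neq 0$, so that $d(a-xc)$ is a nonzero constant; this, together with $b = xd$, yields the first formula in (\ref{expr-for-t}) upon writing $c = k_0$, $a = k_0 k$ and choosing the integration constant as $k_0 k_1$. The degenerate case $c = 0$ forces $d$ to be a constant $k_0$ and $b = k_0 x$, yielding the second formula with $a = k_1$.

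Checking the formulas for $F$ and $G$ is a direct back-substitution, and the final claim $F(0) \neq 0$ follows because $x(0) = 0$ and $y(0) = c_0 \neq 0$ give $F(0) = k_0 k^2 c_0/k_1$ in the first case and $F(0) = k_1 c_0/k_0$ in the second, both nonzero by hypothesis on $k_0, k_1, k \in \mathbb{C}^*$. There is no real obstacle here; the only mild subtlety is verifying that the two $z^0$-entries $(1,1)$ and $(1,2)$ produce no extra constraint beyond $b = xd$ and $y(a-xc) = Fd$, which amounts to the observation that both reduce to multiples of these via the factor $x$ (and at $t_2 = 0$ the $(1,1)$ entry degenerates trivially).
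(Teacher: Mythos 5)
Your proposal is correct and follows essentially the same route as the paper: both split the gauge equation $z\partial_2 T + A_2T - T\tilde A_2=0$ into its $z^0$ part (conjugating $A_2$ to $FC_2$) and its $z^1$ part (the ODE $\partial_2 T = T\cdot GE$), and then integrate elementarily. Your ordering, which extracts the constancy of the first column of $T$ directly from the $z^1$ part, even streamlines the paper's case distinction ($q(0)\neq 0$ versus $q(0)=0$, i.e.\ your $c\neq 0$ versus $c=0$), and all resulting formulas for $T$, $F$, $G$ and the conclusion $F(0)\neq 0$ agree with the paper's.
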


\begin{proof}
By a straightforward computation, the matrices $T = T(t_{2})$ which satisfy 
$T^{-1} A_{2} T = F C_{2}$ and $T^{-1}\partial_{2} T = G E$  are of the form 
\begin{equation}\label{expresie-t}
T = \left(\begin{tabular}{cc}
$q x + \tilde{q} \frac{F}{y}$ & $ x\tilde{q}$\\
$q$ & $ \tilde{q}$
\end{tabular}\right) ,
\end{equation}
where $q, \tilde{q}\in \mathbb{C} \{ t_{2}\}$ and  
\begin{align}
\nonumber& \frac{d}{dt_{2}}(\frac{\tilde{q}F}{y}) + q \dot{x} =0\\
\nonumber& \dot{\tilde{q}} \frac{F}{y} = q\dot{x}\\
\label{q-tildeq} & q^{2} \dot{x} +  q \frac{d}{dt_{2}} (\frac{\tilde{q} F}{y}) - \dot{q} \frac{ \tilde{q}F}{y} =0.
\end{align}
Moreover, if (\ref{q-tildeq}) are satisfied, then $G= \frac{\dot{x} y}{F}.$

If $q(0)\neq 0$, we divide the third relation (\ref{q-tildeq}) by $q^{2}$   and we obtain
$\frac{d}{dt_{2}}(\frac{\tilde{q}F}{qy}) = -\dot{x}$, 
which implies that $\tilde{q} F = qy (k-x)$ for $k\in \mathbb{C}.$ 
Using $\frac{\tilde{q} F}{y} = q(k-x)$, 
the  first relation (\ref{q-tildeq}) implies 
that $q = k_{0}$ is constant. The  second relation (\ref{q-tildeq}) determines $\tilde{q}$ as
$\tilde{q} = \frac{k_{1}}{k-x}$, for $k_{1}\in \mathbb{C}^{*}.$   The expressions for $T$, $F$ and $G$ follow.
As $T$ is invertible and $x(0) =0$, $k, k_{0}\in \mathbb{C}^{*}.$

If $q(0)=0$ then $q=0$ (otherwise $q(z ) =z^{r} \eta (z)$ for $r\in \mathbb{Z}_{\geq 1}$ and $\eta \in \mathbb{C} \{ t_{2}\}$ non-trivial.
But writing  $q$ in this way we obtain a contradiction in the third relation (\ref{q-tildeq})). 
The case $q=0$ can be treated similarly and leads to the second expression in (\ref{expr-for-t}) for $T$ and to
$F$, $G$ as required. 

Since $x(0) =0$, $k_{0} , k_{1}, y(0)\in \mathbb{C}^{*}$, we obtain that $F(0)\neq 0$ (in both cases). 
\end{proof}

Let $T$ be a  gauge  isomorphism  as in Lemma \ref{non-compl}. The underling $(T)$-structure of ${\nabla}^{[1]}= T\cdot \nabla$ is of the form
$$
{A}^{[1]}_{1} = C_{1},\  {A}^{[1]}_{2} = F ( C_{2} + \frac{G}{F} E) = \dot{\mu}_{2} (C_{2} + z f E)
$$
where $\mu_{2} \in \mathrm{Aut} (\mathbb{C}, 0)$ satisfies $\dot{\mu}_{2} = F$ and  
in the second expression for $A_{2}^{[1]}$ the function
$\frac{G}{F}$ is written in terms of $\mu_{2}$, i.e. $\frac{G}{F} = f(\mu_{2})$. 
(Remark that $\dot{\mu}_{2}(0)\neq 0$ since $F(0)\neq 0$). 
We obtain that 
$\nabla^{[1]}$ is the pull-back by $\mu (t_{1}, t_{2}) = (t_{1}, \mu_{2} (t_{2}))$ of 
the $(T)$-structure  
\begin{equation}\label{noname}
\tilde{A}_{1} = C_{1},\ \tilde{A}_{2} = C_{2} + z f E.
\end{equation}
Therefore,  the underling $(T)$-structure of ${\nabla}$ is 
isomorphic to the $(T)$-structure (\ref{noname}).  
In the following we will make  suitable choices 
in Lemma \ref{non-compl} which lead to 'simplest' expressions for the function $f$.

\begin{prop}\label{t-non-ele}  i) If $B^{\infty}_{12}B^{\infty}_{21} = - \frac{1}{16}$, then the underlying $(T)$-stucture of $\nabla$ is isomorphic
to the $(T)$-structure given by
\begin{equation}\label{t1}
\tilde{A}_{1} = C_{1},\ \tilde{A}_{2} = C_{2} + z \frac{(c_{0})^{2}}{1-t_{2}} E.
\end{equation}
ii) If $B^{\infty}_{12}B^{\infty}_{21}\neq -\frac{1}{16}$
and $B_{21}^{\infty} (b-a) \neq 1$  then 
 the underlying $(T)$-structure of $\nabla$ is isomorphic
to the $(T)$-structure given by 
\begin{equation}\label{t2}
\tilde{A}_{1} = C_{1},\ \tilde{A}_{2} = C_{2} + z (\frac{\lambda}{c_{0}} t_{2} + 1)^{-2-\frac{1}{\lambda }} E, 
\end{equation}
where 
$\lambda   := B_{21}^{\infty} (b-a) -1$ and $a, b\in \mathbb{C}$ are defined in Lemma \ref{non-ele-case}.\

iii) If  $B^{\infty}_{12}B^{\infty}_{21} \neq   -\frac{1}{16}$ 
and $B_{21}^{\infty}(b-a) =1$ 
then the 
underlying $(T)$-structure of $\nabla$ is isomorphic  to the $(T)$-structure given by 
\begin{equation}\label{t3}
\tilde{A}_{1} = C_{1},\ \tilde{A}_{2} = C_{2} + z (c_{0})^{2} e^{-t_{2}} E.
\end{equation}
\end{prop}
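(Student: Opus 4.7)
My plan is to apply a gauge isomorphism from Lemma \ref{non-compl} followed by a pullback $\mu(t_1,t_2) = (t_1, \mu_2(t_2))$ with $\dot{\mu}_2 = F$; as noted in the paragraph preceding the proposition, the result has underlying $(T)$-structure $\tilde{A}_1 = C_1$, $\tilde{A}_2 = C_2 + z f(\mu_2) E$, and the task is to choose the constants $k_0, k_1, k \in \mathbb{C}^*$ in the first form of $T$ from (\ref{expr-for-t}) so that $f(\mu_2)$ takes the form prescribed by (\ref{t1})--(\ref{t3}). The guiding principle is to pick $k$ so that $(k-x)^2$ cancels the denominator $(b-ae^{\beta' t_2})^2$ coming from the explicit $x, y$ of Corollary \ref{non-ele-case}, reducing $F$ and $f$ to pure exponentials in $t_2$.

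For case (ii), where $\lambda \neq 0$, the decisive choice is $k = a$. Setting $\beta' := (b-a)B^{\infty}_{21} = \lambda + 1$ and using $ab = -c_0/B^{\infty}_{21} = -c_0(b-a)/\beta'$, a short calculation gives $k - x = a(b-a) e^{\beta' t_2}/(b-ae^{\beta' t_2})$, whence $(k-x)^2 y = a^2 c_0 e^{\lambda t_2}$ after cancellation. With $k_1 = k_0 a^2$ one obtains $F = c_0 e^{\lambda t_2}$; an analogous reduction of $\dot{x}/(k-x)^2$ yields $G = c_0 e^{-\beta' t_2}$, so $f = G/F = e^{-(2\lambda+1)t_2}$. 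Integrating, $\mu_2 = (c_0/\lambda)(e^{\lambda t_2}-1)$, so $e^{\lambda t_2} = 1 + \lambda \mu_2/c_0$ and hence $f(\mu_2) = (1+\lambda \mu_2/c_0)^{-(2\lambda+1)/\lambda} = (\lambda \mu_2/c_0 + 1)^{-2-1/\lambda}$, which is (\ref{t2}).

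The degenerate cases follow from analogous choices. For case (iii), the same $k=a$ with $\lambda = 0$ makes $F$ constant; taking $k_1 = c_0 k_0 a^2$ normalizes $F = 1$ (so $\mu_2 = t_2$) and $f = c_0^2 e^{-t_2}$, giving (\ref{t3}). For case (i), where $a = b$ so (\ref{a-b}) degenerates, I use the formulas (\ref{a-b-sec}) and take $k = 4c_0$; then $k-x = 16c_0/(t_2+4)$ cancels the factor $(t_2+4)^2$ in $y$, and the choice $k_1 = 16 c_0^3 k_0$ gives $F = e^{-t_2}$, $\mu_2 = 1 - e^{-t_2}$, and $f(\mu_2) = c_0^2 e^{t_2} = c_0^2/(1-\mu_2)$, which is (\ref{t1}).

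The main obstacle is identifying the correct value of $k$: $k=a$ works whenever $a \neq b$ because it produces the cancellation $(k-x)(b-ae^{\beta' t_2}) \propto e^{\beta' t_2}$, but it is degenerate when $a = b$, forcing a different value ($k = 4c_0$) found by inspection of (\ref{a-b-sec}). Once the right $k$ is fixed and the scaling $k_1/k_0$ chosen to normalize $F$, everything reduces to elementary manipulations of exponentials.
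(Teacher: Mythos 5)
Your proposal is correct and follows essentially the same route as the paper: the same gauge isomorphism from Lemma \ref{non-compl} with the same choices $k=a$ (cases ii, iii) and $k=4c_{0}$ (case i), the same normalizations $k_{1}/k_{0}=a^{2}$, $a^{2}c_{0}$, $16(c_{0})^{3}$, and the same reparametrization $\mu_{2}$ with $\dot{\mu}_{2}=F$. The intermediate computations ($k-x$, $(k-x)^{2}y$, $\dot{x}/(k-x)^{2}$ and the resulting $f(\mu_{2})$) all check out against (\ref{t1})--(\ref{t3}).
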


\begin{proof} i) Let $T$ be the gauge isomorphism given by the first expression
(\ref{expr-for-t}), with $k:= 4 c_{0}$ (and $k_{0}, k_{1}\in \mathbb{C}^{*}$ arbitrary).   
Define $\gamma := \frac{ k_{0}}{k_{1}}.$ 
Using $F= \gamma (k-x)^{2}y$ (see Lemma \ref{non-compl}) 
and the expressions of  $x$,  $y$ from 
(\ref{a-b-sec}), we obtain that
$$
F =\gamma (4c_{0}-x)^{2} y=  16\gamma (c_{0})^{3}e^{-t_{2}} = \dot{\mu}_{2}(t_{2})
$$
where  
$\mu_{2}\in \mathrm{Aut} (\mathbb{C}, 0)$ is given by
$\mu_{2} (t_{2})= 16 \gamma (c_{0})^{3}(1- e^{-t_{2}}).$ 
Then 
$e^{t_{2}} = \frac{ 16 (c_{0})^{3} \gamma }{ 16 (c_{0})^{3} \gamma - \mu_{2}}$ and 
$$
\frac{G}{F}=  \frac{\dot{x}}{ \gamma^{2}(4c_{0}-x)^{4}y}=
 (\frac{1}{16 \gamma (c_{0})^{2}})^{2}  e^{t_{2}}=  
\frac{1}{16 \gamma c_{0}\left (16\gamma (c_{0})^{3}  - \mu_{2}\right)}.
$$
We obtain that the underlying $(T)$-structure of $\nabla$ is isomorphic
to the $(T)$-structure $\tilde{\nabla}$ given by 
$$
\tilde{A}_{1} = C_{1},\ \tilde{A}_{2} = C_{2} +  \frac{z}{16 \gamma c_{0} \left( 16 \gamma (c_{0})^{3} 
- t_{2}\right)}E.
$$
For $\gamma  = \frac{1}{16 (c_{0})^{3}}$ we obtain the $(T)$-structure (\ref{t1}).\

ii) The claim follows by a similar argument, by taking  the gauge isomorphism $T$  given by the first formula
 (\ref{expr-for-t}) with
$k:=a$, $\frac{k_{1}}{k_{0}} := a^{2}$ and the automorphism $\mu_{2}\in \mathrm{Aut} (\mathbb{C} ,0)$
given by
$$
\mu_{2} (t_{2} ) :=  \frac{ c_{0}}{B_{21}^{\infty} (b-a) -1} (e^{(B_{21}^{\infty} (b-a) -1) t_{2}} -1).
$$

iii) The claim follows by a similar argument,   
by taking $T$ given by the first formula in (\ref{expr-for-t}), with   $k: =a$ and $\frac{k_{1}}{k_{0}}: = a^{2} c_{0}$
and $\mu_{2} \in \mathrm{Aut} ( \mathbb{C}, 0)$ the identity automorphism.
\end{proof}

We arrive at  our main result  from this section.

\begin{thm}\label{classif-non-ele} 
Let $\nabla = \nabla^{\mathrm{univ}, B_{0}^{o}, B_{\infty}}$ be a non-elementary Malgrange normal form of the second type, 
with $B_{0}^{o} = c C_{1} + c_{0} C_{2}$ 
and $B_{\infty} = (B^{\infty}_{ij})$ (thus $B_{12}^{\infty} = c_{0}\neq 0$ and $B_{11}^{\infty} - B_{22}^{\infty}
=-\frac{1}{2}$). Let $\alpha := \frac{1}{2}( B_{11}^{\infty} +  B_{22}^{\infty})$.\

i) If  $B^{\infty}_{12} B^{\infty}_{21} =-\frac{1}{16}$,  then $\nabla$ is isomorphic to the  $(TE)$-structure $\tilde{\nabla}$ given by
\begin{align}
\nonumber &\tilde{A}_{1} = C_{1},\ \tilde{A}_{2} = C_{2} +  
 \frac{z(c_{0})^{2}}{ 1- t_{2}} E,\\ 
\label{te-mal-1}& \tilde{B} = ( -t_{1} + c +\alpha z) C_{1} + (1- t_{2}) C_{2} + z (c_{0})^{2} E.
\end{align}

ii)  If $B^{\infty}_{12}B^{\infty}_{21}\neq  -\frac{1}{16}$
and $B_{21}^{\infty}(b-a) \neq 1$ then  $\nabla$ is 
isomorphic to the $(TE)$-structure $\tilde{\nabla}$ given by
 
\begin{align}
\nonumber& \tilde{A}_{1} = C_{1},\ \tilde{A}_{2} = C_{2} + z (\frac{\lambda }{c_{0}} t_{2} + 1)^{-2-\frac{1}{\lambda }} E\\
\label{te-mal-2}& \tilde{B} = (-t_{1} + c +\alpha z) C_{1} + (\lambda  t_{2} + c_{0}) C_{2} -\frac{z}{2} (\lambda +1) D + z c_{0}
( \frac{\lambda }{c_{0}} t_{2} 
+1)^{-1-\frac{1}{\lambda }}E,
\end{align}
where $\lambda := B_{21}^{\infty} (b-a) -1$ and $a, b\in \mathbb{C}$ are defined in Lemma \ref{non-ele-case}.

iii)  If  $B^{\infty}_{12}B^{\infty}_{21}\neq   - \frac{1}{16}$
and $B_{21}^{\infty} (b-a) =1$, then $\nabla$ is isomorphic to  the $(TE)$-structure  $\tilde{\nabla}$ given by
\begin{align}
\nonumber& \tilde{A}_{1} = C_{1},\ \tilde{A}_{2} = C_{2} + z (c_{0})^{2}e^{-t_{2}} E\\
\label{te-mal-3}& \tilde{B} = (-t_{1} + c +\alpha z) C_{1} + C_{2} -\frac{z}{2} D + z(c_{0})^{2} e^{-t_{2}} E.
\end{align} 
\end{thm}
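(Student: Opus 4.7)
The plan is to take the non-elementary Malgrange normal form $\nabla$ of the second type, apply the gauge isomorphism $T$ from Lemma \ref{non-compl} (with the specific constants $k$ and $k_1/k_0$ selected in the proof of Proposition \ref{t-non-ele}) followed by the coordinate change $\mu(t_1,t_2) = (t_1,\mu_2(t_2))$ from the same proof, and compute the resulting $B$-matrix. The underlying $(T)$-structures (\ref{t1})-(\ref{t3}) are already obtained in Proposition \ref{t-non-ele}, so only $\tilde B$ needs to be identified in each of the three cases.

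The gauge step simplifies considerably because of a key observation from Proposition \ref{expr-mal-coord}: one has $B^{(0)} = (c - t_1) C_1 + A_2^{(0)}$, where $A_2^{(0)} = A_2$ is the $z$-independent matrix appearing in the $(T)$-part. Since $T$ is constructed in Lemma \ref{non-compl} precisely so that $T^{-1} A_2^{(0)} T = F C_2$, we obtain immediately
\begin{equation*}
T^{-1} B^{(0)} T = (c - t_1) C_1 + F C_2,
\end{equation*}
and the intermediate $B$-matrix is $B^{[1]} = (c-t_1) C_1 + F C_2 + z\, T^{-1} B_\infty T$. Thus the problem reduces to the explicit conjugation $T^{-1} B_\infty T$, which is a purely $2\times 2$ linear-algebra computation. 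Expanding $T^{-1} B_\infty T = \alpha C_1 + v C_2 + w D + s E$ (where $\alpha = \tfrac12(B^\infty_{11}+B^\infty_{22})$ is forced by the trace being fixed), one obtains $v,w,s$ as rational functions of $x$ and the entries of $B_\infty$.

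The final step is the pullback by $\mu$: it leaves $\tilde B$ unchanged apart from the substitution $t_2 \mapsto \mu_2^{-1}(\tilde t_2)$, and divides $A_2^{[1]}$ by $\dot\mu_2 = F$ (the mechanism already used to produce (\ref{t1})-(\ref{t3})). One then inserts the closed-form expressions for $x(t_2)$ from Corollary \ref{non-ele-case} and the relations $a+b = -1/(2 B^\infty_{21})$, $ab = -c_0/B^\infty_{21}$, $\lambda = B^\infty_{21}(b-a) - 1$, and verifies that in each case the coefficients of $C_2$, $D$, $E$ in $\tilde B$ collapse to the simple functions of $\tilde t_2$ recorded in (\ref{te-mal-1})-(\ref{te-mal-3}). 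Cases i) and iii) correspond to the degenerate limits $a = b$ (so $\lambda$ is undefined and a different normalisation of $k$ is used) and $\lambda = 0$ respectively, while case ii) is the generic situation.

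The principal obstacle is verifying the algebraic miracles in case ii): that $v$ becomes the linear function $\lambda \tilde t_2 + c_0$, that $w$ reduces to the \emph{constant} $-(\lambda+1)/2$, and that $s$ simplifies to $c_0(\lambda \tilde t_2/c_0 + 1)^{-1-1/\lambda}$. These are not obvious from the raw matrix product $T^{-1} B_\infty T$ and require systematic use of the algebraic relations tying $a,b,c_0,B^\infty_{21},\lambda$ together with the ODEs (\ref{dx}). As a robust cross-check---which may even be used as a shortcut---the resulting $\tilde B$ must lie in the pre-normal form of Lemma \ref{simplif-0} with $f$ read off from (\ref{t1})-(\ref{t3}) and $b_2(0,0)=c_0$; the compatibility relations (\ref{b3-4}) and the flatness equation (\ref{b-2}) then determine $b_3$, $b_4$ from $b_2$ and leave only the coefficient of $C_2$ to be identified, which is a one-variable ODE verification. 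This reduces the tedious conjugation to a scalar calculation and will be the cleanest way to finish each of the three cases.
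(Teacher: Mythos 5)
Your proposal follows essentially the paper's own proof: gauge by the isomorphism $T$ of Lemma \ref{non-compl} with the constants chosen in Proposition \ref{t-non-ele}, compute $B^{[1]}=T^{-1}BT$ (your decomposition $B^{(0)}=(c-t_1)C_1+A_2$, hence $z^0$-part $(-t_1+c)C_1+FC_2$, is exactly what the paper's formula (\ref{n-f-nonele}) records), pull back by $\mu$, and insert the closed forms of $x,y$ from Corollary \ref{non-ele-case} together with the relations among $a,b,c_0,B^\infty_{21},\lambda$. One bookkeeping correction to your anticipated "miracles" in case ii): the $C_2$-component $v$ of $T^{-1}B_\infty T$ does not become $\lambda t_2+c_0$ but vanishes identically, since $k=a$ satisfies $B^\infty_{21}k^2+\tfrac{k}{2}-c_0=0$; the term $(\lambda t_2+c_0)C_2$ in (\ref{te-mal-2}) is instead the $z$-independent coefficient $F=\tfrac{k_0}{k_1}y(k-x)^2$ rewritten through $\mu_2$, which your own step (1) already supplies.
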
 

\begin{proof}
The idea of the proof  is common to the three cases. 
Recall that the matrix $B$ of $\nabla$ is given by
$B = B^{(0)} + z B^{(1)}$ with $B^{(0)}$, $B^{(1)}$ given by 
(\ref{te-1}) and  functions $x$, $y$ as in Lemma
\ref{non-ele-case}. Let $T$ be the gauge isomorphism used in the proof of Proposition
\ref{t-non-ele}. Recall that $T$ is given by the first formula in
(\ref{expr-for-t}) (with various  choices of constants $k,k_{0}, k_{1}$, 
according to the three cases of Proposition \ref{t-non-ele}).
The $(TE)$-structure $\nabla^{[1]}= T\cdot \nabla$ has matrix $B^{[1]}$ given by
$B^{[1]} = T^{-1} BT$ and a straightforward computation shows that
\begin{align}
\nonumber B^{[1]}& = ( - t_{1} + c +\alpha z) C_{1} +  \frac{k_{0}}{k_{1}} \left(  y ( k - x)^{2} + z ( \frac{k}{2} + k^{2} B_{21}^{\infty} - c_{0})\right) C_{2}\\
\nonumber &+ \frac{z}{ 2 (k-x)} \left( (-\frac{k}{2} + 2 c_{0}) - x( 2k B^{\infty}_{21} +\frac{1}{2} )\right) D\\
\label{n-f-nonele} & + \frac{zk_{1}}{k_{0} ( k-x)^{2}}\left( c_{0} -  x(\frac{1}{2} +  B_{21}^{\infty} x)\right) E.
\end{align}   
The choice of $k$  in the proof of Proposition \ref{t-non-ele}  (in all three cases), implies that
$  \frac{k}{2} + k^{2} B_{21}^{\infty} - c_{0}=0$. 
Therefore, the coefficient of $C_{2}$ in $B^{[1]}$ reduces to $\frac{k_{0}}{k_{1}} y (k - x)^{2}$. 
Define $\tilde{B}$ by 
$\tilde{B} :=( \mu^{-1})^{*}B^{[1]}$, where 
$\mu (t_{1}, t_{2}) = (t_{1}, \mu_{2}(t_{2}))$ and 
$\mu_{2}$ is  the automorphism of $(\mathbb{C}, 0)$  constructed 
in the proof of Proposition \ref{t-non-ele}  (for each case).
The matrix $\tilde{B}$ is obtained by writing $B^{[1]}$ in terms of $\mu_{2}$ (and $t_{1}$). 
Together with the $(T)$-structures from Proposition \ref{t-non-ele}, 
the matrices $\tilde{B}$ form  $(TE)$-structures
$\tilde{\nabla}$  isomorphic to $\nabla$ (in all three cases). 
Their associated functions $\tilde{b}_{2}$  are obtained by writing the coefficient  
$b^{[1]}_{2} =\frac{k_{0}}{k_{1}}  y (k-x)^{2}$ of $C_{2}$ in 
the expression of $B^{[1]}$ in terms of $\mu_{2}$. Making the computations explicit we 
obtain that $\tilde{\nabla}$  have the expressions stated in Theorem 
\ref{classif-non-ele}.

To ilustrate  our argument  we consider  the case  
$B_{12}^{\infty} B_{21}^{\infty} = -\frac{1}{16}$. Then, from the proof of Proposition \ref{non-ele-case}, 
$k = 4 c_{0}$, $\frac{k_{0}}{k_{1}}= \frac{1}{16 (c_{0})^{3}}$, $\mu_{2} (t_{2}) = 1 - e^{-t_{2}}$
and the underlying $(T)$-structure  of $\tilde{\nabla}$ is given by the first line of  (\ref{te-mal-1}). 
Using the expressions for $x$, $y$ given by  (\ref{a-b-sec}), we obtain that 
$$
b^{[1]}_{2}= \frac{1}{ 16 (c_{0})^{3}} y ( 4 c_{0} -x)^{2} = e^{-t_{2}} = 1 - \mu_{2} (t_{2}). 
$$
Thus, $\tilde{b}_{2} (t_{2}) = 1 - t_{2}$, which leads to the matrix $\tilde{B}$ given  in
(\ref{te-mal-1}).
\end{proof}

\begin{defn}\label{concl-non-ele}  A holomorphic normal form for $(TE)$-structures over $\mathcal N_{2}$ is a $(TE)$-structure which belongs 
either to the list of $(TE)$-structures from Theorem \ref{te} or to the list of $(TE)$-structures from
Theorem \ref{classif-non-ele}.
\end{defn}

The next corollary summarises our discussion on the holomorphic classification.

\begin{cor}\label{concl-non-ele-1} Any $(TE)$-structure over $\mathcal N_{2}$  is   isomorphic to a holomorphic 
normal form.
\end{cor}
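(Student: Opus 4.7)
The plan is to chain together the classification results already established and apply them to an arbitrary $(TE)$-structure $\nabla$ over $\mathcal N_{2}$. First, I would use Theorem \ref{iso-formal} together with Lemma \ref{simplif-0} to reduce $\nabla$, via a (formal, a priori) isomorphism, to pre-normal form, determined by some data $(f, b_{2}, c, \alpha)$. Proposition \ref{conceptual} i) then partitions the situation into two cases according to whether $f(0,0)\, b_{2}(0,0) = 0$ or not, i.e. whether $\nabla$ is elementary or non-elementary.

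In the elementary case, Corollary \ref{cor-class-elem} ii) applies directly: the formal normal form(s) of $\nabla$, which are listed in Theorem \ref{te}, are in fact holomorphically isomorphic to $\nabla$. So $\nabla$ is isomorphic to a holomorphic normal form in the sense of Definition \ref{concl-non-ele}.

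In the non-elementary case, Proposition \ref{conceptual} iii) (proved via Lemma \ref{BNF} and the universality of Malgrange deformations) identifies $\nabla$ holomorphically with the Malgrange universal deformation $\nabla^{\mathrm{univ}, B_{0}^{o}, B_{\infty}}$ of a connection in Birkhoff normal form whose residue is regular with a single eigenvalue. By Lemma \ref{conj-ct} i), a constant gauge transformation brings $(B_{0}^{o}, B_{\infty})$ to the normalized form with $B^{\infty}_{12}=c_{0}\neq 0$ and $B^{\infty}_{11}-B^{\infty}_{22}=-\tfrac{1}{2}$; this is exactly a non-elementary Malgrange normal form. I would then split according to whether $B^{\infty}_{21}=0$ (first type) or $B^{\infty}_{21}\neq 0$ (second type).

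For the first type, Corollary \ref{cor-2} exhibits an explicit holomorphic isomorphism with the $(TE)$-structure (\ref{f-1}) of Theorem \ref{te} i) (with $c_{0}\neq 0$). For the second type, Theorem \ref{classif-non-ele} gives, according to whether $B^{\infty}_{12} B^{\infty}_{21}$ equals $-\tfrac{1}{16}$ (case i), is different from $-\tfrac{1}{16}$ with $B_{21}^{\infty}(b-a)\neq 1$ (case ii), or differs from $-\tfrac{1}{16}$ with $B_{21}^{\infty}(b-a)=1$ (case iii), an explicit holomorphic isomorphism with one of the $(TE)$-structures (\ref{te-mal-1}), (\ref{te-mal-2}), (\ref{te-mal-3}). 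In every scenario the endpoint is a member of the list of holomorphic normal forms specified in Definition \ref{concl-non-ele}, which proves the corollary. The argument is essentially a bookkeeping exercise once the earlier results are in place; there is no serious obstacle since every case has been treated individually, and the point of the corollary is only to record that these cases exhaust all possibilities.
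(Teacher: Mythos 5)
Your argument is correct and follows the paper's own route: the elementary case is settled by Corollary \ref{cor-class-elem} ii), and the non-elementary case by passing to the Malgrange universal deformation, normalizing $B_{\infty}$ with Lemma \ref{conj-ct}, and then using Corollary \ref{cor-2} (first type) and Theorem \ref{classif-non-ele} (second type), exactly as in the paper's discussion preceding the corollary. One small refinement: since the reduction to pre-normal form via Theorem \ref{iso-formal} and Lemma \ref{simplif-0} is a priori only formal, in the non-elementary branch you should invoke Lemma \ref{char-non-elem} i) (stated for an arbitrary non-elementary $(TE)$-structure and resting on the unfolding condition together with the universality of the Malgrange deformation) rather than Proposition \ref{conceptual} iii), so that the identification with the universal deformation is genuinely holomorphic and not merely formal.
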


It remains to establish when two holomorphic normal forms $\nabla$ and $\tilde{\nabla}$ are  isomorphic. 
If $\nabla$ and $\tilde{\nabla}$ are  isomorphic (and distinct), then they are both 
elementary or both non-elementary.  In the first case, $\nabla$ and $\tilde{\nabla}$  are as in Theorem \ref{te} i) with $c_{0}=0$, 
ii) or iii). They are  isomorphic if and only if they are formally isomorphic and this happens if and only if the conditions
from Theorem \ref{te-class} ii) are satisfied. 
In the second case, $\nabla$ and $\tilde{\nabla}$ are as in Theorem \ref{te} i) with $c_{0} \neq 0$ or as in Theorem 
\ref{classif-non-ele}.  
We shall associate to $\nabla$ (and $\tilde{\nabla})$ a constant $c_{1}$ (respectively, $\tilde{c}_{1}$) which will be used to 
establish when $\nabla$ and $\tilde{\nabla}$ are  isomorphic.  
If $\nabla$ is of the form (\ref{te-mal-1}),  we define $c_{1}:= -\frac{1}{16 c_{0}}$; if $\nabla$ is of the form
(\ref{te-mal-2}), we define $c_{1} = \frac{1}{16 c_{0}} ( 4 \lambda^{2} + 8\lambda +3)$; if
$\nabla$ is of the form (\ref{te-mal-3}), we define $c_{1}:= \frac{3}{16 c_{0}}$. Finally, if $\nabla$ is 
as in Theorem \ref{te}  i) with $c_{0}\neq 0$,  we define $c_{1} :=0.$  In a similar way, we assign to $\tilde{\nabla}$ a constant
$\tilde{c}_{1}.$ 

\begin{cor} Let $\nabla$, $\tilde{\nabla}$ be two holomorphic normal forms, 
as in Theorem \ref{te} i) or  Theorem \ref{classif-non-ele} 
(and $c_{0}\tilde{c}_{0}\neq 0$ 
when $\nabla$ and $\tilde{\nabla}$ belong to Theorem \ref{te} i)).  
Then $\nabla$ and $\tilde{\nabla}$ are  isomorphic if and only if
the constants $(c, \alpha , c_{0})$ and $(\tilde{c}, \tilde{\alpha}, \tilde{c}_{0})$
involved in their expressions satisfy  
$c =\tilde{c}$, $\alpha =\tilde{\alpha}$, $c_{0} =\epsilon \tilde{c}_{0}$ where $\epsilon \in \{ \pm 1\}$ 
and relations 
(\ref{gen-c01}) and (\ref{gen-cc}), with $c_{1}$ and $\tilde{c}_{1}$ defined above,   are satisfied as well. 
\end{cor}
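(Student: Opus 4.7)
The plan is to invoke Lemma \ref{char-non-elem} to reduce the isomorphism question for non-elementary $(TE)$-structures to one about their associated connections in Birkhoff normal form, and then apply Proposition \ref{classif-Birk}.

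First I would check that under the stated hypotheses both $\nabla$ and $\tilde{\nabla}$ are non-elementary. For $\nabla$ of form (\ref{f-1}) with $c_{0}\neq 0$, one has in the notation of Lemma \ref{simplif-0} that $f=1$ and $b_{2}(0,0)=c_{0}\neq 0$, so Proposition \ref{conceptual} i) gives non-elementarity. For $\nabla$ as in Theorem \ref{classif-non-ele}, inspection of (\ref{te-mal-1})--(\ref{te-mal-3}) shows that $b_{2}(0,0)$ and $f(0,0)$ are both non-zero in all three cases. Hence by Lemma \ref{char-non-elem} ii), $\nabla\cong\tilde{\nabla}$ if and only if the associated reduced Birkhoff normal forms, of shape (\ref{almost-fin}) obtained via Lemma \ref{conj-ct}, are isomorphic as meromorphic connections on a disc.

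The main technical step is to identify, for each holomorphic normal form, the four Birkhoff constants $(c,\alpha,c_{0},c_{1})$. Since each normal form has $B$ of degree $\leq 1$ in $z$, its restriction to $\Delta\times\{0\}$ reads off directly as $B\vert_{t=0} = B_{0}^{o}+zB_{\infty}$, already in Birkhoff normal form (though not yet in the reduced shape (\ref{almost-fin})). My recipe is: apply the constant isomorphism $T_{1}$ from Lemma \ref{conj-ct} i) to normalise the coefficient of $D$ in $B_{\infty}$ to $-\tfrac{1}{4}$, and, when needed, apply $T_{2}$ to equate the $C_{2}$-coefficient of $B_{0}^{o}$ with the $E$-coefficient of $B_{\infty}$; then $c_{1}$ is the resulting $C_{2}$-coefficient of $B_{\infty}$. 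For $\nabla$ of form (\ref{f-1}) with $c_{0}\neq 0$, the restriction is already in the reduced shape with $c_{1}=0$. For (\ref{te-mal-1}), conjugation by $T_{1}=C_{1}-\tfrac{1}{4(c_{0})^{2}}C_{2}$ followed by $T_{2}$ with $\tilde{c}_{0}=c_{0}$ yields $c_{1}=-\tfrac{1}{16c_{0}}$. For (\ref{te-mal-2}), only $T_{1}=C_{1}+\tfrac{2\lambda+1}{4c_{0}}C_{2}$ is required (the $E$- and $C_{2}$-coefficients are already compatible), producing $c_{1}=\tfrac{(2\lambda+1)(2\lambda+3)}{16c_{0}}=\tfrac{4\lambda^{2}+8\lambda+3}{16c_{0}}$. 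For (\ref{te-mal-3}), an analogous two-step reduction gives $c_{1}=\tfrac{3}{16c_{0}}$. These match exactly the constants $c_{1}$ defined in the paragraph preceding the corollary.

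With the correspondence $\nabla\leftrightarrow(c,\alpha,c_{0},c_{1})$ in place, the corollary follows directly from Proposition \ref{classif-Birk}: part i) supplies the necessary conditions $c=\tilde{c}$, $\alpha=\tilde{\alpha}$ and $c_{0}=\epsilon\tilde{c}_{0}$ for some $\epsilon\in\{\pm 1\}$, and part ii) translates the remaining isomorphism criterion into (\ref{gen-c01}) together with (\ref{gen-cc}) for the computed $c_{1}$ and $\tilde{c}_{1}$. The main obstacle is purely computational: in case (\ref{te-mal-2}) one must propagate the parameter $\lambda$ through the $T_{1}$-conjugation and watch several $\alpha$-dependent terms cancel using the algebra relations (\ref{r1})--(\ref{r4}), in particular $C_{2}E=\tfrac{1}{2}(C_{1}-D)$, $EC_{2}=\tfrac{1}{2}(C_{1}+D)$ and $C_{2}D=-DC_{2}=C_{2}$, before the clean factorisation $(2\lambda+1)(2\lambda+3)/(16c_{0})$ emerges.
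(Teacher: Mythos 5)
Your proof is correct, and its skeleton is the paper's: restrict at the origin, invoke Lemma \ref{char-non-elem} ii) to reduce to connections in Birkhoff normal form, and conclude with Proposition \ref{classif-Birk}. The one place where you genuinely diverge is the identification of the constants $c_{1}$, $\tilde{c}_{1}$. The paper does this \emph{backwards}: it recalls that each normal form of Theorem \ref{classif-non-ele} was produced from a Malgrange datum $\nabla^{\mathrm{univ},B_{0}^{o},B_{\infty}}$ already in the reduced shape, and checks $c_{1}=B^{\infty}_{21}$ purely algebraically from $B_{12}^{\infty}=c_{0}$, $\lambda=B_{21}^{\infty}(b-a)-1$, $a+b=-\tfrac{1}{2B_{21}^{\infty}}$, $ab=-\tfrac{c_{0}}{B_{21}^{\infty}}$, which give $16B_{12}^{\infty}B_{21}^{\infty}=4(\lambda+1)^{2}-1=4\lambda^{2}+8\lambda+3$ (and $\lambda=0$, resp.\ $B_{12}^{\infty}B_{21}^{\infty}=-\tfrac{1}{16}$, in the other two cases). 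You instead compute \emph{forwards}: you read $B_{0}^{o}+zB_{\infty}$ off the normal form at $t=0$ and push it into the shape (\ref{almost-fin}) by the constant conjugations of Lemma \ref{conj-ct}; your resulting values $-\tfrac{1}{16c_{0}}$, $\tfrac{(2\lambda+1)(2\lambda+3)}{16c_{0}}$, $\tfrac{3}{16c_{0}}$, and $0$ are right (e.g.\ conjugating $yD+fE$ by $C_{1}+\mu C_{2}$ produces the $C_{2}$-coefficient $-2y\mu-f\mu^{2}$, which with $\mu=\tfrac{2\lambda+1}{4c_{0}}$, $y=-\tfrac{\lambda+1}{2}$, $f=c_{0}$ gives exactly $\tfrac{(2\lambda+1)(2\lambda+3)}{16c_{0}}$, and in case (\ref{te-mal-2}) no $T_{2}$ is needed since the $E$- and residue $C_{2}$-coefficients are both $c_{0}$). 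The paper's route is shorter because the relations of Corollary \ref{non-ele-case} are already at hand; yours is more self-contained, does not require retracing the derivation of Theorem \ref{classif-non-ele}, and doubles as a consistency check that the constants $c_{1}$ defined before the corollary really are the $C_{2}$-coefficients of the reduced Birkhoff forms.
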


\begin{proof} We claim that the constant $c_{1}$ associated to $\nabla$ as  above coincides with the $(2,1)$-entry 
$B^{\infty}_{21}$  of the matrix $B_{\infty}$ from the
Malgrange universal deformation $\nabla^{\mathrm{univ}, B_{0}^{o}, B_{\infty}}$ isomorphic to $\nabla $
(and similarly for $\tilde{c}_{1}$ and $\tilde{\nabla}$).
 This is obvious for the $(TE)$-structures from Theorem \ref{te}  i) with $c_{0}\neq 0$ and for
the $(TE)$-structures from Theorem \ref{classif-non-ele} i).
For the $(TE)$-structures from Theorem \ref{classif-non-ele}
ii) the claim follows from
$B_{12}^{\infty} = c_{0}$ and 
$\lambda = B_{21}^{\infty} (b-a)-1$, which imply
$$
4\lambda^{2} + 8\lambda + 3 = 4 (\lambda + 1)^{2} - 1 = 16 B_{12}^{\infty} B_{21}^{\infty}
$$
where we used $a+ b= - \frac{1}{2 B_{21}^{\infty}}$ and
 $ab = -\frac{c_{0}}{B_{21}^{\infty}}$  (see Corollary  \ref{non-ele-case}).
For the $(TE)$-structures from Theorem \ref{classif-non-ele}
iii) the claim follows from the same argument with $\lambda =0$. 
We conclude
the proof using Proposition \ref{classif-Birk}. 
\end{proof}

\section{Which Euler fields on ${\mathcal N}_2$ come from
$(TE)$-structures?}\label{application}

This section has two parts.
In the first part, we determine normal forms for  Euler fields 
on ${\mathcal N}_{2}$. 
It turns out that 
$\mathcal N_{2}$ has  surprisingly many 
Euler fields. 
In the second part, we characterize the Euler fields on 
$\mathcal N_{2}$ which are induced by a $(TE)$-structure. 
At first sight, one might expect that all are induced
by $(TE)$-structures (as in the case of the
$F$-manifolds $I_2(m)$ \cite{DH-dubrovin}).
We will prove that this  is not the case.

\subsection{Normal forms for  Euler fields on $\mathcal N_{2}$}\label{euler-sect}

\begin {thm}\label{classif-euler} i)  Up to an automorphism,  any Euler field on $\mathcal N_{2}$ is of the form 
\begin{align}
\label{expr-euler-1} & E = (t_{1} + c) \partial_{1} +\partial_{2}\\
\label{expr-euler-2}& E = (t_{1}+c) \partial_{1}\\
\label{expr-euler-3}& E = (t_{1}+c) \partial_{1} +{c}_{0} t_{2}\partial_{2},\\
\label{expr-euler-4} & E = (t_{1}+c) \partial_{1} + t_{2}^{r}(1 + c_{1} t_{2}^{r-1})\partial_{2},
\end{align} 
where $c, c_{1}\in \mathbb{C}$,  $c_{0}\in \mathbb{C}^{*}$ and $r\in \mathbb{Z}_{\geq 2}.$\   

ii) Any  two (distinct)  Euler fields from the above list belong to distinct orbits of
the natural action of $\mathrm{Aut} (\mathcal N_{2})$  on the space of Euler fields. 
\end{thm}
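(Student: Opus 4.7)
The plan is to work in the standard coordinates and first pin down the functional form of an Euler field, then reduce the classification to a one-dimensional problem. Writing $E = a(t_1,t_2)\partial_1 + b(t_1,t_2)\partial_2$ and expanding $L_E(\circ)=\circ$ against the three products of the multiplication table of $\mathcal N_2$ (using $L_E(\partial_i) = [E,\partial_i]$), the product $\partial_1\circ\partial_1=\partial_1$ forces $\partial_1 a = 1$ and $\partial_1 b = 0$, the product $\partial_2\circ\partial_2 = 0$ forces $\partial_2 a = 0$, and the product $\partial_1\circ\partial_2 = \partial_2$ is then automatic. Hence every Euler field has the shape
$$E = (t_1+c)\,\partial_1 + b(t_2)\,\partial_2,\qquad c\in\mathbb{C},\ b\in\mathbb{C}\{t_2\}.$$

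Second, under an automorphism $h(t_1,t_2)=(t_1,\lambda(t_2))$ of $\mathcal N_2$, a direct pushforward computation gives $h_*E = (s_1+c)\partial_1 + \tilde b(s_2)\partial_2$ with $\tilde b(s_2) = b(\lambda^{-1}(s_2))\,\dot\lambda(\lambda^{-1}(s_2))$. Thus $c$ is automatically an invariant, and part i) reduces to classifying germs of vector fields $b(t_2)\partial_2$ on $(\mathbb{C},0)$ under $\mathrm{Aut}(\mathbb{C},0)$. I would proceed by cases according to the vanishing order of $b$ at $0$. If $b$ is a unit, solving $b(t_2)\dot\lambda(t_2) = 1$ produces $\lambda\in\mathrm{Aut}(\mathbb{C},0)$ normalizing $b$ to $1$ and giving (\ref{expr-euler-1}); $b\equiv 0$ gives (\ref{expr-euler-2}); if $b(0)=0$ and $\dot b(0)=c_0\neq 0$, then $c_0$ is automorphism-invariant (it equals $\dot{\tilde b}(0)$) and the ODE $b\,\dot\lambda = c_0\,\lambda$ has a solution $\lambda(t_2) = t_2\cdot e^{\text{holom.}}\in\mathrm{Aut}(\mathbb{C},0)$ normalizing $b$ to $c_0 t_2$, giving (\ref{expr-euler-3}); if $b$ has a zero of order $r\geq 2$, a linear rescaling $\lambda(t_2)=\mu t_2$ (with $\mu^{r-1}$ equal to the leading coefficient) normalizes that coefficient to $1$, and the classical normal-form theorem for one-dimensional holomorphic vector fields with a degenerate zero (the single invariant being the residue-type datum $c_1$ of $dt_2/b(t_2)$) produces the form $t_2^r(1+c_1 t_2^{r-1})\partial_2$, giving (\ref{expr-euler-4}).

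For part ii), distinctness of orbits follows from the automorphism-invariants isolated along the way: the constant $c$, the vanishing order of $b$ at $0$ (which distinguishes the four families), the linear coefficient $c_0$ inside (iii), and the pair $(r,c_1)$ inside (iv); different values or different families force different orbits.

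The main obstacle is the degenerate case $r\geq 2$: after normalizing the leading coefficient of $b$ to $1$, I must show that all remaining Taylor coefficients except the one at $t_2^{2r-1}$ can be killed by a \emph{convergent} $\lambda\in\mathrm{Aut}(\mathbb{C},0)$. Order by order this is a triangular linear system in the coefficients of $\lambda$ whose only nontrivial cokernel sits in degree $2r-1$ (explaining the single invariant $c_1$); the convergence of the cumulative transformation can be verified either by a majorant estimate or by appealing to the classical theory of holomorphic vector fields on $(\mathbb{C},0)$, which in dimension one (unlike the parabolic-diffeomorphism setting) admits purely holomorphic normalization. This is the step I would either carry out in detail or quote from the appendix on differential equations announced in the introduction.
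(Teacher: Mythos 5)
Your proposal is correct and follows the same overall strategy as the paper: the Lie-derivative computation giving $E=(t_1+c)\partial_1+b(t_2)\partial_2$, the pushforward formula $\tilde b=(\dot\lambda\, b)\circ\lambda^{-1}$ reducing everything to the classification of germs $b(t_2)\partial_2$ under $\mathrm{Aut}(\mathbb{C},0)$, and the case split by $\mathrm{ord}_0(b)$. The only genuine divergence is in how the analytic content is supplied. For $r=0$ and $r=1$ your direct arguments (integrating $\dot\lambda=1/b$, resp. $\dot\lambda/\lambda=c_0/b=\tfrac1{t_2}+\text{holomorphic}$) are sound and in fact a bit more elementary than the paper, which routes both cases through its Lemma \ref{ecuatia-baza} on regular-singular ODEs after substituting $\lambda=t_2\tilde\lambda$. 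For the hard case $r\ge 2$ the paper does not quote the classical theory of one-dimensional vector-field germs; it substitutes $\lambda=t_2\tilde\lambda$, $\tau=(1-r)\tilde\lambda^{r-1}$ and proves a bespoke result (Lemma \ref{de-adaugat}, with the majorant inequality of Lemma \ref{her}) asserting that there is a \emph{unique} constant $c_1$ for which the resulting equation $t\dot\tau+(r-1)\tau=-\tau^2 f^{-1}(1+\tfrac{c_1}{1-r}t^{r-1}\tau)$ has a formal solution with $\tau_0\neq0$, and that any such solution converges. This single lemma buys both the existence of the normal form (\ref{expr-euler-4}) and, via the uniqueness of $c_1$, the separation of orbits needed in part ii). Your route instead invokes the classical holomorphic normal form for germs $b(t_2)\partial_2$ with a zero of order $r\ge2$ (complete invariants: the order $r$ and the residue of $dt_2/b$, which is $-c_1$ in your normalization); that citation is legitimate and also yields part ii) once you observe, as you do, that $c$, $\mathrm{ord}_0(b)$, the linearization coefficient $c_0$, and the residue are all $\mathrm{Aut}(\mathcal N_2)$-invariants. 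So the trade-off is: your version is shorter but leans on an external classical theorem at exactly the step you flag as deferred; the paper's version is self-contained, and its uniqueness statement is what it reuses verbatim to prove part ii). If you carry out your alternative "majorant estimate" option rather than citing the classical theory, you would essentially be reproducing the paper's Lemma \ref{de-adaugat}.
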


We divide the proof  into several steps. 

\begin{lem}\label{euler-1} i) A vector  field on $\mathcal N_{2}$ is an Euler field if and only if 
\begin{equation}\label{form-euler}
E = (t_{1} + c) \partial_{1} + g(t_{2}) \partial_{2},
\end{equation}
for $c\in \mathbb{C}$ and $g\in \mathbb{C} \{ t_{2} \} .$\ 

ii) If $g\neq 0$,  then   $r:= \mathrm{ord}_{0} (g)\geq 0$ is $\mathrm{Aut} (\mathcal N_{2})$-invariant.
If $g\neq 0$ is constant, then up to an automorphism, $E$ is of the form (\ref{expr-euler-1}).\

iii)  If $g=0$ then $E$ is of the form (\ref{expr-euler-2}) and  the $\mathrm{Aut} (\mathcal N_{2})$-orbit of $E$ reduces to $E$.
\end{lem}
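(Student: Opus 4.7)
The plan is to use the explicit multiplication on $\mathcal{N}_2$ to unpack the Euler field equation $L_E(\circ)=\circ$ coefficient by coefficient, and then to analyze how the resulting normal form is acted upon by $\mathrm{Aut}(\mathcal{N}_2)$.

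For part i), I would write $E = a(t_1,t_2)\partial_1 + b(t_1,t_2)\partial_2$ and evaluate the tensor $L_E(\circ)(X,Y) = [E,X\circ Y]-[E,X]\circ Y - X\circ [E,Y]$ on the basis pairs. The pair $(\partial_1,\partial_1)$, together with $\partial_1\circ\partial_1=\partial_1$, forces $[E,\partial_1]=-\partial_1$, i.e.\ $\partial_1 a = 1$ and $\partial_1 b = 0$. The pair $(\partial_2,\partial_2)$, using $\partial_2\circ\partial_2=0$, reduces to $[E,\partial_2]\circ\partial_2 = 0$; since $[E,\partial_2] = -(\partial_2 a)\partial_1 - (\partial_2 b)\partial_2$ and $\partial_1\circ\partial_2=\partial_2$, this yields $\partial_2 a = 0$. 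The mixed pair $(\partial_1,\partial_2)$ is then automatic. These three conditions integrate to $a = t_1 + c$ with $c\in\mathbb{C}$ and $b = g(t_2)\in\mathbb{C}\{t_2\}$, which is exactly (\ref{form-euler}).

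For part ii), any $h\in\mathrm{Aut}(\mathcal{N}_2)$ is of the form $h(t_1,t_2)=(t_1,\lambda(t_2))$ with $\lambda\in\mathrm{Aut}(\mathbb{C},0)$, and $h_*E$ is again an Euler field because $h$ is an $F$-manifold automorphism. Using $dh(\partial_1) = \tilde\partial_1$, $dh(\partial_2) = \dot\lambda(t_2)\tilde\partial_2$, and the identity $\dot\lambda\circ\mu = 1/\dot\mu$ with $\mu := \lambda^{-1}$, one computes
\begin{equation*}
h_*E = (\tilde t_1+c)\tilde\partial_1 + \tilde g(\tilde t_2)\tilde\partial_2,\qquad \tilde g(\tilde t_2) = \frac{g(\mu(\tilde t_2))}{\dot\mu(\tilde t_2)}.
\end{equation*}
Since $\dot\mu$ is a unit in $\mathbb{C}\{\tilde t_2\}$ and $\mu\in\mathrm{Aut}(\mathbb{C},0)$ preserves $\mathrm{ord}_0$, it follows that $\mathrm{ord}_0\tilde g = \mathrm{ord}_0 g$, proving the invariance of $r$. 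If $g$ is a nonzero constant $g_0$, then choosing $\lambda(t_2) = t_2/g_0$ (so $\dot\mu\equiv g_0$) gives $\tilde g \equiv 1$, which brings $E$ into the form (\ref{expr-euler-1}).

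Part iii) is essentially tautological: if $g=0$ then $E=(t_1+c)\partial_1$, and since every $h\in\mathrm{Aut}(\mathcal{N}_2)$ preserves $t_1$ and $\partial_1$, one immediately obtains $h_*E = E$, so the $\mathrm{Aut}(\mathcal{N}_2)$-orbit of $E$ reduces to $E$. The only mild subtlety in the whole argument is keeping the push-forward convention straight in part ii) in order to extract the correct formula $\tilde g = (g\circ\mu)/\dot\mu$; once that is in place, both the order-invariance and the normalization of a nonzero constant $g$ to $1$ become one-line computations.
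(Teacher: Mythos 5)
Your proposal is correct and follows essentially the same route as the paper: unpack $L_E(\circ)=\circ$ on the coordinate fields to get the normal form $(t_1+c)\partial_1+g(t_2)\partial_2$, then use the push-forward formula $h_*E=(t_1+c)\partial_1+(\dot\lambda\, g)\circ\lambda^{-1}\,\partial_2$ (equivalently your $\tilde g=(g\circ\mu)/\dot\mu$) to read off the invariance of $\mathrm{ord}_0(g)$, the normalization of a nonzero constant $g$, and the triviality of the orbit when $g=0$. The paper merely states the part i) computation as "straightforward", which you carry out explicitly; otherwise the arguments coincide.
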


\begin{proof} 
i) Let $E:= f  \partial_{1} + g \partial_{2}$ be a vector field on $\mathcal  N_{2}$, where 
$f, g\in\mathbb{C} \{ t_{1}, t_{2}\} .$ A straightforward computation
shows that $L_{E}(\circ ) = \circ$  if and only if $f= t_{1} + c$ (with $c\in \mathbb{C}$)
and $g\in \mathbb{C} \{ t_{2}\}$, i.e.  $E$ is of the form (\ref{form-euler}).\

ii)  Let  $h (t_{1}, t_{2}) =(t_{1}, \lambda (t_{2}))$ be an automorphism of 
$\mathcal N_{2}$,  where $\lambda\in \mathrm{Aut} (\mathbb{C}, 0)$ and $E$ an Euler field given by
(\ref{form-euler}).  Then
\begin{equation}\label{change-iso-euler}
(h_{*}E)_{(t_{1}, t_{2} )} = (t_{1}+c) \partial_{1} + (\dot{\lambda} g) \circ \lambda^{-1} \partial_{2}. 
\end{equation}  
Assume   that $g\neq 0$ and  let $r:= \mathrm{ord}_{0}(g)\in \mathbb{Z}_{\geq 0}$. 
Relation (\ref{change-iso-euler}) and $\lambda \in \mathrm{Aut} (\mathbb{C}, 0)$  implies that  $r$ is an invariant of the $\mathrm{Aut}(\mathcal N_{2})$-action on Euler fields. If  $g = g_{0}$ is a (non-zero) constant,  let 
$h(t_{1}, t_{2}) : =  (t_{1}, g_{0}^{-1} t_{2}).$ Then $h_{*}E =    (t_{1} + c) \partial_{1} +\partial_{2}$
is of the form  (\ref{expr-euler-1}).\

iii) Claim iii) is obvious from (\ref{change-iso-euler}). 

\end{proof}

The next lemma concludes the proof of Theorem \ref{classif-euler} i).

\begin{lem}\label{euler-2} Let $E$ be an Euler field given by (\ref{form-euler}), with $g$ non-constant and 
$r= \mathrm{ord}_{0}(g)\geq 0.$ 

i)  If $r=0$ then, up to an automorphism of $\mathcal N_{2}$, $E$ is of the form (\ref{expr-euler-1}).\

ii) If   $r=1$ then, up to an automorphism of $\mathcal N_{2}$,  $E$ is of the form (\ref{expr-euler-3}).\
 
iii) If $r\geq 2$ then,  up to an automorphism of $\mathcal N_{2}$, $E$ is of the form (\ref{expr-euler-4}).

\end{lem}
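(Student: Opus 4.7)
The strategy, uniform across the three cases, is to exhibit an automorphism $h(t_1,t_2)=(t_1,\lambda(t_2))$ with $\lambda\in\mathrm{Aut}(\mathbb{C},0)$ that converts $g$ into the prescribed shape. By formula (\ref{change-iso-euler}), this amounts to solving $\dot\lambda(t_2)\,g(t_2)=\tilde g(\lambda(t_2))$ for $\lambda$, with $\tilde g$ prescribed; equivalently, under the coordinate change $s=\lambda(t_2)$ the meromorphic $1$-form $\omega:=dt_2/g(t_2)$ must be transported to $ds/\tilde g(s)$. Each of the three cases is thus a normal-form problem for a $1$-form on $(\mathbb{C},0)$ with pole of order $r$ at the origin.

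Case (i) is immediate: since $g(0)\neq 0$, $\omega$ is holomorphic and non-zero at $0$, so $\lambda(t_2):=\int_0^{t_2}du/g(u)$ is a holomorphic function with $\dot\lambda(0)=1/g(0)\neq 0$, hence belongs to $\mathrm{Aut}(\mathbb{C},0)$; it realizes $\tilde g\equiv 1$, yielding the normal form (\ref{expr-euler-1}). For case (ii), write $g(t_2)=c_0 t_2 h(t_2)$ with $c_0=\dot g(0)\in\mathbb{C}^{*}$ and $h(0)=1$. The requirement $\tilde g(s)=c_0 s$ becomes the ODE $\dot\lambda/\lambda=1/(t_2 h(t_2))$; setting $\lambda=t_2\mu$ converts it into the \emph{regular} linear ODE $t_2 h(t_2)\dot\mu=\mu(1-h)$. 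Since $(1-h)/t_2$ is holomorphic (because $h(0)=1$), this admits a unique holomorphic solution for every choice of $\mu(0)\in\mathbb{C}^{*}$, producing the required $\lambda\in\mathrm{Aut}(\mathbb{C},0)$ and the normal form (\ref{expr-euler-3}).

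Case (iii) is the substantive one. Writing $g(t_2)=a t_2^r+O(t_2^{r+1})$ with $a\neq 0$, a preliminary linear rescaling $t_2\mapsto\alpha t_2$ with $\alpha^{1-r}=a^{-1}$ (solvable since $r\neq 1$) normalizes the leading coefficient of $g$ to $1$. It remains to prove that any $g$ of the form $t_2^r+a_{r+1}t_2^{r+1}+\cdots$ is equivalent, via some $\lambda\in\mathrm{Aut}(\mathbb{C},0)$ with $\dot\lambda(0)=1$, to $s^r(1+c_1 s^{r-1})$ for a unique $c_1\in\mathbb{C}$. This is the classical holomorphic normal form for an isolated zero of order $r\geq 2$; the invariant $c_1$ equals minus the residue $\mathrm{Res}_{0}(\omega)$, which is, alongside $r$, the only biholomorphic conjugation invariant of $\omega$. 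One clean implementation matches meromorphic primitives: decompose $\omega=dF+\rho\,dt_2/t_2$ with $\rho:=\mathrm{Res}_{0}(\omega)$ and $F(t_2)=-1/[(r-1)t_2^{r-1}]+(\text{holomorphic})$, write the analogous decomposition for the target $1$-form $ds/(s^r(1+c_1 s^{r-1}))$, and solve the resulting identity for $\lambda$ via the implicit function theorem applied at the level of the leading $t_2^{-(r-1)}$ principal part.

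The main obstacle is case (iii). Working purely algebraically, the Taylor coefficients of $\lambda$ can be determined order by order from the functional equation, producing (for each choice of the free coefficient of $t_2^{r}$ in $\lambda$, which reflects the residual symmetry of the normal form) a unique formal solution with a uniquely determined $c_1$; establishing convergence is the delicate step and is precisely what the primitive-matching argument (or any other incarnation of the classical Dulac-type theorem) delivers. Uniqueness of $c_1$ follows from biholomorphic invariance of the residue: equating residues of $\omega$ and of the target $1$-form forces $c_1=-\mathrm{Res}_{0}(\omega)$ independently of the choice of $\lambda$.
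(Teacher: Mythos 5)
Your proof is correct, and in the decisive case iii) it takes a genuinely different route from the paper. For $r=0$ and $r=1$ you and the paper do essentially the same reduction (solve $\dot\lambda g=1$, resp.\ $\dot\lambda g=c_{0}\lambda$ with $c_{0}=\dot g(0)=1/f(0)$), though you settle these by direct integration and the explicit exponential formula, whereas the paper funnels both through the formal-to-holomorphic Lemma \ref{ecuatia-baza}; your version is slightly more elementary but equivalent. The real divergence is in iii): the paper substitutes $\lambda=t\tilde\lambda$, $\tau=(1-r)\tilde\lambda^{r-1}$ to turn (\ref{l-change}) into the nonlinear singular ODE (\ref{study-eqn}) and then relies on the appendix Lemma \ref{de-adaugat}, whose proof is a majorant-series estimate establishing both the existence of the distinguished constant and the convergence of the formal solution. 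You instead recognize the problem as the classical normal form for the meromorphic $1$-form $dt_{2}/g$ (equivalently, a holomorphic vector field with a zero of order $r\geq 2$): after normalizing the leading coefficient, you match primitives, $G(\lambda)+\rho\log\lambda=F(t_{2})+\rho\log t_{2}$ with $\rho=\mathrm{Res}_{0}(dt_{2}/g)$ and $c_{1}=-\rho$, and solve for $\lambda=t_{2}w$ by the implicit function theorem after multiplying through by $t_{2}^{\,r-1}$ (the relevant partial derivative at $(0,1)$ is $r-1\neq 0$, so the sketch does close up). This buys you two things: you avoid the convergence estimates of Lemma \ref{de-adaugat} altogether, and you obtain the constant $c_{1}$ conceptually as minus a residue, which makes its invariance (used in the paper for Theorem \ref{classif-euler} ii)) immediate rather than a by-product of the uniqueness of $c$ in Lemma \ref{de-adaugat}. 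What the paper's route buys in exchange is self-containedness (no appeal to the classical Dulac-type classification; everything is reduced to one reusable power-series lemma) and, through the free coefficient $\tau_{r}$, an explicit description of the residual one-parameter freedom in the conjugation, which you correctly note corresponds to the free constant of integration in your primitive-matching equation.
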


\begin{proof} i) From (\ref{change-iso-euler}),    we  need to find $\lambda \in \mathrm{Aut} (\mathbb{C}, 0)$ such that
$(\dot{\lambda} g) (\lambda^{-1}(t)) = 1$, or 
$\dot{\lambda} (t) g(t) =1.$ Writing $\lambda (t) = t \tilde{\lambda }(t)$ with $\tilde{\lambda }\in \mathbb{C} \{ t\}$ a
unit, the problem reduces to showing that the differential equation
\begin{equation}\label{lambda-tilde}
t \dot{\tilde{\lambda}}(t) +\tilde{\lambda}(t) =\frac{1}{g(t)}
\end{equation}
admits a holomorphic solution with $\tilde{\lambda}(0) \neq 0.$  
Equation (\ref{lambda-tilde}) admits a (unique) formal solution
$\tilde{\lambda }$,  which is holomorphic  from Lemma \ref{ecuatia-baza}.  
Moreover, $\tilde{\lambda }(0) = \frac{1}{ g(0)}\in \mathbb{C}^{*}.$ This proves claim i).\

ii) From (\ref{change-iso-euler}),  we  need to show that  there is $\lambda \in \mathrm{Aut} (\mathbb{C}, 0)$ such that 
$(\dot{\lambda} g)(\lambda^{-1}(t)) = c_{0}t$ or 
\begin{equation}\label{ec-ajutatoare}
\dot{\lambda} (t)  g(t ) = {c}_{0}\lambda (t),
\end{equation}
for a suitable  ${c}_{0}\in \mathbb{C}^{*}$. Writing  as before
$\lambda (t) = t\tilde{\lambda }(t)$
and $g(t) =\frac{t}{f(t)}$,  
with $f, \tilde{\lambda}\in \mathbb{C} \{ t\}$ units, 
we obtain that equation (\ref{ec-ajutatoare})  is equivalent to 
\begin{equation}\label{last-t-l}
t\dot{\tilde{\lambda}}(t) + (1-{c}_{0}f(t))  \tilde{\lambda}(t) =0.
\end{equation}
It is easy to  check that (\ref{last-t-l})  has a formal solution  $\tilde{\lambda}=\sum_{n\geq 0} \tilde{\lambda}_{n}t^{n}$ 
(unique, when $\tilde{\lambda}_{0}$ is given)
if and only if  $c_{0} = \frac{1}{f(0)}.$ Define  $c_{0} := \frac{1}{f(0)}$ and let $\tilde{\lambda}$ be a formal solution
of (\ref{last-t-l}) with $\tilde{\lambda}_{0}\neq 0.$ 
 From Lemma \ref{ecuatia-baza}, $\tilde{\lambda}$ is holomorphic. 
Let $h(t_{1}, t_{2}) := (t_{1},t_{2} \tilde{\lambda}  (t_{2}))$.  Then  $h_{*}E$  
is of the form (\ref{expr-euler-3}).\

iii)  From (\ref{change-iso-euler}), we need to find $\lambda \in \mathrm{Aut} (\mathbb{C}, 0)$ such that 
$(\dot{\lambda} g)( \lambda^{-1} (t)) = t^{r} ( 1 + c_{1} t^{r-1})$ or
\begin{equation}\label{l-change}
(\dot{\lambda } g )(t) = \lambda (t)^{r} ( 1 + c_{1} \lambda (t)^{r-1}),
\end{equation}
for a suitably chosen $c_{1}\in \mathbb{C}.$ Writing $g(t) = t^{r} f(t)$,
$\lambda (t) = t \tilde{\lambda }(t)$ with  
$f, \tilde{\lambda }\in \mathbb{C} \{ t\}$ units, 
and $\tau (t) = (1-r) \tilde{\lambda}(t)^{r-1}$,   
equation (\ref{l-change}) becomes
\begin{equation}\label{tau-last-add}
t \dot{\tau} (t) + (r-1) \tau (t) = -\frac{\tau (t)^{2}}{f}  ( 1 +\frac{ c_{1}}{1-r} t^{r-1} \tau (t)). 
\end{equation}
This is an equation,   in the unknown function $\tau$, of type (\ref{study-eqn}).
Lemma \ref{de-adaugat} concludes   claim iii).  
\end{proof}

It remains to prove
Theorem \ref{classif-euler} ii). From (\ref{change-iso-euler}), 
the constant $c$ from the Euler fields of Theorem \ref{classif-euler} i)  is $\mathrm{Aut} (\mathcal N_{2})$-invariant. 
From Lemma \ref{euler-1}  we deduce that any two  distinct Euler fields $E$ and $\tilde{E}$
from Theorem \ref{classif-euler} i), which
belong to the same orbit  of $\mathrm{Aut} (\mathcal N_{2})$, are necessarily either both of the  form 
(\ref{expr-euler-3})  (with the same constant $c$ and distinct constants
$c_{0}, \tilde{c}_{0}$) or both of  the  form 
(\ref{expr-euler-4})  (with the same constant $c$ and distinct constants $c_{1}$, $\tilde{c}_{1}$). But these cases cannot hold: assume, e.g. that
$E$ and $\tilde{E}$ are of the form (\ref{expr-euler-4}). 
Then there is a solution $\tau$, with $\tau_{0} \neq 0$,  of the equation (\ref{tau-last-add}) 
with $f:= 1+ \tilde{c}_{1} t^{r-1}$. From the uniqueness of the constant $c$ in Lemma \ref{de-adaugat}  
we obtain that $c_{1} = \tilde{c}_{1}.$ The other case can be treated similarly.

\subsection{Euler fields and $(TE)$-structures}\label{applic}

For a $(TE)$-structure in pre-normal form, determined by $(f, b_{2}, c, \alpha )$, 
the induced Euler field on $\mathcal N_{2}$  is given by
\begin{equation}\label{expression-euler-B}
E = ( t_{1} - c ) \partial_{1} - b_{2}^{(0)} \partial_{2}
\end{equation}
(see Theorem \ref{t2.4}). 
Recall also that any isomorphism 
$f : (M_{1}, \circ_{1}, e_{1}, E_{1}) \rightarrow (M_{2}, \circ_{2}, e_{2}, E_{2})$
between $F$-manifolds with Euler fields 
 defines  (by the pull-back
$(\mathrm{Id}\times f)^{*}$) an isomorphism between the spaces of $(TE)$-structures over
$(M_{2}, \circ_{2}, e_{2})$ and  $(M_{1}, \circ_{1}, e_{1})$, which induce $E_{2}$ and $E_{1}$ respectively.
Using these facts, we obtain:

\begin{prop}  An Euler field on $\mathcal N_{2}$  is induced by a $(TE)$-structure if and only if its normal form
 is of the type (\ref{expr-euler-1}), (\ref{expr-euler-2}), (\ref{expr-euler-3}) or
of the type (\ref{expr-euler-4}) with $r=2$ and $c_1=0$. 
\end{prop}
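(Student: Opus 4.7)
The plan is to combine the formal classification of $(TE)$-structures (Theorem \ref{te}) with the normal forms for Euler fields (Theorem \ref{classif-euler}), using the explicit formula (\ref{expression-euler-B}). The two directions are essentially an enumeration modulo one non-trivial ODE calculation.

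For the forward direction, recall that any $(TE)$-structure $\nabla$ over $\mathcal{N}_2$ is formally isomorphic to some normal form $\nabla^{\mathrm{nf}}$ from Theorem \ref{te}, and that the isomorphism covers a biholomorphism $h\in\mathrm{Aut}(\mathcal{N}_2)$. Hence the induced Euler fields satisfy $E=h_*E^{\mathrm{nf}}$ and lie in the same $\mathrm{Aut}(\mathcal{N}_2)$-orbit. It therefore suffices to compute the induced Euler field $E^{\mathrm{nf}}=(t_1-c)\partial_1-b_2^{(0)}\partial_2$ for each normal form and reduce it via Lemmas \ref{euler-1} and \ref{euler-2}. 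Reading off the matrices in Theorem \ref{te}, one sees that $b_2^{(0)}$ is always a polynomial in $t_2$ of degree at most two: it is either (a) identically $0$, giving (\ref{expr-euler-2}); (b) of the form $at_2+b$ with $b\neq 0$, giving order $0$ and hence normal form (\ref{expr-euler-1}); (c) of the form $at_2$ with $a\neq 0$, giving order $1$ and hence normal form (\ref{expr-euler-3}); or (d) the pure quadratic $t_2^2$, arising only from the second matrix in Theorem \ref{te} iii), giving order $2$. In particular, no $b_2^{(0)}$ with $\mathrm{ord}_0(b_2^{(0)})\geq 3$ ever occurs, so $r\geq 3$ in (\ref{expr-euler-4}) is automatically excluded.

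For the converse, I exhibit one $(TE)$-structure inducing each allowed normal form: form (\ref{expr-euler-2}) from the first matrix in Theorem \ref{te} iii); form (\ref{expr-euler-1}) from the fourth matrix with $\lambda=0$ (so $b_2^{(0)}=1$); form (\ref{expr-euler-3}) from the third, sixth/seventh, or eighth/ninth matrices, whose parameter $\lambda$ collectively sweeps all of $\mathbb{C}^*$ and therefore produces every $c_0\in\mathbb{C}^*$ after the reduction of Lemma \ref{euler-2} ii); and form (\ref{expr-euler-4}) with $r=2,c_1=0$ from the second matrix in Theorem \ref{te} iii).

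The main technical step, which simultaneously excludes $c_1\neq 0$ in the quadratic case, is identifying the invariant $c_1$ attached to $g(t_2)=-t_2^2$ by the reduction of Lemma \ref{euler-2} iii). Writing $g(t)=t^r f(t)$ with $r=2$, $f=-1$ and following the proof of that lemma, equation (\ref{tau-last-add}) becomes
\[
t\dot{\tau}(t)+\tau(t)=\tau(t)^2-c_1\, t\,\tau(t)^3 .
\]
By the uniqueness statement in Lemma \ref{de-adaugat}, exactly one $c_1\in\mathbb{C}$ admits a holomorphic solution $\tau$ with $\tau(0)\neq 0$. Trying $c_1=0$ yields the Bernoulli equation $t\dot{\tau}+\tau=\tau^2$, which admits the explicit holomorphic solution $\tau(t)=(1-Kt)^{-1}$ for any $K\in\mathbb{C}$, with $\tau(0)=1$. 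Hence $c_1=0$ is the invariant associated to $g(t_2)=-t_2^2$, so the induced normal form of the Euler field is precisely (\ref{expr-euler-4}) with $r=2$ and $c_1=0$, not any other. The hard part of the proof is exactly this appeal to uniqueness in Lemma \ref{de-adaugat}; the rest reduces to reading off $b_2^{(0)}$ from the list in Theorem \ref{te}.
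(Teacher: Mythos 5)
Your proof is correct, and its skeleton (reduce to normal forms of $(TE)$-structures, read off the induced Euler field $E=(t_{1}-c)\partial_{1}-b_{2}^{(0)}\partial_{2}$ from (\ref{expression-euler-B}), then classify via Lemmas \ref{euler-1} and \ref{euler-2}) is the same as the paper's; the genuine difference is which classification you feed in. The paper works with the \emph{holomorphic} normal forms (Corollary \ref{concl-non-ele-1}, i.e.\ Theorems \ref{te} and \ref{classif-non-ele}), so the covering automorphism is holomorphic by construction, whereas you use only the \emph{formal} classification of Theorem \ref{te}. That is legitimate and in fact more economical, since it bypasses the Birkhoff/Malgrange analysis of Section \ref{holom-section}; but it rests on one point you assert without justification, namely that formally isomorphic $(TE)$-structures induce Euler fields in the same $\mathrm{Aut}(\mathcal N_{2})$-orbit. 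This does hold: a formal isomorphism covers a genuine biholomorphism $h$, its $z^{0}$-part $T^{(0)}$ is holomorphic in $t$, and the $r=0$ components of (\ref{compat-t}) and (\ref{compat-te}) say exactly that $T^{(0)}$ intertwines the Higgs fields and the endomorphisms $\mathcal U$ via $h$, so $h$ carries one induced Euler field to the other; a sentence to this effect should be added. Your handling of the quadratic case is also correct: with $g=-t_{2}^{2}$ the reduction of Lemma \ref{euler-2} iii) gives $t\dot{\tau}+\tau=\tau^{2}-c_{1}t\tau^{3}$, and the explicit solution $\tau=(1-Kt)^{-1}$ for $c_{1}=0$ together with the uniqueness of the constant in Lemma \ref{de-adaugat} pins down $c_{1}=0$ — a computation the paper leaves implicit (it also follows immediately from the automorphism $t_{2}\mapsto -t_{2}$, which sends $-t_{2}^{2}\partial_{2}$ to $t_{2}^{2}\partial_{2}$, given the orbit separation of Theorem \ref{classif-euler} ii)). For the converse, your realizations are adequate because the normal forms of Theorem \ref{te} are themselves holomorphic $(TE)$-structures over $\mathcal N_{2}$, the constants $c$ and $c_{0}$ (which are orbit invariants) are swept out by the free parameters, and being induced by a $(TE)$-structure is an orbit-invariant property.
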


\begin{proof}   The Euler fields  induced by the holomorphic normal forms are 
given by  $E= ( t_{1} - c) \partial_{1} + g(t_{2}) \partial_{2}$,  where 
$c\in \mathbb{C}$  and $g=0$, $g=-1$,
$g = - t_{2}^{2}$ 
or $g$ is non-constant with $\mathrm{ord}_{0}(g)\in \{ 0, 1\}$
(from (\ref{expression-euler-B}) and the explicit expression of the holomorphic normal forms).
Such  Euler fields 
have the normal forms 
(\ref{expr-euler-1}), (\ref{expr-euler-2}), (\ref{expr-euler-3}) or (\ref{expr-euler-4}) with $r =2$ and $c_1=0$. 
\end{proof}

\begin{rem}{\rm The Euler fields 
in normal form  (\ref{expr-euler-1}), (\ref{expr-euler-2}) and  (\ref{expr-euler-3})
are the Euler fields of a Frobenius manifold with underlying $F$-manifold $\mathcal N_{2}$ 
(we may choose a  constant 
metric $g = (g_{ij})$ with $g_{11} = g_{22}=0$ in the standard coordinate system $(t_{1}, t_{2})$ for $\mathcal N_{2}$, as a Frobenius metric). The Euler fields in normal form
(\ref{expr-euler-4}) with $r=2$ and $c_1=0$ are not Euler
fields of a Frobenius manifold. The reason is that in the
case of a Frobenius manifold, there is a $(TE)$-structure
which induces the $F$-manifold with Euler field and which
extends to a trivial bundle on ${\mathbb P}^1\times M$ with 
a logarithmic pole along $\{0\}\times M$, i.e. which 
can be brought into a Birkhoff normal form.
But the only $(TE)$-structures over ${\mathcal N}_2$
with an Euler field in normal form (\ref{expr-euler-4})
with $r=2$ and $c_1=0$ are the $(TE)$-structures in the 
second normal form in Theorem \ref{te} iii). 
At the point $t_1=t_2=0$ and $c=0$, the matrix $B$
takes the form $B=\alpha z C_1-z\frac{1}{2}D-z^2E$.
Example 5.5 in \cite{DH-dubrovin} shows that this connection
cannot be brought into Birkhoff normal form. }
\end{rem}

\section{Appendix}

\subsection{Differential equations}\label{diff-eqn-sect}

Along this section $t\in (\mathbb{C},0)$ is the standard coordinate.   
We shall use repeatedly  the following well-known  lemma 
(see  e.g. \cite{wasow} and the proof of Lemma 12 of \cite{DH-AMPA}).

\begin{lem}\label{ecuatia-baza} Any
formal solution $u\in \mathbb{C} [[t]]$ of a differential equation of the form
\begin{equation}\label{ec-w}
t\dot{ u}(t) + A(t) u(t) = b(t),
\end{equation}
where $A: ( \mathbb{C}, 0) \rightarrow M_{n}(\mathbb{C})$ and $b: (\mathbb{C} , 0) \rightarrow \mathbb{C}^{n}$
are holomorphic, is holomorphic. 
\end{lem}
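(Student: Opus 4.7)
The plan is the standard one for showing convergence of formal solutions of a regular singular system: derive a recursion for the Taylor coefficients of $u$, observe that the ``indicial'' operator $kI+A_0$ becomes invertible for large $k$ with norm of order $1/k$, and then close a majorant-type induction using Cauchy estimates on $A$ and $b$.

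First I would write $A(t)=\sum_{k\ge 0}A_k t^k$, $b(t)=\sum_{k\ge 0}b_k t^k$, and let $u=\sum_{k\ge 0}u_k t^k\in\mathbb{C}[[t]]^n$ be the given formal solution. Substituting into (\ref{ec-w}) and equating coefficients of $t^k$ gives, for every $k\ge 0$,
\begin{equation*}
(kI+A_0)\,u_k \;=\; b_k \;-\;\sum_{l=1}^{k} A_l\,u_{k-l}.
\end{equation*}
Since $A_0\in M_n(\mathbb{C})$ has finitely many eigenvalues, there exists $N\in\mathbb{Z}_{\ge 1}$ such that $kI+A_0$ is invertible for all $k\ge N$, and a standard Neumann-series estimate gives $\|(kI+A_0)^{-1}\|\le 2/k$ once $k\ge 2\|A_0\|$. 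Enlarging $N$ if necessary, this bound holds for all $k\ge N$.

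Next, by holomorphy of $A$ and $b$ on a common neighborhood of $0$, Cauchy's estimates yield constants $M>0$ and $0<\rho<1$ with
\begin{equation*}
\|A_k\|\le M\rho^{-k},\qquad \|b_k\|\le M\rho^{-k}\qquad (k\ge 0).
\end{equation*}
Fix $\sigma\in(0,\rho)$ and set $\theta:=\sigma/\rho\in(0,1)$. I then claim that there exists $K>0$ such that $\|u_k\|\le K\sigma^{-k}$ for all $k\ge 0$. For $k<N$ this is a finite condition that merely enlarges $K$. For $k\ge N$, I use the recursion together with $\|(kI+A_0)^{-1}\|\le 2/k$ to estimate
\begin{equation*}
\|u_k\|\;\le\;\frac{2}{k}\Bigl(M\rho^{-k}+\sum_{l=1}^{k}M\rho^{-l}\,K\sigma^{-(k-l)}\Bigr)\;\le\;\frac{2M}{k}\,\sigma^{-k}\Bigl(1+\frac{K\theta}{1-\theta}\Bigr),
\end{equation*}
and the induction closes provided $K$ is chosen large enough and $N$ is chosen so that $\tfrac{2M}{N}\bigl(1+\tfrac{\theta}{1-\theta}\bigr)\le 1$, which is possible for $N$ sufficiently large. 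This proves $u\in\mathcal O(\{|t|<\sigma\})^n$.

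The only real obstacle is the inductive estimate in the last step; once the factor $1/k$ from $\|(kI+A_0)^{-1}\|$ is available and $\sigma<\rho$ is chosen with enough slack, the geometric sum in the recursion is absorbed and the induction closes for all large $k$. The small values $k<N$ are then handled trivially by enlarging $K$.
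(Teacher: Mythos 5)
Your argument is correct: the coefficient recursion $(kI+A_0)u_k=b_k-\sum_{l=1}^{k}A_l u_{k-l}$, the bound $\|(kI+A_0)^{-1}\|\le 2/k$ for large $k$, and the geometric-majorant induction with $\sigma<\rho$ close properly (the finitely many initial coefficients being absorbed into $K$), which is the standard convergence proof for systems with a singularity of the first kind. The paper itself does not prove this lemma but cites it as well known (Wasow, and the proof of Lemma 12 of \cite{DH-AMPA}), and your proof is essentially that classical argument.
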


The next class of inequalities will be used in 
Lemma \ref{de-adaugat} below.

\begin{lem}\label{her}
Let $C:= 4\sum_{n =1}^{\infty} (\frac{1}{n})^{2} = \frac{2}{3} \pi^{2}$.    For
any $b, l\in \mathbb{Z}_{\geq 2}$ with  $b\geq l$, 
\begin{equation}\label{ineq}
\sum_{a_{i}:(*)_{l,b}} (a_1\cdots  a_l)^{-2}\leq C^{l-1} b^{-2},
\end{equation}
where the condition $(*)_{l,b}$ 
on $(a_{1}, \cdots , a_{l})$ 
means  $a_{i}\in \mathbb{Z}_{\geq 1}$
(for any $1\leq i\leq l$)  and   $\sum_{i=1}^{l}a_i =b$.
\end{lem}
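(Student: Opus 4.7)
The plan is to prove the inequality by induction on $l\geq 2$, writing $S_l(b):=\sum_{(*)_{l,b}}(a_1\cdots a_l)^{-2}$ throughout.  The base case $l=2$ carries all the real content, while the inductive step is a one-line reduction to it.

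For the base case $l=2$, I would apply the partial-fraction identity $\frac{1}{a(b-a)}=\frac{1}{b}\bigl(\frac{1}{a}+\frac{1}{b-a}\bigr)$, squared, to rewrite
\[
\frac{1}{a^2(b-a)^2}=\frac{1}{b^2}\left(\frac{1}{a^2}+\frac{2}{a(b-a)}+\frac{1}{(b-a)^2}\right).
\]
Summing over $a=1,\dots,b-1$ and re-using the same identity for the cross term gives the exact formula
\[
S_2(b)=\frac{2\,H^{(2)}_{b-1}}{b^2}+\frac{4\,H_{b-1}}{b^3},
\]
with the standard notation $H_n=\sum_{a=1}^{n}a^{-1}$, $H^{(2)}_n=\sum_{a=1}^{n}a^{-2}$.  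Since $H^{(2)}_{b-1}\leq \zeta(2)=C/4$ and an elementary check shows that $4H_{b-1}/b\leq 2\zeta(2)$ for every integer $b\geq 2$ (the left-hand side is maximised at $b=2,3$ with value $2$, and $2<\pi^2/3=2\zeta(2)$), both summands are bounded by $C/(2b^2)$, whence $S_2(b)\leq C/b^2$.

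For the inductive step, I would peel off the last coordinate.  Setting $a_{l+1}=k$ with $1\leq k\leq b-l$, the remaining indices $(a_1,\dots,a_l)$ range over $(*)_{l,b-k}$ with $b-k\geq l$, so the induction hypothesis applies to the inner sum and yields
\[
S_{l+1}(b)=\sum_{k=1}^{b-l}\frac{1}{k^2}\sum_{(*)_{l,b-k}}\frac{1}{(a_1\cdots a_l)^2}\leq C^{l-1}\sum_{k=1}^{b-1}\frac{1}{k^2(b-k)^2}=C^{l-1}\,S_2(b)\leq \frac{C^{l}}{b^2},
\]
which closes the induction.

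The only genuinely delicate point in this plan is the calibration of the constant $C$: the inductive step forces $C$ to be whatever value closes the $l=2$ estimate, so the whole argument lives or dies by the base case.  The computation above shows that $C=4\zeta(2)=\tfrac{2}{3}\pi^{2}$ is exactly enough to absorb both the diagonal piece $2H^{(2)}_{b-1}/b^{2}$ (which alone saturates half of the budget $C/b^{2}$ as $b\to\infty$) and the off-diagonal piece $4H_{b-1}/b^{3}$ (which is most dangerous at small $b$).  Everything else is a mechanical consequence of the partial fraction identity plus induction.
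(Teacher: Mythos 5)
Your proposal is correct and follows essentially the same route as the paper: induction on $l$ with all the work in the $l=2$ case, which both arguments handle via the partial-fraction identity $\frac{1}{a(b-a)}=\frac{1}{b}\bigl(\frac{1}{a}+\frac{1}{b-a}\bigr)$, and an inductive step that splits off one summand and reuses the $l=2$ bound. The only (minor) divergence is in the base case, where you evaluate $S_2(b)=\frac{2H^{(2)}_{b-1}}{b^2}+\frac{4H_{b-1}}{b^3}$ exactly and verify $4H_{b-1}/b\le 2\zeta(2)$ numerically, whereas the paper simply bounds $\bigl(a^{-1}+(b-a)^{-1}\bigr)^2\le 2\bigl(a^{-2}+(b-a)^{-2}\bigr)$ and sums, avoiding that check.
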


\begin{proof} We prove (\ref{ineq})  by induction on $l$.  
Consider first  $l=2$. 
\begin{align*}
&\sum_{a_1,a_2:(*)_{2,b}}(a_1a_2)^{-2} 
= \sum_{a=1}^{b-1} \left( a^{-1} (b-a)^{-1}\right)^{2}=
\sum_{a=1}^{b-1}\left((a^{-1}+(b-a)^{-1})b^{-1}\right)^2\\
&= b^{-2} \sum_{a=1}^{b-1}\left(a^{-1}+(b-a)^{-1}\right)^2 
\leq 2 b^{-2} \sum_{a=1}^{b-1} \left( a^{-2} + (b-a)^{-2} \right) \leq  b^{-2} C.
\end{align*}
Suppose that (\ref{ineq}) holds for any $l\leq n-1$. Using (\ref{ineq}) for $l=2$ and $l=n-1$,  
\begin{align*}
&\sum_{a_1,...,a_n:(*)_{n,b}}(a_1\cdots  a_n)^{-2} =   \sum_{b_1,b_2:(*)_{2,b}} \sum_{a_1,\cdots ,a_{n-1}:(*)_{n-1,b_{1}}} 
(a_1 \cdots  a_{n-1}\cdot b_2)^{-2}\\
&\leq  \sum_{b_1,b_2:(*)_{2,b}} C^{n-2}(b_1b_2)^{-2}\leq  C^{n-1} b^{-2},
\end{align*}
i.e. (\ref{ineq})  holds for $l=n$ as well.\end{proof}

\begin{lem}\label{de-adaugat}
Let $f\in \mathbb{C} \{t\}$ be a unit and $r\in \mathbb{Z}_{\geq 1}.$ 
There is a unique $c\in \mathbb{C}$ such that  the  differential equation
\begin{equation}\label{study-eqn}
t \dot{\tau} (t) + r\tau (t) = \tau (t)^{2} f(t) ( 1 + c t^{r} \tau (t))
\end{equation} 
admits  a formal solution $\tau =\sum_{n\geq 0} \tau_{n}t^{n}\in \mathbb{C} [[t]]$ with $\tau_{0}\neq 0.$ Any such solution
$\tau$ is  holomorphic. It is uniquely determined by $\tau_{r}\in \mathbb{C}$, which can be  chosen arbitrarily.   
\end{lem}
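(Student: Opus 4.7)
First I would expand everything as a formal power series. Writing $\tau=\sum_{n\geq 0}\tau_nt^n$ and $f=\sum_{n\geq 0}f_nt^n$ with $f_0\neq 0$, and comparing coefficients of $t^n$ in the ODE $t\dot\tau+r\tau=\tau^2 f(1+ct^r\tau)$, the constant term forces $r\tau_0=\tau_0^2 f_0$, hence $\tau_0=r/f_0$ under the assumption $\tau_0\neq 0$. For $n\geq 1$, the only $\tau_n$-linear contribution on the right comes from the two diagonal terms of $\tau^2 f$ and equals $2\tau_0 f_0\tau_n=2r\,\tau_n$ (the cubic term $ct^r\tau^3 f$ contributes no $\tau_n$ for $n\geq 1$, since $i+j+k+\ell=n-r$ and $i=n$ would force $j+k+\ell=-r<0$). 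Thus the recursion is
\begin{equation*}
(n-r)\,\tau_n = R_n(\tau_0,\ldots,\tau_{n-1};c;f_0,\ldots,f_n),
\end{equation*}
where $c$ enters $R_n$ only when $n\geq r$, through the combination $c\sum_{i+j+k+\ell=n-r}\tau_i\tau_j\tau_k f_\ell$.

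For $n<r$, the factor $n-r$ is nonzero and $c$ is absent, so $\tau_1,\ldots,\tau_{r-1}$ are determined uniquely. At $n=r$ the recursion degenerates to $0=R_r$; writing $R_r=\widetilde R_r+c\,\tau_0^3 f_0$ where $\widetilde R_r$ is already fixed, and noting $\tau_0^3 f_0\neq 0$, this equation solves uniquely for $c$, while $\tau_r$ remains a free parameter. For $n>r$, the factor $n-r$ is again nonzero and $c$ is now fixed, so $\tau_n$ is determined uniquely in terms of $\tau_0,\ldots,\tau_{n-1}$. This establishes existence and uniqueness at the formal level, with the claimed freedom in $\tau_r$ and uniqueness of $c$.

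For holomorphy, let $P(t):=\sum_{k=0}^r\tau_k t^k$ be the Taylor polynomial just constructed and set $\tau=P+t^{r+1}u$. The residual $\rho(t):=P^2 f(1+ct^rP)-t\dot P-rP$ coincides, order by order up to $t^r$, with $\tau^2 f(1+ct^r\tau)-t\dot\tau-r\tau=0$, hence $\rho(t)=O(t^{r+1})$. Substituting $\tau=P+t^{r+1}u$ and dividing by $t^{r+1}$ converts the equation into
\begin{equation*}
t\dot u + A(t)\,u = B(t) + \Phi(t,u),
\end{equation*}
where $A(t)=(2r+1)-[2P+3ct^rP^2]f$ (so $A(0)=2r+1-2\tau_0 f_0=1$), $B(t)=t^{-r-1}\rho(t)$ is holomorphic, and $\Phi(t,u)=t^{r+1}u^2 f(1+3ct^rP)+ct^{3r+2}u^3 f$ is holomorphic in $(t,u)$ near $(0,0)$ with $\Phi(t,0)=0$ and $\partial_u\Phi(t,0)=0$. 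Rewriting as $t\dot u=F(t,u)$ with $F(0,u):=-u+B(0)$, the base point is $u(0)=B(0)$ and $\partial_u F(0,B(0))=-1\notin\mathbb{Z}_{\geq 1}$; hence the Briot--Bouquet theorem (equivalently, a standard majorant construction) produces a unique holomorphic solution $u$ with the prescribed value $u(0)$, and this solution must coincide with the formal one by uniqueness of the recursion. Back-substitution shows $\tau$ is holomorphic.

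The main obstacle is the convergence step. The linear part of the reduced equation is non-resonant at $t=0$, which is precisely what makes Briot--Bouquet (or the majorant estimate) applicable; the role of the appendix lemmas is to support this estimate, with Lemma \ref{her} providing the combinatorial bounds on $\sum(a_1a_2)^{-2}$ and $\sum(a_1a_2 a_3)^{-2}$ needed to close an induction of the form $|\tau_n|\leq K\sigma^n/n^2$ when one prefers a direct majorant argument to invoking Briot--Bouquet. The delicate point is that the nonlinearity $\Phi$ is quadratic in $u$ but comes equipped with the positive power $t^{r+1}$, which is exactly what makes the majorant close; any attempt to bypass the substitution $\tau=P+t^{r+1}u$ and work directly with $\tau$ would stumble on the resonance at order $r$.
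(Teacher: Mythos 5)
Your proposal is correct. The formal part (the recursion $(n-r)\tau_n=R_n$, the forced values $\tau_0=r/f_0$ and $\tau_1,\dots,\tau_{r-1}$, the degeneration at $n=r$ that fixes $c=-\frac{1}{\tau_0^3f_0}\sum_{j+k+p=r,\,k,p\le r-1}f_j\tau_k\tau_p$ while leaving $\tau_r$ free) is exactly the paper's argument. Where you genuinely diverge is the convergence step: the paper proves holomorphy by a direct majorant-type induction, establishing $|\tau_n|\le M_0^{n+1}\tilde r_0^n/(n+1)^2$ with a careful choice of $M_0,\tilde r_0,n_0,\epsilon_0$ and using the combinatorial inequality of Lemma \ref{her} on sums $\sum(a_1\cdots a_l)^{-2}$; you instead subtract the degree-$r$ jet, substitute $\tau=P+t^{r+1}u$, and observe that the resulting equation $t\dot u+A(t)u=B(t)+\Phi(t,u)$ has $A(0)=1$, so after shifting by $u(0)=B(0)$ it is a Briot--Bouquet equation with eigenvalue $-1\notin\mathbb{Z}_{\ge 1}$, whence the unique formal solution converges (your computations of $A$, $B$, $\Phi$ check out, and the identification of the formal and holomorphic solutions is justified by the non-resonant recursion $(n+1)w_n=\cdots$). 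Your route is shorter and makes transparent that the only resonance sits at order $r$ (which is precisely where $c$ had to be adjusted), but it leans on the classical Briot--Bouquet theorem, which the paper does not invoke; the paper's estimate is longer but self-contained and elementary, in line with its stated ``down-to-earth'' policy, and it is the reason Lemma \ref{her} appears in the appendix at all. Either argument proves the lemma.
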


\begin{proof}
We write $f = \sum_{n\geq 0} f_{n} t^{n}$. Identifying the coefficients in (\ref{study-eqn}) 
(and using that $\tau_{0} \neq 0$) we obtain that
$\tau_{n}$, for $n\in \{ 1, \cdots ,  r\}$,  are determined inductively by
\begin{equation}
\tau_{0} = \frac{r}{f_{0}},\ \tau_{n} = \frac{1}{n-r} \sum_{j+ k+p=n;\   k,p\leq n-1} f_{j}\tau_{k}\tau_{p},\ \forall
n< r.
\end{equation}
Identifying the coefficients of $t^{r}$ 
in (\ref{study-eqn}) 
we obtain that $c$ is determined by 
\begin{equation}\label{def-c}
c= -\frac{1}{  \tau_{0}^{3} f_{0}} \sum_{j+k+p= r;\  k,p\leq r-1} f_{j} \tau_{k}\tau_{p}.
\end{equation}
It follows that there is a unique $c\in \mathbb{C}$, namely the one defined by (\ref{def-c}),  for which
(\ref{study-eqn}) admits  a formal solution $\tau$  with $\tau_{0}\neq 0$: the coefficient $\tau_{r}$
of $\tau$  can be chosen arbitrarily and
the remaining coefficients $\tau_{n}$,  for $n\geq r+1$,  
are  determined inductively by
\begin{equation}\label{tau-n-rem}
\tau_{n} = \frac{1}{n-r} \left( \sum_{j+ k+p = n;\  k,p\leq n-1} f_{j}\tau_{k} \tau_{p} + c \sum_{j+k+p+s= n-r} 
\tau_{j}\tau_{k}\tau_{p} f_{s}\right) .
\end{equation}
It remains to prove  that $\tau$ is holomorphic. 
Since $f$ is holomorphic, there is $M>0$ and $\tilde{r}>0$ such that
\begin{equation}\label{ev-f}
| f_{n}| \leq \frac{ M \tilde{r}^{n}}{ (n+1)^{2}},\ \forall n\geq 0.
\end{equation}
The above relation for $n=0$ implies that $M \geq | f_{0}|.$
We further assume that $M\geq 1.$ 
 We claim that for a suitable choice of  $M$ and $\tilde{r}$ satisfying relations (\ref{ev-f}), 
the coefficients $\tau_{n}$ of $\tau$ satisfy 
\begin{equation}\label{ev-tau-prima}
| \tau_{n} |  \leq \frac{M^{n+1} \tilde{r}^{n}}{(n+1)^{2}},\ \forall n\geq 0. 
\end{equation}  
Remark that for $n=0$ relation (\ref{ev-tau-prima}) is equivalent to $M\geq  |\tau_{0} | = \frac{r}{ | f_{0}|}.$

To prove the claim,  let  $n\geq r+1$ be fixed. We   assume that (\ref{ev-tau-prima}) holds  for all 
$\tau_{0}, \cdots , \tau_{n-1}$ and  we study when it holds 
for $\tau_{n}.$
For this, we evaluate, using (\ref{tau-n-rem}),  
\begin{align}
\nonumber& |\tau_{n}| \leq 
\frac{1}{n-r} \left( \sum_{j+ k+p=n;\ k, p\leq n-1}| f_{j}| |\tau_{k} | |\tau_{p} |  + |c| \sum_{j+k+p+s= n-r}  |\tau_{j}| |\tau_{k} | | \tau_{p} | | f_{s} |\right)\\
\nonumber& \leq \frac{ \tilde{r}^{n}}{n-r} \sum_{j+ k+p=n; \ k, p\leq n-1} M^{n-j +3} ( j+1)^{-2} (k+1)^{-2} (p+1)^{-2} \\
\nonumber&+ \frac{| c| \tilde{r}^{n-r} }{n-r} \sum_{j+k+p+s= n-r}  M^{n-r-s +4}  (j+1)^{-2} ( k+1)^{-2} (p+1)^{-2} (s+1)^{-2}.
\end{align}
Since $M\geq 1$, $M^{n-j+3}\leq M^{n+3}$ and $M^{n-r-s +4}\leq M^{n-r +4}.$ From Lemma 
\ref{her}, we  obtain that 
\begin{equation}
|\tau_{n} | \leq \frac{1}{n-r} \left( \tilde{r}^{n} M^{n+3}C^{2}(n+3)^{-2} + |c| M^{n-r+4} \tilde{r}^{n-r}C^{3} (n-r+4)^{-2}\right) .
\end{equation}
We deduce that a sufficient condition for (\ref{ev-tau-prima}) to hold also for $\tau_{n}$ 
is that 
\begin{equation}\label{condition-0-added}
M^{2} + \frac{C|c| M^{3-r}}{ \tilde{r}^{r}} \left( \frac{n+3}{n-r +4}\right)^{2} \leq \frac{1}{C^{2}} ( n-r) \left( \frac{n+3}{n+1}\right)^{2}.
\end{equation} 
Consider now $\epsilon_{0} > 0$ small,
$M_{0}\geq \mathrm{max} \{ \frac{r}{ |f_{0}| }, M\}$  and $n_{0} > r$ such that 
\begin{equation}\label{condition-1}
M_{0}^{2} \leq \frac{1}{C^{2}}  ( n-r)  \left( \frac{n+3}{n+1}\right)^{2}-\epsilon_{0} ,\forall n\geq n_{0}.
\end{equation}
(This is possible since the right hand side of (\ref{condition-1}) tends to $+\infty$ for $n\rightarrow +\infty$). 
With this choice of $(M_{0}, n_{0}, \epsilon_{0} )$, we choose $\tilde{r}_{0}\geq\tilde{r} $ such that
\begin{equation}\label{condition-2}
\tilde{r}_{0}^{r} \geq \frac{ C| c| M_{0}^{3-r}}{\epsilon_{0}} \left( \frac{n+3}{n-r+4}\right)^{2},\ \forall n\geq n_{0}.
\end{equation}
(This is possible since the right hand side of (\ref{condition-2}) is bounded when $n\rightarrow +\infty$). 
Relations (\ref{condition-1}) and (\ref{condition-2}) imply 
\begin{equation}\label{condition-0}
M_{0}^{2} + \frac{C|c| M_{0}^{3-r}}{ \tilde{r}_{0}^{r}} \left( \frac{n+3}{n-r +4}\right)^{2} \leq \frac{1}{C^{2}} ( n-r) \left( \frac{n+3}{n+1}\right)^{2},\ \forall n\geq n_{0},
\end{equation} 
i.e. relation (\ref{condition-0-added}) (with $M$ and $\tilde{r}$ replaced by $M_{0}$ and $\tilde{r}_{0}$ respectively) holds.   

The above argument shows that the inequalities 
\begin{equation}\label{ev-tau-m0}
| \tau_{n} |  \leq \frac{M_{0}^{n+1} \tilde{r}_{0}^{n}}{(n+1)^{2}},\ \forall n\geq 0
\end{equation}  
hold  if they hold for any $n\leq n_{0} -1$ and
\begin{equation}\label{ev-f-1}
| f_{n}| \leq \frac{ M_{0} \tilde{r}_{0}^{n}}{ (n+1)^{2}},\ \forall n\geq 0.
\end{equation}
But (\ref{ev-f-1}) is obviously true from 
(\ref{ev-f}), since $M_{0}\geq M$ and $\tilde{r}_{0}\geq \tilde{r}.$ 
Relations (\ref{ev-tau-m0}) for $n\leq n_{0}-1$ are satisfied as well,  by imposing to $\tilde{r}_{0}$ 
(which can be chosen as large as needed)
the additional conditions
\begin{equation}
\tilde{r}_{0}^{n}\geq \frac{(n+ 1)^{2} | \tau_{n} | }{ M_{0}^{n+1}},\ \forall  0\leq n\leq n_{0} -1.
\end{equation}
From (\ref{ev-tau-m0}), $\tau \in \mathbb{C} \{ t\} .$ 
\end{proof}

The following lemma  will be used in our  formal  classification of $(TE)$-structures.  Its proof is straightforward and will be omitted.

\begin{lem}\label{third-der} Consider the system of equations 
\begin{equation}\label{trei}
m x + \dot{b} x - b \dot{x} = g,\ x^{(3)} =0
\end{equation}
in the unknown function $x\in \mathbb{C} \{ t\}$, 
where $m\in \mathbb{C}^{*}$ and $g = g_{2}t^{2} + g_{1}t + g_{0}$, for $g_{i}\in
\mathbb{C}.$\

i) Assume that $b(t) = \lambda t$, for $\lambda \in \mathbb{C}^{*}$. If $m\notin\{ \pm\lambda \}$ then there is a 
unique solution
$x$ of (\ref{trei}).  
If $m=\lambda$, then  there is a solution of (\ref{trei}) if and only if $g_{2}=0.$ If $m = -\lambda$,
then  there is a solution
of (\ref{trei}) if and only if $g_{0} =0.$

ii) Assume that $b(t) = \lambda t +1$, for $\lambda \in \mathbb{C}.$   If $m\notin\{ \pm\lambda \}$, then there is  a 
unique solution
$x$ of (\ref{trei}).  
If $m=\lambda$, then there is a solution of (\ref{trei}) if and only if $g_{2}=0.$  If $m = -\lambda$, then
there is a solution
of (\ref{trei}) if and only if $m^{2}g_{0} + mg_{1} + g_{2}=0.$\

iii) Assume that $b(t) = t^{2}.$ Then there is a unique solution of (\ref{trei}).   
\end{lem}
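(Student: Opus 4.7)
The plan is to exploit the second equation $x^{(3)}=0$, which forces $x$ to be a polynomial in $t$ of degree at most two. Writing $x = a_0 + a_1 t + a_2 t^2$ with unknowns $a_0, a_1, a_2 \in \mathbb{C}$ and substituting into the first equation of (\ref{trei}), the problem reduces to identifying coefficients of $1, t, t^2$ on both sides of $mx + \dot{b}x - b\dot{x} = g_0 + g_1 t + g_2 t^2$. Since $g$ has degree at most $2$ and in each of the three cases $b$ has degree at most $2$, the expression $mx + \dot{b}x - b\dot{x}$ is automatically a polynomial of degree at most $2$, so the equation becomes a $3\times 3$ linear system for $(a_0, a_1, a_2)$.

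The plan is then to compute this system explicitly in each of the three cases and read off solvability. For case i) with $b = \lambda t$, the cross terms cancel and one obtains the diagonal system $(m+\lambda)a_0 = g_0$, $m a_1 = g_1$, $(m-\lambda)a_2 = g_2$; since $m \neq 0$, unique solvability is equivalent to $m \notin \{\pm \lambda\}$, and the degenerate cases give precisely the compatibility conditions $g_2 = 0$ (for $m=\lambda$) and $g_0 = 0$ (for $m = -\lambda$). For case ii) with $b = \lambda t + 1$, the constant term of $b$ introduces an off-diagonal coupling, producing a lower-triangular system with diagonal entries $(m+\lambda, m, m-\lambda)$ (solving from the $t^2$-coefficient upwards); the only subtle point is the sub-case $m = -\lambda$, where eliminating $a_2$ and $a_1$ from the bottom two equations and substituting into the top one yields the stated compatibility relation $m^2 g_0 + m g_1 + g_2 = 0$. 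For case iii) with $b = t^2$, both $\dot{b}x$ and $b\dot{x}$ have degree $3$, but their degree-$3$ parts cancel, leaving a lower-triangular system with diagonal $(m, m, m)$, uniquely solvable since $m \neq 0$.

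There is no real obstacle here: the whole proof is bookkeeping on a $3\times 3$ linear system. The only point requiring minor care is case ii) with $m = -\lambda$, where one must correctly back-substitute to obtain the compatibility condition in the stated form.
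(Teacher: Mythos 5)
Your proposal is correct: reducing via $x^{(3)}=0$ to $x=a_0+a_1t+a_2t^2$ and comparing coefficients gives exactly the triangular/diagonal $3\times 3$ systems you describe, and your case analysis (including the compatibility condition $m^2g_0+mg_1+g_2=0$ when $b=\lambda t+1$, $m=-\lambda$) checks out. The paper omits the proof as "straightforward," and your argument is precisely the intended elementary computation.
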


\subsection{The Fuchs criterion}

For the definition and properties of meromorphic connections with regular singularities,
see e.g. \cite{sabbah}, Chapter II. Consider a meromorphic connection $\nabla$ on the germ 
$\mathcal M = \textit{\textbf k}^{d}$ of the meromorphic rank $d$ trivial vector bundle over $(\mathbb{C}, 0)$ , with pole at the origin only. Its sections are germs at the origin of  vector valued  functions $(f_{1}, \cdots , f_{d})$, where each
$f_{i}$ is meromorphic, with pole at the origin only.
The Fuchs criterion is an effective way to check if $\nabla$ has a regular singularity at the origin. Namely, one considers  a cyclic vector, i.e. a section $v_{0}$ such that
$\{ v_{0},  v_{1}:= \nabla_{\partial_{ z}} (v_{0}), \cdots ,  v_{d-1}:= \nabla^{d-1}_{\partial_{ z}} (v_{0}) \}$ is a 
basis  of $\mathcal M$ (such a vector always exists).  In this basis, $\nabla$ has the expression
\begin{align}
\nabla_{\partial_{ z}} (v_{i}) & = v_{i+1},\ 0\leq i\leq d-2\\
\nabla_{\partial_{ z}} (v_{d-1}) &= a_{0} v_{0} + \cdots + a_{d-1} v_{d-1},
\end{align}
for some $a_{i}\in \textit{\textbf{k}}$. We denote by $v(f)$ the valuation of a function $f\in \textit{\textbf{k}}$, i.e. the unique integer such that $f(z) = z^{v(f)} h(z)$, where 
$h\in \mathbb{C} \{ z\}$ and $h(0) \neq 0.$  The Fuchs criterion is stated as follows (see \cite{mal-book}):

\begin{thm} \label{fuchs} The connection $\nabla$ has a regular singularity at the origin if and only if 
$v(a_{i}) \geq i-d$, for any $0\leq i\leq d-1.$
\end{thm}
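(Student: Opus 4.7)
The plan is to use the standard characterization of regular singularity in terms of lattices: $\nabla$ has a regular singularity at $0$ if and only if there exists a lattice $\mathcal L' \subset \mathcal M$ (a free $\mathbb C\{z\}$-submodule of rank $d$ generating $\mathcal M$ over $\textit{\textbf{k}}$) that is preserved by $z\nabla_{\partial_z}$; equivalently, $\nabla$ admits a basis in which the connection matrix has a logarithmic pole at $0$. I would treat the two implications separately, using this characterization throughout.

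For the ``if'' direction, I would exhibit an explicit stable lattice in the cyclic vector basis. Setting $e_i := z^i v_i$, the relations $\nabla_{\partial_z}(v_i) = v_{i+1}$ for $i \leq d-2$ and $\nabla_{\partial_z}(v_{d-1}) = \sum_{i=0}^{d-1} a_i v_i$ yield, after a short computation,
\begin{align*}
z\nabla_{\partial_z}(e_i) &= i\, e_i + e_{i+1} \quad (0 \leq i \leq d-2), \\
z\nabla_{\partial_z}(e_{d-1}) &= (d-1)\, e_{d-1} + \sum_{i=0}^{d-1} \bigl(z^{d-i} a_i\bigr)\, e_i.
\end{align*}
The Fuchs inequality $v(a_i) \geq i - d$ is precisely the holomorphy of every coefficient $z^{d-i} a_i$, so the lattice $\mathcal L := \bigoplus_{i=0}^{d-1} \mathbb C\{z\}\, e_i$ is stable under $z\nabla_{\partial_z}$; hence $\nabla$ is logarithmic in the basis $(e_0,\dots,e_{d-1})$ and therefore regular singular.

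For the converse I would argue by contrapositive, using the Newton polygon of the operator $L := \partial_z^d - \sum_{i=0}^{d-1} a_i \partial_z^i$, defined as the lower convex hull of $\{(d,0)\} \cup \{(i, v(a_i))\}_{i=0}^{d-1}$. A classical result identifies the slopes of this polygon with the irregularity slopes of $\nabla$ at $0$, independently of the chosen cyclic vector; regular singularity is equivalent to all points $(i, v(a_i))$ lying on or above the line $y = x - d$, which is exactly the Fuchs inequality. A strict violation $v(a_{i_0}) < i_0 - d$ would therefore introduce an irregular slope and contradict the existence of a stable lattice.

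The main obstacle is justifying the invariance of Newton slopes under changes of cyclic vector, which is the substantive input in the converse. If a fully self-contained argument is desired, I would replace it by the classical moderate-growth analysis: regular singularity implies that a fundamental system of (multi-valued) solutions of the scalar equation $Lf = 0$ has moderate growth of the form $z^\alpha (\log z)^k h(z)$ with $h \in \mathbb C\{z\}$, and the Frobenius indicial analysis of $L$ at $0$ then forces the Fuchs inequality $v(a_i) \geq i - d$ directly.
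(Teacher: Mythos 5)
The paper offers no proof of Theorem \ref{fuchs}: it is quoted as a classical result with a reference to \cite{mal-book}, so there is no internal argument to compare yours against. Judged on its own terms, your ``if'' direction is complete and correct: with the definition of regular singularity used in the paper's reference \cite{sabbah} (existence of a lattice stable under $z\nabla_{\partial_z}$), the rescaled frame $e_i=z^iv_i$ does exactly the job, since your computation shows the matrix of $z\nabla_{\partial_z}$ in this frame is holomorphic precisely when $v(a_i)\ge i-d$ for all $i$; this is the standard argument for that implication.

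The converse, as written, has a genuine gap. Route (a) invokes the invariance of the Newton polygon of $L$ under change of cyclic vector together with the equivalence ``regular singular $\Leftrightarrow$ all slopes zero''; these are results of at least the same depth as the Fuchs criterion itself (the criterion is normally an ingredient in their proofs), so citing them does not give a self-contained proof and comes close to circularity. Route (b) is the right classical strategy, but its last sentence overstates what you get: from a fundamental system of solutions of the form $z^{\alpha}(\log z)^k h(z)$ you cannot simply ``read off'' $v(a_i)\ge i-d$ from the Frobenius indicial analysis, since the indicial polynomial only records the terms on the boundary of the Newton polygon. The necessity half of the scalar Fuchs theorem needs an actual argument: classically, induction on $d$ by factoring off a log-free solution $f_1=z^{\alpha}h(z)$ (substitute $f=f_1\int g$ and control how the coefficients of the reduced operator relate to the $a_i$), or Wronskian estimates in which the Wronskian must also be bounded from below (not only above); alternatively one can follow the lattice-theoretic proof in \cite{mal-book}. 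Whichever route you choose, that step is the substantive content of the converse and must be supplied rather than cited.
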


\subsection{ Irreducible bundles and Birkhoff normal form}
 
Consider a meromorphic connection $\nabla$ on the germ 
$E = ({\mathcal O}_{(\mathbb{C},0)})^{d}$ of the  holomorphic rank $d$  trivial vector bundle over $(\mathbb{C}, 0)$,  with pole of order $r\geq 0$
at the origin.  In the standard basis  of $E$, the connection form of $\nabla$ is given by  $A(z  ) dz $, where 
$A\in M_{d\times d} (\textit{\textbf k}) $ is such that  $z^{r+1} A(z)$ is holomorphic.  We say that $(E,\nabla )$ can be put in  Birkhoff normal form if there is a holomorphic isomorphism  $T\in M_{d\times d} ( {\mathcal O}_{(\mathbb{C}, 0)})$ 
such that the image $T\cdot \nabla$ of $\nabla$ by $T$ has connection form 
$$
\Omega :=  z^{-(r+1)}  \left( B_{0} z^{0} + \cdots + z^{r} B_{r}\right) dz 
$$
where $B_{i}$ are constant matrices. 

The irreducibility criterion (see \cite{AB} and \cite{sabbah}, Chapter IV)  provides a sufficient condition
for $(E, \nabla )$ as above to be put in the Birkhoff normal form. 
Let $(\mathcal M  = \textit{\textbf{ k}}^{d} , \nabla )$ be the germ of the meromorphic bundle with connection, with singularity at the origin only,
for which $(E, \nabla )$ is a lattice. From the Riemann-Hilbert correspondence (see e.g. \cite{sabbah}, page 99), 
there is a unique (up to isomorphism) meromorphic bundle with connection $(\tilde{\mathcal M}, \tilde{\nabla })$ on $\mathbb{P}^{1}$, with poles at $0$ and $\infty$,  
whose germ at $0$  is isomorphic to  $(\mathcal M, \nabla )$
and such that $\infty$ is a regular singularity for $\tilde{\nabla}.$ 
 We say that $(\tilde{\mathcal M}, \tilde{\nabla })$ is irreducible if there is no proper meromorphic subbundle $\mathcal N\rightarrow \mathbb{P}^{1}$
of $\tilde{\mathcal M}$, which is preserved by $\tilde{\nabla}$, i.e. $\tilde{\nabla} (\mathcal N ) \subset \Omega^{1}_{\mathbb{P}^{1}}\otimes \mathcal N .$ 
The irreducibility criterion states that if $(\tilde{\mathcal M}, \tilde{\nabla })$ is irreducible, then $(E, \nabla )$ can be put in  Birkhoff normal form: one extends the lattice $E$ around the origin to a globally defined lattice $\tilde{E}$ of $(\tilde{\mathcal M}, \tilde{\nabla})$, logarithmic at $\infty$,  and  applies  Corollary 2.6 of  \cite{sabbah} (page 154)  to obtain
a new lattice $\tilde{E}^{\prime}$ of  $\tilde{\mathcal M}$, which also extends $E$, is logarithmic at $\infty$ and is trivial as a holomorphic vector bundle.
A base change between the standard basis of $E$ and a basis of $\tilde{E}^{\prime}$  in a neighborhood of 
$0\in \mathbb{C}$   gives 
the  holomorphic isomorphism $T$ above.  In particular,  for rank $2$ bundles we can state: 

\begin{lem}\label{lem-ired}  Assume that $E$ is of rank two and let $\{ v_{1}, v_{2}\}$ be its standard
basis. 
If there is no non-zero $w = gv_{1} + fv_{2}$, with $f, g\in\textit{\textbf k}$, such that  
$\nabla_{\partial_{z}}(w)= hw$ for a function 
$h\in \textit{\textbf k}$,  then $\nabla$ can be put in Birkhoff normal form.
\end{lem}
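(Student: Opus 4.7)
The plan is to derive Lemma \ref{lem-ired} from the general irreducibility criterion recalled in the paragraph preceding it, which asserts that if the associated meromorphic bundle with connection $(\tilde{\mathcal M},\tilde{\nabla})$ on $\mathbb{P}^{1}$ is irreducible, then $(E,\nabla)$ admits a Birkhoff normal form. It therefore suffices to show that the hypothesis of the lemma (absence of a non-zero $w=gv_{1}+fv_{2}$ with $\nabla_{\partial_{z}}(w)=hw$) implies the irreducibility of $(\tilde{\mathcal M},\tilde{\nabla})$. I will argue by contraposition: I will start from a hypothetical proper $\tilde{\nabla}$-invariant subbundle and produce such a $w$.

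The key algebraic remark is that $\textit{\textbf k}$ is a field: any non-zero $f\in\textit{\textbf k}$ is of the form $f(z)=z^{v(f)}u(z)$ with $u\in\mathbb{C}\{z\}^{*}$, and its inverse $f^{-1}$ is again meromorphic on a neighbourhood of $0$ with pole at $0$ only. Thus $\mathcal M=\textit{\textbf k}^{2}$ is a $2$-dimensional vector space over the field $\textit{\textbf k}$, and any $1$-dimensional $\textit{\textbf k}$-subspace is of the form $\textit{\textbf k}\cdot w$ for a single non-zero $w\in \mathcal M$, which in the basis $\{v_{1},v_{2}\}$ reads $w=g v_{1}+f v_{2}$ with $f,g\in\textit{\textbf k}$.

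Now assume $(\tilde{\mathcal M},\tilde{\nabla})$ is reducible, and let $\mathcal N\subset\tilde{\mathcal M}$ be a proper $\tilde{\nabla}$-invariant meromorphic subbundle; since $\tilde{\mathcal M}$ has rank $2$, necessarily $\mathcal N$ has rank $1$. Restricting to the germ at $0$ and localizing yields a rank $1$ $\textit{\textbf k}$-subspace of $\mathcal M$, which by the previous paragraph is spanned by a single $w=g v_{1}+f v_{2}$ with $f,g\in\textit{\textbf k}$. The $\tilde{\nabla}$-invariance of $\mathcal N$ translates into $\nabla_{\partial_{z}}(w)\in\textit{\textbf k}\cdot w$, i.e. $\nabla_{\partial_{z}}(w)=hw$ for a unique $h\in\textit{\textbf k}$, contradicting the hypothesis. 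The only place where something could fail is the passage from a globally defined subbundle $\mathcal N\subset\tilde{\mathcal M}$ on $\mathbb{P}^{1}$ to a non-zero localized generator near $0$, but this is automatic since $\mathcal N$ is non-zero and of rank $1$ as a $\textit{\textbf k}$-module at the origin; hence there is really no substantive obstacle, and the lemma is just the rank $2$ reformulation of the irreducibility criterion in the concrete language of cyclic eigenvectors at the singular point.
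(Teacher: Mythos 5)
Your proof is correct and follows essentially the same route as the paper: the paper also invokes the irreducibility criterion and argues that a proper $\tilde{\nabla}$-invariant subbundle $\mathcal N$ would, via a local generator near the origin, yield a section $w$ with $\nabla_{\partial_{z}}(w)=hw$, contradicting the hypothesis. Your write-up merely makes explicit the (correct) observation that $\textit{\textbf k}$ is a field, so the germ of $\mathcal N$ at $0$ is a one-dimensional $\textit{\textbf k}$-subspace of $\mathcal M$ spanned by a single such $w$.
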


\begin{proof} In the above notation, we claim that  $(\tilde{\mathcal M}, \tilde{\nabla })$  is irreducible: if it were reducible, then 
a basis of $\mathcal N$ around the origin would  provide a section $w$ as in the statement of the lemma. We obtain
a contradiction.
\end{proof}

\subsection{ Malgrange universal connections}
Let $(H\rightarrow \mathbb{C}\times (M,0), \nabla )$ be a $(TE)$-structure with unfolding condition
over a germ $((M, 0),\circ , e, E)$ of an $F$-manifold with Euler field. 
Assume that the restriction $\nabla^{0}$ of $\nabla$ to the slice at the origin
$\mathbb{C} \times \{ 0\}$ 
can be put in  Birkhoff normal form.
Let $\vec{v}_{0}$  be a basis  
of $H\vert_{(\mathbb{C}, 0) \times \{ 0\}}$ in which the connection form of $\nabla^{\mathrm{0}}$
is given by
\begin{equation}\label{omega-0}
\Omega^{0}=(\frac{B_{0}^{o}}{z } + B_{\infty})
\frac{d z}{z },
\end{equation}
where $B^{o}_{0}, B_{\infty}\in M_{n\times n}(\mathbb{C})$ (and $n= \mathrm{rank}(H) =\mathrm{dim}(M)$). 
If $B_{0}^{o}$ is a regular matrix 
(i.e. distinct Jordan blocks in its Jordan normal form have distinct eigenvalues, or the vector space of matrices which commute with
$B_{0}^{o}$ has dimension $n$, with basis  $\{ \mathrm{Id}, B_{0}^{o},\cdots , (B_{0}^{o})^{n-1}\}$), then  
$\nabla^{0}$  
has a universal deformation
$\nabla^{\mathrm{univ}}:= \nabla^{\mathrm{univ}, B_{0}^{o}, B_{\infty}}$. In particular, 
$\nabla^{\mathrm{univ}}$  is isomorphic to the given $(TE)$-structure
$\nabla$ and so are the parameter spaces 
of  $\nabla^{\mathrm{univ}}$ and $\nabla$ 
(as $F$-manifolds with Euler fields). 
The universal deformation  $\nabla^{\mathrm{univ}}$  was constructed by Malgrange in \cite{mal1,mal2}
(see also \cite{sabbah}, Chapter
VI, Section 3.a; see e.g. \cite{sabbah}, page 199, for the definition of the universal deformation).
We now recall its definition.
Let $\mathcal
D\subset T M_{n\times n}(\mathbb{C})$ be defined by
\begin{equation}\label{rond}
{\mathcal D}_{\Gamma}:= \mathrm{Span}_{\mathbb{C}} \{ \mathrm{Id},
(B_{0})_{\Gamma}, \cdots ,(B_{0})_{\Gamma }^{n-1}\}\subset
T_{\Gamma}M_{n\times n}({\mathbb{C}}) = M_{n\times n}({\mathbb{C}}),
\end{equation}
where
\begin{equation}\label{B-0}
(B_{0})_{\Gamma }:= B^{o}_{0}-\Gamma  +[B_{\infty}, \Gamma ].
\end{equation}
Because $B^{o}_{0}$ is regular, so is $(B_{0})_{\Gamma}$, for any
$\Gamma\in W$, where $W$ is a small open neighborhood of  $0$ in
$M_{n}({\mathbb{C}})$. For any $\Gamma \in W$,  $\mathcal
D_{\Gamma}$ is the ($n$-dimensional) vector space of polynomials
in $(B_{0})_{\Gamma}$  and the distribution  $\mathcal
D\rightarrow W$ is integrable.  The parameter space $M^{\mathrm{univ}}$ of $\nabla^{\mathrm{univ}}$ 
 is the maximal integral submanifold of $\mathcal
D\vert_{W}$, passing through $0\in M_{n\times n}(\mathbb{C})$
(the trivial matrix). Let $\circ_{\mathrm{univ}}$ be the multiplication on  $TM^{\mathrm{univ}}$, 
which, on any tangent space $T_{\Gamma} M^{\mathrm{univ}}= \mathcal D_{\Gamma}$,  
is given by multiplication of matrices. It has  unit field $e_{\mathrm{univ}}:= \mathrm{Id}$ (i.e. $(e_{\mathrm{univ}})_{\Gamma}
:= \mathrm{Id}\in \mathcal D_{\Gamma}$, for any $\Gamma \in M^{\mathrm{univ}}$).
Let $E_{\mathrm{univ}}$ be the vector field on $M^{\mathrm{univ}}$  
defined by  $E_{\mathrm{univ}}: = - B_{0}$
(i.e. $E_{\Gamma}:= - (B_{0})_{\Gamma}$,  for any $\Gamma \in M^{\mathrm{univ}}$).  
Then $(M^{\mathrm{univ}}, \circ_{\mathrm{univ}}, e_{\mathrm{univ}}, E_{\mathrm{univ}})$ is a regular $F$-manifold
(see Definition 2 of \cite{DH}). The germ 
$((M^{\mathrm{univ}},0) ,\circ_{\mathrm{univ}}, e_{\mathrm{univ}}, E_{\mathrm{univ}})$  is
universal in the following sense: it is  the unique (up to isomorphism) germ of $F$-manifold  with Euler field
$((M,0), \circ , e, E)$ for which the endomorphism
$\mathcal U (X):= E\circ X$ of $T_{0} M$ has the same conjugacy class as $B_{0}^{o}$  (see \cite{DH}).
Moreover,  if $B_{0}^{o}$ has a unique eigenvalue (or a unique Jordan block)  
then the matrix  $(B_{0})_{\Gamma}$,  for any $\Gamma \in M^{\mathrm{univ}}$, has this property as well 
(see Proposition 15 of  \cite{DH}).\

The universal deformation  $\nabla^{\mathrm{univ}}$ of $\nabla^{0}$ is
defined on the trivial bundle $E = (\mathbb{C} \times M^{\mathrm{univ}}) \times  \mathbb{C}^{n} \rightarrow \mathbb{C}\times M^{\mathrm{univ}}$. Its  connection form in the standard trivialization of $E$ is given
by
\begin{equation}\label{omega-cann}
\Omega^{\mathrm{univ}} = \left( \frac{B_{0}}{z} +
B_{\infty}\right) \frac{dz}{z } +\frac{\mathcal C}{z}.
\end{equation}
Here $B_{0} : M^{\mathrm{univ}}\rightarrow M_{n\times n}(\mathbb{C})$,
$(B_{0})(\Gamma) : = (B_{0})_{\Gamma}$ is given by (\ref{B-0}) and
${\mathcal C}_{X}:= X$ is the action of the matrix $X$ on
$\mathbb{C}^{n}$, for any $X\in T_{\Gamma}M^{\mathrm{can}}\subset
M_{n\times n}(\mathbb{C})$.\\

{\bf Acknowledgements.}  L.D.\ was supported by a grant of the Ministery of Research and Innovation, project no PN-III-ID-P4-PCE-2016-0019 within PNCDI. Part of this work was done during her visit at the University of Mannheim in October 2017.
She thanks University of Mannheim for hospitality and excelent working conditions.

{\it Liana David}:  Institute of Mathematics 'Simion Stoilow' of the
Romanian Academy, Research Unit 4, Calea Grivitei nr. 21,
Bucharest, Romania; liana.david@imar.ro\\

{\it Claus Hertling}: Lehrstuhl f\"{u}r Algebraische Geometrie, 
Universit\"{a}t  Mannheim,
B6, 26, 68131, Mannheim, Germany;
hertling@math.uni-mannheim.de

\end{document}